\documentclass[10pt]{amsart}

\usepackage[T1]{fontenc}
\usepackage{amsmath, amsthm, amssymb}
\usepackage{color}
\usepackage{bbm}
\usepackage{mathtools}
\usepackage{bm}
\usepackage{framed}
\usepackage[shortlabels]{enumitem}
\usepackage{mathbbol}
\usepackage{mathrsfs}
\usepackage[hidelinks]{hyperref}
\usepackage{cleveref}
\usepackage[at]{easylist} 

\newtheorem{definition}{Definition}
\newtheorem{theorem}{Theorem}
\newtheorem{lemma}{Lemma}
\newtheorem{proposition}{Proposition}
\newtheorem{remark}{Remark}

\newtheorem{corollary}{Corollary}

\DeclarePairedDelimiter\floor{\lfloor}{\rfloor}

\DeclareMathOperator\supp{supp}

\DeclareMathOperator*\argmin{argmin}

\DeclareMathOperator\cov{Cov}

\DeclareMathOperator\diag{diag}
\DeclareMathOperator\dist{dist}

\DeclareMathOperator\mes{mes}

\newcommand{\E}[1]{\mathbb{E}\left\{ #1 \right\}}
\newcommand{\pk}[1]{\mathbb{P} \left\{ #1 \right\} }
\newcommand{\Ind}[1]{\mathbb{1} \left\{ #1 \right\} }
\newcommand{\Var}[1]{\mathrm{Var} \left\{ #1 \right\} }
\newcommand{\R}{\mathbb{R}}
\newcommand{\N}{\mathbb{N}}

\newcommand{\bb}{\tilde{\bm{b}}}
\newcommand{\Jtan}{J_{\mathrm{tan}}}
\newcommand{\Jsl}{J_{\mathrm{sl}}}
\newcommand{\1}{\mathbb{1}}
\newcommand{\uu}{\check{\bm{u}}}

\allowdisplaybreaks[4]

\makeatletter
\def\namedlabel#1#2{\begingroup
  #2\def\@currentlabel{#2}\phantomsection\label{#1}\endgroup
}
\makeatother 
\begin{document}

\title[]{Asymptotic Behavior of Path Functionals for Vector-Valued Gaussian Processes at High Levels}

\author{Pavel Ievlev}
\address{Pavel Ievlev, Independent researcher}
\email{Pavel.Ievlev@unil.ch, ievlev.pn@gmail.com}

\author{Svyatoslav Novikov}
\address{Svyatoslav Novikov, Department of Actuarial Science, University of Lausanne, UNIL-Dorigny, 1015 Lausanne, Switzerland
}
\email{Svyatoslav.Novikov@unil.ch}

\author{Timofei Shashkov}
\address{Timofei Shashkov, Department of Actuarial Science, University of Lausanne, UNIL-Dorigny, 1015 Lausanne, Switzerland
}
\email{Timofei.Shashkov@unil.ch} 

\begin{abstract}
    We study precise asymptotics for high-level exceedance probabilities of
    path functionals of continuous vector-valued Gaussian processes.  The
    probabilities have the form
    \[
      \mathbb{P}\{\Gamma_{[0,T]}(\check{\bm u}(\bm X-u\bm b))>L_u\},
      \qquad u\to\infty,
    \]
    where \(\bm X\) is a centered \(\mathbb R^d\)-valued Gaussian process and
    \(\Gamma\) belongs to a broad class satisfying natural monotonicity,
    scaling, no-atom, and continuity assumptions.  The class covers classical
    sojourn times, Choquet-type sojourn integrals, area-under-the-curve
    functionals, and, through a local-footprint extension, shrinking-window
    Parisian persistence functionals.

    We obtain exact asymptotics in the stationary case and in the
    non-stationary case when the inverse generalized variance has a unique
    minimizer at the boundary point.  The non-stationary theorem covers the
    regimes \(\alpha<\beta\), \(\alpha=\beta\), and \(\alpha>\beta\), which lead
    respectively to Pickands-type, Piterbarg-type, and deterministic limiting
    constants.  We also derive conditional limit laws for the first exceedance
    time.  The main claims are stated in the body of the paper, while the
    proofs and auxiliary estimates are collected in the appendices.
\end{abstract}

\keywords{Gaussian process; sojourn time; Parisian persistence; Choquet integral;
Vector-valued Gaussian extremes; high exceedance probability;}

\subjclass[2020]{Primary 60G15; secondary 60G70}
 
\date{\today}
\maketitle

\section{Introduction}
\label{sec:introduction}

High excursions of Gaussian processes and fields are most often studied through
a supremum event.  In the scalar case, the asymptotic analysis of
\(
  \mathbb P\{\sup_{t\in[0,T]}X(t)>u\}
\)
goes back to Pickands and forms the basis of the Pickands--Piterbarg theory for
stationary and non-stationary Gaussian processes and fields; see
\cite{Pickands1967,Pickands1969,LeadbetterLindgrenRootzen1983,Piterbarg1996}.
The associated constants and their representations remain an active topic; see,
for example, \cite{DiekerYakir2014}.  A supremum records whether a high
excursion occurs, but not its duration, accumulated size, or geometry.  The
sojourn problem replaces this binary question by an occupation functional of
the excursion set.  Berman's foundational work showed that high-level
sojourns and extremes are closely linked; see
\cite{Berman1982,Berman1992}.  This distinction matters whenever an isolated
spike and a persistent excursion have different interpretations, as in storage,
queueing, reliability, and ruin models.

There are two complementary traditions for studying functionals of Gaussian
excursion sets.  For sufficiently smooth Gaussian fields, local differential
geometry provides a powerful description of high excursions.  The expected
Euler characteristic (EEC) heuristic approximates a high excursion probability
by the expected Euler characteristic of the corresponding excursion set,
whereas the Gaussian kinematic formula gives exact expressions for expected
Lipschitz--Killing curvatures of broad classes of smooth Gaussian-related
fields.  Exponential accuracy of the EEC approximation was established in
\cite{TaylorTakemuraAdler2005}, while the Gaussian kinematic formula was
developed in \cite{Taylor2006}; see also the monograph
\cite{AdlerTaylor2007}.  Recent work establishes super-exponentially accurate
EEC approximations for smooth fields with general non-constant variance
functions \cite{Cheng2023EEC} and for joint high excursions of a class of smooth
\(\mathbb R^2\)-valued Gaussian vector fields \cite{ChengXiao2023EEC}.  These
results are high-threshold results, but their principal object is the occurrence
and geometry of a smooth excursion rather than the upper tail of its occupation
volume.

A related smooth-field literature treats excursion volume and other geometric
functionals as random variables and analyzes their fluctuations, often by
Hermite or Wiener-chaos expansions, Malliavin calculus, and reduction
principles.  Central limit theorems for excursion-set volumes of weakly
dependent random fields, with Gaussian fields as a principal case, were proved
in \cite{BulinskiSpodarevTimmermann2012}.  Quantitative central limit theorems
for Gaussian-field sojourn times at fixed and moving levels were obtained in
\cite{Pham2013SojournCLT}, and limit theorems for the broader family of
Lipschitz--Killing curvatures were developed in
\cite{KratzVadlamani2017}.  More recent work studies high moving levels for
long-range dependent spatio-temporal Gaussian fields under increasing-domain
asymptotics \cite{LeonenkoRuizMedina2024} and almost-sure and quantitative
central limit theorems for integral functionals of stationary Gaussian fields,
including excursion volume, by chaos expansions and Malliavin--Stein methods
\cite{MainiRossiZheng2026}.  This literature describes global fluctuations,
reduction principles, and geometric structure.  It is complementary to the
problem considered here: an exact rare-event asymptotic for the upper tail of a
locally rescaled path functional on a fixed observation interval.

For Gaussian processes that need not be differentiable, exact high-level
sojourn tails are governed instead by local covariance scaling, conditional
weak limits, and Pickands--Berman constants.  Building on Berman's approach,
uniform double-sum methods yield exact asymptotics for broad classes of
continuous- and discrete-time Gaussian sojourns
\cite{DebickiHashorvaPengMichna2017}, for Gaussian processes with trend and the
associated passage time of the accumulated sojourn
\cite{DebickiLiuMichna2019}, and for Gaussian-related random fields
\cite{DebickiHashorvaLiuMichna2021}.  Recent developments include structural and simulation-oriented results for
the associated Berman functions \cite{DebickiHashorvaMichnaBerman2022}, as well
as locally self-similar Gaussian processes and cumulative Parisian ruin
\cite{Novikov2024Sojourns}.  This locally non-smooth line is the closest in
asymptotic scale and proof method to the present paper.

The vector-valued setting adds a second difficulty.  The exponential cost of a
simultaneous high excursion is determined by a Gaussian quadratic program, and
only a subset of coordinates may be active.  The quadratic-program active-set
reduction and a uniform double-sum method for vector-valued Gaussian processes
were developed in \cite{VVGP}.  A locally additive extension to non-homogeneous
vector-valued Gaussian fields, with an application to double-crossing
probabilities, was obtained in \cite{VVGF}.  Most closely related to the
stationary ordinary-sojourn specialization of our results is the recent paper
\cite{DebickiHashorvaMichna2025}, which derives exact asymptotics for the
Lebesgue occupation volume of a stationary \(\mathbb R^d\)-valued Gaussian
field on a multidimensional parameter box.  That work is more general in the
dimension of the parameter set for the indicator-sojourn functional.  Its main
formulation uses the all-active condition \(\Sigma^{-1}\bm b>0\), while the
paper also discusses the dimension-reduction issue outside this case.  Our
results are complementary: we work with a one-parameter process, but allow a
substantially broader class of nonlinear and non-additive path functionals,
carry out the active, tangential, and slack coordinate reduction explicitly,
treat a boundary non-stationary setting, and derive conditional
first-exceedance-time limits.

The object of this paper is therefore not one particular occupation time, but a
unified class of high-level path functionals.  We study probabilities of the
form
\begin{equation}
  \label{def probability psi(u)}
  \pk{\Gamma_{[0,T]} ( \uu (\bm{X} - u \bm{b}) ) > L_u },
  \qquad u\to\infty,
\end{equation}
where \(\bm X\) is a continuous centered \(\mathbb R^d\)-valued Gaussian
process, \(\bm b\) is a fixed high-level direction with at least one positive
coordinate, \(\uu\) is the anisotropic normalization selected by the Gaussian
quadratic program in \eqref{def b tilde}--\eqref{def hat u}, and \(L_u\) is the
natural local scaling of the functional.  The functional \(\Gamma\) may be
nonlinear and need not be continuous everywhere in the uniform topology.  The
class covered by our assumptions includes the usual exceedance event,
classical sojourn times, positive-area functionals, Choquet integrals with
respect to non-additive capacities, and shrinking-window Parisian functionals
through the local-footprint convention described below.

This functional breadth is useful because persistence can be encoded in several
inequivalent ways.  A Parisian event requires an adverse path over a time
window rather than a crossing at a single instant; Gaussian Parisian ruin was
studied, for example, in
\cite{DebickiHashorvaJi2014,DebickiHashorvaJi2016}.  Ordinary sojourn and area
functionals measure duration and accumulated excess.  Choquet functionals
replace Lebesgue measure by a capacity and can therefore weight the shape of an
excursion set non-additively: depending on the capacity, sets of equal length
but different clustering or spatial spread need not have the same value.  We
use Choquet's capacity framework \cite{choquet-ref} and the standard theory of
non-additive integration \cite{Denneberg1994} to obtain concrete corollaries
for this class.

The assumptions on \(\Gamma\) are structural rather than tied to a particular
formula.  We require non-decreasing behavior with respect to the observation
set, affine time scaling, and two regularity properties for the limiting
Gaussian model: absence of an atom at the level under consideration and
continuity at almost every shift.  Thus integral functionals
\[
  \Gamma_E(\bm f)=\int_E G(\bm f(t))\,dt
\]
and the Choquet functionals
\[
  \Gamma_E^{\nu,\psi}(\bm f)
  =
  \int_0^\infty \nu\{t\in E:\psi(\bm f(t))>r\}\,dr
\]
can be handled by the same Gaussian estimates once their local regularity is
verified.  A Parisian moving-window functional is naturally indexed by a long start set
and a short local footprint,
\[
  \Pi_{E;F}(\bm f)
  =\sup_{t\in E}\inf_{s\in F}\min_{1\le i\le d}f_i(t+s),
  \qquad \bm f\in C(E+F,\R^d).
\]
The two indices play different roles.  Only the start set \(E\) is partitioned
in the global double-sum argument; the footprint \(F\) stays bounded in local
coordinates and is rescaled jointly with \(E\).  Accordingly, our
local-footprint extension imposes union-monotonicity only in \(E\), requires no
monotonicity or decomposition property in \(F\), and replaces the one-set
scaling rule by joint affine scaling of \((E,F)\).  With a physical footprint
\(\rho_uF\), where \(\rho_u\) is the operative local time scale, the same
conditioning and double-sum proofs yield exact Parisian asymptotics.  This is
the usual non-degenerate shrinking-window convention on the correlation
(Pickands) scale in the stationary, \(\alpha<\beta\), and
\(\alpha=\beta\) regimes; see
\cite{DebickiHashorvaJi2014,DebickiHashorvaJi2016}.  Appendix~\ref{sec:auxiliary_results}
verifies the required local no-atom and continuity properties.  The
local-conditioning step is related to uniform approximation results for
homogeneous continuous functionals of scalar Gaussian fields in
\cite{DebickiHashorvaLiu2017}.  Our no-atom and almost-every-shift continuity
conditions are designed to include discontinuous occupation and Choquet
functionals, while the global argument additionally incorporates vector
thresholds, active-coordinate reduction, and stationary and boundary
non-stationary double-sum estimates.

We now summarize the main contributions.
\begin{enumerate}[(i)]
  \item \emph{Stationary vector-valued path functionals.}  For a stationary
  one-parameter vector-valued Gaussian process with local covariance behavior
  \(\|\Sigma-\mathcal R(t)-t^\alpha V\|_{\mathrm F}=o(t^\alpha)\), we obtain an exact asymptotic for
  \eqref{def probability psi(u)}.  The constant is a functional Pickands-type
  constant built from the limiting multivariate fractional Brownian field, the
  active set of the quadratic program, and \(\Gamma\).  For the classical
  indicator sojourn, this complements
  \cite{DebickiHashorvaMichna2025}; the point here is that the same theorem also
  covers positive-area, Choquet, and other admissible path
  functionals, including cases with slack coordinates.

  \item \emph{Boundary non-stationary path functionals.}  Suppose that the
  inverse generalized variance has a unique minimum at the boundary point
  \(0\), and locally
  \[
    \bigl\|\Sigma-R(t,s)-A t^\beta-A^\top s^\beta
      -V|t-s|^\alpha\bigr\|_{\mathrm F}
    =o\bigl(t^\beta+s^\beta+|t-s|^\alpha\bigr),
    \qquad t\ge s\ge0.
  \]
  The competition between the correlation scale \(u^{-2/\alpha}\) and the
  variance scale \(u^{-2/\beta}\) produces three regimes.  The leading
  constant is Pickands-type when \(\alpha<\beta\), Piterbarg-type with a
  boundary drift when \(\alpha=\beta\), and deterministic when
  \(\alpha>\beta\).

  \item \emph{Local-footprint and Parisian extensions.}  We show that the
  proofs require long-set structure only in the start set \(E\).  A bounded
  footprint \(F\) is never partitioned and is merely transported by the local
  affine rescaling.  This yields stationary and boundary non-stationary
  asymptotics for shrinking-window Parisian persistence, with no inclusion
  assumption on the window variable.

  \item \emph{Choquet-integral corollaries.}  We formulate an admissible class
  of Choquet data \((\nu,\psi)\) and derive explicit corollaries from both
  main theorems.  If \(\nu\) is affinely homogeneous with exponent
  \(\lambda_\nu\), then the threshold is
  \(L u^{-2\lambda_\nu/\alpha}\) in the stationary and
  \(\alpha<\beta\) boundary regimes, and
  \(L u^{-2\lambda_\nu/\beta}\) in the \(\alpha\ge\beta\) boundary regimes.
  This includes genuinely non-additive power capacities such as
  \(\nu(A)=\mes(A)^\theta\), \(\theta>1\), as well as interaction capacities
  that depend on the spatial arrangement of an excursion set.

  \item \emph{Conditional first-exceedance-time limits.}  We identify the
  limiting distribution of the first time at which the path functional exceeds
  its scaled level.  In the stationary case the first exceedance location is
  asymptotically uniform on \([0,T]\).  In the boundary non-stationary case it
  lives on the variance scale and its law changes with the same
  \(\alpha\)-versus-\(\beta\) trichotomy as the tail probability.
\end{enumerate}

The proofs combine conditional weak convergence on shrinking time windows,
adapted Bonferroni inequalities for functional exceedance events, and uniform
single-sum and double-sum estimates.  The formulation deliberately separates
functional verification from the Gaussian localization argument: once a
functional satisfies the structural assumptions and the local limiting model
is non-trivial, the main theorems yield both the exact tail asymptotic and, when
applicable, the conditional first-exceedance-time law.

\medskip

\noindent\textbf{Organization of the paper.}  Section~\ref{sec:preliminaries}
introduces the functional assumptions, the quadratic-programming notation, the
limiting constants, and the admissible Choquet data.  Sections~\ref{sec:stationary_case}
and~\ref{sec:non_stationary_case} state the stationary and boundary
non-stationary theorems, together with the Choquet-integral corollaries.
Section~\ref{sec:exceedance-times} gives the exceedance-time limits.
Section~\ref{sec:examples} works out representative examples.  All proofs are
collected in the appendices: Appendix~\ref{sec:proofs_main_claims} proves the
main claims, Appendix~\ref{sec:auxiliary_results} contains the auxiliary
functional results, and Appendix~\ref{sec:technical_estimates} contains the
remaining technical estimates.

\bigskip
\textbf{Notation.} Throughout the paper \( a \lesssim b \) means that there
exists \( c > 0 \) independent of the parameters \( u \), \( \Lambda \) and \( S
\), such that \( a \leq c b \). If the constant \( c \) is allowed to depend on
\( \Lambda \), it is denoted by \( \lesssim_\Lambda \). The symbol \( \sim \)
is reserved for scalar asymptotic equivalence as \( u \to \infty \), whereas
\( \sim_{u, \Lambda} \) denotes asymptotic equivalence as \( u \to \infty \)
and \( \Lambda \to \infty \) in this exact order:
\( f \sim_{u, \Lambda} g \iff \lim_{\Lambda \to \infty}
\lim_{u \to \infty} f/g=1 \). Similarly, \( f \sim_{u, \Lambda, S}g \) means
\( \lim_{\Lambda \to \infty}\lim_{S \to \infty}\lim_{u \to \infty}f/g=1 \).
For vector- and matrix-valued expressions, \(O(\cdot)\) and \(o(\cdot)\) are
understood in Euclidean and Frobenius norm, respectively; in particular, we do
not use quotient notation such as \(M_1\sim M_2\) for matrices.  For a positive
definite matrix \(\Sigma\), \(\varphi_\Sigma\) denotes the density of the
centered Gaussian vector with covariance matrix \(\Sigma\).
The symbol \(\mes\) denotes Lebesgue measure on \(\R\).  We write
\(\Gamma(a,x)=\int_x^\infty t^{a-1}e^{-t}\,dt\) for the upper incomplete Gamma
function and \(\Gamma(a)=\Gamma(a,0)\) for the Gamma function.
\smallskip

Let \( \Sigma \) be a positive definite matrix. If \( \bm{b} \in \R^d \setminus
(-\infty, 0]^d \), then the quadratic programming problem
\begin{equation*}
  \Pi_\Sigma(\bm{b}) : \quad
  \text{minimize } \bm{x}^\top \Sigma^{-1} \bm{x} \text{ subject to } \bm{x} \geq \bm{b}
\end{equation*}
has the unique solution
\begin{equation}
  \label{def b tilde}
  \bb = \argmin \{ \bm{x}^\top \Sigma^{-1} \bm{x} : \bm{x} \geq \bm{b} \}.
\end{equation}
Set \(\bm w=\Sigma^{-1}\bb\).  The Karush--Kuhn--Tucker conditions are
\begin{equation}
  \label{eq qp kkt conditions}
  \bb\ge\bm b,
  \qquad
  \bm w\ge\bm0,
  \qquad
  w_i(\tilde b_i-b_i)=0,
  \quad 1\le i\le d;
\end{equation}
see also \cite[Lemma~4.1]{VVGP}.  Define the \emph{essential index set}, its
complement, the tangential boundary set, and the slack set by
\begin{equation}
  \label{def qp index sets}
  I=\{i:w_i>0\},\qquad J=I^c,\qquad
  \Jtan=\{j\in J:\tilde b_j=b_j\},\qquad
  \Jsl=J\setminus\Jtan.
\end{equation}
Because \(\bm b\notin(-\infty,0]^d\), the set \(I\) is non-empty.  The
relations \eqref{eq qp kkt conditions} and \(\bb=\Sigma\bm w\) give
\begin{equation}
  \label{eq qp kkt decomposition}
  \bb_I=\bm b_I,\qquad
  \bm w_I=(\Sigma_{II})^{-1}\bm b_I>\bm0_I,\qquad
  \bm w_J=\bm0_J,
\end{equation}
and
\begin{equation}
  \label{eq qp inactive coordinates}
  \bb_J=\Sigma_{JI}(\Sigma_{II})^{-1}\bm b_I\ge \bm b_J,
  \qquad
  \bb_{\Jtan}=\bm b_{\Jtan},
  \qquad
  \bb_{\Jsl}>\bm b_{\Jsl}.
\end{equation}
Equivalently, \(I\) is the unique non-empty subset for which
\begin{equation*}
  (\Sigma_{II})^{-1}\bm b_I>\bm0_I,
  \qquad
  \Sigma_{JI}(\Sigma_{II})^{-1}\bm b_I\ge\bm b_J;
\end{equation*}
the optimizer is then given by \eqref{eq qp kkt decomposition}--\eqref{eq qp inactive coordinates}.
We call the coordinates in \(I\) active or
essential, those in \(\Jtan\) tangential, and those in \(\Jsl\) slack.

\section{Preliminaries}
\label{sec:preliminaries}

\subsection{Class of functionals}
\label{sec:class_of_functionals}

Let \( \mathcal{L} \subset \R_+ \) be a non-empty set, which we shall call the
set of admissible levels. Next, let \( \mathcal{K} \) denote the set of all
non-empty compact subsets of \( \R \). For a compact set \(E\) and an integer
\(m\ge1\), the space \(C(E,\R^m)\) is equipped with the uniform norm.  Every
reference below to Borel measurability or continuity of a path functional is
with respect to this topology.  For each \( E \in \mathcal{K} \), let
\[
  \Gamma_E:C(E,\R^d)\to\R
\]
be Borel measurable, but not necessarily linear or continuous.  When coordinate
deletion is used, we assume separately that, for every proper
\(K\subset\{1,\ldots,d\}\), there is a Borel map
\[
  \Gamma_E^{(-K)}:C(E,\R^{|K^c|})\to\R,
  \qquad \Gamma_E^{(-\emptyset)}=\Gamma_E.
\]
We retain the convenient notation
\(\Gamma_E(\bm f_{K^c},\bm\infty_K)\) as shorthand for
\(\Gamma_E^{(-K)}(\bm f_{K^c})\); no topology or Borel structure on arbitrary
extended-valued paths is used.  In the original probability only sets
\(E\subset[0,T]\) are applied to the process \(\bm X\), but the enlarged index
set is needed for the rescaled local windows \([0,S]\) that enter the limiting
constants. We assume that, for all \( \bm{f} \in C ( \R,\R^d ) \), the following
conditions hold whenever all displayed restrictions are well-defined:
\begin{enumerate}
  \item [(\namedlabel{B1}{\( \mathbf{B}_1 \)})] \textbf{Union-monotonicity.} For all \( A, B \in \mathcal{K} \) and \( L \in \mathcal{L} \), we have
  \[
  \Gamma_A ( \bm{f} ) > L
  \implies
  \Gamma_{A \cup B} ( \bm{f} ) > L
  \implies
  \Gamma_A ( \bm{f} ) > L
  \quad \text{or} \quad
  \sup_{t \in B} \min_{i = 1, \dots , d} f_i ( t ) > 0.
  \]
  and
  \begin{equation*}
    \Gamma_A ( \bm{f} ) > L
    \implies 
    \sup_{t \in A} \min_{i = 1, \dots , d} f_i ( t ) > 0.
  \end{equation*}
  \item [(\namedlabel{B2}{\( \mathbf{B}_2 \)})] \textbf{Affine time-scaling property.} There exists \( \lambda_\Gamma \in \R \) such that
  \[
  \Gamma_{a E + b} ( \bm{f} ) > L
  \iff
  a^{\lambda_\Gamma} \, \Gamma_E ( \bm{f} ( a \, ( \cdot ) + b ) ) > L
  \]
  for all \( a > 0 \), \( b \in \R \), \( E \in \mathcal{K} \) and \( L \in
  \mathcal{L} \).
\end{enumerate}

\begin{remark}[Scaled levels]
\label{rem:scaled-level-B1}
Although Assumption~\ref{B1} is stated only at levels in \(\mathcal L\), no
additional hypothesis is needed at the scaled levels occurring in the proofs.
Indeed, applying Assumption~\ref{B2} to the inverse affine map gives, for every
\(L\in\mathcal L\),
\begin{equation}
  \label{eq:B2-threshold-scaled-form}
  \Gamma_{aE+b}(\bm f)>a^{\lambda_\Gamma}L
  \quad\Longleftrightarrow\quad
  \Gamma_E\bigl(\bm f(a\,\cdot+b)\bigr)>L.
\end{equation}
Consequently, Assumptions~\ref{B1}--\ref{B2} imply the two implications in
Assumption~\ref{B1} at every level \(a^{\lambda_\Gamma}L\), \(a>0\): apply
Assumption~\ref{B1} at level \(L\) to the inverse affine images of the sets and
to the rescaled path, then use \eqref{eq:B2-threshold-scaled-form}.  Thus the
levels \(L_u\) in the main theorems are covered even when
\(\mathcal L=\{L\}\).  We use this scaled-level consequence without further
comment.
\end{remark}

The following two conditions are local in the observation set and are defined
with respect to a fixed continuous, but not necessarily centered, Gaussian
process \(\bm W\).  For a specified \(E\in\mathcal K\), we say that:

\begin{enumerate}
  \item [(\namedlabel{B3}{\( \mathbf{B}_3 \)})] \textbf{No \(\mathcal L\)-level
  atoms on \(E\).} For every \(L\in\mathcal L\) and Lebesgue-a.e.
  \(\bm x\in\R^d\),
  \begin{equation*}
    \pk{\Gamma_E(\bm W-\bm x)=L}=0.
  \end{equation*}
  \item [(\namedlabel{B4}{\( \mathbf{B}_4 \)})] \textbf{Continuity at almost
  every shift on \(E\).} For Lebesgue-a.e. \(\bm x\in\R^d\),
  \begin{equation*}
    \pk{\bm W-\bm x\text{ is a continuity point of }\Gamma_E}=1.
  \end{equation*}
\end{enumerate}
Thus an invocation of Assumptions~\ref{B3}--\ref{B4} must specify the compact
set or class of compact sets on which they are required.  The main theorems use
them only on the intervals \([0,S]\).

The following condition shall be used separately from the other four:

\begin{enumerate}
  \item [(\namedlabel{B5}{\( \mathbf{B}_5 \)})] \textbf{Constant reduction property.} If \( \bm{f}_K \) is a constant subvector of \( \bm{f} \) (i.e., \(t\mapsto \bm{f}_K(t)\) is constant), then 
  \[ 
  \Gamma_E ( \bm{f} ) > L 
  \iff 
  \Gamma_E ( \bm{f}_{K^c}, \bm{\infty}_K ) > L
  \quad \text{and} \quad
  \bm{f}_K > \bm{0} 
  \] 
  for all \( L \in \mathcal{L} \) and \( E \in \mathcal{K} \).
\end{enumerate} 

\begin{remark}
  \label{remark Gamma_A is monotone in A}
  Assumption~\ref{B1} implies a version of Bonferroni inequality provided in
  Lemma~\ref{lemma Bonferroni adapted for sojourns}. If \( \mathcal{L} = \R_+
  \), the first implication of assumption~\ref{B1} is equivalent to \( A \mapsto
  \Gamma_A ( \bm{f} ) \) being non-decreasing with respect to the inclusion
  order. In general, this implies that \( A \mapsto \Ind{\Gamma_A ( \bm{f} ) >
  L} \) is non-decreasing for all \( L \in \mathcal{L} \).
\end{remark}

\begin{remark}
  If \(\mathcal L=\R_+\) and the power factor in Assumption~\ref{B2} is
  replaced by a positive multiplicative function \(\ell\), then the power form
  \(\ell(a)=a^{\lambda_\Gamma}\) follows provided \(\ell\) is, for example,
  Borel measurable or locally bounded.  Without such a regularity condition,
  multiplicativity alone admits pathological non-power solutions.  We therefore
  impose the power form directly in Assumption~\ref{B2}.
\end{remark}

\begin{remark}
  The notation \(\Gamma_E(\bm f_{K^c},\bm\infty_K)\) refers to the separate
  finite-valued coordinate-deleted map \(\Gamma_E^{(-K)}\) introduced above;
  \(+\infty\) is only a mnemonic for omitting coordinates. For example,
  \begin{itemize}
    \item if \( \Gamma_E ( \bm{f} ) = \sup_{t \in E} \min_{i} f_i ( t ) \), then
  \( \Gamma_E ( \bm{f}_{K^c}, \bm{\infty}_K ) = \sup_{t \in E} \min_{i \in K^c}
  f_i(t) \);
    \item if \( \Gamma_E ( \bm{f} ) = \int_E \Ind{ \bm{f} ( t ) > \bm{0} } \, d
  t \), then \( \Gamma_E ( \bm{f}_{K^c}, \bm{\infty}_K ) = \int_E \Ind{
  \bm{f}_{K^c} ( t ) > \bm{0}_{K^c} } \, d t \);
    \item if \( \Gamma_E ( \bm{f} ) = \int_E \min_i ( f_i ( t ) )_+ \, dt \),
    then \( \Gamma_{E} ( \bm{f}_{K^c}, \bm{\infty}_{K} ) = \int_E \min_{i \in
    K^c} ( f_i ( t ) )_+ \, dt \).
  \end{itemize}
  Each displayed deleted map is Borel measurable on its own uniform path space.
\end{remark}

\begin{remark}[Basic examples and local footprints]
  The classical sojourn family
  \[
    \Gamma_E(\bm f)=\int_E\Ind{\bm f(t)>\bm0}\,dt
  \]
  satisfies~\ref{B1},~\ref{B2}, and~\ref{B5} with
  \(\lambda_\Gamma=1\).  Assumptions~\ref{B3}--\ref{B4} must be checked on
  the intervals and at the levels used in the theorem; endpoint levels can
  create atoms for discontinuous sojourn scalarizations.  The positive-area
  family
  \[
    \Gamma_E(\bm f)=\int_E\min_{1\le i\le d}(f_i(t))_+\,dt
  \]
  satisfies~\ref{B1}--\ref{B2} with \(\lambda_\Gamma=1\), and its interval-wise
  no-atom and continuity properties at positive levels are verified in
  Appendix~\ref{sec:auxiliary_results}.

  The Parisian moving-window family is treated by the local-footprint
  convention below.  The start set is the only long index, while the window is
  a bounded local footprint that is rescaled jointly with the start set.  In
  particular, no union-monotonicity assumption is imposed on the window.
\end{remark}

\subsection{Local footprints and shrinking-window functionals}
\label{sec:local-footprints}

Let
\[
  \mathcal K_0=\{F\in\mathcal K:0\in F\},
  \qquad
  \mathcal K_0^+=\{F\in\mathcal K_0:F\subset[0,\infty)\},
  \qquad
  E+F=\{t+s:t\in E,\ s\in F\}.
\]
The set \(E\) is the long observation, or start, set, whereas \(F\) is a
bounded local footprint.  The proofs partition \(E\), but never partition
\(F\).  For later use, put
\[
  [0,T]\ominus \rho F
  =\{t\in\R:t+\rho F\subset[0,T]\}
  =[-\rho\min F,\,T-\rho\max F],
\]
whenever the interval on the right is non-empty.  In particular, if
\(F\in\mathcal K_0^+\), then
\([0,T]\ominus\rho F=[0,T-\rho\max F]\).

\begin{definition}[Admissible local-footprint family]
\label{def:local-footprint-family}
For \(E\in\mathcal K\) and \(F\in\mathcal K_0\), equip
\(C(E+F,\R^d)\) with the uniform norm and let
\[
  \Gamma_{E;F}:C(E+F,\R^d)\to\R
\]
be Borel measurable.  If coordinate deletion is used, the notation
\(\Gamma_{E;F}(\bm f_{K^c},\bm\infty_K)\) denotes a separate Borel map
\(\Gamma_{E;F}^{(-K)}:C(E+F,\R^{|K^c|})\to\R\), as above.
We call \(\{\Gamma_{E;F}\}\) an admissible local-footprint family, with
admissible levels \(\mathcal L\) and exponent \(\lambda_\Gamma\), if the
following conditions hold whenever the displayed restrictions are defined.
\begin{enumerate}
  \item[(\namedlabel{LF1}{\(\mathbf{LF}_1\)})]
  \textbf{Long-set union structure.}  For every fixed \(F\in\mathcal K_0\),
  all \(A,B\in\mathcal K\), \(L\in\mathcal L\), and
  \(\bm f\in C((A\cup B)+F,\R^d)\),
  \[
    \Gamma_{A;F}(\bm f)>L
    \Longrightarrow
    \Gamma_{A\cup B;F}(\bm f)>L
    \Longrightarrow
    \Gamma_{A;F}(\bm f)>L
    \quad\hbox{or}\quad
    \sup_{t\in B}\min_i f_i(t)>0,
  \]
  and
  \[
    \Gamma_{A;F}(\bm f)>L
    \Longrightarrow
    \sup_{t\in A}\min_i f_i(t)>0.
  \]
  No inclusion, monotonicity, or decomposition property is imposed in the
  footprint variable.

  \item[(\namedlabel{LF2}{\(\mathbf{LF}_2\)})]
  \textbf{Joint affine scaling.}  For all \(a>0\), \(b\in\R\),
  \(E\in\mathcal K\), \(F\in\mathcal K_0\), and \(L\in\mathcal L\),
  \[
    \Gamma_{aE+b;aF}(\bm f)>a^{\lambda_\Gamma}L
    \quad\Longleftrightarrow\quad
    \Gamma_{E;F}\bigl(\bm f(a\,\cdot+b)\bigr)>L.
  \]

  \item[(\namedlabel{LF3}{\(\mathbf{LF}_3\)})]
  \textbf{Local regularity for the exact limiting element.}  On each pair
  \(([0,S],F)\) used in a theorem, let \(\mathcal W_{\bm x}\) denote the
  coordinate-reduced shifted limiting path appearing in the corresponding
  local constant, regarded as a random element of
  \(C([0,S]+F,\R^d)\).  Then, for every \(L\in\mathcal L\) and
  Lebesgue-a.e. shift vector \(\bm x\),
  \[
    \pk{\Gamma_{[0,S];F}(\mathcal W_{\bm x})=L}=0,
    \qquad
    \pk{\mathcal W_{\bm x}\text{ is a continuity point of }
       \Gamma_{[0,S];F}}=1.
  \]
\end{enumerate}
As in Remark~\ref{rem:scaled-level-B1}, Assumptions~\ref{LF1}--\ref{LF2}
imply the long-set union structure at the scaled level
\(a^{\lambda_\Gamma}L\) for the scaled footprint \(aF\).  This is the form used
for a physical footprint \(\rho_uF\).
The coordinate-reduction property analogous to Assumption~\ref{B5}, when it
is used, is required only for the path coordinates and with \(F\) fixed.
A one-set family canonically lifts to a local-footprint family by
\[
  \Gamma_{E;F}(\bm f)=\Gamma_E(\bm f|_E),
\]
provided its affine scaling is read in the equivalent threshold-scaled form
used in~\ref{LF2}.
\end{definition}

\begin{remark}[Parisian persistence]
\label{rem:parisian-local-footprint}
For \(F\in\mathcal K_0\), define
\begin{equation}
  \label{def:parisian-footprint-functional}
  \Pi_{E;F}(\bm f)
  =\sup_{t\in E}\inf_{s\in F}\min_{1\le i\le d}f_i(t+s),
  \qquad \bm f\in C(E+F,\R^d).
\end{equation}
The coordinate-deleted map \(\Pi_{E;F}^{(-K)}\) is obtained by omitting the
indices in \(K\) from the minimum; we keep the shorthand
\(\Pi_{E;F}(\bm f_{K^c},\bm\infty_K)\).  At the admissible level
\(\mathcal L=\{0\}\), this is an admissible
local-footprint family with \(\lambda_\Gamma=0\).  For fixed \(F\),
\[
  \Pi_{A\cup B;F}(\bm f)
  =\max\{\Pi_{A;F}(\bm f),\Pi_{B;F}(\bm f)\}.
\]
Because \(0\in F\), a successful window starting in \(E\) also gives a
pointwise exceedance at its start:
\[
  \Pi_{E;F}(\bm f)>0
  \Longrightarrow
  \sup_{t\in E}\min_i f_i(t)>0.
\]
These two identities prove~\ref{LF1}, without any condition under enlargement
of \(F\).  The identity
\[
  \Pi_{aE+b;aF}(\bm f)
  =\Pi_{E;F}\bigl(\bm f(a\,\cdot+b)\bigr)
\]
proves~\ref{LF2}.

For completeness, the local regularity survives the active--inactive
coordinate reduction.  If \(D\subset\{1,\ldots,d\}\) denotes the path
coordinates and the remaining coordinates are fixed at a vector \(\bm c\),
then
\begin{equation}
  \label{eq:parisian-constant-coordinate-reduction}
  \Pi_{E;F}(\bm f_D,\bm c_{D^c})
  =\min\left\{\Pi^D_{E;F}(\bm f_D),\min_{j\in D^c}c_j\right\},
\end{equation}
with the second term omitted when \(D^c=\emptyset\).  The active-coordinate
functional is 1-Lipschitz, so continuity is automatic.  The level-zero
no-atom assertion for the exact active--tangential--slack limiting element is
proved in Lemma~\ref{lem:parisian-partially-frozen}: strictly positive slack
constants cannot create a zero, while a frozen tangential coordinate can equal
zero only on a coordinate hyperplane in the shift space.  Hence~\ref{LF3}
holds.  The same identity gives the level-zero constant-reduction property:
fixed coordinates may be discarded exactly when all of them are strictly
positive.
\end{remark}

For a local-footprint family, let \(\bm Z\) and \(\bm q\) be defined on
\(E+F\) and set
\begin{align*}
  \mathfrak H_{\Gamma;F,L}^{\bm Z,\bm q}(E)
  ={}&\int_{\R^d}
  \exp\left(
    \bm x_I^\top\bm w_I
    -\frac12\bm x_J^\top(\Sigma^{-1})_{JJ}\bm x_J
  \right)\\[-2pt]
  &\quad\times
  \pk{\Gamma_{E;F}
    \begin{pmatrix}
      \bm Z-\bm q-\bm x_I\\
      -\bm x_{\Jtan}\\
      (\bb-\bm b)_{\Jsl}
    \end{pmatrix}>L}\,d\bm x.
\end{align*}
Write
\[
  H_{\Gamma;F,L,\alpha,V,\bm d,\Sigma,\bm b}(E)
\]
for this constant with \(\bm Z=\bm Y_{\alpha,V}\) and
\(\bm q=S_{\alpha,V}(\cdot)\bm w_I+\bm d\), both restricted to \(E+F\).
Define the corresponding infinite-horizon constants by
\begin{align*}
  \mathcal H_{\Gamma;F,L,\alpha,V,\Sigma,\bm b}
  &=\lim_{S\to\infty}\frac1S
    H_{\Gamma;F,L,\alpha,V,\bm0,\Sigma,\bm b}([0,S]),\\
  \mathcal P_{\Gamma;F,L,\alpha,V,\bm d,\Sigma,\bm b}
  &=\lim_{S\to\infty}
    H_{\Gamma;F,L,\alpha,V,\bm d,\Sigma,\bm b}([0,S]),\\
  \mathcal C_{\Gamma;F,L,\bm d,\Sigma,\bm b}
  &=\lim_{S\to\infty}
    \mathfrak H_{\Gamma;F,L}^{\bm0,\bm d_I}([0,S]),
\end{align*}
whenever the limits exist.

\begin{proposition}[Local-footprint transfer principle]
\label{prop:local-footprint-transfer}
Under the Gaussian hypotheses of the theorem being transferred, fix
\(L\in\mathcal L\) and \(F\in\mathcal K_0\); in the boundary non-stationary
setting assume \(F\in\mathcal K_0^+\).  Suppose that
Assumptions~\ref{LF1}--\ref{LF2} hold for the fixed footprint \(F\) and its
affine images as in~\ref{LF2}, that Assumption~\ref{LF3} holds on every pair
\(([0,S],F)\), \(S\ge0\), for the exact limiting element in that theorem, and
that the corresponding local model is non-trivial.  Put
\[
  \rho_u=u^{-2/\alpha}
  \quad\text{in the stationary case and when }\alpha<\beta,
  \qquad
  \rho_u=u^{-2/\beta}
  \quad\text{when }\alpha\ge\beta,
\]
and let \(E_u=[0,T]\ominus\rho_uF\).  Then the conclusions of
Theorems~\ref{thm:stationary_case} and~\ref{thm:main_theorem} remain valid after
replacing
\[
  \{\Gamma_{[0,T]}(\uu(\bm X-u\bm b))>L_u\}
  \quad\text{by}\quad
  \{\Gamma_{E_u;\rho_uF}(\uu(\bm X-u\bm b))
       >L\rho_u^{\lambda_\Gamma}\},
\]
and replacing every local \(\Gamma_E\) in a limiting constant by
\(\Gamma_{E;F}\).  In particular, the powers of \(u\) in the main formulae
are unchanged; only the limiting constants acquire the fixed footprint \(F\).

If \(F\in\mathcal K_0^+\), the same transfer applies to
Theorem~\ref{thm:exceedance_times} for the first successful start time
\[
  \mathfrak t_F(u,L)
  =\inf\left\{0<t\le T-\rho_u\max F:
    \Gamma_{[0,t];\rho_uF}(\uu(\bm X-u\bm b))
       >L\rho_u^{\lambda_\Gamma}\right\}.
\]
No inclusion property of the map \(F\mapsto\Gamma_{E;F}\) is used.
\end{proposition}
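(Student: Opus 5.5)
The plan is to re-run the proofs of Theorems~\ref{thm:stationary_case}, \ref{thm:main_theorem} and~\ref{thm:exceedance_times} step by step, checking where each property of \(\Gamma\) enters. The aim is to show that every use is covered by~\ref{LF1}--\ref{LF3} once \(F\) is held fixed in local coordinates. These proofs have three ingredients.
\begin{itemize}
  \item[(a)] A partition of \([0,T]\) into intervals \(\Delta_k=[k\Lambda\rho_u,(k+1)\Lambda\rho_u]\) on the operative scale \(\rho_u\), together with an adapted Bonferroni inequality (Lemma~\ref{lemma Bonferroni adapted for sojourns}).
  \item[(b)] A local asymptotic for each single block, obtained by conditioning on the value at a reference point and passing to the weak limit of the rescaled process.
  \item[(c)] Double-sum bounds for pairs of blocks, controlled by \(\pk{\sup_{\Delta_k}\min_i>0,\ \sup_{\Delta_l}\min_i>0}\).
\end{itemize}

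For (a) and (c), partition only the start set \(E_u\). The Bonferroni lemma uses nothing but the two implications of~\ref{B1} for the fixed footprint. By~\ref{LF1} and the scaled-level remark after Definition~\ref{def:local-footprint-family}, these hold for \(\Gamma_{\cdot;\rho_uF}\) at level \(L\rho_u^{\lambda_\Gamma}\). So the upper bound by the sum of single events, and the lower bound by that sum minus the pairwise intersections, carry over verbatim. The second implication of~\ref{LF1} bounds each pairwise term by a joint supremum event over the start blocks. These events involve no \(F\) at all, so the existing double-sum estimates apply unchanged.

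The only new point is the range of the process. The block \(\Delta_k+\rho_uF\) protrudes by at most \(\rho_u\operatorname{diam}F\), which is negligible on the \(\Lambda\rho_u\) scale. Replacing \([0,T]\) by \(E_u\) alters the number of blocks by \(O(1)\), which is negligible relative to their total number in the stationary case. In the boundary case the requirement \(F\subset[0,\infty)\) ensures that \(E_u\) still starts at \(0\), so the blocks near the minimizer are untouched.

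Step (b) is where joint scaling enters. By~\ref{LF2} with \(a=\rho_u\) and \(b=t_k\), the single-block event
\(\{\Gamma_{t_k+\rho_u[0,S];\rho_uF}(\uu(\bm X-u\bm b))>L\rho_u^{\lambda_\Gamma}\}\)
equals \(\{\Gamma_{[0,S];F}(\uu(\bm X(t_k+\rho_u\cdot)-u\bm b))>L\}\). This involves the rescaled process on the fixed compact \([0,S]+F\). The conditional weak convergence to the shifted limit \(\mathcal W_{\bm x}\), after coordinate reduction into active, tangential and slack parts, was proved on arbitrary fixed compacts. Hence it holds on \([0,S]+F\); in the boundary case this is a subset of \([0,\infty)\), where the boundary covariance expansion is valid.

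Next, the continuous mapping theorem combined with the no-atom property gives convergence of the conditional probabilities for a.e.\ \(\bm x\). Here~\ref{LF3} supplies exactly the continuity-a.e.\ and no-atom conditions that~\ref{B3}--\ref{B4} supplied before. The dominated convergence in \(\bm x\) uses only the Gaussian density bounds together with the pointwise-exceedance implication of~\ref{LF1}. It therefore transfers unchanged and produces \(H_{\Gamma;F,\dots}([0,S])\). The limits in \(S\), namely the Pickands, Piterbarg and deterministic constants, follow by the same subadditivity and Bonferroni arguments applied to the start index \([0,S]\) with \(F\) fixed. Since no step ever compares footprints, no inclusion property in \(F\) is needed.

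For the first successful start time I would reuse the proof of Theorem~\ref{thm:exceedance_times}. There, \(\{\mathfrak t_F\le t\}\) coincides with the exceedance event on the start set \([0,t]\). Applying the transferred asymptotics with \(T\) replaced by \(t\), or with \(t=x\rho_u\) in the boundary case, gives the ratio. The condition \(F\in\mathcal K_0^+\) is what ensures \([0,t]+\rho_uF\) stays in \([0,T]\) up to the stated horizon.

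The main obstacle is step (b) in the boundary non-stationary regimes. There I would check carefully that the uniformity in \(k\) of the conditional convergence survives enlarging the window to \([0,S]+F\). In the \(\alpha\ge\beta\) regime the variance drift on the scale \(u^{-2/\beta}\) also acts on the footprint part. I would handle this by treating \([0,S+\max F]\) as the local window in the existing uniform estimates, and then restricting.
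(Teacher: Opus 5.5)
Your proposal is correct and follows essentially the same route as the paper. You partition only the start set, so that the Bonferroni and double-sum steps see nothing but the start-block witnesses supplied by Assumption~\ref{LF1}. You rescale each block by Assumption~\ref{LF2} to the fixed pair $([0,S],F)$ on the compact set $[0,S]+F$, where the conditioning lemma and Assumption~\ref{LF3} apply. You use $F\subset[0,\infty)$ to keep the minimizer $0$ in the boundary case, and you rerun the exceedance-time argument with the transferred asymptotics.

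There is one small slip in that last step. For $\alpha<\beta$ the first start time lives on the scale $u^{-2/\beta}$, not on $\rho_u=u^{-2/\alpha}$, so the rescaling there should be $t=xu^{-2/\beta}$. Also, $\{\mathfrak t_F(u,L)\le t\}$ is in general only sandwiched between the exceedance events on $[0,t_-]$ and $[0,t_+]$, by upward closedness of the successful-start set, rather than being equal to the event on $[0,t]$.
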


\begin{remark}[Endpoint convention]
The erosion \(E_u=[0,T]\ominus\rho_uF\) merely keeps the entire footprint
inside the original process domain.  If \(\bm X\) is defined on a fixed
neighbourhood of \([0,T]\), one may instead allow all starts in \([0,T]\).
The two conventions differ by start intervals of total length
\(O(\rho_u)\) and therefore have the same leading asymptotic.
\end{remark}

\noindent The proof is given in
Appendix~\ref{sec:proof_local_footprint_transfer}.

\begin{definition}[Capacity terminology]
\label{def:capacity-terminology}
Let \(E\subset\R\) be non-empty and compact.  A \emph{finite capacity} on
\(E\) (not necessarily normalized) is a set function \(\mu:\mathcal B(E)\to[0,\infty)\) such that
\(\mu(\emptyset)=0\) and \(\mu(A)\le\mu(B)\) whenever \(A\subset B\).  It is
\emph{regular} if, for every Borel set \(A\subset E\),
\[
  \mu(A)
  =\sup\{\mu(K):K\subset A,\ K\text{ compact}\}
  =\inf\{\mu(U):A\subset U,\ U\text{ relatively open in }E\}.
\]
It is \emph{convex} (equivalently, supermodular) if
\[
  \mu(A\cup B)+\mu(A\cap B)\ge\mu(A)+\mu(B),
  \qquad A,B\in\mathcal B(E).
\]
Its support is the closed set
\[
  \supp\mu
  =\{t\in E:\mu(U)>0\text{ for every relatively open neighbourhood }U\ni t\}.
\]
We say that \(\mu\) has the \emph{support-intersection property} if
\[
  \mu(U)>0
  \quad\Longleftrightarrow\quad
  U\cap\supp\mu\ne\emptyset
\]
for every relatively open \(U\subset E\).  This property implies
\(\mu(E\setminus\supp\mu)=0\), but for a non-additive capacity it need not imply
\(\mu(\supp\mu)=\mu(E)\); the latter equality will be called the property that
the support \emph{carries full capacity}.  We say that the capacity has
\emph{full support} if \(\supp\mu=E\).

A capacity \(\nu\) on the Borel subsets of \(\R\), possibly taking the value
\(+\infty\), is \emph{locally finite} if \(\nu(K)<\infty\) for every compact
\(K\subset\R\).  For compact \(E\), its restriction \(\nu|_E\) is the capacity
on \(E\) given by \((\nu|_E)(A)=\nu(A)\), \(A\in\mathcal B(E)\).
\end{definition}

\begin{remark}
  An interesting class of functionals is given by Choquet integrals with respect
  to a locally finite capacity \(\nu\) in the sense of
  Definition~\ref{def:capacity-terminology}. Let
  \(\psi:\R^d\to\R_+\) be a Borel scalarization function, assume that the
  resulting map on each uniform path space is Borel measurable (as is automatic
  in the continuous admissible-data setting below), and define
  \begin{equation*}
    \Gamma_E(\bm f)=\int_E \psi(\bm f)\,d\nu,
  \end{equation*}
  where the Choquet integral is
  \begin{equation*}
    \int_E u\,d\nu
    \coloneqq
    \int_0^\infty \nu\{x\in E:u(x)>t\}\,dt.
  \end{equation*}
  The following conditions are sufficient for the structural assumptions used in
  the main theorems.
  \begin{itemize}
    \item If \(\psi(\bm x)>0\) implies \(\min_i x_i>0\), then
    \(\Gamma_E\) satisfies~\ref{B1}.
    \item If \(\nu\) is affinely homogeneous, meaning
    \(\nu(aA+b)=a^{\lambda_\Gamma}\nu(A)\) for some
    \(\lambda_\Gamma\in\R\), all \(a>0\), \(b\in\R\), and all Borel sets
    \(A\) with compact closure, then \(\Gamma_E\) satisfies~\ref{B2}.
    \item On a fixed compact set \(E\) with \(0<\nu(E)<\infty\), and at a
    positive level \(L\), the no-atom condition~\ref{B3} on \(E\) follows from
    Lemma~\ref{sojorn_F2_cap} under either of the sufficient hypotheses stated
    there. In particular, this applies when \(\nu|_E\) is a regular finite
    capacity satisfying the support-intersection property, \(\psi\) is continuous
    and coordinatewise non-decreasing, and either the scalarization is strictly
    increasing above zero, or \(\nu|_E\) is convex, its support is connected and
    carries full capacity, and \(L/\nu(E)\) is not a plateau level of the
    scalarization.
    \item The continuity condition~\ref{B4} follows from
    Lemma~\ref{sojorn_F3_cap}. If \(\psi\) is continuous, then the corresponding
    Choquet functional is in fact continuous on \(C(E,\R^d)\).
    \item If, for every proper \(K\), there is a Borel coordinate-deleted
    scalarization \(\psi^{(-K)}:\R^{|K^c|}\to[0,\infty)\) such that, for every
    constant subvector \(\bm f_K\),
    \begin{equation*}
      \nu\{x\in E:\psi(\bm f(x))>t\}
      =
      \nu\{x\in E:\psi^{(-K)}(\bm f_{K^c}(x))>t\}
      \Ind{\bm f_K>\bm0_K}
    \end{equation*}
    for almost all \(t>0\), then the Choquet functional built from
    \(\psi^{(-K)}\) is \(\Gamma_E^{(-K)}\), and \(\Gamma_E\) satisfies~\ref{B5}.
    We may write \(\psi(\bm f_{K^c},\bm\infty_K)\) for
    \(\psi^{(-K)}(\bm f_{K^c})\).  This is the case for the classical sojourn
    scalarization \(\psi(\bm x)=\Ind{\min_i x_i>0}\) and any capacity \(\nu\).
  \end{itemize}
  In the generic limiting constants some slack coordinates are frozen at finite
  positive values.  The no-atom lemma must then be applied to the corresponding
  positive coordinate section of \(\psi\), rather than to \(\psi\) on all of
  \(\R^d\).  Definition~\ref{def:admissible-choquet-data} incorporates this
  section-wise requirement.
\end{remark}

\noindent The verification of these Choquet-integral claims is given in Appendix~\ref{sec:proof_choquet_claims}.

\begin{definition}[Admissible Choquet data]
\label{def:admissible-choquet-data}
Fix \(L>0\).  For a non-empty coordinate set
\(D\subset\{1,\ldots,d\}\) and a vector
\(\bm c\in(0,\infty)^{D^c}\), define the positive coordinate section
\begin{equation}
  \label{def:positive-coordinate-section}
  \psi_{D,\bm c}(\bm y_D)
  =\psi(\bm y_D,\bm c_{D^c}),
\end{equation}
with the evident convention when \(D^c=\emptyset\).  A non-negative
coordinatewise non-decreasing function \(g\) is called
\emph{strictly increasing above zero} if
\[
  g(\bm y)>0,\quad \bm z\succ\bm y
  \quad\Longrightarrow\quad
  g(\bm z)>g(\bm y),
\]
where \(\bm z\succ\bm y\) means that \(z_i\ge y_i\) for every
coordinate and that at least one of these inequalities is strict.
A pair \((\nu,\psi)\) is called \emph{\(L\)-admissible Choquet data} with
 time-scaling exponent \(\lambda_\nu\) if the following conditions hold.
\begin{enumerate}[label=\textup{(Ch\arabic*)},wide]
  \item \(\nu\) is a capacity on the Borel subsets of \(\R\), allowed to take
  the value \(+\infty\) on unbounded sets, finite on compact sets, and affinely
  homogeneous in the sense that
  \[
    \nu(aA+b)=a^{\lambda_\nu}\nu(A),
    \qquad a>0,\ b\in\R,
  \]
  for every Borel set \(A\) with compact closure.  Moreover
  \(0<\nu([0,S])<\infty\) for every \(S>0\).
  \item \(\psi:\R^d\to\R_+\) is continuous and coordinatewise
  non-decreasing.  It is
  supported in the positive orthant in the sense that
  \[
    \psi(\bm x)>0 \quad\Longrightarrow\quad \min_{1\le i\le d}x_i>0 .
  \]
  \item For every \(S>0\), the restriction \(\mu_S=\nu|_{[0,S]}\) is a
  regular finite capacity satisfying the support-intersection property
  \[
    \mu_S(U)>0
    \quad\Longleftrightarrow\quad
    U\cap\supp\mu_S\ne\emptyset
  \]
  for every relatively open \(U\subset[0,S]\).  For each such \(S\), every
  non-empty \(D\subset\{1,\ldots,d\}\), and every
  \(\bm c\in(0,\infty)^{D^c}\), at least one of the following holds:
  \begin{enumerate}[label=\textup{(\alph*)}]
    \item the section \(\psi_{D,\bm c}\) is strictly increasing above zero;
    \item \(\mu_S\) is convex, \(\supp\mu_S\) is connected,
    \(\mu_S(\supp\mu_S)=\mu_S([0,S])\), and
    \(L/\mu_S([0,S])\) is not a plateau level of \(\psi_{D,\bm c}\).
  \end{enumerate}
  Here a number \(r\) is a plateau level of a function on \(\R^{|D|}\) if its
  inverse image has positive \(|D|\)-dimensional Lebesgue measure.
\end{enumerate}
Condition~\textup{(Ch3)} is intentionally imposed only on intervals \([0,S]\):
these are exactly the sets on which Assumptions~\ref{B3}--\ref{B4} enter the
main theorems.  No interval-wise verification is being used to claim
Assumption~\ref{B3} on an arbitrary compact set \(E\); such a set would require
its own no-atom check.
For such data we write
\begin{equation}
  \label{def:choquet-functional-corollaries}
  \Gamma_E^{\nu,\psi}(\bm f)
  =
  \int_0^\infty
  \nu\{t\in E:\psi(\bm f(t))>r\}\,dr,
  \qquad E\in\mathcal K .
\end{equation}
\end{definition}

\begin{remark}[Examples of admissible Choquet data]
\label{rem:admissible-choquet-data-examples}
  The definition is a process-independent sufficient criterion.  Its
  section-wise formulation is needed only because a slack Gaussian coordinate
  converges to a finite positive constant in the generic limiting model.  The
  following examples give useful non-additive choices.
  \begin{enumerate}[(i)]
    \item \emph{Distorted length capacities.}  For \(c>0\) and
    \(\theta\ge1\),
    \[
      \nu_{\theta,c}(A)=c\,\mes(A)^\theta
    \]
    is affinely homogeneous with \(\lambda_\nu=\theta\).  For every
    \(S>0\), its restriction to \([0,S]\) is regular, its support is the whole
    interval (and therefore carries full capacity), and it satisfies the
    support-intersection property.  It is convex: if
    \(a=\mes(A)\ge b=\mes(B)\) and \(v=\mes(A\cap B)\), then
    \(\mes(A\cup B)=a+b-v\), and the increasing-increments property of
    \(x\mapsto x^\theta\) gives
    \[
      (a+b-v)^\theta+v^\theta\ge a^\theta+b^\theta.
    \]
    For \(\theta>1\) the capacity is non-additive.

    \item \emph{Spread-sensitive interaction capacities.}  Let \(m\ge2\),
    \(\rho\ge0\), and \(\theta\ge1\).  With the convention
    \(M_{m,0}(A)=\mes(A)^m\), define for \(\rho>0\)
    \[
      M_{m,\rho}(A)
      =
      \int_{A^m}
        \prod_{1\le p<q\le m}|t_p-t_q|^\rho
      \,dt_1\cdots dt_m,
      \qquad
      \nu_{m,\rho,\theta}(A)=M_{m,\rho}(A)^\theta .
    \]
    This capacity is affinely homogeneous with
    \[
      \lambda_\nu
      =
      \theta\left(m+\rho\binom{m}{2}\right).
    \]
    On every non-degenerate compact interval the kernel is bounded and
    continuous.  Approximation of a Borel set from inside by compact sets and
    from outside by open sets, together with dominated convergence on the
    corresponding product sets, proves regularity.  Every non-empty relatively
    open set has positive \(M_{m,\rho}\)-capacity, so the support is the whole
    interval (and therefore carries full capacity), and the
    support-intersection property holds.

    The capacity is convex.  Indeed, the pointwise indicator inequality
    \[
      \Ind{(A\cup B)^m}+\Ind{(A\cap B)^m}
      \ge \Ind{A^m}+\Ind{B^m}
    \]
    gives supermodularity of \(M_{m,\rho}\).  Writing \(M=M_{m,\rho}\), if
    \(u=M(A\cup B)\), \(a=M(A)\), \(b=M(B)\), and
    \(v=M(A\cap B)\), then
    \(u\ge a,b\), \(v\le a,b\), and \(u+v\ge a+b\).  Relabeling if necessary,
    suppose \(a\ge b\).  Then \(u\ge a+b-v\), and the
    increasing-increments property of \(x\mapsto x^\theta\) yields
    \[
      u^\theta-a^\theta
      \ge (a+b-v)^\theta-a^\theta
      \ge b^\theta-v^\theta.
    \]
    Hence \(u^\theta+v^\theta\ge a^\theta+b^\theta\), proving convexity of
    \(\nu_{m,\rho,\theta}\).  For \(m=2\) one obtains the
    distance-energy capacity
    \[
      \nu_{\rho,\theta}(A)
      =
      \left(
        \int_A\int_A |s-t|^\rho\,ds\,dt
      \right)^\theta,
      \qquad
      \lambda_\nu=\theta(\rho+2).
    \]
    When \(\rho>0\), separated components create additional positive cross
    terms, so this capacity records more than the total length of the set.

    \item \emph{Section-stable positive-orthant scalarizations.}  For positive
    parameters \(p,q,p_i,\eta_i\), the functions
    \[
      \psi_{\mathrm{prod},\bm p}(\bm x)=\prod_{i=1}^d (x_i)_+^{p_i}
    \]
    and
    \[
      \psi_{\mathrm{soft},p,q,\bm\eta}(\bm x)
      =
      \begin{cases}
        \left(\sum_{i=1}^d \eta_i x_i^{-p}\right)^{-q/p},
        & \min_i x_i>0,\\[3pt]
        0, & \text{otherwise},
      \end{cases}
    \]
    are continuous, coordinatewise non-decreasing, and supported in the
    positive orthant.  Every
    positive coordinate section is strictly increasing above zero: a product
    section is a positive constant times a product of the remaining coordinates,
    while a soft-min section has the form
    \((\kappa+\sum_{i\in D}\eta_i x_i^{-p})^{-q/p}\) on the positive orthant.
    Hence either scalarization can be combined with either capacity in
    \textup{(i)}--\textup{(ii)} to produce \(L\)-admissible Choquet data for
    every \(L>0\).
  \end{enumerate}

  The familiar scalarization
  \(\psi_{\min,p}(\bm x)=(\min_i x_i)_+^p\) has no positive plateau levels in
  full dimension, but it is not section-stable: after a coordinate is fixed at
  \(c>0\), the resulting section has a plateau at \(c^p\).  It is therefore
  covered directly when no slack coordinate is frozen, but not by the universal
  section-wise definition above.  This failure can indeed occur.  For instance,
  take \(E=[0,1]\), \(\nu=\mes\),
  \(\psi(x_1,x_2)=(\min\{x_1,x_2\})_+\), freeze the second coordinate at
  \(c>0\), and let the first path be identically \(2c\).  The resulting
  one-dimensional section integral equals \(c\) for every scalar shift
  \(x\in[0,c]\), so its level-\(c\) shift set has positive Lebesgue measure.
  Thus the section-wise condition prevents an actual no-atom failure in the
  presence of inactive slack coordinates.
\end{remark}

\subsection{Pickands constants}
\label{sec:pickands_constants}

Consider a number \( \alpha \in (0, 2] \) and a real matrix \( V \in \R^{d
\times d} \). Let \( \bm{Y}_{\alpha, V} ( t ), \ t \in \R \) be a multivariate
fractional Brownian motion defined by its covariance matrix
function\footnote{For a generic matrix \(V\), validity requires the conditions
in~\cite[(2.3)]{VVGP} or~\cite[Proposition~9]{Amblard_and_Coeurjolly}.  For the
matrices arising from Assumption~\ref{R2}, and from Assumption~\ref{D2} when
\(\alpha\le\beta\), validity follows directly from
Remarks~\ref{rem:stationary-kernel-validity} and
\ref{rem:nonstationary-kernel-validity}.}
\begin{equation*}
  R_{\alpha, V} ( t, s )
  = S_{\alpha, V} ( t ) + S_{\alpha, V}(-s) - S_{\alpha, V} ( t-s ),
\end{equation*}
where 
\begin{equation*}
    S_{\alpha, V}(t) = |t|^\alpha \, ( V \, \Ind{t \geq 0} + V^\top \, \Ind{t < 0} ).
\end{equation*}
Thus, for example, if \(t\ge s\ge0\), then
\[
  R_{\alpha,V}(t,s)=Vt^\alpha+V^\top s^\alpha-V(t-s)^\alpha.
\]
For each tuple of the following objects
\begin{itemize}
    \item a family of functionals
    \( 
    \Gamma 
    = ( \Gamma_E : C(E, \R^d) \to \R )_{E \in \mathcal{K}} 
    \),
    \item a number \( L \in \R \),
\item a number \( \alpha \in (0, 2] \),
    \item a matrix \( V \in \R^{|I| \times |I|} \),
    \item a function \( \bm{d} : \R \to \R^{|I|} \),
    \item a positive definite matrix \( \Sigma \in \R^{d \times d} \),
    \item a vector \( \bm{b} \in \R^d \setminus (-\infty, 0]^d \),
    \item a compact set \( E \in \mathcal{K} \),
\end{itemize}
define
\begin{equation}
  \label{def pre-Pickands constants}
  H_{\Gamma, L, \alpha, V, \bm{d}, \Sigma, \bm{b}} (E)
  \coloneqq
  \int_{\R^d}
  \exp \left(
  \bm{x}_I^\top \bm{w}_I 
  - \frac{1}{2} \, \bm{x}_J^\top ( \Sigma^{-1} )_{J J} \, \bm{x}_J
  \right)
  \pk{
    \Gamma_E
    \begin{pmatrix}
      \bm{Y}_{\alpha, V} - S_{\alpha,V}(\cdot)\bm{w}_I - \bm{d} - \bm{x}_I \\
      -\bm{x}_{\Jtan} \\
      ( \bb - \bm{b} )_{\Jsl}
    \end{pmatrix}
    > L
  }
  \, d \bm{x},
\end{equation}
where \(\bb\), \(\bm w\), \(I\), \(J\), \(\Jtan\), and \(\Jsl\) are
defined by the quadratic-programming decomposition associated with
\(\Pi_\Sigma(\bm b)\). Here \(S_{\alpha,V}(\cdot)\bm{w}_I\) denotes the
continuous drift \(t \mapsto S_{\alpha,V}(t)\bm{w}_I\).
Let us further define
\begin{align}
  \label{def Pickands constants}
  \mathcal{H}_{\Gamma, L, \alpha, V, \Sigma, \bm{b}}
  & =
  \lim_{S \to \infty} 
  S^{-1} 
  H_{\Gamma, L, \alpha, V, \bm{0}, \Sigma, \bm{b}} ([0, S])
  \\[7pt]
  \label{def Piterbarg constants}
  \mathcal{P}_{\Gamma, L, \alpha, V, \bm{d}, \Sigma, \bm{b}}
  & =
  \lim_{S \to \infty} 
  H_{\Gamma, L, \alpha, V, \bm{d}, \Sigma, \bm{b}} ([0, S])
\end{align}
whenever the limits exist.

For later reference define the generic preconstant associated with a limiting
model. Let \(E\in\mathcal K\), let \(\bm Z\) be an \(\R^{|I|}\)-valued
continuous centered Gaussian field on \(E\), and let \(\bm q:E\to\R^{|I|}\) be
continuous. Define
\begin{equation}
  \label{def generic limiting constant}
  \mathfrak H_{\Gamma,L}^{\bm Z,\bm q}(E)
  =
  \int_{\R^d}
  \exp\left(
    \bm x_I^\top\bm w_I
    -\frac12\bm x_J^\top(\Sigma^{-1})_{JJ}\bm x_J
  \right)
  \pk{
    \Gamma_E
    \begin{pmatrix}
      \bm Z-\bm q-\bm x_I\\
      -\bm x_{\Jtan}\\
      (\bb-\bm b)_{\Jsl}
    \end{pmatrix}
    >L
  }\,d\bm x .
\end{equation}
Below, when Assumptions~\ref{B3}--\ref{B4} are invoked for a limiting model
\((\bm Z,\bm q)\) on \(E\), they mean Assumption~\ref{B3} at the fixed level
\(L\) and Assumption~\ref{B4} applied to the shifted limiting paths that appear
inside \eqref{def generic limiting constant}. We say that the limiting model is
\emph{non-trivial} if \(\mathfrak H_{\Gamma,L}^{\bm Z,\bm q}([0,S])>0\) for at
least one \(S>0\). With \(\bm Z=\bm Y_{\alpha,V}\) and
\(\bm q=S_{\alpha,V}(\cdot)\bm w_I+\bm d\), the quantity in
\eqref{def generic limiting constant} is precisely
\(H_{\Gamma,L,\alpha,V,\bm d,\Sigma,\bm b}(E)\). The deterministic
\(\alpha>\beta\) constant corresponds to \(\bm Z\equiv\bm0\) and
\(\bm q=\bm d_I\).

\section{Stationary case}
\label{sec:stationary_case}

Let \(T>0\), and let \( \bm{X} ( t ), \ t \in [0, T] \) be a continuous
centered \( \R^d\)-valued Gaussian process with covariance matrix function
\begin{equation*}
  R : [0, T]^2 \to \R^{d \times d} :
  R ( t, s ) = \E{\bm{X} ( t ) \, \bm{X}^\top (s)}.
\end{equation*}
In this section we assume that \( \bm{X} \) is stationary, which in terms of \(
R \) means that \( R ( t + s, s ) = \mathcal{R} ( t ) \) for some \( \mathcal{R}
: [0, T] \to \R^{d \times d} \) and all \( t, s, t + s \in [0, T] \). Set
\(\Sigma=\mathcal R(0)=R(0,0)\), and assume explicitly that \(\Sigma\) is
positive definite.  All quadratic-programming objects in this section are
formed with this \(\Sigma\) and the fixed threshold vector \(\bm b\).  We denote
by \( \uu \in (0, \infty)^d \) the normalization vector
\begin{equation}
  \label{def hat u}
  (\uu)_I=u\bm1_I,
  \qquad
  (\uu)_{\Jtan}=\bm1_{\Jtan},
  \qquad
  (\uu)_{\Jsl}=u^{-1}\bm1_{\Jsl}.
\end{equation}
Then \(\Sigma(t)=R(t,t)=\Sigma\) for all \(t\in[0,T]\). We use the standard
extension \(\mathcal R(-t)=\mathcal R(t)^\top\) whenever negative lags appear.
We shall furthermore impose the following assumptions on \(R\):
\begin{enumerate}
  \item [(\namedlabel{R1}{\( \mathbf{R}_1 \)})] For every \(t\in(0,T]\),
  the symmetric part of \(\Sigma_{II}-\mathcal R_{II}(t)\) is positive
  definite; equivalently,
  \[
    \bm z^\top(\Sigma_{II}-\mathcal R_{II}(t))\bm z>0
    \qquad\text{for every }\bm z\ne\bm0.
  \]
  \item [(\namedlabel{R2}{\( \mathbf{R} _2 \)})] There exists a \( d \times d \)
  real matrix \( V \) and a number \(\alpha\in(0,2]\) such that
  \( \bm{w}^\top V\bm{w} > 0 \) and
  \begin{equation*}
    \bigl\|\Sigma-\mathcal R(t)-t^\alpha V\bigr\|_{\mathrm F}
    =o(t^\alpha),
    \qquad t\downarrow0.
  \end{equation*}
\end{enumerate}

\begin{remark}[Validity of the stationary limiting kernel]
\label{rem:stationary-kernel-validity}
Assumption~\ref{R2} automatically supplies the covariance-kernel condition
needed for \(\bm Y_{\alpha,V}\).  Fix an interior point \(t_0\in(0,T)\).  For
any finite collection of real times and all sufficiently small \(q>0\), the
kernel
\[
  C_q(t,s)=q^{-\alpha}\Bigl[
    \mathcal R\bigl(q(t-s)\bigr)
    -\mathcal R(qt)\Sigma^{-1}\mathcal R(-qs)
  \Bigr]
\]
is the covariance kernel of
\(q^{-\alpha/2}\bm X(t_0+q\,\cdot)\) conditional on \(\bm X(t_0)\).  Assumption
\ref{R2}, together with covariance transposition at negative lags, gives
\[
  C_q(t,s)\longrightarrow
  S_{\alpha,V}(t)+S_{\alpha,V}(-s)-S_{\alpha,V}(t-s)
  =R_{\alpha,V}(t,s).
\]
Every finite block matrix generated by the limit is therefore positive
semidefinite.  The same is true for the principal kernel obtained from
\(V_{II}\); its increment bound is of order \(|t-s|^\alpha\), so it has a
continuous Gaussian version.
\end{remark}

\begin{theorem}[Stationary case]
  \label{thm:stationary_case}
  Let \( \bm{X} \) be a continuous stationary Gaussian process satisfying
  assumptions~\ref{R1} and~\ref{R2}. Let \( \Gamma \) satisfy
  assumptions~\ref{B1}--\ref{B2}, let \(L\in\mathcal L\), and assume that
  Assumptions~\ref{B3}--\ref{B4} hold on every interval \([0,S]\), \(S\ge0\),
  for the limiting model
  \[
    \bm Z=\bm Y_{\alpha,V_{I,I}},
    \qquad
    \bm q(t)=S_{\alpha,V_{I,I}}(t)\bm w_I,
  \]
  and that this limiting model is non-trivial.
  With \( L_u = L \cdot u^{-2 \lambda_\Gamma / \alpha} \), we have
  \begin{equation*}
    \pk{\Gamma_{[0, T]} ( \uu ( \bm{X} - u \bm{b} ) ) > L_u}
    \sim
    T \, \mathcal{H}_{\Gamma, L, \alpha, V_{I, I}, \Sigma, \bm{b}} \, 
    u^{2 / \alpha - |I|} \,
    \varphi_{\Sigma} ( u \bb ),
\end{equation*}
  as \( u \to \infty \).
\end{theorem}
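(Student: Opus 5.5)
The plan is the classical double-sum (Pickands--Berman) method, applied to the functional event rather than to a supremum. Put \(\Delta_u=\Lambda u^{-2/\alpha}\) and split \([0,T]\) into intervals \(E_k=[k\Delta_u,(k+1)\Delta_u]\), \(k=0,\dots,N_u\approx T/\Delta_u\). For \(\mathcal E=\{\Gamma_{[0,T]}(\uu(\bm X-u\bm b))>L_u\}\) we use the adapted Bonferroni inequality, Lemma~\ref{lemma Bonferroni adapted for sojourns}, which follows from Assumption~\ref{B1}. It gives an upper bound by \(\sum_k \pk{\Gamma_{E_k}>L_u}\) plus a double sum of joint pointwise exceedances over pairs of blocks. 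It also gives a matching lower bound by a single sum over blocks separated by short gaps of length \(\varepsilon\Delta_u\), minus a double sum. The unions of blocks are handled through the first implication of~\ref{B1}. The target is therefore
\[
  \pk{\mathcal E}\sim_{u,\Lambda}\frac{T}{\Lambda u^{-2/\alpha}}\,
  H_{\Gamma,L,\alpha,V_{II},\bm0,\Sigma,\bm b}([0,\Lambda])\,u^{-|I|}\varphi_\Sigma(u\bb),
\]
after which we let \(\Lambda\to\infty\), using the definition \eqref{def Pickands constants} together with existence and positivity of \(\mathcal H\), which comes from non-triviality and subadditivity.

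\textbf{Local (single-block) asymptotics.} By stationarity every block has the same probability. By Assumption~\ref{B2} with \(a=u^{-2/\alpha}\), the event \(\{\Gamma_{[0,\Lambda u^{-2/\alpha}]}(\cdot)>L_u\}\) equals \(\{\Gamma_{[0,\Lambda]}(\bm\chi_u)>L\}\), where \(\bm\chi_u(t)=\uu(\bm X(u^{-2/\alpha}t)-u\bm b)\). We then condition on \(\bm X(0)\) and write \(\bm X(0)=u\bb-\bm x/\uu\) coordinatewise. In the active coordinates this means \(u\bm b_I-\bm x_I/u\). In the tangential ones it is \(\bm b_{\Jtan}u-\bm x_{\Jtan}\) up to the \(u\bb\) centering. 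In the slack ones it is \(u\bb_{\Jsl}-u\bm x_{\Jsl}\), so that after scaling by \(u^{-1}\) the slack coordinate tends to the constant \((\bb-\bm b)_{\Jsl}\). Expanding the Gaussian density gives \(\varphi_\Sigma(u\bb-\bm x/\uu)\) times the Jacobian \(u^{-|I|}u^{|\Jsl|}\cdots\). Combined with the factor \(\bm x_I^\top\bm w_I-\frac12\bm x_J^\top(\Sigma^{-1})_{JJ}\bm x_J\) in the exponent, this yields exactly the integrand weight of \eqref{def pre-Pickands constants}; the slack integration is absorbed by its own Gaussian normalization. For the conditional law, Remark~\ref{rem:stationary-kernel-validity} and Assumption~\ref{R2} give convergence of the conditional covariance of the active part to \(R_{\alpha,V_{II}}\) and of the conditional mean to \(-S_{\alpha,V_{II}}(t)\bm w_I-\bm x_I\). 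Tightness follows from the \(|t-s|^\alpha\) increment bound. The inactive coordinates converge uniformly to the constants \(-\bm x_{\Jtan}\) and \((\bb-\bm b)_{\Jsl}\). Passing to the limit in \(\pk{\Gamma_{[0,\Lambda]}(\bm\chi_u)>L\mid\bm x}\) uses the continuous mapping theorem (Portmanteau), which requires Assumption~\ref{B4} for continuity at a.e. shift and Assumption~\ref{B3} so that the boundary \(\{\Gamma=L\}\) is null. Dominated convergence in \(\bm x\) then follows from a Borell-type tail bound on \(\sup\) over the block, since by the second part of~\ref{B1} the event implies a pointwise exceedance. This bound is uniform in \(k\) by stationarity.

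\textbf{Double sum, the main obstacle.} We must show that \(\sum_{k\neq l}\pk{\sup_{E_k}\min_i\chi_i>0,\ \sup_{E_l}\min_i\chi_i>0}\) is \(o\) of the single sum as \(u\to\infty\) and then \(\Lambda\to\infty\). For adjacent blocks we bound it by \(\pk{\text{block }k}+\pk{\text{block }l}-\pk{\text{union}}\), which reduces it to the sup-type Pickands estimate of \cite{VVGP}. For separated blocks at lag \(|k-l|\Delta_u\le\delta\) we use the vector-valued double-sum lemma of \cite{VVGP}: Assumption~\ref{R2} together with \(\bm w^\top V\bm w>0\) gives a bound of order \(\exp(-c(|k-l|\Lambda)^\alpha)\) times the single-block probability, whose sum over \(l\) is \(o(1)\) as \(\Lambda\to\infty\). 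For lags larger than \(\delta\), Assumption~\ref{R1} gives strict decrease of the QP value of the pair \((\bm X(t),\bm X(s))\) relative to twice the cost, and hence exponentially smaller terms. The delicate point is that only the active block \(\Sigma_{II}-\mathcal R_{II}\) is controlled, so the pair QP must be reduced to its active coordinates. We would invoke the active-set reduction of \cite{VVGP} here and check that the tangential and slack coordinates do not lower the joint cost. Finally, the gap blocks in the lower bound contribute \(O(\varepsilon)\) relative terms, and letting \(\varepsilon\to0\) completes the argument.
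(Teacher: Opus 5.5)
Your architecture matches the paper's. You use Pickands-scale blocks and the adapted Bonferroni lemma. Assumption~\ref{B2} plus stationarity reduces every block to $\Gamma_{[0,\Lambda]}$. The conditioning argument of Lemma~\ref{vectorPickands_functionals} does the local step, with the portmanteau step via \ref{B3}--\ref{B4} and domination via the last implication in \ref{B1}. Near pairs are handled by \cite{VVGP}-type bounds and far pairs by \ref{R1}. Two departures are legitimate variants of the paper's direct adjacent-block lower bound and its split of the neighbouring block at $S^{1/2}$. The first is your $\varepsilon$-gaps in the lower bound. The second is the identity $\pk{C_k\cap C_{k+1}}=\pk{C_k}+\pk{C_{k+1}}-\pk{C_k\cup C_{k+1}}$ for adjacent blocks, where $C_k$ is the pointwise exceedance on block $k$; this route needs the vector sup-type Pickands constant. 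Two steps, however, need repair.

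\textbf{Existence and positivity of $\mathcal H$.} You attribute these to subadditivity of $\Lambda\mapsto H(\Lambda)$, but for a general $\Gamma$ this fails. With the path argument suppressed and $C_1,C_2$ as in Lemma~\ref{lemma Bonferroni adapted for sojourns}, Assumption~\ref{B1} only gives
\[
\{\Gamma_{[0,S_1+S_2]}>L\}\subset\{\Gamma_{[0,S_1]}>L\}\cup\{\Gamma_{[S_1,S_1+S_2]}>L\}\cup(C_1\cap C_2).
\]
So $H(S_1+S_2)$ may exceed $H(S_1)+H(S_2)$ by an adjacent double-exceedance term. For the sojourn functional, two sub-threshold sojourns in adjacent halves can add up to more than $L$. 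Exact subadditivity holds only for the supremum functional. Even exact subadditivity would give only $\lim H(\Lambda)/\Lambda=\inf H(\Lambda)/\Lambda$, which says nothing about positivity.

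The repair uses what you already have, as in Lemmas~\ref{lemma:finiteness-of-H}, \ref{lemmma:positivity-of-H} and~\ref{lemma: lim H(S) / S in (0, infty) if alpha < beta}. Fix one probability $p_u$, for instance on $[0,1]$, and put $r_u=u^{2/\alpha-|I|}\varphi_\Sigma(u\bb)$. Your block decomposition then gives $H(\Lambda)/\Lambda-\epsilon(\Lambda)\le\liminf_u p_u/r_u\le\limsup_u p_u/r_u\le H(\Lambda)/\Lambda+\epsilon(\Lambda)$ with $\epsilon(\Lambda)\to0$. No gaps are needed here, since the adapted Bonferroni lemma already gives the lower bound over adjacent blocks. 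Comparing the lower bound at one block length with the upper bound at another shows that $H(\Lambda)/\Lambda$ is bounded and Cauchy.

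Positivity needs a separate sparse-block argument, because non-triviality gives only $H(S_0)>0$ for one $S_0$. Take blocks of a fixed length $R\ge S_0$ separated by gaps of length $R$ (not $\varepsilon R$). This gives $\liminf_u p_u/r_u\ge H(R)/(2R)-Ce^{-cR^\alpha}$, which is positive for large $R$ because $H(R)\ge H(S_0)$ by \ref{B1}. The upper bound then forces $\liminf_\Lambda H(\Lambda)/\Lambda>0$.

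\textbf{Far pairs, and a technical point.} As written, the comparison is reversed. Exponential negligibility requires that, for lags in $[\delta,T]$, the joint-exceedance cost exceed the single cost $\bb^\top\Sigma^{-1}\bb$ by a uniform margin. Being below twice that cost is irrelevant.

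The ``delicate point'' you raise disappears. Since $\bm w_J=\bm0$, one has $\bm b_I^\top(\Sigma_{II})^{-1}\bm b_I=\bm w^\top\Sigma\bm w=\bb^\top\Sigma^{-1}\bb$. So you may simply drop the inactive coordinates, which only enlarges the event, and pass to $(\bm X_I(t)+\bm X_I(s))/2$. Its covariance is $\Sigma_{II}$ minus one half of the symmetric part of $\Sigma_{II}-\mathcal R_{II}(t-s)$. By \ref{R1} it is strictly below $\Sigma_{II}$ in the Loewner order, which strictly raises the quadratic-programming value, uniformly on $[\delta,T]$ by continuity and compactness.

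Finally, with $\bm X(0)=u\bb-\bm x/\uu$ the slack coordinates of the conditioned path tend to $(\bb-\bm b)_{\Jsl}-\bm x_{\Jsl}$ for fixed $\bm x$. The $\bm x_{\Jsl}$-weight then concentrates like a delta sequence, so dominated convergence does not apply as stated. Conditioning on $\bm X(0)=u\bb-\bm x/\overline{\bm u}$ with $\overline{\bm u}=(u\bm 1_I,\bm 1_J)$, as the paper does, removes this problem.
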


\noindent The proof is given in Appendix~\ref{sec:proof_stationary_case}.

\begin{corollary}[Stationary Choquet-integral asymptotics]
\label{cor:stationary-choquet-integral}
Let \(\bm X\) be a continuous stationary Gaussian process satisfying
Assumptions~\ref{R1}--\ref{R2} on \([0,T]\), and fix \(L>0\).  Let
\((\nu,\psi)\) be \(L\)-admissible Choquet data in the sense of
Definition~\ref{def:admissible-choquet-data}, and assume that the stationary
limiting model in Theorem~\ref{thm:stationary_case} is non-trivial for
\(\Gamma^{\nu,\psi}\).  Set
\[
  \mathcal H_{\nu,\psi}^{\mathrm{stat}}(L)
  =
  \mathcal H_{\Gamma^{\nu,\psi},L,\alpha,V_{I,I},\Sigma,\bm b}.
\]
Then, as \(u\to\infty\),
\begin{equation}
  \label{eq:stationary-choquet-corollary}
  \pk{
    \Gamma_{[0,T]}^{\nu,\psi}\bigl(\uu(\bm X-u\bm b)\bigr)
    > L u^{-2\lambda_\nu/\alpha}
  }
  \sim
  T\,\mathcal H_{\nu,\psi}^{\mathrm{stat}}(L)\,
  u^{2/\alpha-|I|}\,
  \varphi_\Sigma(u\bb).
\end{equation}
The constant \(\mathcal H_{\nu,\psi}^{\mathrm{stat}}(L)\) is finite and
positive.
\end{corollary}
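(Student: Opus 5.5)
The corollary follows by applying Theorem~\ref{thm:stationary_case} to \(\Gamma=\Gamma^{\nu,\psi}\) with \(\mathcal L=\{L\}\). It then suffices to check Assumptions~\ref{B1}--\ref{B4} and the finiteness and positivity of the constant. Non-triviality of the limiting model is assumed.

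\emph{Structural assumptions.} By (Ch2), \(\psi(\bm x)>0\) forces \(\min_i x_i>0\), so the first bullet of the Choquet remark gives~\ref{B1}. Concretely, \(\nu\{t\in A\cup B:\psi>r\}\) exceeds \(\nu\{t\in A:\psi>r\}\) only if \(\{t\in B:\psi(\bm f(t))>r\}\ne\emptyset\) for some \(r>0\). That in turn forces \(\sup_B\min_i f_i>0\), and the first implication is simply monotonicity of \(\nu\). By (Ch1), \(\nu\) is affinely homogeneous with exponent \(\lambda_\nu\), and the change of variables \(\{t\in aE+b:\psi(\bm f(t))>r\}=a\{s\in E:\psi(\bm f(as+b))>r\}+b\) yields \(\Gamma_{aE+b}(\bm f)=a^{\lambda_\nu}\Gamma_E(\bm f(a\cdot+b))\). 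Hence~\ref{B2} holds with \(\lambda_\Gamma=\lambda_\nu\), and \(L_u=Lu^{-2\lambda_\nu/\alpha}\) as displayed.

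\emph{Local regularity on \([0,S]\).} For~\ref{B4}, \(\psi\) is continuous, so by the fourth bullet (Lemma~\ref{sojorn_F3_cap}) \(\Gamma_{[0,S]}\) is continuous on the whole of \(C([0,S],\R^d)\); every shifted limiting path is then a continuity point. \(S=0\) is trivial. For~\ref{B3}, fix a shift \(\bm x\). The limiting path has active coordinates \(\bm Y_{\alpha,V_{II}}-S_{\alpha,V_{II}}\bm w_I-\bm x_I\), tangential coordinates frozen at \(-\bm x_{\Jtan}\), and slack coordinates frozen at \((\bb-\bm b)_{\Jsl}>\bm0\).

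\begin{itemize}
\item If some \(x_j\ge0\) with \(j\in\Jtan\), then \(\psi\equiv0\) along the path by (Ch2). Since \(L>0\), the event \(\{\Gamma=L\}\) is empty.
\item Otherwise all frozen coordinates are strictly positive constants. The functional then equals the Choquet functional of the positive coordinate section \(\psi_{I,\bm c}\) with \(\bm c=(-\bm x_{\Jtan},(\bb-\bm b)_{\Jsl})\). Condition (Ch3) applies to this section with \(D=I\), and Lemma~\ref{sojorn_F2_cap} gives \(\pk{\Gamma=L}=0\) for a.e.\ \(\bm x_I\). Fubini over \(\bm x_J\) then gives~\ref{B3} for Lebesgue-a.e.\ \(\bm x\).
\end{itemize}

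The main obstacle is checking that the hypotheses of Lemma~\ref{sojorn_F2_cap} hold for this non-centered Gaussian section. That lemma should absorb the shift \(\bm x_I\) as the randomizing variable, while (Ch3) provides either strict increase above zero or the convex/plateau alternative uniformly in \(\bm c\).

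\emph{The constant.} Theorem~\ref{thm:stationary_case} now gives the asymptotic equivalence, including the existence of \(\mathcal H_{\nu,\psi}^{\mathrm{stat}}(L)\) as the limit in~\eqref{def Pickands constants}, and its finiteness, since the theorem's conclusion is a finite asymptotic. For positivity, non-triviality gives \(H([0,S_0])>0\) for some \(S_0\). Subadditivity of \(S\mapsto H([0,S])\) together with stationarity, as established in the proof of the theorem, should show \(S^{-1}H([0,S])\) converges to its infimum over \(S\). To get a positive lower bound I would instead compare \(H([0,nS_0])\) with \(n\) disjoint translated blocks via the Bonferroni lower bound of Lemma~\ref{lemma Bonferroni adapted for sojourns}. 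The double-sum terms contribute \(o(n)\) once the blocks are separated, as in the upper/lower bound scheme of the theorem's proof. This yields \(\liminf S^{-1}H([0,S])\ge c\,H([0,S_0])/S_0>0\). If this reconstruction proves delicate, an alternative is to cite the positivity statement proved in Appendix~\ref{sec:proof_stationary_case} for non-trivial models.
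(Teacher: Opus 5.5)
Your proposal is correct and has the same skeleton as the paper's proof. You apply Theorem~\ref{thm:stationary_case} with $\mathcal L=\{L\}$, obtain \ref{B1}--\ref{B2} from \textup{(Ch1)}--\textup{(Ch2)} with $\lambda_\Gamma=\lambda_\nu$, and obtain \ref{B4} from continuity of $\psi$ via Lemma~\ref{sojorn_F3_cap}. The genuine difference is in how you verify \ref{B3}.

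The paper does not split on the tangential shifts. It sets $D_*=I\cup\Jtan$ and freezes only the slack constants $\bm c=(\bb-\bm b)_{\Jsl}$. It then applies Corollary~\ref{cor:choquet-partial-shifts} once, to the $|D_*|$-dimensional path $(\bm Y_{\alpha,V_{I,I}}-S_{\alpha,V_{I,I}}(\cdot)\bm w_I,\bm 0_{\Jtan})$ with shift $(\bm x_I,\bm x_{\Jtan})$. The tangential coordinates are thus a deterministic zero path, shifted exactly like the active ones.

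You instead dispose of the shifts with some $x_j\ge0$, $j\in\Jtan$, directly by \textup{(Ch2)} and $L>0$. On the complementary region you apply \textup{(Ch3)} with $D=I$ and the shift-dependent vector $(-\bm x_{\Jtan},(\bb-\bm b)_{\Jsl})\in(0,\infty)^J$, followed by Tonelli. Both arguments are valid, since \textup{(Ch3)} quantifies over all non-empty $D$ and all positive $\bm c$.

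The trade-off is this. Your route applies the no-atom lemma only to the genuinely random active path and makes the zero-tangential case transparent. As written, however, it draws on \textup{(Ch3)} for a continuum of sections indexed by $\bm x_{\Jtan}$. The paper uses \textup{(Ch3)} only for the single section $\psi_{D_*,\bm c}$, and that economy is exactly what Remark~\ref{rem:choquet-no-slack} exploits when $\Jsl=\emptyset$.

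There are two smaller points.

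\begin{enumerate}
\item The theorem also requires \ref{B3} on the degenerate interval $[0,0]$. There \textup{(Ch3)} and the hypothesis $\mu(E)>0$ of Lemma~\ref{sojorn_F2_cap} are not available, and your remark that $S=0$ is trivial only covers \ref{B4}. The paper adds a short singleton argument. If the capacity is zero, the level $L>0$ cannot be attained. Otherwise it uses the diagonal argument under \textup{(Ch3)(a)}, or under \textup{(Ch3)(b)} the fact that convexity plus translation invariance force $\nu(\{0\})=0$. This gap is harmless in practice, because the proof of the theorem only uses block lengths $S>0$.

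\item You do not need to reconstruct positivity of the constant. Existence, finiteness and positivity under non-triviality are already established in the proof of Theorem~\ref{thm:stationary_case}. The positive lower bound is Lemma~\ref{lemmma:positivity-of-H}, whose ``every other block'' Bonferroni argument is what you sketch. The subadditivity of $S\mapsto H([0,S])$ that you attribute to the theorem's proof is not established anywhere in the paper; existence of the limit comes from a Cauchy argument with two-sided Bonferroni bounds. You should drop that line.
\end{enumerate}
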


\begin{proof}
Take the admissible-level set to be \(\mathcal L=\{L\}\).  The positive-orthant support and coordinatewise non-decreasing requirements in
\textup{(Ch2)}, together with inclusion monotonicity of the capacity, give
Assumption~\ref{B1}; affine homogeneity in \textup{(Ch1)} gives
Assumption~\ref{B2} with \(\lambda_\Gamma=\lambda_\nu\).

It remains to identify correctly the scalarization in the limiting constant.
Put \(D_*=I\cup\Jtan\) and
\(\bm c=(\bb-\bm b)_{\Jsl}\in(0,\infty)^{\Jsl}\).  The path inside
\eqref{def generic limiting constant} is obtained by applying the section
\(\psi_{D_*,\bm c}\) to the \(D_*\)-dimensional continuous path
\[
  \bigl(\bm Y_{\alpha,V_{I,I}}-
  S_{\alpha,V_{I,I}}(\cdot)\bm w_I,\,\bm0_{\Jtan}\bigr)
  -(\bm x_I,\bm x_{\Jtan}).
\]
For every \(S>0\), condition~\textup{(Ch3)} and
Corollary~\ref{cor:choquet-partial-shifts} therefore give
Assumption~\ref{B3} in the full \(\bm x\)-space.  Continuity of \(\psi\) and
Lemma~\ref{sojorn_F3_cap} give Assumption~\ref{B4}; in fact the Choquet
functional is continuous at every continuous path.

For completeness, the degenerate interval \([0,0]\) also causes no problem.
If its capacity is zero, the functional cannot equal the positive level \(L\).
If the relevant section satisfies~\textup{(Ch3)(a)}, the diagonal argument in
Lemma~\ref{sojorn_F2_cap} applies verbatim on a singleton.  Otherwise
\textup{(Ch3)(b)} makes the capacity convex on every non-degenerate interval;
a finite convex translation-invariant capacity assigns zero capacity to a
singleton, since superadditivity over arbitrarily many distinct singleton sets
would otherwise contradict finiteness on a compact interval.  Thus all
hypotheses of Theorem~\ref{thm:stationary_case} are satisfied, and its
conclusion is exactly \eqref{eq:stationary-choquet-corollary}.
\end{proof}

\begin{corollary}[Stationary Parisian persistence]
\label{cor:stationary-parisian}
Let \(F\in\mathcal K_0\), set
\[
  E_u=[0,T]\ominus u^{-2/\alpha}F,
\]
and let \(\bm X\) satisfy the assumptions of
Theorem~\ref{thm:stationary_case}.  Then, as \(u\to\infty\),
\begin{equation}
  \label{eq:stationary-parisian-asymptotic}
  \pk{
    \exists t\in E_u\ \text{such that}\
    \bm X(t+u^{-2/\alpha}s)>u\bm b
    \ \text{for every }s\in F
  }
  \sim
  T\,\mathcal H_F^{\mathrm{Par}}\,
  u^{2/\alpha-|I|}\varphi_\Sigma(u\bb),
\end{equation}
where inequalities between vectors are coordinatewise and
\[
  \mathcal H_F^{\mathrm{Par}}
  =\mathcal H_{\Pi;F,0,\alpha,V_{I,I},\Sigma,\bm b}
  \in(0,\infty).
\]
For \(F=[0,D]\), this is the exact asymptotic for persistence above
\(u\bm b\) throughout a Parisian delay of length \(D u^{-2/\alpha}\), with
the whole delay required to remain inside \([0,T]\).
\end{corollary}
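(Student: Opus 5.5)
The plan is to deduce the corollary from the stationary case of Proposition~\ref{prop:local-footprint-transfer}, applied to the Parisian family $\Pi_{E;F}$ of \eqref{def:parisian-footprint-functional} at the single admissible level $\mathcal L=\{0\}$ with $\lambda_\Gamma=0$.

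\textbf{Step 1: rewrite the event.} The normalization $\uu$ has strictly positive entries. Hence $\bm X(t+\rho_u s)>u\bm b$ holds coordinatewise if and only if $\min_i\bigl(\uu(\bm X-u\bm b)\bigr)_i(t+\rho_u s)>0$, where $\rho_u=u^{-2/\alpha}$. Since $F$ is compact and the paths are continuous, the infimum over $s\in F$ is attained, and the supremum over the compact set $E_u$ is attained as well. The event in \eqref{eq:stationary-parisian-asymptotic} is therefore exactly
\[
\bigl\{\Pi_{E_u;\rho_uF}\bigl(\uu(\bm X-u\bm b)\bigr)>0\bigr\}
=\bigl\{\Pi_{E_u;\rho_uF}\bigl(\uu(\bm X-u\bm b)\bigr)>0\cdot\rho_u^{\lambda_\Gamma}\bigr\}.
\]
This is precisely the replaced event in Proposition~\ref{prop:local-footprint-transfer}.

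\textbf{Step 2: check the hypotheses of the transfer principle.} Remark~\ref{rem:parisian-local-footprint} already gives Assumptions~\ref{LF1}--\ref{LF2}, and these persist for the affine images $aF$. For Assumption~\ref{LF3} on each pair $([0,S],F)$, I would use identity~\eqref{eq:parisian-constant-coordinate-reduction} in the limiting element:
\begin{itemize}
\item the active coordinates carry $\bm Y_{\alpha,V_{I,I}}-S_{\alpha,V_{I,I}}\bm w_I-\bm x_I$;
\item the tangential coordinates are frozen at $-\bm x_{\Jtan}$;
\item the slack coordinates are frozen at $(\bb-\bm b)_{\Jsl}>0$.
\end{itemize}
Continuity is automatic because $\Pi$ is 1-Lipschitz in the uniform norm. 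The level-zero no-atom property for a.e. shift is exactly Lemma~\ref{lem:parisian-partially-frozen}.

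The remaining hypothesis is non-triviality, meaning $\mathfrak H>0$ for some $S$. I would take $S=0$ and restrict to shifts with $\bm x_I$ very negative and $\bm x_{\Jtan}<\bm0$. On this set of positive Lebesgue measure, the probability that the continuous field $\bm Y-S\bm w_I-\bm x_I$ stays positive on the compact set $F$ is positive. This follows because the sample paths are bounded on compacts, so the probability tends to $1$ as $\bm x_I\to-\infty$.

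\textbf{Step 3: conclude and show the constant is finite.} Proposition~\ref{prop:local-footprint-transfer} combined with Theorem~\ref{thm:stationary_case} then gives the asymptotic with constant $T\,\mathcal H_{\Pi;F,0,\alpha,V_{I,I},\Sigma,\bm b}$, including existence of the defining limit. Positivity comes from subadditivity-type lower bounds in the double-sum argument, combined with non-triviality. Finiteness I would obtain by domination. By \ref{LF1}, $\Pi_{[0,S];F}>0$ implies $\sup_{[0,S]}\min_i>0$. So $H_{\Pi;F,\dots}([0,S])$ is bounded by the corresponding supremum preconstant, and $S^{-1}$ times that is uniformly bounded; this is the standard vector-valued Pickands bound from \cite{VVGP}. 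The same bound is implicit in the theorem's upper double-sum estimate.

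\textbf{Step 4: the special case $F=[0,D]$.} Here $E_u=[0,T-Du^{-2/\alpha}]$, which gives the stated interpretation.

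\textbf{Main obstacle.} I expect the main difficulty to be the bookkeeping in Step~2. One must confirm that the exact limiting element of the transfer, on $[0,S]+F$ with $F$ possibly containing negative points, matches the form covered by Lemma~\ref{lem:parisian-partially-frozen}. One must also check that the stationary kernel is valid for two-sided times, which Remark~\ref{rem:stationary-kernel-validity} provides. Everything else is a direct citation.
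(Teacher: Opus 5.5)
Your proposal is correct and is essentially the paper's own proof: rewrite the event as $\{\Pi_{E_u;u^{-2/\alpha}F}(\uu(\bm X-u\bm b))>0\}$, obtain \ref{LF1}--\ref{LF3} from Remark~\ref{rem:parisian-local-footprint} and Lemma~\ref{lem:parisian-partially-frozen}, and get non-triviality from a very negative active shift and negative tangential shifts on the compact window. You then conclude with Proposition~\ref{prop:local-footprint-transfer} and Theorem~\ref{thm:stationary_case}, exactly as the paper does. Two small remarks. First, non-triviality is defined with some $S>0$, but your $S=0$ computation suffices because $\Pi_{[0,S];F}\ge\Pi_{[0,0];F}$: the supremum over $[0,S]$ includes $t=0$. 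Second, your separate domination argument for finiteness is valid but redundant, since the transfer principle already supplies existence, finiteness and positivity of the limiting constant.
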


\begin{proof}
The event in \eqref{eq:stationary-parisian-asymptotic} is exactly
\[
  \{\Pi_{E_u;u^{-2/\alpha}F}
       (\uu(\bm X-u\bm b))>0\}.
\]
Remark~\ref{rem:parisian-local-footprint} verifies
Assumptions~\ref{LF1}--\ref{LF3}.  The limiting model is non-trivial: on the
compact set \([0,S]+F\), its continuous active coordinates and deterministic
drift are bounded, so a sufficiently negative active shift and strictly
negative tangential shifts make one complete window positive with positive
probability; the slack coordinates are already strictly positive.  Hence the
local preconstant is positive for some \(S>0\).  Proposition~\ref{prop:local-footprint-transfer}
and Theorem~\ref{thm:stationary_case} give the result.
\end{proof}

\section{Non-stationary case}
\label{sec:non_stationary_case}

In this section let \(T>0\) and assume that \( \bm{X} ( t ),
\ t \in [0, T] \), is non-stationary. Write \(\Sigma(t)=R(t,t)\) and assume
that \(\Sigma(t)\) is positive definite for every \(t\in[0,T]\).  Put
\(\Sigma=\Sigma(0)\); all quadratic-programming objects \(\bb\), \(I\), \(J\),
\(\Jtan\), \(\Jsl\), and \(\bm w\) in this section are formed with this
boundary covariance matrix \(\Sigma\). For \( t \in [0, T] \) we denote the
so-called \emph{inverse generalized variance} of \( \bm{X} \) by
\begin{equation*}
  g ( t ) = \min_{\bm{x} \geq \bm{b}} \bm{x}^\top \Sigma^{-1} (t) \, \bm{x}.
\end{equation*}
We assume that
\begin{enumerate}
    \item [(\namedlabel{D1}{\( \mathbf{D}_1 \)})] \( g \) is continuous and attains its unique minimum at \( 0 \).
    \item [(\namedlabel{D2}{\( \mathbf{D}_2 \)})] There exist \( d \times d \)
    matrices \( A \) and \( V \), and numbers \(\beta>0\),
    \(\alpha\in(0,2]\), such that
    \begin{equation}
      \label{eq:D2-joint-remainder}
      \bigl\|\Sigma-R(t,s)-At^\beta-A^\top s^\beta
        -V|t-s|^\alpha\bigr\|_{\mathrm F}
      =o\bigl(t^\beta+s^\beta+|t-s|^\alpha\bigr)
    \end{equation}
    as \((t,s)\to(0,0)\) in the wedge \(t\ge s\ge0\), with
    \( \bm{w}^\top A \bm{w} > 0 \) and \( \bm{w}^\top V \bm{w} > 0 \).  The
    little-oh in \eqref{eq:D2-joint-remainder} is a joint limit; hence the
    corresponding ratio is uniform on the intersections of the wedge with
    shrinking rectangles.  For \(s>t\), the expansion is obtained from
    \(R(t,s)=R(s,t)^\top\), and the correlation term is then
    \(V^\top|t-s|^\alpha\).
    \item [(\namedlabel{D3}{\( \mathbf{D}_3 \)})] There exists \( \gamma \in [\min\{\alpha,\beta\},2] \) such that for all \(s,t\) in some
    open neighbourhood of \(0\)
    \begin{equation*}
      \E{|\bm{X}(t)-\bm{X}(s)|^2} \lesssim |t-s|^\gamma.
    \end{equation*}
\end{enumerate}

\begin{remark}[Validity of the non-stationary limiting kernel]
\label{rem:nonstationary-kernel-validity}
When \(\alpha\le\beta\), Assumption~\ref{D2} also guarantees that the kernel
used in the stochastic limiting regimes is valid.  For fixed \(s,t\ge0\),
expansion of the conditional covariance at the boundary gives
\[
  q^{-\alpha}\Bigl[
    R(qt,qs)-R(qt,0)\Sigma^{-1}R(0,qs)
  \Bigr]
  \longrightarrow R_{\alpha,V}(t,s),
  \qquad q\downarrow0.
\]
The terms involving \(A\) and \(A^\top\) cancel; the joint remainder in
Assumption~\ref{D2} is \(o(q^\alpha)\) when \(\alpha\le\beta\).  Since every
kernel on the left is a conditional covariance kernel, the limit and its
principal subkernel corresponding to \(V_{II}\) are positive semidefinite and
admit continuous Gaussian versions.  No such stochastic kernel is used in the
deterministic regime \(\alpha>\beta\).
\end{remark}

\begin{theorem}[Non-stationary case]
  \label{thm:main_theorem}
  Let \( \bm{X} \) be a continuous centered Gaussian process satisfying
  assumptions~\ref{D1}--\,\ref{D3}, let \( \Gamma \) satisfy
  assumptions~\ref{B1}--\ref{B2}, and let \( L \in \mathcal{L} \).
  \begin{enumerate}[(a)]
    \item If \( \alpha < \beta \) and Assumptions~\ref{B3}--\ref{B4} hold on
    every interval \([0,S]\), \(S\ge0\), for the limiting model
    \[
      \bm Z=\bm Y_{\alpha,V_{I,I}},
      \qquad
      \bm q(t)=S_{\alpha,V_{I,I}}(t)\bm w_I,
    \]
    and this limiting model is non-trivial,
    then with \( L_u = L \cdot u^{-2 \lambda_\Gamma / \alpha} \) holds
    \begin{equation*}
      \pk{
        \Gamma_{[0, T]} ( \uu ( \bm{X} - u \bm{b} ) ) > L_u
      }
      \sim
      \mathcal{H}_{\Gamma, L, \alpha, V_{I, I}, \Sigma, \bm{b}} \, 
      \Gamma ( 1 / \beta + 1 ) \,
      \tau_{\bm{w}}^{-1 / \beta} \,
      u^{2 / \alpha - 2 / \beta - |I|} \, 
      \varphi_{\Sigma} ( u \bb ),
\end{equation*}
    where \( \tau_{\bm{w}} = \bm{w}^\top A \bm{w} \).
    \item If \( \alpha = \beta \), \(\bm d(t)=t^\beta A\bm w\), and
    Assumptions~\ref{B3}--\ref{B4} hold on every interval \([0,S]\), \(S\ge0\),
    for the limiting model
    \[
      \bm Z=\bm Y_{\alpha,V_{I,I}},
      \qquad
      \bm q(t)=S_{\alpha,V_{I,I}}(t)\bm w_I+\bm d_I(t),
    \]
    and this limiting model is non-trivial,
    then with \( L_u = L \cdot u^{-2 \lambda_\Gamma / \beta} \) we have
    \begin{equation*}
      \pk{
        \Gamma_{[0, T]} ( \uu ( \bm{X} - u \bm{b} ) ) > L_u
      }
      \sim
      \mathcal{P}_{\Gamma, L, \alpha, V_{I, I}, \bm{d}_I, \Sigma, \bm{b}} \, 
      u^{-|I|} \,
      \varphi_{\Sigma} ( u \bb ).
\end{equation*}
    \item If \( \alpha > \beta \), \(\bm d(t)=t^\beta A\bm w\), and
    Assumptions~\ref{B3}--\ref{B4} hold on every interval \([0,S]\), \(S\ge0\),
    for the deterministic limiting model
    \[
      \bm Z\equiv\bm0,
      \qquad
      \bm q(t)=\bm d_I(t),
    \]
    and this limiting model is non-trivial,
    then with \( L_u = L \cdot u^{-2 \lambda_\Gamma / \beta} \) we have
    \begin{equation*}
      \pk{
        \Gamma_{[0, T]} ( \uu ( \bm{X} - u \bm{b} ) ) > L_u
      }
      \sim
      \mathcal{C}_{\Gamma, L, \bm d, \Sigma, \bm{b}} \,
      u^{-|I|} \,
      \varphi_{\Sigma} ( u \bb ),
\end{equation*}
    where
    \begin{equation}
      \label{def constant C main theorem}
      \mathcal{C}_{\Gamma, L, \bm d, \Sigma, \bm{b}}
      = 
      \lim_{S \to \infty} 
      \int_{\R^d}
      \exp \left( 
        \bm{x}_I^\top \bm{w}_I
        - \frac{1}{2} \, \bm{x}_J^\top ( \Sigma^{-1} )_{J J} \, \bm{x}_J
      \right) 
      \Ind{
        \Gamma_{[0, S]} \left( 
          -\bm{d}_I - \bm{x}_I,
          -\bm{x}_{\Jtan},
          ( \bb - \bm{b} )_{\Jsl}
        \right) 
        > L
      }
      \, d \bm{x}.
    \end{equation}
  \end{enumerate}
  The displayed constants are finite and, by the non-triviality assumptions in
  the corresponding cases, positive.
\end{theorem}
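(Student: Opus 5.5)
The plan is the standard localization / double-sum scheme on the variance scale. Fix $\Lambda>0$ and split $[0,T]=[0,\Lambda\rho_u]\cup(\Lambda\rho_u,T]$, where $\rho_u=u^{-2/\beta}$ is the variance scale.

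\textbf{Step 1: the far part is negligible.} On $(\Lambda\rho_u,T]$ we use \ref{B1}: exceedance of $\Gamma$ implies $\sup_t\min_i$ of the normalized process is positive. By \ref{D1} and continuity of $g$, on $[\delta,T]$ the probability is smaller by a factor $e^{-cu^2}$. On $[\Lambda\rho_u,\delta]$, a Piterbarg-type single-sum bound for the supremum of the vector process is available: the quadratic-program expansion $g(t)-g(0)\approx 2\tau_{\bm w}t^\beta$ follows from \ref{D2} via $\bb^\top\Sigma^{-1}(t)\bb$ and the KKT structure. Together with \ref{D3}, this gives a bound of order $\int_\Lambda^\infty e^{-\tau_{\bm w}s^\beta}(\cdots)\,ds$ times the main term, which vanishes as $\Lambda\to\infty$.

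\textbf{Step 2: the near window $[0,\Lambda\rho_u]$.} Here the behaviour depends on the regime.

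\emph{Cases (b), (c).} The window has fixed length $\Lambda$ on the rescaled time $t=\rho_u s$. Condition on $\bm X(0)$, written as $u\bb$ plus the shifts $\bm x_I/u$, $\bm x_{\Jtan}$, $u\bm x_{\Jsl}$ according to $\uu$. The density ratio $\varphi_\Sigma(u\bb-\cdot)/\varphi_\Sigma(u\bb)$ produces the weight $\exp(\bm x_I^\top\bm w_I-\tfrac12\bm x_J^\top(\Sigma^{-1})_{JJ}\bm x_J)$ after integrating out the slack coordinates. This yields the factor $u^{-|I|}$ and is where $\bb_{\Jsl}-\bm b_{\Jsl}$ appears.

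The conditional process $u(\bm X_I(\rho_u s)-u\bm b_I)$ converges weakly in $C([0,\Lambda])$. The conditional mean contributes the drift $-S_{\alpha,V}(s)\bm w_I-s^\beta A\bm w$ (only the $A$-part survives when $\alpha>\beta$). The conditional covariance is $u^2\rho_u^{\alpha}R_{\alpha,V}$, which is $O(1)$ if $\alpha=\beta$ and $\to0$ if $\alpha>\beta$; tightness follows from \ref{D3}. Tangential coordinates converge to their frozen shifts $-\bm x_{\Jtan}$, and slack coordinates to $(\bb-\bm b)_{\Jsl}$.

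By \ref{B2}, $\Gamma_{[0,\Lambda\rho_u]}>L\rho_u^{\lambda_\Gamma}$ is equivalent to $\Gamma_{[0,\Lambda]}(\text{rescaled})>L$. Then \ref{B3}--\ref{B4} and the continuous mapping theorem give pointwise convergence of the conditional probabilities for a.e.\ $\bm x$. Dominated convergence, with Gaussian-tail domination from Borell-TIS on the conditional supremum, gives the limit $H([0,\Lambda])$, or its deterministic analogue. Letting $\Lambda\to\infty$ (monotone in $\Lambda$ by \ref{B1}) gives $\mathcal P$, respectively $\mathcal C$. Finiteness of these limits follows from the Step 1 bounds, and positivity from non-triviality together with monotonicity.

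\emph{Case (a).} The window $[0,\Lambda\rho_u]$ is long on the correlation scale $u^{-2/\alpha}$. Partition it into intervals $\Delta_k=[kSu^{-2/\alpha},(k+1)Su^{-2/\alpha}]$. On each $\Delta_k$, the argument above (with no $A$-drift in the limit) gives
\[
H([0,S])\,u^{-|I|}\varphi_{\Sigma(t_k)}(u\bb(t_k))\approx H([0,S])\,u^{-|I|}\varphi_\Sigma(u\bb)e^{-\tau_{\bm w}u^2t_k^\beta},
\]
uniformly in $k$. The lower bound uses the Bonferroni inequality of Lemma~\ref{lemma Bonferroni adapted for sojourns} implied by \ref{B1}; the upper bound uses union-monotonicity together with subadditivity via \ref{B1}. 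Summing, the Riemann sum is $\sim S^{-1}H([0,S])\,u^{2/\alpha-2/\beta}\,\Gamma(1/\beta+1)\tau_{\bm w}^{-1/\beta}$. Then let $S\to\infty$.

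\textbf{Main obstacle.} The hardest step is the double-sum bound in case (a): controlling
\[
\sum_{k\ne l}\pk{\sup_{\Delta_k}\min_i>0,\ \sup_{\Delta_l}\min_i>0}
\]
so that it is $o(S)$ per block, uniformly in the nonstationary variance. I would adapt the vector-valued double-sum estimates of \cite{VVGP}, which rely on \ref{D3} and local positivity from $\bm w^\top V\bm w>0$. Adjacent blocks are handled by splitting off a small $\sqrt S$ gap, and distant blocks by a bound of the form $e^{-c|k-l|^\alpha S^\alpha}$. The same bound also yields existence of the limit $\mathcal H$, by a subadditivity argument.
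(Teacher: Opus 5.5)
Your outline matches the paper's proof. It truncates at $\Lambda u^{-2/\beta}$ and bounds the remainder via \ref{B1} and a log-layer estimate. For $\alpha\ge\beta$ it conditions on $\bm X(0)$ on $[0,\Lambda]$, uses weak convergence with \ref{B3}--\ref{B4} and dominated convergence, then lets $\Lambda\to\infty$ using monotonicity and the truncation bound. For $\alpha<\beta$ it uses blocks of length $Su^{-2/\alpha}$, the adapted Bonferroni lemma, uniform single-block asymptotics with factor $e^{-\tau_{\bm w}u^2t_k^\beta}$, a Riemann sum, and the adjacent/separated double-sum bound.

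One harmless slip: every inactive coordinate of $\bm X(0)$, slack ones included, is shifted by an $O(1)$ amount, $\bm X_J(0)=u\bb_J-\bm x_J$. The slack shifts stay in the weight $e^{-\frac12\bm x_J^\top(\Sigma^{-1})_{JJ}\bm x_J}$ rather than being integrated out. The rescaled slack coordinates tend to $(\bb-\bm b)_{\Jsl}$ whatever $\bm x_{\Jsl}$ is.

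The genuine gap is the claim, which is part of the statement, that $\mathcal H_{\Gamma,L,\alpha,V_{I,I},\Sigma,\bm b}>0$ in case (a). You derive positivity only in (b) and (c), from non-triviality plus monotonicity, and that reasoning does not transfer. Knowing $H(S_0)>0$ and that $H$ is non-decreasing is still compatible with $H(S)/S\to0$, in which case the asymptotic in (a) degenerates.

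Your subadditivity route does not supply positivity either. Since $\Gamma$ need not be subadditive in the observation set (sojourns are an example), you only get $H(S_1+S_2)\le H(S_1)+H(S_2)+O(\sqrt{S_1+S_2})$. Combined with a de Bruijn--Erd\H{o}s-type Fekete lemma, or with the paper's two-sided common-horizon comparison $|H(S_1)/S_1-H(S_2)/S_2|\le\epsilon(S_1)+\epsilon(S_2)$, this yields existence and finiteness only. The matching approximate superadditivity over adjacent blocks has the same $O(\sqrt S)$ error, which can swamp $H(S_0)$.

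The missing idea is to use gap-separated blocks. Keep every other block of length $R\ge S_0$; the lower Bonferroni bound and the separated double-sum estimate then give
\[
  \liminf_{S\to\infty}\frac{H(S)}{S}\ \ge\ \frac{H(R)}{2R}-Ce^{-cR^\alpha}.
\]
The right-hand side is positive for large $R$, because $H(R)\ge H(S_0)>0$ while $e^{-cR^\alpha}=o(R^{-1})$. This is Lemma~\ref{lemmma:positivity-of-H} in the paper.
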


\noindent The proof is given in Appendix~\ref{sec:proof_main_theorem}.

\begin{corollary}[Boundary Parisian persistence]
\label{cor:nonstationary-parisian}
Let \(F\in\mathcal K_0^+\), let \(\bm X\) satisfy
Assumptions~\ref{D1}--\ref{D3} on \([0,T]\), and define
\[
  \rho_u=
  \begin{cases}
    u^{-2/\alpha},&\alpha<\beta,\\
    u^{-2/\beta},&\alpha\ge\beta,
  \end{cases}
  \qquad
  E_u=[0,T-\rho_u\max F].
\]
Then, as \(u\to\infty\),
\begin{enumerate}[(a)]
  \item If \(\alpha<\beta\), then
  \begin{equation}
    \label{eq:nonstationary-parisian-alpha-less}
    \pk{
      \exists t\in E_u:\
      \bm X(t+u^{-2/\alpha}s)>u\bm b
      \text{ for every }s\in F
    }
    \sim
    \mathcal H_F^{\mathrm{Par,bd}}
    \Gamma(1/\beta+1)\tau_{\bm w}^{-1/\beta}
    u^{2/\alpha-2/\beta-|I|}\varphi_\Sigma(u\bb).
  \end{equation}

  \item If \(\alpha=\beta\), then
  \begin{equation}
    \label{eq:nonstationary-parisian-alpha-equal}
    \pk{
      \exists t\in E_u:\
      \bm X(t+u^{-2/\beta}s)>u\bm b
      \text{ for every }s\in F
    }
    \sim
    \mathcal P_F^{\mathrm{Par,bd}}
    u^{-|I|}\varphi_\Sigma(u\bb).
  \end{equation}

  \item If \(\alpha>\beta\), then
  \begin{equation}
    \label{eq:nonstationary-parisian-alpha-greater}
    \pk{
      \exists t\in E_u:\
      \bm X(t+u^{-2/\beta}s)>u\bm b
      \text{ for every }s\in F
    }
    \sim
    \mathcal C_F^{\mathrm{Par,bd}}
    u^{-|I|}\varphi_\Sigma(u\bb).
  \end{equation}
\end{enumerate}
Here
\begin{align*}
  \mathcal H_F^{\mathrm{Par,bd}}
  &=\mathcal H_{\Pi;F,0,\alpha,V_{I,I},\Sigma,\bm b},\\
  \mathcal P_F^{\mathrm{Par,bd}}
  &=\mathcal P_{\Pi;F,0,\alpha,V_{I,I},\bm d_I,\Sigma,\bm b},\\
  \mathcal C_F^{\mathrm{Par,bd}}
  &=\mathcal C_{\Pi;F,0,\bm d,\Sigma,\bm b},
  \qquad \bm d(t)=t^\beta A\bm w,
\end{align*}
and the constants are finite and positive in their respective regimes.  The
first successful start time has the corresponding conditional limits from
Theorem~\ref{thm:exceedance_times}, with the truncated constants carrying the
same fixed footprint \(F\).
\end{corollary}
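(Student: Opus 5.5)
The corollary follows by applying Proposition~\ref{prop:local-footprint-transfer} to Theorem~\ref{thm:main_theorem}, in the same way as Corollary~\ref{cor:stationary-parisian}.

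\textbf{Step 1: rewriting the event.} Take \(\mathcal L=\{0\}\) and \(\lambda_\Gamma=0\). Since \(F\in\mathcal K_0^+\), we have \(E_u=[0,T]\ominus\rho_uF\). The event in each display is
\[
\{\Pi_{E_u;\rho_uF}(\uu(\bm X-u\bm b))>0\}.
\]
The normalization does not change this. The coordinates of \(\uu\) are positive, so \(\uu(\bm X-u\bm b)>\bm0\) holds exactly when \(\bm X>u\bm b\). The infimum over the compact set \(F\) of a continuous function is attained, so the strict inequality \(\inf_{s\in F}\min_i(\cdot)>0\) holds exactly when \(\bm X(t+\rho_us)>u\bm b\) for every \(s\in F\).

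The time scales match the regimes. For \(\alpha<\beta\) we have \(\rho_u=u^{-2/\alpha}\), and for \(\alpha\ge\beta\) we have \(\rho_u=u^{-2/\beta}\). These are the scales in the three displays, and they agree with the convention of the transfer principle.

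\textbf{Step 2: the structural assumptions.} Remark~\ref{rem:parisian-local-footprint} gives Assumptions~\ref{LF1}--\ref{LF2} for every fixed \(F\). It also gives Assumption~\ref{LF3} for the active--tangential--slack limiting element. This is the 1-Lipschitz continuity, together with the no-atom property from Lemma~\ref{lem:parisian-partially-frozen}.

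In the stochastic regimes (a) and (b), the exact limiting element has \(\bm Z=\bm Y_{\alpha,V_{I,I}}\). Its drift is \(S_{\alpha,V_{I,I}}\bm w_I\) in regime (a) and \(S_{\alpha,V_{I,I}}\bm w_I+\bm d_I\) in regime (b). In the deterministic regime (c), \(\bm Z\equiv\bm0\) and \(\bm q=\bm d_I\).

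\textbf{Step 3: no-atom property in the deterministic regime.} I expect this to be the delicate point. Lemma~\ref{lem:parisian-partially-frozen} must cover the deterministic element, or else it has to be checked directly. The direct check goes as follows. By \eqref{eq:parisian-constant-coordinate-reduction}, the functional equals
\[
\min\Bigl\{\max_{t\in[0,S]}\min_{s\in F}\min_{i\in I}\bigl(-d_i(t+s)-x_i\bigr),\;\min_{j\in\Jtan}(-x_j),\;\min_{j\in\Jsl}c_j\Bigr\}.
\]
In the \(\bm x_I\) variables, the first term has the form \(\phi(\bm x_I)\), where \(\phi\) is strictly decreasing under a common shift of all coordinates. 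Hence \(\{\phi=0\}\) is a Lipschitz graph over the hyperplane orthogonal to \(\bm 1_I\), which is a Lebesgue-null set. The tangential term vanishes only on coordinate hyperplanes. The slack constants are strictly positive. So the level set at \(0\) is null, and continuity at almost every shift follows from the Lipschitz property.

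\textbf{Step 4: non-triviality of the local models.} This is the argument from Corollary~\ref{cor:stationary-parisian}. On \([0,S]+F\) the active paths, and the drifts \(S_{\alpha,V}\bm w_I\) and \(\bm d_I\), are bounded with positive probability. In the deterministic case they are simply bounded. Choose \(\bm x_I\) very negative and \(\bm x_{\Jtan}<\bm0\). Then every coordinate is positive throughout the window starting at \(t=0\), with positive probability. The probability is in fact one in the deterministic case. The set of such \(\bm x\) has positive measure, so the preconstant is positive.

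\textbf{Step 5: conclusion.} Proposition~\ref{prop:local-footprint-transfer} applied to Theorem~\ref{thm:main_theorem}(a)--(c) yields the three asymptotics. The constants are the stated \(\mathcal H\), \(\mathcal P\) and \(\mathcal C\) carrying the footprint \(F\). Finiteness and positivity come from the final clause of Theorem~\ref{thm:main_theorem} under the non-triviality established in Step 4.

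The statement about the first successful start time is the last part of the transfer proposition. It is valid here because \(F\in\mathcal K_0^+\).
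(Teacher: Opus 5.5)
Your proposal is correct and follows the paper's own proof. Both rewrite each event as \(\{\Pi_{E_u;\rho_uF}(\uu(\bm X-u\bm b))>0\}\), take \ref{LF1}--\ref{LF3} from Remark~\ref{rem:parisian-local-footprint}, obtain non-triviality from the negative-shift argument (which also covers the deterministic model), and apply Proposition~\ref{prop:local-footprint-transfer} to Theorem~\ref{thm:main_theorem}(a)--(c); your separate no-atom check in Step~3 is valid but unnecessary, because Lemma~\ref{lem:parisian-partially-frozen} is stated for an arbitrary continuous random process \(\bm Z\), including the degenerate case \(\bm Z\equiv\bm 0\).
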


\begin{proof}
For the relevant \(\rho_u\), each event is
\[
  \{\Pi_{E_u;\rho_uF}(\uu(\bm X-u\bm b))>0\}.
\]
Remark~\ref{rem:parisian-local-footprint} verifies all structural and exact
local regularity conditions.  Non-triviality follows by the same compact-path
and negative-shift argument as in the proof of
Corollary~\ref{cor:stationary-parisian}; the argument also covers the
deterministic limiting model in the regime \(\alpha>\beta\).  Proposition~\ref{prop:local-footprint-transfer}
therefore reduces the three claims to parts~(a)--(c) of
Theorem~\ref{thm:main_theorem}.
\end{proof}

\begin{remark}[Which Parisian scale is covered?]
\label{rem:parisian-scales}
In the stationary case and in the boundary regimes \(\alpha<\beta\) and
\(\alpha=\beta\), the footprint has the correlation, or Pickands, scale
\(u^{-2/\alpha}\).  This is the standard non-degenerate shrinking-delay scale
in finite-horizon Gaussian Parisian asymptotics; see, for example,
\cite{DebickiHashorvaJi2016}.  When \(\alpha<\beta\), that footprint is shorter
than the boundary localization scale \(u^{-2/\beta}\), so it is local inside
each long-scale neighborhood.  When \(\alpha=\beta\), the two
scales coincide.

When \(\alpha>\beta\), the present one-scale transfer uses the smaller boundary
scale \(u^{-2/\beta}\).  A footprint of correlation length
\(u^{-2/\alpha}\) is then larger than the entire boundary-localization window
and is not a fixed mark after rescaling by \(u^{-2/\beta}\).  The elementary
variance-cost calculation
\[
  u^2\bigl(u^{-2/\alpha}\bigr)^\beta
  =u^{2-2\beta/\alpha}\longrightarrow\infty
\]
shows why such a delay is not governed by the deterministic constant in
\eqref{eq:nonstationary-parisian-alpha-greater}.  A Pickands-scale delay in
this regime requires a separate two-scale analysis and is not claimed here.
\end{remark}

\begin{corollary}[Boundary non-stationary Choquet-integral asymptotics]
\label{cor:nonstationary-choquet-integral}
Let \(\bm X\) satisfy Assumptions~\ref{D1}--\ref{D3} on \([0,T]\), and fix
\(L>0\).  Let \((\nu,\psi)\) be \(L\)-admissible Choquet data in the sense of
Definition~\ref{def:admissible-choquet-data}, and put
\(\Gamma=\Gamma^{\nu,\psi}\).  In each of the following regimes assume that the
corresponding limiting model in Theorem~\ref{thm:main_theorem} is non-trivial
for this Choquet functional.  With \(\bm d(t)=t^\beta A\bm w\), define
\begin{align*}
  \mathcal H_{\nu,\psi}^{\mathrm{bd}}(L)
  & =
  \mathcal H_{\Gamma,L,\alpha,V_{I,I},\Sigma,\bm b},\\
  \mathcal P_{\nu,\psi}^{\mathrm{bd}}(L)
  & =
  \mathcal P_{\Gamma,L,\alpha,V_{I,I},\bm d_I,\Sigma,\bm b},\\
  \mathcal C_{\nu,\psi}^{\mathrm{bd}}(L)
  & =
  \mathcal C_{\Gamma,L,\bm d,\Sigma,\bm b}.
\end{align*}
Then the following asymptotics hold as \(u\to\infty\).
\begin{enumerate}[(a)]
  \item If \(\alpha<\beta\), then
  \begin{equation}
    \label{eq:nonstationary-choquet-alpha-less-corollary}
    \pk{
      \Gamma_{[0,T]}^{\nu,\psi}\bigl(\uu(\bm X-u\bm b)\bigr)
      > L u^{-2\lambda_\nu/\alpha}
    }
    \sim
    \mathcal H_{\nu,\psi}^{\mathrm{bd}}(L)\,
    \Gamma(1/\beta+1)\,
    \tau_{\bm w}^{-1/\beta}\,
    u^{2/\alpha-2/\beta-|I|}\,
    \varphi_\Sigma(u\bb).
  \end{equation}
  \item If \(\alpha=\beta\), then
  \begin{equation}
    \label{eq:nonstationary-choquet-alpha-equal-corollary}
    \pk{
      \Gamma_{[0,T]}^{\nu,\psi}\bigl(\uu(\bm X-u\bm b)\bigr)
      > L u^{-2\lambda_\nu/\beta}
    }
    \sim
    \mathcal P_{\nu,\psi}^{\mathrm{bd}}(L)\,
    u^{-|I|}\,
    \varphi_\Sigma(u\bb).
  \end{equation}
  \item If \(\alpha>\beta\), then
  \begin{equation}
    \label{eq:nonstationary-choquet-alpha-greater-corollary}
    \pk{
      \Gamma_{[0,T]}^{\nu,\psi}\bigl(\uu(\bm X-u\bm b)\bigr)
      > L u^{-2\lambda_\nu/\beta}
    }
    \sim
    \mathcal C_{\nu,\psi}^{\mathrm{bd}}(L)\,
    u^{-|I|}\,
    \varphi_\Sigma(u\bb).
  \end{equation}
\end{enumerate}
The constants displayed above are finite and positive in the regimes in which
they are used.
\end{corollary}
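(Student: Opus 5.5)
The plan is to reduce the corollary to Theorem~\ref{thm:main_theorem}, in the same way that Corollary~\ref{cor:stationary-choquet-integral} was reduced to Theorem~\ref{thm:stationary_case}. First I take $\mathcal L=\{L\}$. Assumption~\ref{B1} then follows from (Ch2): positive-orthant support and coordinatewise monotonicity of $\psi$, together with inclusion monotonicity of $\nu$, give monotonicity in the observation set. Moreover, an exceedance on $A\cup B$ that is not already an exceedance on $A$ forces $\psi(\bm f(t))>0$ for some $t\in B$, hence $\min_i f_i(t)>0$. Affine homogeneity in (Ch1) gives Assumption~\ref{B2} with $\lambda_\Gamma=\lambda_\nu$. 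The thresholds $L_u=Lu^{-2\lambda_\nu/\alpha}$ in case (a) and $L_u=Lu^{-2\lambda_\nu/\beta}$ in cases (b)--(c) are then exactly those of Theorem~\ref{thm:main_theorem}, and by Remark~\ref{rem:scaled-level-B1} no further level hypothesis is needed.

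Next I verify Assumptions~\ref{B3}--\ref{B4} on every $[0,S]$ for each of the three limiting models. Put $D_*=I\cup\Jtan$ and $\bm c=(\bb-\bm b)_{\Jsl}\in(0,\infty)^{\Jsl}$. In every regime the path inside \eqref{def generic limiting constant} is the section $\psi_{D_*,\bm c}$ applied to the $D_*$-dimensional continuous path $(\bm Z-\bm q,\bm 0_{\Jtan})-(\bm x_I,\bm x_{\Jtan})$. Here $(\bm Z,\bm q)$ is $(\bm Y_{\alpha,V_{I,I}},S_{\alpha,V_{I,I}}\bm w_I)$ in case (a), $(\bm Y_{\alpha,V_{I,I}},S_{\alpha,V_{I,I}}\bm w_I+\bm d_I)$ in case (b), and $(\bm 0,\bm d_I)$ in case (c). Condition (Ch3), combined with Corollary~\ref{cor:choquet-partial-shifts}, yields the no-atom property in the full shift space. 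Lemma~\ref{sojorn_F3_cap} and continuity of $\psi$ yield~\ref{B4}; in fact the functional is continuous everywhere.

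The main obstacle is the deterministic regime (c). There the limiting path is non-random, so the no-atom statement must come entirely from Lebesgue-almost-every shift rather than from Gaussian randomness. I would check that Corollary~\ref{cor:choquet-partial-shifts} is formulated for an arbitrary continuous (possibly degenerate) base path with only the shift random or integrated. If it is, the argument is unchanged. If it is not, I would redo its diagonal-shift argument directly. Under (Ch3)(a), strict increase of the section above zero makes $s\mapsto\Gamma(\bm f-\bm x-s\bm 1_{D_*})$ strictly decreasing wherever it is positive, so each diagonal line meets the level set $\{\Gamma=L\}$ in at most one point, and Fubini gives measure zero. Under (Ch3)(b), I would use the plateau exclusion for $L/\mu_S([0,S])$ together with convexity and connected support, as in Lemma~\ref{sojorn_F2_cap}. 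The degenerate interval $[0,0]$ is handled exactly as in the stationary corollary: a singleton either has zero capacity, by the convexity and translation-invariance argument, or falls under the singleton diagonal argument.

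Finally, with the non-triviality hypothesis assumed in each regime, all hypotheses of parts (a)--(c) of Theorem~\ref{thm:main_theorem} hold. That theorem gives \eqref{eq:nonstationary-choquet-alpha-less-corollary}--\eqref{eq:nonstationary-choquet-alpha-greater-corollary}, with constants equal to $\mathcal H^{\mathrm{bd}}_{\nu,\psi}(L)$, $\mathcal P^{\mathrm{bd}}_{\nu,\psi}(L)$ and $\mathcal C^{\mathrm{bd}}_{\nu,\psi}(L)$ by definition. Finiteness and positivity of these constants are part of the conclusion of Theorem~\ref{thm:main_theorem}.
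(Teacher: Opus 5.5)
Your proposal is correct and follows essentially the same route as the paper: take $\mathcal L=\{L\}$, get Assumptions~\ref{B1}--\ref{B2} from (Ch1)--(Ch2) with $\lambda_\Gamma=\lambda_\nu$, reduce all three limiting models to the section $\psi_{D_*,\bm c}$ with $D_*=I\cup\Jtan$ so that (Ch3), Corollary~\ref{cor:choquet-partial-shifts} and Lemma~\ref{sojorn_F3_cap} give \ref{B3}--\ref{B4} (plus the singleton argument for $[0,0]$), and then apply Theorem~\ref{thm:main_theorem}(a)--(c). Your concern about regime (c) does not arise: Lemma~\ref{sojorn_F2_cap}, and hence Corollary~\ref{cor:choquet-partial-shifts}, proves $\lambda(D_{\bm f})=0$ pathwise for every continuous path, so a deterministic base path is covered directly and your fallback argument is not needed.
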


\begin{proof}
Take again \(\mathcal L=\{L\}\).  As in the stationary corollary,
\textup{(Ch1)}--\textup{(Ch2)} and the verification in
Appendix~\ref{sec:proof_choquet_claims} give Assumptions~\ref{B1}--\ref{B2}
with \(\lambda_\Gamma=\lambda_\nu\).  As in the stationary proof, put
\(D_*=I\cup\Jtan\) and \(\bm c=(\bb-\bm b)_{\Jsl}\).  In all three regimes the
functional appearing in the limiting constant reduces to the section
\(\psi_{D_*,\bm c}\) applied to a continuous \(D_*\)-dimensional path and
shifted by \((\bm x_I,\bm x_{\Jtan})\).  Hence, on every interval
\([0,S]\), \(S>0\), condition~\textup{(Ch3)} and
Corollary~\ref{cor:choquet-partial-shifts} give Assumption~\ref{B3} in the full
shift space.  Continuity of \(\psi\) and
Lemma~\ref{sojorn_F3_cap} give Assumption~\ref{B4} for the Gaussian and
deterministic limiting paths alike.  The preceding singleton argument verifies
the degenerate interval as well.  Applying parts~(a)--(c) of
Theorem~\ref{thm:main_theorem} yields the three formulae, with the threshold
powers obtained by substituting \(\lambda_\Gamma=\lambda_\nu\).
\end{proof}

\begin{remark}[Power capacities]
\label{rem:power-capacity-choquet-corollaries}
For \(\theta\ge1\), the capacity
\(\nu_\theta(A)=\mes(A)^\theta\) is affinely homogeneous with exponent
\(\lambda_\nu=\theta\).  For every \(S>0\), its restriction to \([0,S]\) is a
regular finite convex capacity.  Moreover, every non-empty relatively open
set \(U\subset[0,S]\) has positive Lebesgue measure and therefore
\(\nu_\theta(U)>0\).  Consequently,
\(\supp(\nu_\theta|_{[0,S]})=[0,S]\); the support is connected, carries full
capacity, and the support-intersection property holds.  Therefore the
preceding two corollaries apply to
\[
  \Gamma_E^{\theta,\psi}(\bm f)
  =
  \int_0^\infty
  \mes\{t\in E:\psi(\bm f(t))>r\}^\theta\,dr
\]
whenever \(\psi\) is continuous, coordinatewise non-decreasing, supported in
the positive orthant,
and every positive coordinate section satisfies one of the alternatives in
\textup{(Ch3)}.  The product and soft-min scalarizations in
Remark~\ref{rem:admissible-choquet-data-examples} satisfy the first alternative.
In particular, the scalar choice \(\psi(x)=x_+\) gives a non-additive
Choquet integral for \(\theta>1\), with thresholds
\(L u^{-2\theta/\alpha}\) in the stationary and \(\alpha<\beta\) boundary
regimes, and thresholds \(L u^{-2\theta/\beta}\) in the
\(\alpha\ge\beta\) boundary regimes.
\end{remark}

\begin{remark}[No-slack simplification]
\label{rem:choquet-no-slack}
If \(\Jsl=\emptyset\), no coordinate is frozen at a finite positive
value in the limiting constant.  In both Choquet corollaries, the
\(L\)-admissibility hypothesis may then be weakened as follows: retain
\textup{(Ch1)}--\textup{(Ch2)}, but require \textup{(Ch3)} only for the full
scalarization \(\psi\), rather than for all positive coordinate sections.
Indeed, the proofs use only the section indexed by
\(D_*=I\cup\Jtan=\{1,\ldots,d\}\).  Consequently, with a convex full-support
capacity, scalarizations such as \((\min_i x_i)_+^p\) are covered in the
no-slack case; their full-dimensional positive level sets have Lebesgue measure
zero.
\end{remark}

\section{Exceedance times}
\label{sec:exceedance-times}

Let the exceedance time \( \mathfrak{t} ( u, L ) \) on the observation window
\([0,T]\) be defined by
\begin{equation*}
  \mathfrak{t} ( u, L ) = \inf \left\{ 
    0<t\le T : 
    \Gamma_{[0, t]} ( \uu ( \bm{X} - u \bm{b} ) ) 
    > L_u
  \right\},
\end{equation*}
with the convention \(\inf\emptyset=\infty\).

\begin{theorem}[Exceedance times]
    \label{thm:exceedance_times}
    Let \( \mathfrak t(u,L) \) be defined as above, with the threshold sequence
    \(L_u\) chosen as in the corresponding asymptotic theorem.
    \begin{enumerate}[(a),wide]
        \item \textbf{Stationary case.} Under the assumptions of
        Theorem~\ref{thm:stationary_case}, for every \(s\in[0,T]\),
        \begin{equation*}
          \pk{
            \mathfrak t(u,L)\le s
            \,\middle|\,
            \mathfrak t(u,L)\le T
          }
          \to \frac{s}{T}.
        \end{equation*}

        \item \textbf{Non-stationary case \(\alpha<\beta\).} Under the
        assumptions of Theorem~\ref{thm:main_theorem}(a), for every
        \(S\geq0\),
        \begin{equation*}
            \pk{
              u^{2/\beta}\mathfrak{t}(u,L)\leq S
              \,\middle|\,
              \mathfrak{t}(u,L)\leq T
            }
            \to
            \frac{
                \Gamma (1 / \beta) - \Gamma (1 / \beta, \tau_{\bm{w}} S^\beta )
            }{
                \Gamma (1 / \beta)
            }.
        \end{equation*}

        \item \textbf{Non-stationary case \(\alpha=\beta\).} Under the
        assumptions of Theorem~\ref{thm:main_theorem}(b), put
        \begin{equation*}
          G_{\mathfrak P}(S)
          =
          \frac{
          H_{\Gamma,L,\alpha,V_{I,I},\bm d_I,\Sigma,\bm b}([0,S])
          }{
          \mathcal{P}_{\Gamma,L,\alpha,V_{I,I},\bm d_I,\Sigma,\bm b}
          },
          \qquad S\ge0,
        \end{equation*}
        and define its right-continuous modification, extended to \(\R\), by
        \begin{equation*}
          F_{\mathfrak P}(S)
          =
          \begin{cases}
            0, & S<0,\\[2pt]
            \displaystyle\lim_{r\downarrow S}G_{\mathfrak P}(r), & S\ge0.
          \end{cases}
        \end{equation*}
        Assumption~\ref{B1} and the definition of the Piterbarg constant imply
        that \(G_{\mathfrak P}\) is non-decreasing and tends to \(1\); hence
        \(F_{\mathfrak P}\) is a distribution function.  Then, at every
        continuity point \(S\ge0\) of \(F_{\mathfrak P}\),
        \begin{equation*}
            \pk{
              u^{2/\beta}\mathfrak{t}(u,L)\leq S
              \,\middle|\,
              \mathfrak{t}(u,L)\leq T
            }
            \to F_{\mathfrak P}(S).
        \end{equation*}
        Consequently, if \(0\le a<r\) are continuity points of
        \(F_{\mathfrak P}\), then
        \begin{equation*}
            \pk{
              u^{2/\beta}\mathfrak t(u,L)\in(a,r]
              \,\middle|\,
              \mathfrak t(u,L)\le T
            }
            \to F_{\mathfrak P}(r)-F_{\mathfrak P}(a).
        \end{equation*}

        \item \textbf{Non-stationary case \(\alpha>\beta\).} Under the
        assumptions of Theorem~\ref{thm:main_theorem}(c), define
        \begin{equation*}
          \mathfrak C(S)
          =
          \int_{\R^d}
          \exp \left(
            \bm{x}_I^\top \bm{w}_I
            - \frac{1}{2} \, \bm{x}_J^\top ( \Sigma^{-1} )_{J J} \, \bm{x}_J
          \right)
          \Ind{
            \Gamma_{[0,S]} \left(
              -\bm{d}_I - \bm{x}_I,
              -\bm{x}_{\Jtan},
              ( \bb - \bm{b} )_{\Jsl}
            \right)>L
          }
          \,d\bm x,
          \qquad S\ge0,
        \end{equation*}
        put
        \begin{equation*}
          G_{\mathfrak C}(S)
          =
          \frac{\mathfrak C(S)}{\mathcal C_{\Gamma,L,\bm d,\Sigma,\bm b}},
          \qquad S\ge0,
        \end{equation*}
        and define its right-continuous modification, extended to \(\R\), by
        \begin{equation*}
          F_{\mathfrak C}(S)
          =
          \begin{cases}
            0, & S<0,\\[2pt]
            \displaystyle\lim_{r\downarrow S}G_{\mathfrak C}(r), & S\ge0.
          \end{cases}
        \end{equation*}
        Assumption~\ref{B1} and the definition of the deterministic constant
        imply that \(G_{\mathfrak C}\) is non-decreasing and tends to \(1\);
        hence \(F_{\mathfrak C}\) is a distribution function.  Then, at every
        continuity point \(S\ge0\) of \(F_{\mathfrak C}\),
        \begin{equation*}
            \pk{
              u^{2/\beta}\mathfrak{t}(u,L)\leq S
              \,\middle|\,
              \mathfrak{t}(u,L)\leq T
            }
            \to F_{\mathfrak C}(S).
        \end{equation*}
        Consequently, if \(0\le a<r\) are continuity points of
        \(F_{\mathfrak C}\), then
        \begin{equation*}
            \pk{
              u^{2/\beta}\mathfrak t(u,L)\in(a,r]
              \,\middle|\,
              \mathfrak t(u,L)\le T
            }
            \to F_{\mathfrak C}(r)-F_{\mathfrak C}(a).
        \end{equation*}
    \end{enumerate}
\end{theorem}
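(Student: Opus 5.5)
The key observation is that, by the first implication in Assumption~\ref{B1} (monotonicity in the observation set), the event \(\{\mathfrak t(u,L)\le s\}\) coincides, up to the boundary behaviour at the infimum, with \(\{\Gamma_{[0,s]}(\uu(\bm X-u\bm b))>L_u\}\). Indeed, if \(\Gamma_{[0,t]}>L_u\) for some \(t\le s\), then \(\Gamma_{[0,s]}>L_u\). Conversely, \(\Gamma_{[0,s]}>L_u\) gives \(\mathfrak t\le s\) directly. Hence
\[
  \pk{\mathfrak t(u,L)\le s\,|\,\mathfrak t(u,L)\le T}
  =\frac{\pk{\Gamma_{[0,s]}(\uu(\bm X-u\bm b))>L_u}}{\pk{\Gamma_{[0,T]}(\uu(\bm X-u\bm b))>L_u}},
\]
possibly with \(s\) replaced by \(s\pm\varepsilon\) in a sandwich if the infimum is attained only in the limit. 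The plan is therefore to evaluate the numerator with the same machinery as the denominator and take the ratio.

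\emph{Stationary case.} The hypotheses of Theorem~\ref{thm:stationary_case} remain valid on \([0,s]\) for \(s>0\): restriction preserves \ref{R1}--\ref{R2}. Applying the theorem with horizon \(s\) gives the numerator \(\sim s\,\mathcal H\,u^{2/\alpha-|I|}\varphi_\Sigma(u\bb)\). The ratio tends to \(s/T\), and the case \(s=0\) is handled by monotonicity.

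\emph{Boundary regimes.} Here the numerator interval is \([0,Su^{-2/\beta}]\), which shrinks, so the theorems cannot be quoted verbatim. I would instead rerun the proof of Theorem~\ref{thm:main_theorem} restricted to this window. In regime (a), the double-sum argument partitions \([0,Su^{-2/\beta}]\) into Pickands intervals of length \(\Lambda u^{-2/\alpha}\). Each interval at position \(t=ku^{-2/\beta}\) contributes \(H([0,\Lambda])e^{-\tau_{\bm w}t^\beta u^2/ \cdots}\) times the common factor, where the variance penalty comes from \(g(t)-g(0)\sim 2\tau_{\bm w}t^\beta\) via \ref{D2}. Summing gives a Riemann sum converging to \(\mathcal H\int_0^S e^{-\tau_{\bm w}x^\beta}dx\). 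Dividing by the full-horizon integral \(\Gamma(1/\beta+1)\tau_{\bm w}^{-1/\beta}=\int_0^\infty e^{-\tau_{\bm w}x^\beta}dx\) and substituting \(y=\tau_{\bm w}x^\beta\) yields the incomplete-Gamma ratio.

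In regimes (b) and (c), a single local window already captures the whole mass. The conditional weak-convergence step used to prove Theorem~\ref{thm:main_theorem} gives, for fixed \(S\), the single-window limit \(H_{\Gamma,L,\alpha,V_{I,I},\bm d_I,\Sigma,\bm b}([0,S])\), respectively \(\mathfrak C(S)\), times \(u^{-|I|}\varphi_\Sigma(u\bb)\). This uses \ref{B3}--\ref{B4} on \([0,S]\) together with \ref{B2} to rescale the threshold \(L_u\) to \(L\). Dividing by the Piterbarg or deterministic constant gives \(G_{\mathfrak P}(S)\), respectively \(G_{\mathfrak C}(S)\).

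\emph{Main obstacle.} The delicate point is the passage from \(\{\mathfrak t\le S u^{-2/\beta}\}\) to the fixed-window event at a possibly discontinuous \(S\). The limit function \(S\mapsto H([0,S])\) need not be continuous, because \ref{B3} is assumed only at fixed \(S\) and excludes atoms in the level, not in the window length. I would handle this with the sandwich
\[
  \Gamma_{[0,S]}>L_u\subset\{\mathfrak t\le S\}\subset\Gamma_{[0,S+\varepsilon]}>L_u,
\]
take limits, and then let \(\varepsilon\downarrow0\). This yields bounds of \(G(S)\) and \(G(S+)\), which agree exactly at continuity points of \(F_{\mathfrak P}\) and \(F_{\mathfrak C}\). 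The stated interval limits then follow by subtraction.

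The same sandwich settles the remaining claims. Monotonicity of \(G\) is immediate from \ref{B1}, and \(G\to1\) holds by the definition of \(\mathcal P\) and \(\mathcal C\) as limits. Finally, the single-window uniformity and the negligibility of the region \(t>Su^{-2/\beta}\) are not needed separately, since the denominator is already supplied by Theorem~\ref{thm:main_theorem}.
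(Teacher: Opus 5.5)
Your proposal is correct and follows essentially the same route as the paper. In both, upward closedness of $\{t:\Gamma_{[0,t]}(\uu(\bm X-u\bm b))>L_u\}$, which comes from \ref{B1} at the scaled level via Remark~\ref{rem:scaled-level-B1}, sandwiches $\{\mathfrak t(u,L)\le s\}$ between fixed-window events. The numerators are then evaluated by Theorem~\ref{thm:stationary_case}, by rerunning the single-sum and double-sum argument of Theorem~\ref{thm:main_theorem}(a) on $[0,S_\pm u^{-2/\beta}]$, or by Lemmas~\ref{lemma: limiting process} and~\ref{vectorPickands_functionals} on a single rescaled window in regimes (b)--(c), and one divides by the full-horizon asymptotic. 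The only difference is cosmetic: you take $\{\Gamma_{[0,S]}>L_u\}\subset\{\mathfrak t\le S\}$ directly as the lower bound, getting $G_{\mathfrak P}(S)$ or $G_{\mathfrak C}(S)$, whereas the paper uses $S_-<S$ and gets the left limit; both coincide with $F_{\mathfrak P}(S)$ or $F_{\mathfrak C}(S)$ at continuity points.
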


\begin{remark}
    Note that
    \begin{equation*}
    \frac{
        \Gamma (1 / \beta) - \Gamma (1 / \beta, \tau_{\bm{w}} S^\beta )
    }{
        \Gamma (1 / \beta)
    }
    = \pk{
      Y \leq \tau_{\bm{w}} S^\beta
    },
    \end{equation*}
    where \( Y \sim \operatorname{Gamma} ( 1 / \beta, 1 ) \).
\end{remark}

\noindent The proof is given in Appendix~\ref{sec:proof_exceedance_times}.

\section{Examples}
\label{sec:examples}

This section gives two direct applications of the main theorems.  The first
one is a vector-valued stationary example for an integrated positive
minimum.  The second one is a scalar non-stationary boundary example for a
non-additive Choquet integral; it displays the three regimes
\(\alpha<\beta\), \(\alpha=\beta\), and \(\alpha>\beta\), and in the
deterministic regime gives the leading constant in closed form.  In the first
example the covariance matrix at the origin is \(\Sigma=I_d\) and
\(\bm b\in(0,\infty)^d\), while the second example is scalar with \(b>0\).
Hence the quadratic programme is coordinatewise separable in the first example
and one-dimensional in the second: in both cases \(\bb=\bm b\), the active set
is the full coordinate set, and there are no inactive coordinates.  Strict
positivity of \(\bm b\) alone would not imply this for a general covariance
matrix.

\subsection{Integral and Choquet functionals used below}

The first example uses the integral functional
\begin{equation}
  \label{eq:area-functional-example}
  \Gamma_E^{\mathrm{area}}(\bm f)
  =
  \int_E \min_{1\le i\le d} (f_i(t))_+\,dt,
  \qquad (x)_+=\max(x,0).
\end{equation}
It satisfies Assumptions~\ref{B1}--\ref{B2} with
\(\mathcal L=(0,\infty)\) and \(\lambda_\Gamma=1\).  For the continuous
limiting Gaussian fields appearing below, Assumptions~\ref{B3}--\ref{B4} at any
fixed level \(L>0\) follow from the integral-functional lemmas in
Appendix~\ref{sec:auxiliary_results}: the scalarization
\(\bm x\mapsto \min_i(x_i)_+\) is coordinatewise non-decreasing,
bounded on compact sets, and has no positive plateau levels.  In dimension
one this reduces to
\(\Gamma_E^{\mathrm{area}}(f)=\int_E f(t)_+\,dt\).

We shall also use a Choquet-type functional.  Fix
\(\theta\ge1\).  For Borel \(A\subset\R\), set
\(\nu_\theta(A)=\mes(A)^\theta\), allowing the value \(+\infty\) on
unbounded sets.  Its restriction to every compact interval is a finite regular
capacity, and it is non-additive when \(\theta>1\).  For scalar continuous
paths define
\begin{equation}
  \label{eq:choquet-power-functional-example}
  \Gamma_E^{\mathrm{Ch},\theta}(f)
  =
  \int_0^\infty
  \nu_\theta\{t\in E:f(t)>r\}\,dr
  =
  \int_0^\infty
  \mes\{t\in E:f(t)>r\}^\theta\,dr .
\end{equation}
Equivalently, this is the Choquet integral of \(f_+\) with respect to
\(\nu_\theta\).  Since
\(\nu_\theta(aA+b)=a^\theta\nu_\theta(A)\), the affine scaling property holds
with \(\lambda_\Gamma=\theta\).  The capacity is convex for
\(\theta\ge1\), because \(x\mapsto x^\theta\) has increasing increments.  On every
non-degenerate compact interval \(E\), each non-empty relatively
open subset of \(E\) has positive Lebesgue measure, so
\(\supp(\nu_\theta|_E)=E\).  Thus the support is connected, carries full
capacity, and the support-intersection property holds.  The scalarization
\(x_+\) is continuous, non-decreasing, supported on \((0,\infty)\), and strictly
increasing above zero.
Thus \((\nu_\theta,x_+)\) is \(L\)-admissible Choquet data in the sense of
Definition~\ref{def:admissible-choquet-data} for every \(L>0\), and
Corollaries~\ref{cor:stationary-choquet-integral}--\ref{cor:nonstationary-choquet-integral}
apply with \(\lambda_\Gamma=\theta\).  The case \(\theta=1\) is exactly the
ordinary area functional, while \(\theta>1\) gives a non-additive Choquet
integral that emphasizes longer level sets.  In this scalar case there are no proper positive coordinate sections,
so the section-wise condition introduces no additional restriction.

For comparison, the classical vector sojourn functional
\[
  \Gamma_E^{\mathrm{soj}}(\bm f)
  = \int_E \Ind{\bm f(t)>\bm0}\,dt
\]
has the same scaling exponent as \eqref{eq:area-functional-example}, namely
\(\lambda_\Gamma=1\).  The formulae below may be read with this functional as
well whenever the corresponding no-atom assumptions are verified for the
limiting model.

\subsection{Stationary vector example: multivariate area above a high level}
\label{sec:stationary-vector-example}

Fix \(d\ge1\), \(T>0\), \(L>0\), \(\alpha\in(0,2]\), and vectors
\(\bm b=(b_1,\ldots,b_d)^\top\), \(\bm c=(c_1,\ldots,c_d)^\top\) with
\(b_i>0\) and \(c_i>0\).  Let \(Z_1,\ldots,Z_d\) be independent centered
stationary Gaussian processes with unit variance and correlation functions
\[
  r_i(t)=\exp(-c_i |t|^\alpha),
  \qquad t\in\mathbb R,
\]
and take their continuous versions, which exist by Kolmogorov's criterion.  Set
\(\bm X(t)=(Z_1(t),\ldots,Z_d(t))^\top\).  Define the accumulated positive
multivariate excess
\begin{equation*}
  A_T(u)
  =
  \int_0^T
  \min_{1\le i\le d} \bigl(u(X_i(t)-u b_i)\bigr)_+\,dt .
\end{equation*}
Let \(B_{\alpha/2,1},\ldots,B_{\alpha/2,d}\) be independent standard
fractional Brownian motions with Hurst index \(\alpha/2\), and put
\[
  Y_i(t)=\sqrt{2c_i}\,B_{\alpha/2,i}(t),
  \qquad t\ge0.
\]
Equivalently,
\(\cov(Y_i(t),Y_i(s))=c_i(t^\alpha+s^\alpha-|t-s|^\alpha)\) for
\(s,t\ge0\).  Define
\begin{equation}
  \label{def:H-area-stationary-example}
  \mathcal H_{\alpha,\bm c,\bm b}^{\mathrm{area}}(L)
  =
  \lim_{S\to\infty} S^{-1}
  \int_{\R^d} e^{\bm x^\top\bm b}
  \pk{
    \int_0^S
    \min_{1\le i\le d}
    \bigl(Y_i(t)-c_i b_i t^\alpha-x_i\bigr)_+\,dt
    >L
  }
  \,d\bm x .
\end{equation}

\begin{proposition}[Stationary vector area asymptotic]
\label{prop:stationary-vector-area-example}
For the process above,
\begin{equation}
  \label{eq:stationary-vector-area-result}
  \pk{ A_T(u)>L u^{-2/\alpha} }
  \sim
  T\,\mathcal H_{\alpha,\bm c,\bm b}^{\mathrm{area}}(L)\,
  u^{2/\alpha-d}\,(2\pi)^{-d/2}
  \exp\left(-\frac{u^2}{2}\sum_{i=1}^d b_i^2\right).
\end{equation}
The constant in \eqref{def:H-area-stationary-example} is finite and positive.
Moreover, if
\[
  \mathfrak t_A(u,L)
  =
  \inf\left\{0<t\le T:
  \int_0^t
  \min_{1\le i\le d}\bigl(u(X_i(s)-u b_i)\bigr)_+\,ds
  >L u^{-2/\alpha}
  \right\},
\]
with \(\inf\emptyset=\infty\), then for every \(s\in[0,T]\),
\begin{equation}
  \label{eq:stationary-vector-area-time-result}
  \pk{
    \mathfrak t_A(u,L)\le s
    \,\middle|\,
    \mathfrak t_A(u,L)\le T
  }
  \to \frac{s}{T}.
\end{equation}
\end{proposition}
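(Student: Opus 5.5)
The plan is to apply Theorem~\ref{thm:stationary_case} and Theorem~\ref{thm:exceedance_times}(a) directly, with \(\Gamma=\Gamma^{\mathrm{area}}\), \(\mathcal L=(0,\infty)\) and \(\lambda_\Gamma=1\). The work consists of checking the hypotheses and identifying every ingredient of the constant.

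\emph{Quadratic programme.} Since \(\Sigma=\mathcal R(0)=I_d\) and \(\bm b>\bm0\), the programme \(\Pi_{I_d}(\bm b)\) separates coordinatewise. Hence \(\bb=\bm b\), \(\bm w=\bm b>\bm0\), \(I=\{1,\ldots,d\}\) and \(J=\emptyset\). The normalization is therefore \(\uu=u\bm1\), so \(\Gamma_{[0,T]}(\uu(\bm X-u\bm b))=A_T(u)\). We also have
\[
  \varphi_\Sigma(u\bb)=(2\pi)^{-d/2}\exp\Bigl(-\tfrac{u^2}{2}\textstyle\sum_i b_i^2\Bigr).
\]

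\emph{Covariance assumptions.} Here \(\mathcal R(t)=\diag(e^{-c_i|t|^\alpha})\).
\begin{itemize}
  \item Assumption~\ref{R1}: for \(t>0\), \(\Sigma-\mathcal R(t)=\diag(1-e^{-c_it^\alpha})\) is positive definite.
  \item Assumption~\ref{R2}: take \(V=\diag(c_i)\). Then \(\|\Sigma-\mathcal R(t)-t^\alpha V\|_{\mathrm F}=O(t^{2\alpha})=o(t^\alpha)\), and \(\bm w^\top V\bm w=\sum c_ib_i^2>0\).
  \item Limiting field: \(V\) is symmetric, so \(R_{\alpha,V}(t,s)=\diag\bigl(c_i(t^\alpha+s^\alpha-|t-s|^\alpha)\bigr)\) for \(s,t\ge0\). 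Thus \(\bm Y_{\alpha,V}\) has independent coordinates equal in law to \(Y_i=\sqrt{2c_i}B_{\alpha/2,i}\).
  \item Drift: \(S_{\alpha,V}(t)\bm w=(c_ib_it^\alpha)_i\) for \(t\ge0\).
\end{itemize}
Substituting these into \eqref{def pre-Pickands constants}, the exponential factor is \(e^{\bm x^\top\bm b}\) and there are no \(J\)-coordinates. This matches \eqref{def:H-area-stationary-example} exactly.

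\emph{Functional assumptions and non-triviality.}
\begin{itemize}
  \item Assumption~\ref{B1}: \(E\mapsto\Gamma_E^{\mathrm{area}}\) is non-decreasing. If \(\Gamma_{A\cup B}>L\) but \(\Gamma_A\le L\), the integrand must be positive somewhere on \(B\), which gives \(\min_if_i>0\) there. The same reasoning gives the second implication.
  \item Assumption~\ref{B2}: this follows from the substitution \(t=as+b\), with \(\lambda_\Gamma=1\).
  \item Assumptions~\ref{B3}--\ref{B4} on \([0,S]\): these are supplied by the integral-functional lemmas in Appendix~\ref{sec:auxiliary_results}, as noted before the proposition. For a degenerate interval, \(\Gamma\equiv0<L\), so both hold trivially.
  \item Non-triviality: fix \(S=1\). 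Choose \(\bm x\) very negative, for instance \(x_i=-M\). Then with positive probability \(\sup_{[0,1]}|Y_i-c_ib_it^\alpha|<M/2\) for all \(i\), by Gaussian small-ball positivity. On that event the integral is at least \(M/2>L\) once \(M>2L\). The probability is continuous in \(\bm x\) over a neighbourhood of such points, so the integral defining \(H\) is positive.
\end{itemize}

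\emph{Conclusions.} Theorem~\ref{thm:stationary_case} then yields \eqref{eq:stationary-vector-area-result}, including the claims that the limit defining \(\mathcal H\) exists and is finite. Finiteness and positivity of \(\mathcal H\) come from the theorem's proof, where the single- and double-sum bounds sandwich \(S^{-1}H([0,S])\). Theorem~\ref{thm:exceedance_times}(a) gives \eqref{eq:stationary-vector-area-time-result}, because \(\mathfrak t_A(u,L)=\mathfrak t(u,L)\) for this \(\Gamma\).

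\emph{Main obstacle.} The only non-routine step is the no-atom condition~\ref{B3} for the shifted vector field. I would rely on the appendix lemma, which requires that the scalarization \(\min_i(x_i)_+\) has no positive plateau levels; this holds because its level sets in \(\R^d\) are null. If the lemma does not apply directly, I would argue via the diagonal shift \(\bm x\mapsto\bm x+h\bm1\). Along this shift the functional is strictly decreasing wherever it is positive. Hence, for almost every path, at most one value of \(h\) gives \(\Gamma=L\), and Fubini then gives \(\mathbb P\{\Gamma=L\}=0\) for almost every \(\bm x\).
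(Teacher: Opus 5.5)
Your proposal is correct and follows the paper's own proof essentially step for step. You reduce the quadratic programme to $\bb=\bm b$, $I=\{1,\ldots,d\}$, $J=\emptyset$, verify \ref{R1}--\ref{R2} with $V=\diag(c_1,\ldots,c_d)$, identify $\bm Y_{\alpha,V}$ and the drift $S_{\alpha,V}(t)\bm w=(c_ib_it^\alpha)_i$ so that the constant matches \eqref{def:H-area-stationary-example}, obtain non-triviality from a sufficiently negative shift, and invoke Theorem~\ref{thm:stationary_case} and Theorem~\ref{thm:exceedance_times}(a), exactly as the paper does; your extra details (the degenerate interval $[0,0]$ and the diagonal-shift no-atom argument) are sound and only make explicit what the paper delegates to the appendix lemmas.
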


\begin{proof}
Here \(\Sigma=I_d\), and since \(\bm b>\bm0\), the quadratic programme has
\(\bb=\bm b\), \(I=\{1,\ldots,d\}\), \(J=\emptyset\), and
\(\bm w=\bm b\).  The covariance matrix is
\[
  \mathcal R(t)=\diag(e^{-c_1|t|^\alpha},\ldots,e^{-c_d|t|^\alpha}),
\]
so
\[
  \bigl\|I_d-\mathcal R(t)
    -|t|^\alpha\diag(c_1,\ldots,c_d)\bigr\|_{\mathrm F}
  =o(|t|^\alpha),
  \qquad t\downarrow0.
\]
Thus Assumption~\ref{R2} holds with
\(V=\diag(c_1,\ldots,c_d)\), and Assumption~\ref{R1} follows because
\(1-e^{-c_i|t|^\alpha}>0\) for \(t>0\).  The local field
\(\bm Y_{\alpha,V}\) in Section~\ref{sec:pickands_constants} is exactly
\((Y_1,\ldots,Y_d)^\top\), and
\(S_{\alpha,V}(t)\bm w=(c_1b_1t^\alpha,\ldots,c_db_dt^\alpha)^\top\) for
\(t\ge0\).  The limiting model is non-trivial: choosing all coordinates of
\(\bm x\) sufficiently negative and using continuity of \(\bm Y_{\alpha,V}\)
gives a positive probability that the integral in
\eqref{def:H-area-stationary-example} exceeds \(L\) on a short interval.
Applying Theorem~\ref{thm:stationary_case} to \(\Gamma^{\mathrm{area}}\) gives
\eqref{eq:stationary-vector-area-result}, and
Theorem~\ref{thm:exceedance_times}(a) gives
\eqref{eq:stationary-vector-area-time-result}.
\end{proof}

\subsection{Non-stationary boundary example: a Choquet integral}
\label{sec:nonstationary-choquet-boundary-example}

We now keep the model one-dimensional in order to display the constants more
explicitly.  Fix \(T>0\), \(L>0\), \(a>0\), \(b>0\), \(c>0\),
\(\alpha\in(0,2]\), \(\beta>0\), and \(\theta\ge1\).  Let \(Z\) be a centered
stationary Gaussian process with unit variance and correlation
\(r(t)=\exp(-c|t|^\alpha)\), again taken with a continuous version.  Put
\begin{equation}
  \label{eq:nonstationary-model-example}
  X(t)=\frac{Z(t)}{1+a t^\beta},
  \qquad 0\le t\le T,
\end{equation}
and define the Choquet excess functional
\begin{equation}
  \label{eq:choquet-excess-example}
  C_T^{(\theta)}(u)
  =
  \Gamma_{[0,T]}^{\mathrm{Ch},\theta}\bigl(u(X-u b)\bigr)
  =
  \int_0^\infty
  \mes\{t\in[0,T]:u(X(t)-u b)>r\}^\theta\,dr .
\end{equation}
Let \(B_{\alpha/2}\) be a standard fractional Brownian motion with Hurst index
\(\alpha/2\), and write
\[
  Y(t)=\sqrt{2c}\,B_{\alpha/2}(t),
  \qquad
  \tau=a b^2 .
\]
Define the Pickands-type and Piterbarg-type Choquet constants
\begin{align}
  \label{def:H-choquet-nonstationary-example}
  \mathcal H_{\alpha,c,b}^{\mathrm{Ch},\theta}(L)
  & =
  \lim_{S\to\infty} S^{-1}
  \int_\R e^{bx}
  \pk{
    \Gamma_{[0,S]}^{\mathrm{Ch},\theta}
    \bigl(Y(t)-c b t^\alpha-x\bigr)>L
  }\,dx, \\
  \label{def:P-choquet-nonstationary-example}
  \mathcal P_{\alpha,a,c,b}^{\mathrm{Ch},\theta}(L)
  & =
  \lim_{S\to\infty}
  \int_\R e^{bx}
  \pk{
    \Gamma_{[0,S]}^{\mathrm{Ch},\theta}
    \bigl(Y(t)-(a+c)b t^\alpha-x\bigr)>L
  }\,dx .
\end{align}
The second constant is used only in the case \(\alpha=\beta\).  Finally, set
\begin{equation}
  \label{def:ell-choquet-example}
  \ell_{L,\theta}
  =
  \left(
    \frac{\beta+\theta}{\beta}\,L\,(ab)^{\theta/\beta}
  \right)^{\beta/(\beta+\theta)} .
\end{equation}

\begin{proposition}[Boundary variance trend for a powered Choquet integral]
\label{prop:nonstationary-choquet-example}
For the process \eqref{eq:nonstationary-model-example} and the functional
\eqref{eq:choquet-excess-example}, the following asymptotics hold as
\(u\to\infty\).  The constants in
\eqref{def:H-choquet-nonstationary-example} and
\eqref{def:P-choquet-nonstationary-example} exist, are finite and positive in
the regimes in which they are used.
\begin{enumerate}[(i)]
  \item If \(\alpha<\beta\), then
  \begin{equation}
    \label{eq:choquet-alpha-less-beta-example}
    \pk{ C_T^{(\theta)}(u)>L u^{-2\theta/\alpha} }
    \sim
    \mathcal H_{\alpha,c,b}^{\mathrm{Ch},\theta}(L)\,
    \Gamma(1/\beta+1)\,(ab^2)^{-1/\beta}\,
    u^{2/\alpha-2/\beta-1}\,
    \frac{1}{\sqrt{2\pi}}e^{-b^2u^2/2}.
  \end{equation}
  \item If \(\alpha=\beta\), then
  \begin{equation}
    \label{eq:choquet-alpha-equal-beta-example}
    \pk{ C_T^{(\theta)}(u)>L u^{-2\theta/\beta} }
    \sim
    \mathcal P_{\alpha,a,c,b}^{\mathrm{Ch},\theta}(L)\,
    u^{-1}\,
    \frac{1}{\sqrt{2\pi}}e^{-b^2u^2/2}.
  \end{equation}
  \item If \(\alpha>\beta\), then the Gaussian local fluctuation no longer
  appears in the leading constant and
  \begin{equation}
    \label{eq:choquet-alpha-greater-beta-example}
    \pk{ C_T^{(\theta)}(u)>L u^{-2\theta/\beta} }
    \sim
    \frac{1}{b}e^{-b\ell_{L,\theta}}\,
    u^{-1}\,
    \frac{1}{\sqrt{2\pi}}e^{-b^2u^2/2}.
  \end{equation}
\end{enumerate}
Let
\[
  \mathfrak t_{\theta}(u,L)
  =
  \inf\left\{0<t\le T:
  \Gamma_{[0,t]}^{\mathrm{Ch},\theta}\bigl(u(X-u b)\bigr)>L_u
  \right\},
\]
where \(L_u=L u^{-2\theta/\alpha}\) in the case \(\alpha<\beta\) and
\(L_u=L u^{-2\theta/\beta}\) in the cases \(\alpha\ge\beta\).  If
\(\alpha<\beta\), then for every \(S\ge0\),
\begin{equation}
  \label{eq:choquet-time-alpha-less-beta-example}
  \pk{
    u^{2/\beta}\mathfrak t_{\theta}(u,L)\le S
    \,\middle|\,
    \mathfrak t_{\theta}(u,L)\le T
  }
  \to
  \frac{\Gamma(1/\beta)-\Gamma(1/\beta,ab^2S^\beta)}{\Gamma(1/\beta)}.
\end{equation}
If \(\alpha>\beta\), define, for \(S>0\), the unique number
\(y_{S,\theta}>0\) by
\begin{equation}
  \label{eq:yS-choquet-example}
  \int_0^\infty
  \mes\{t\in[0,S]:y_{S,\theta}-ab t^\beta>r\}^\theta\,dr=L .
\end{equation}
Then, for every \(S>0\),
\begin{equation}
  \label{eq:choquet-time-alpha-greater-beta-example}
  \pk{
    u^{2/\beta}\mathfrak t_{\theta}(u,L)\le S
    \,\middle|\,
    \mathfrak t_{\theta}(u,L)\le T
  }
  \to
  \exp\{-b(y_{S,\theta}-\ell_{L,\theta})\}.
\end{equation}
\end{proposition}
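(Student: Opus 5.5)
The plan is to verify the hypotheses of Corollary~\ref{cor:nonstationary-choquet-integral} and Theorem~\ref{thm:exceedance_times} for the model \eqref{eq:nonstationary-model-example}, and then to compute the deterministic constant in closed form.

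\textbf{Step 1: the Gaussian hypotheses.} Here \(d=1\), \(\Sigma(t)=(1+at^\beta)^{-2}\) and \(\Sigma=1\). The quadratic programme gives \(\bb=b\), \(I=\{1\}\), \(J=\emptyset\) and \(w=b\). Moreover \(g(t)=b^2(1+at^\beta)^2\), which is continuous and strictly increasing, so \ref{D1} holds. For \ref{D2}, write
\[
  R(t,s)=\frac{e^{-c|t-s|^\alpha}}{(1+at^\beta)(1+as^\beta)} .
\]
A first-order expansion of the three factors gives
\[
  1-R(t,s)=at^\beta+as^\beta+c|t-s|^\alpha+O\bigl((t^\beta+s^\beta+|t-s|^\alpha)^2\bigr).
\]
The error is jointly \(o(t^\beta+s^\beta+|t-s|^\alpha)\) on the wedge, so \(A=a\) and \(V=c\). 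Then \(\tau_{\bm w}=ab^2>0\) and \(wVw=cb^2>0\). For \ref{D3}, split
\[
  X(t)-X(s)=\frac{Z(t)-Z(s)}{1+at^\beta}+Z(s)\Bigl(\frac1{1+at^\beta}-\frac1{1+as^\beta}\Bigr).
\]
Near \(0\) this gives \(\E{|X(t)-X(s)|^2}\lesssim|t-s|^\alpha+|t^\beta-s^\beta|^2\). Since \(|t^\beta-s^\beta|\lesssim|t-s|^{\min(\beta,1)}\), we may take \(\gamma=\min(\alpha,\beta)\); this is admissible because \(2\min(\beta,1)\ge\min(\alpha,\beta)\).

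\textbf{Step 2: reading off (i) and (ii).} The admissibility of \((\nu_\theta,x_+)\) was checked before the proposition, with \(\lambda_\nu=\theta\). Non-triviality of each limiting model follows from the negative-shift argument used in Proposition~\ref{prop:stationary-vector-area-example}: for \(x\) very negative the path is at least \(1\) on a short interval with positive probability. Parts (a) and (b) of Corollary~\ref{cor:nonstationary-choquet-integral} then give (i) and (ii). This uses \(\varphi_\Sigma(ub)=(2\pi)^{-1/2}e^{-b^2u^2/2}\), \(u^{-|I|}=u^{-1}\), and the drift \(S_{\alpha,c}(t)w+d(t)=(a+c)bt^\alpha\) when \(\alpha=\beta\). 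The time limit \eqref{eq:choquet-time-alpha-less-beta-example} is Theorem~\ref{thm:exceedance_times}(b) with \(\tau_{\bm w}=ab^2\).

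\textbf{Step 3: the closed form in (iii).} Substitute \(y=-x\) and set
\[
  h_S(y)=\Gamma^{\mathrm{Ch},\theta}_{[0,S]}(y-abt^\beta).
\]
Then
\[
  \mathfrak C(S)=\int_\R e^{-by}\Ind{h_S(y)>L}\,dy .
\]
The map \(y\mapsto h_S(y)\) is continuous and non-decreasing, vanishes for \(y\le0\), and is strictly increasing on \(y>0\) because the level sets strictly grow in measure. Hence \(y_{S,\theta}\) is well defined, and \(\mathfrak C(S)=b^{-1}e^{-by_{S,\theta}}\). As \(S\to\infty\), \(h_S\) increases to
\[
  h_\infty(y)=\int_0^y\Bigl(\frac{y-r}{ab}\Bigr)^{\theta/\beta}dr=\frac{\beta}{\beta+\theta}(ab)^{-\theta/\beta}y^{1+\theta/\beta}.
\]
Indeed, for fixed \(y\) the set \(\{t:y-abt^\beta>r\}=[0,((y-r)/ab)^{1/\beta})\) lies inside \([0,S]\) as soon as \(S\) is large. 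Solving \(h_\infty(y)=L\) gives exactly \(y=\ell_{L,\theta}\). Monotonicity and continuity give \(y_{S,\theta}\downarrow\ell_{L,\theta}\), hence
\[
  \mathcal C=\lim_{S\to\infty}\mathfrak C(S)=b^{-1}e^{-b\ell_{L,\theta}}.
\]
This gives (iii) via Corollary~\ref{cor:nonstationary-choquet-integral}(c).

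\textbf{Step 4: the time limit in (iii).} We have \(F_{\mathfrak C}(S)=\mathfrak C(S)/\mathcal C=e^{-b(y_{S,\theta}-\ell_{L,\theta})}\). This is continuous in \(S>0\), since \(y_{S,\theta}\) depends continuously on \(S\). Theorem~\ref{thm:exceedance_times}(d) therefore yields \eqref{eq:choquet-time-alpha-greater-beta-example} at every \(S>0\).

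\textbf{Main obstacle.} I expect the hardest part to be the joint-uniform remainder in \ref{D2}, together with the \ref{D3} exponent when \(\beta<1\), where \(t^\beta\) is not Lipschitz. I would handle both with the elementary inequalities above, checking the worst case \(\beta<\alpha\) explicitly. A secondary point is the strict monotonicity of \(h_S\) on \(y>0\), which is needed for the uniqueness of \(y_{S,\theta}\).
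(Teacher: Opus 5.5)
Your proposal is correct and follows essentially the same route as the paper: you verify \ref{D1}--\ref{D3} with $A=a$, $V=c$, $\tau_{\bm w}=ab^2$ and $\gamma=\min\{\alpha,\beta\}$, read off (i), (ii) and the $\alpha<\beta$ time limit from the main results, and compute $\mathfrak C(S)=b^{-1}e^{-by_{S,\theta}}$ and its limit $b^{-1}e^{-b\ell_{L,\theta}}$ explicitly. Two small points need tightening: for non-triviality the shifted path must be made large on a fixed interval (not merely $\ge 1$ on a short one) so that the Choquet integral actually exceeds $L$, and the continuity of $S\mapsto y_{S,\theta}$ should be justified, as the paper does, by joint continuity of $(S,y)\mapsto h_S(y)$ on $(0,\infty)^2$ together with strict monotonicity in $y$.
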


\begin{proof}
The covariance of \(X\) is
\[
  R(t,s)=\frac{e^{-c|t-s|^\alpha}}{(1+at^\beta)(1+as^\beta)} .
\]
Thus \(\Sigma=R(0,0)=1\) and, as \(t,s\to0\) with \(t\ge s\ge0\),
\begin{equation}
  \label{eq:nonstationary-cov-exp-example}
  1-R(t,s)
  = at^\beta+as^\beta+c|t-s|^\alpha
  +o\bigl(t^\beta+s^\beta+|t-s|^\alpha\bigr).
\end{equation}
Consequently Assumption~\ref{D2} holds with \(A=a\), \(V=c\), and
\(\tau_{\bm w}=ab^2\).  Since \(b>0\), the inverse generalized variance is
\[
  g(t)=b^2(1+at^\beta)^2,
\]
which has its unique minimum at \(0\), proving Assumption~\ref{D1}.  For
Assumption~\ref{D3}, write
\[
  X(t)-X(s)
  =
  \frac{Z(t)-Z(s)}{1+at^\beta}
  +Z(s)\left(\frac{1}{1+at^\beta}-\frac{1}{1+as^\beta}\right).
\]
Near the origin,
\(\E{(Z(t)-Z(s))^2}\lesssim |t-s|^\alpha\) and
\(|t^\beta-s^\beta|\lesssim |t-s|^{\min\{\beta,1\}}\).  Hence
\[
  \E{(X(t)-X(s))^2}
  \lesssim |t-s|^\alpha+|t-s|^{2\min\{\beta,1\}}
  \lesssim |t-s|^{\min\{\alpha,\beta\}},
\]
where the last step uses \(\alpha\le2\).
Thus Assumption~\ref{D3} holds with \(\gamma=\min\{\alpha,\beta\}\).  The
functional assumptions required by Theorem~\ref{thm:main_theorem} were verified
above for \(\Gamma^{\mathrm{Ch},\theta}\).

The Pickands and Piterbarg limiting models are non-trivial by the same
negative-shift and continuity argument used in the stationary example: after
choosing \(x<0\) sufficiently negative, the continuous limiting path is positive
on a short initial interval with positive probability.  The cases
\(\alpha<\beta\) and \(\alpha=\beta\) now follow immediately from
Theorem~\ref{thm:main_theorem}(a) and Theorem~\ref{thm:main_theorem}(b), with
\(\lambda_\Gamma=\theta\) and
\(\varphi_1(ub)=(2\pi)^{-1/2}e^{-b^2u^2/2}\).  In the case
\(\alpha>\beta\), Theorem~\ref{thm:main_theorem}(c) gives the deterministic
constant as
\[
  \mathcal C_\theta
  =
  \lim_{S\to\infty}
  \int_\R e^{bx}
  \Ind{\Gamma_{[0,S]}^{\mathrm{Ch},\theta}(-abt^\beta-x)>L}\,dx .
\]
For \(y\ge0\), monotone convergence of the level-set lengths gives
\begin{align*}
  J_\infty(y)
  &\coloneqq
  \lim_{S\to\infty}
  \Gamma_{[0,S]}^{\mathrm{Ch},\theta}\bigl((y-abt^\beta)_+\bigr) \\
  & =
  \int_0^y
  \mes\{t\ge0:y-abt^\beta>r\}^\theta\,dr \\
  & =
  \int_0^y
  \left(\frac{y-r}{ab}\right)^{\theta/\beta}\,dr
  =
  \frac{\beta}{\beta+\theta}(ab)^{-\theta/\beta}y^{1+\theta/\beta}.
\end{align*}
For \(y<0\), put \(J_\infty(y)=0\).  Hence the limiting indicator is
\(\Ind{J_\infty(-x)>L}\), which is equivalent to
\(-x>\ell_{L,\theta}\), where \(\ell_{L,\theta}\) is defined in
\eqref{def:ell-choquet-example}.  A second application of monotone convergence
therefore yields
\[
  \mathcal C_\theta
  =
  \int_{-\infty}^{-\ell_{L,\theta}}e^{bx}\,dx
  =
  \frac1b e^{-b\ell_{L,\theta}}.
\]
This proves \eqref{eq:choquet-alpha-greater-beta-example}.

The exceedance-time limit \eqref{eq:choquet-time-alpha-less-beta-example} is
exactly Theorem~\ref{thm:exceedance_times}(b).  For \(\alpha>\beta\), put
\[
  J_S(y)=
  \int_0^\infty
  \mes\{t\in[0,S]:y-abt^\beta>r\}^\theta\,dr .
\]
For \(S>0\) and \(y\ge0\), the level-set length gives the explicit
representation
\[
  J_S(y)
  =
  \int_0^y
  \min\left\{S,\left(\frac{y-r}{ab}\right)^{1/\beta}\right\}^{\theta}\,dr.
\]
It follows directly that \(y\mapsto J_S(y)\) is continuous and strictly
increasing from \(0\) to \(\infty\), so \eqref{eq:yS-choquet-example} defines a
unique positive \(y_{S,\theta}\).  The same representation shows that
\((S,y)\mapsto J_S(y)\) is continuous on \((0,\infty)^2\), and the strict
monotonicity in \(y\) implies that \(S\mapsto y_{S,\theta}\) is continuous.
Moreover, \(J_S(y)\uparrow J_\infty(y)\) as \(S\to\infty\), whence
\(y_{S,\theta}\downarrow\ell_{L,\theta}\).  Thus every \(S>0\) is a
continuity point of the deterministic limiting distribution in
Theorem~\ref{thm:exceedance_times}(d), and the limiting distribution is proper.  The truncated deterministic constant equals
\[
  \mathfrak C(S)
  =
  \int_\R e^{bx}
  \Ind{J_S(-x)>L}\,dx
  =
  \frac1b e^{-b y_{S,\theta}}.
\]
Dividing by \(\mathcal C_\theta=b^{-1}e^{-b\ell_{L,\theta}}\) and applying
Theorem~\ref{thm:exceedance_times}(d) gives
\eqref{eq:choquet-time-alpha-greater-beta-example}.
\end{proof}

\newpage
\appendix

\section{Proofs of the main claims}
\label{sec:proofs_main_claims}

\subsection{Choquet-integral functionals}
\label{sec:proof_choquet_claims}
\begin{proof}[Verification of the Choquet-integral claims]
  The first implication in~\ref{B1} follows from monotonicity of the capacity:
  for every \(t\ge0\),
  \[
    \{x\in A:\psi(\bm f(x))>t\}
    \subset
    \{x\in A\cup B:\psi(\bm f(x))>t\}.
  \]
  Hence \(\Gamma_A(\bm f)>L\) implies \(\Gamma_{A\cup B}(\bm f)>L\). Since
  \(L\ge0\), the inequality \(\Gamma_A(\bm f)>L\) also implies that
  \(\nu\{x\in A:\psi(\bm f(x))>t\}>0\) for some \(t>0\). Thus this set is
  non-empty, and the assumption \(\psi(\bm x)>0\Rightarrow\min_i x_i>0\) gives
  \(\sup_{x\in A}\min_i f_i(x)>0\).

  Suppose next that \(\Gamma_{A\cup B}(\bm f)>L\) but
  \(\Gamma_A(\bm f)\le L\). Then
  \[
    \int_0^\infty
    \Big(
      \nu\{x\in A\cup B:\psi(\bm f(x))>t\}
      -\nu\{x\in A:\psi(\bm f(x))>t\}
    \Big)\,dt>0.
  \]
  Hence the integrand is strictly positive for some \(t>0\). For this \(t\), the level set in \(A\cup B\) has strictly larger
  capacity than its intersection with \(A\).  It therefore contains a point of
  \(B\setminus A\) (and in particular meets \(B\)).  Thus
  \(\psi(\bm f(x))>t>0\) for some \(x\in B\), and again \(\sup_{x\in B}\min_i f_i(x)>0\). This proves~\ref{B1}.

  The affine scaling property~\ref{B2} is immediate from the covariance of
  \(\nu\):
  \begin{align*}
    \Gamma_{aE+b}(\bm f)
    &=\int_0^\infty
      \nu\{x\in aE+b:\psi(\bm f(x))>t\}\,dt \\
    &=\int_0^\infty
      \nu\bigl(b+a\{x\in E:\psi(\bm f(ax+b))>t\}\bigr)\,dt \\
    &=a^{\lambda_\nu}
      \int_0^\infty
      \nu\{x\in E:\psi(\bm f(ax+b))>t\}\,dt
      =a^{\lambda_\nu}\Gamma_E(\bm f(a\cdot+b)).
  \end{align*}
  Thus Assumption~\ref{B2} holds with
  \(\lambda_\Gamma=\lambda_\nu\).

  The no-atom and continuity assertions are precisely the applications of
  Lemmas~\ref{sojorn_F2_cap} and~\ref{sojorn_F3_cap} to
  \(G=\psi\) and to the shifted limiting process appearing in~\ref{B3} and
  \ref{B4}.  If some coordinates of that path are frozen at finite positive
  values, Corollary~\ref{cor:choquet-partial-shifts} applies in the remaining
  coordinates to the section \(\psi_{D,\bm c}\) from
  \eqref{def:positive-coordinate-section}; this is the role of the section-wise
  condition in Definition~\ref{def:admissible-choquet-data}.

  Finally, assume the displayed level-set identity in the remark. Integrating it
  over \(t>0\) gives
  \[
    \Gamma_E(\bm f)
    =
    \Gamma_E(\bm f_{K^c},\bm\infty_K)
    \Ind{\bm f_K>\bm0_K}.
  \]
  Since admissible levels are non-negative and the event in~\ref{B5} is strict,
  this identity is equivalent to
  \[
    \Gamma_E(\bm f)>L
    \quad\Longleftrightarrow\quad
    \Gamma_E(\bm f_{K^c},\bm\infty_K)>L
    \text{ and }
    \bm f_K>\bm0_K,
  \]
  which proves~\ref{B5}.
\end{proof}

\subsection{Local-footprint transfer}
\label{sec:proof_local_footprint_transfer}
\begin{proof}[Proof of Proposition~\ref{prop:local-footprint-transfer}]
Fix \(u\) and keep the physical footprint \(\rho_uF\) fixed.  On a common
ambient continuous path, define the family indexed only by the start set by
\[
  \widehat\Gamma_E^{(u)}(\bm f)
  =\Gamma_{E;\rho_uF}(\bm f|_{E+\rho_uF}).
\]
Thus, when several start blocks are compared, every restriction is taken from
the common path domain \((\bigcup_k E_k)+\rho_uF\).  Assumption~\ref{LF1} is
exactly the part of Assumption~\ref{B1} used by the
adapted Bonferroni lemma.  Consequently all upper and lower decompositions are
performed on start blocks only.  Moreover, a functional exceedance on a start
block implies a pointwise vector exceedance at a point of that same block.
Therefore the one-block bounds, the separated-block estimates, and the
adjacent-block estimates in the original double-sum proofs are unchanged.
Possible overlap of the enlarged path domains \(B_{k,u}+\rho_uF\) is irrelevant:
the Gaussian two-block remainder is bounded by pointwise exceedances in the
disjoint start blocks.  This is the precise reason no assumption in the
footprint variable is needed.

Let \(a_u=\inf E_u\) and consider a complete start block
\[
  B_{k,u}=a_u+\rho_u(Sk+[0,S]).
\]
By~\ref{LF2},
\begin{align*}
 &\Gamma_{B_{k,u};\rho_uF}
    (\uu(\bm X-u\bm b))>L\rho_u^{\lambda_\Gamma}
 \\
 &\qquad\Longleftrightarrow
 \Gamma_{[0,S];F}
 \left(\uu\bigl(\bm X(a_u+\rho_u(Sk+\cdot))-u\bm b\bigr)\right)>L.
\end{align*}
Thus, after rescaling, the footprint is the fixed compact set \(F\), not a
second growing direction.

For the local conditioning argument, set \(D_{S,F}=[0,S]+F\) and regard
\[
  \bm f\longmapsto\Gamma_{[0,S];F}(\bm f)-L
\]
as a functional on \(C(D_{S,F},\R^d)\).  Assumption~\ref{LF1} gives
condition~\ref{F1}, while Assumption~\ref{LF3} gives conditions~\ref{F2}--\ref{F3}
for the exact coordinate-reduced limiting element.  The generic limiting
constant retains the finite slack coordinates, so no constant-reduction
assumption is needed for the transfer.  Since \(F\) is fixed and
compact, \(D_{S,F}\) is contained in a fixed compact interval for every fixed
\(S\).  The conditional mean, covariance, and increment estimates used in
Lemma~\ref{lemma: limiting process} and in the stationary local argument are
uniform on such a compact set.  Lemma~\ref{vectorPickands_functionals}
therefore yields the same single-block asymptotic, with
\(\Gamma_{[0,S]}\) replaced by \(\Gamma_{[0,S];F}\).

The arguments proving finiteness, positivity, and existence of the
Pickands-type constant partition only the start interval and use only the
single-block limit, the start-set Bonferroni implication, and the same
pointwise two-block Gaussian estimates.  They therefore give
\[
  \mathcal H_{\Gamma;F,L,\alpha,V,\Sigma,\bm b}
  =\lim_{S\to\infty}\frac1S
   H_{\Gamma;F,L,\alpha,V,\bm0,\Sigma,\bm b}([0,S]).
\]
The monotone limits defining the Piterbarg-type and deterministic constants
are unchanged for the same reason.

It remains only to compare the admissible start set
\(E_u=[0,T]\ominus\rho_uF\) with \([0,T]\).  In the stationary case the
erosion removes intervals of total length \(O(\rho_u)\), hence at most a
bounded number of local blocks; their contribution is
\(O(u^{-|I|}\varphi_\Sigma(u\bb))\), negligible relative to the main factor
\(u^{2/\alpha-|I|}\varphi_\Sigma(u\bb)\).  In the boundary non-stationary
case \(F\subset[0,\infty)\), so the left endpoint \(0\), where the generalized
variance is minimized, is retained.  Only an \(O(\rho_u)\) interval at the
right endpoint is removed, and that region is already negligible in the
localization step because it is separated from the unique minimizer.  Hence
the Laplace sums and all three leading constants are unchanged.

Finally, with \(F\) fixed, the scaled-level consequence of
Assumptions~\ref{LF1}--\ref{LF2} makes
\[
  \{t:\Gamma_{[0,t];\rho_uF}(\uu(\bm X-u\bm b))
       >L\rho_u^{\lambda_\Gamma}\}
\]
upward closed.  The sandwich proof of
Theorem~\ref{thm:exceedance_times} therefore applies verbatim to the first
successful start time.  This proves the transfer principle.
\end{proof}

\subsection{Stationary case}
\label{sec:proof_stationary_case}
\begin{proof}[Proof of Theorem~\ref{thm:stationary_case}]
Fix a block length \(S>0\) and put
    \(N_u=\floor{T S^{-1}u^{2/\alpha}}\).  Let
    \(B_{k,u}=S u^{-2/\alpha}[k,k+1]\), \(0\le k\le N_u-1\), and let
    \(R_u=[N_uS u^{-2/\alpha},T]\) be the possible terminal remainder.  This
    remainder has length at most \(S u^{-2/\alpha}\).  Applying
    Lemma~\ref{lemma Bonferroni adapted for sojourns} to the complete blocks,
    and using the scaled-level consequence in
    Remark~\ref{rem:scaled-level-B1} with
    \(A=\cup_{k<N_u}B_{k,u}\), \(B=R_u\), gives
    \begin{equation*}
      \mathbb{\Sigma}_1 - \mathbb{\Sigma}_2
      \leq
      \pk{\Gamma_{[0, T]} ( \uu ( \bm{X} - u \bm{b} ) ) > L_u}
      \leq
      \mathbb{\Sigma}_1 + \mathbb{\Sigma}_2
      + \pk{\exists t\in R_u:\bm X(t)>u\bm b},
    \end{equation*}
    where
    \begin{gather*}
        \mathbb{\Sigma}_1
        =
        \sum_{k = 0}^{N_u-1}
        \pk{
        \Gamma_{B_{k,u}}
        ( \uu ( \bm{X} - u \bm{b} ) )
        > L_u
        },
        \\[7pt]
        \mathbb{\Sigma}_2
        =
        \sum_{0\le k \neq l \le N_u-1}
        \pk{
        \begin{aligned}
        & \exists\, t \in B_{k,u}:
        \bm{X} ( t ) > u \bm{b}
        \\
        & \exists\, s \in B_{l,u}:
        \bm{X} ( s ) > u \bm{b}.
        \end{aligned}
        }
    \end{gather*}
    The terminal-remainder probability is bounded by one local high-excursion
    probability on an interval of length \(S u^{-2/\alpha}\), hence it is
    \(O(u^{-|I|}\varphi_\Sigma(u\bb))=o(u^{2/\alpha-|I|}\varphi_\Sigma(u\bb))\).
    We next control the double sum. Split the pairs according to whether
    \(S|k-l|<\delta u^{2/\alpha}\), where \(\delta\) is the constant from
    Lemma~\ref{lemma:double_sum_bound_stationary}. The complementary pairs are
    separated by a fixed positive time distance; by~\ref{R1} and the standard
    Gaussian quadratic-programming bound for two separated copies, their total
    contribution is exponentially smaller than
    \(u^{2/\alpha-|I|}\varphi_\Sigma(u\bb)\).  For the near pairs, write
    \(h=|k-l|\). Lemma~\ref{lemma:double_sum_bound_stationary} gives the
    adjacent contribution \(h=1\) of order
    \[
      N_u\,(S^{1/2}+S e^{-cS^{\alpha/2}})
      u^{-|I|}\varphi_\Sigma(u\bb),
    \]
    and the separated contribution \(h\ge2\) of order
    \[
      N_u\,S\sum_{m\ge1}e^{-c(Sm)^\alpha}
      u^{-|I|}\varphi_\Sigma(u\bb).
    \]
    Since \(N_u\sim T u^{2/\alpha}/S\), both bounds are
    \(o(u^{2/\alpha-|I|}\varphi_\Sigma(u\bb))\) after first letting
    \(u\to\infty\) and then \(S\to\infty\). Hence
    \begin{equation*}
      \lim_{S \to \infty} \limsup_{u \to \infty}
      \frac{\mathbb{\Sigma}_2}{u^{2 / \alpha - |I|} \, \varphi_\Sigma ( u \bb )} 
      = 0.
    \end{equation*}
    For each term of the single sum \( \mathbb{\Sigma}_1 \) we have 
    \begin{align*}
      \pk{\Gamma_{S u^{-2 / \alpha}[k, k+1]} ( \uu (\bm{X} - u \bm{b}) ) > L_u}
      & = \pk{
        u^{-2 \lambda_\Gamma / \alpha} \Gamma_{[0,S]} ( \uu (\bm{X}_{u,k} - u \bm{b})) 
        > L \cdot u^{-2 \lambda_\Gamma / \alpha} 
        }
      \\
      & = \pk{\Gamma_{[0,S]} ( \uu (\bm{X}_{u, 0} - u \bm{b})) > L},
    \end{align*}
    where
    \( \bm{X}_{u,k}(t)=\bm{X}(u^{-2/\alpha}(Sk+t))\), \(t\in[0,S]\).  Here
    \ref{B2} is used with the affine map \(t\mapsto u^{-2/\alpha}(Sk+t)\), and
    stationarity gives \( \bm{X}_{u,k}\overset{d}{=}\bm{X}_{u,0}\).  The local
    covariance expansion, together with the corresponding increment bound, gives
    assumptions~\ref{A1}--\ref{A3} with limiting field
    \(\bm Y_{\alpha,V_{I,I}}\) and local drift
    \(S_{\alpha,V_{I,I}}(\cdot)\bm w_I\). Assumption~\ref{B1} supplies
    \ref{F1} for \(\tilde{\Gamma}=\Gamma_{[0,S]}(\cdot)-L\), and the
    Assumptions~\ref{B3}--\ref{B4} for the limiting model are exactly
    conditions~\ref{F2}--\ref{F3} for this shifted limiting element. Applying Lemma~\ref{vectorPickands_functionals} to
    \(\bm{X}_{u,0}\) and \(\tilde{\Gamma}\), we obtain
    \begin{equation*}
      \mathbb{\Sigma}_1
      \sim 
      T \, u^{2 / \alpha} \, \frac{H ( S )}{S} \, 
      u^{-|I|} \,
      \varphi_{\Sigma} ( u \bb )
\end{equation*}
    where \( H(S) \coloneqq H_{\Gamma, L, \alpha, V_{I, I}, \bm{0}_I, \Sigma,
    \bm{b}} ( [0,S] ) \). By
    Lemma~\ref{lemma: lim H(S) / S in (0, infty) if alpha < beta}, the limit
    \( \mathcal{H} = \lim_{S \to \infty} S^{-1} H(S) \) exists and is finite.
    Combining the upper and lower Bonferroni bounds and then letting
    \(S\to\infty\) gives the claimed stationary asymptotic. The constant is
    positive by the non-triviality assumption.
\end{proof}

\subsection{Non-stationary case}
\label{sec:proof_main_theorem}
\begin{proof}[Proof of Theorem~\ref{thm:main_theorem}]
We split the proof into the three cases \( \alpha < \beta \), \( \alpha = \beta \), and \( \alpha > \beta \).
  \bigskip

  \textbf{Case \( \alpha < \beta \).}
  Let \( \Lambda > 0 \). By the scaled-level consequence in
  Remark~\ref{rem:scaled-level-B1}, we have that
  \begin{align*}
    \pk{
      \Gamma_{[0, \Lambda u^{-2 / \beta} ]} 
      ( \uu ( \bm{X} - u \bm{b} ) )
      > L_u
      }
    & 
    \leq
    \pk{
      \Gamma_{[0,T]} 
      ( \uu ( \bm{X} - u \bm{b} ) ) 
      > L_u
      }
    \\[7pt]
    &
    \leq
    \pk{
      \Gamma_{[0, \Lambda u^{-2 / \beta}]} 
      ( \uu ( \bm{X} - u \bm{b} ) )
      > L_u
    }
    + \pk{ 
    \exists\, t \in [\Lambda u^{-2 / \beta} , T] : 
    \bm{X} ( t ) > u \bm{b}
    }.
  \end{align*}
  Using \Cref{log layer bound lemma} the second term on the right may be bounded
  as
  \begin{equation}
    \label{main theorem: log layer bound application}
    \pk{\exists\, t \in [\Lambda u^{-2 / \beta}, T] : \bm{X} (t) > u \bm{b}}
    \lesssim 
    \exp\left( -c \Lambda^\beta \right) 
    u^{2/\alpha-2/\beta-|I|} \, \varphi_\Sigma ( u \bb )
  \end{equation}
  with some \( c > 0 \) for all \( u \) large enough. Let \(S>0\) and put
  \(M_u=\floor{\Lambda S^{-1}u^{2/\alpha-2/\beta}}\).  Set
  \(B_{k,u}=S u^{-2/\alpha}[k,k+1]\), \(0\le k\le M_u-1\), and let
  \(R_{u,\Lambda}=[M_uS u^{-2/\alpha},\Lambda u^{-2/\beta}]\) be the possible
  terminal remainder.  Lemma~\ref{lemma Bonferroni adapted for sojourns}, applied
  to the complete blocks, and the scaled-level implication from
  Remark~\ref{rem:scaled-level-B1} give
  \begin{equation}
    \label{main theorem: sojourn-bonferroni application}
    \mathbb{\Sigma}_1 - \mathbb{\Sigma}_2
    \leq
    \pk{
      \Gamma_{[0, \Lambda u^{-2 / \beta}]}
      ( \uu ( \bm{X} - u \bm{b} ) )
      > L_u
    }
    \leq
    \mathbb{\Sigma}_1 + \mathbb{\Sigma}_2
    + \pk{\exists t\in R_{u,\Lambda}:\bm X(t)>u\bm b},
  \end{equation}
  where
  \begin{gather*}
    \mathbb{\Sigma}_1
    =
    \sum_{k=0}^{M_u-1}
    \pk{
      \Gamma_{B_{k,u}}
      ( \uu ( \bm{X} - u \bm{b} ) )
      > L_u
    },
    \\[7pt]
    \mathbb{\Sigma}_2
    =
    \sum_{0\le k \neq l \le M_u-1}
    \pk{
    \begin{aligned}
      & \exists\, t \in B_{k,u}:
      \bm{X} ( t ) > u \bm{b}
      \\
      & \exists\, s \in B_{l,u}:
      \bm{X} ( s ) > u \bm{b}.
    \end{aligned}
    }
  \end{gather*}
  The terminal remainder has length at most \(S u^{-2/\alpha}\).  By the same
  local bound used for one complete block, its high-excursion probability is
  \(O(u^{-|I|}\varphi_\Sigma(u\bb))\), which is negligible on the scale
  \(u^{2/\alpha-2/\beta-|I|}\varphi_\Sigma(u\bb)\) because \(\alpha<\beta\).
  Lemma~\ref{lemma:double_sum_bound} gives the required double-sum estimate.
  Indeed, adjacent pairs contribute at most
  \(M_u(S^{1/2}+S e^{-cS^{\alpha/2}})u^{-|I|}\varphi_\Sigma(u\bb)\), and pairs
  separated by at least one full block contribute at most
  \(M_uS\sum_{m\ge1}e^{-c(Sm)^\alpha}u^{-|I|}\varphi_\Sigma(u\bb)\). Since
  \(M_u\sim \Lambda u^{2/\alpha-2/\beta}/S\),
  \begin{equation}
    \label{main theorem: double sum bounded by S^{-1/2}}
    \limsup_{u \to \infty} 
    \frac{
      \mathbb{\Sigma}_2
    }{ 
      u^{2 / \alpha - 2 / \beta - |I|} \,
      \varphi_{\Sigma} ( u \bb )
}
    \lesssim_\Lambda
    S^{-1 / 2}.
  \end{equation}
  By Assumption~\ref{B2}, we have
  \begin{align*}
    \pk{\Gamma_{S u^{-2 / \alpha} [k, k+1]} ( \uu (\bm{X} - u \bm{b} ) ) > L_u}
    & 
    = \pk{
      u^{-2 \lambda_\Gamma / \alpha}
      \Gamma_{[0, S]} 
      ( \uu ( \bm{X}_{u, k} - u \bm{b} ) )
      > 
      u^{-2 \lambda_\Gamma / \alpha} L
    }
    \\[7pt]
    & = \pk{
      \Gamma_{[0, S]} 
      ( \uu ( \bm{X}_{u, k} - u \bm{b} ) )
      > 
      L
    }
  \end{align*}
  where we set
  \[
    \tau_k=Sk,
    \qquad
    \bm X_{u,k}(t)=\bm X(u^{-2/\alpha}(\tau_k+t)),
    \quad t\in[0,S],
  \]
  and used that \(L_u=L\cdot u^{-2\lambda_\Gamma/\alpha}\). For
  \(k\le \Lambda S^{-1}u^{2/\alpha-2/\beta}\), the parameters
  \(\tau_k\) lie in \(Q_u=[0,\Lambda u^{2/\alpha-2/\beta}]\). Thus
  Lemma~\ref{lemma: limiting process} verifies assumptions
  \ref{A1}--\ref{A3} uniformly in \(k\). The local drift entering
  Lemma~\ref{vectorPickands_functionals} is
  \(S_{\alpha,V_{I,I}}(\cdot)\bm w_I\), hence the resulting preconstant is
  \(H_{\Gamma,L,\alpha,V_{I,I},\bm0_I,\Sigma,\bm b}([0,S])\). Let
  \(\tilde{\Gamma}(\cdot)=\Gamma_{[0,S]}(\cdot)-L\). Assumption~\ref{B1}
  gives~\ref{F1}, while Assumptions~\ref{B3}--\ref{B4} for the limiting model
  give~\ref{F2}--\ref{F3}. Hence we can
  apply Lemma~\ref{vectorPickands_functionals} uniformly in \(k\) to obtain
  \begin{equation*}
    \label{main theorem: Pickands lemma application}
    \pk{\Gamma_{[0, S]} ( \uu ( \bm{X}_{u, k} - u \bm{b} ) ) > L }
    \sim
    H ( S ) \,
    u^{-|I|} \, 
    \varphi_{\Sigma_{u,\tau_k}} ( u \bb ),
  \end{equation*}
  where 
  \( 
  H(S) 
  \coloneqq 
  H_{\Gamma, L, \alpha, V_{I, I}, \bm{0}_I, \Sigma, \bm{b}} ([0, S])
  \).
  Moreover, uniformly over the same range of \(k\),
  \begin{equation*}
    \frac{\varphi_{\Sigma_{u,\tau_k}}(u\bb)}{\varphi_\Sigma(u\bb)}
    =
    \exp\left\{-\tau_{\bm w}
      (S k u^{2/\beta-2/\alpha})^\beta(1+o(1))\right\}.
  \end{equation*}
  This follows by expanding \(\bb^\top\Sigma(t)^{-1}\bb\) at
  \(t=0\), with \(t=u^{-2/\alpha}Sk\), and using
  \(\Sigma-\Sigma(t)=(A+A^\top)t^\beta+o(t^\beta)\). Therefore,
  \begin{align*}
    \mathbb{\Sigma}_1
    & \sim 
    H ( S ) \, 
    u^{-|I|} \,
    \varphi_\Sigma ( u \bb )
    \sum_{k=0}^{M_u-1}
    e^{ -\tau_{ \bm{w} }( S k u^{ 2/\beta - 2/\alpha } )^\beta ( 1 + o( 1 ) ) } 
    \\[7pt]
    & \sim
    \frac{H ( S )}{S} \,
    u^{2 / \alpha - 2 / \beta} \,
    u^{-|I|} \,
    \varphi_\Sigma ( u \bb )
    \int_0^\Lambda e^{ -\tau_{ \bm{w} } x^{ \beta } } \, dx.
  \end{align*}
  Letting \( u \to \infty \) and then \( S \to \infty \), and using that 
  \( 
  \mathcal{H} = \lim_{S \to \infty} S^{-1} H(S) \in (0,\infty) 
  \) 
  by Lemma~\ref{lemma: lim H(S) / S in (0, infty) if alpha < beta}, we obtain
  \begin{align*}
    \int_0^\Lambda e^{-\tau_{\bm{w}} x^\beta} \, dx
    & \leq
\liminf_{u \to \infty} 
    \frac{
    \pk{\Gamma_{[0,T]} ( \uu (\bm{X} - u \bm{b}) ) > L_u}
    }{
      \mathcal{H} \, u^{2 / \alpha - 2 / \beta - |I|} \,
      \varphi_{\Sigma} ( u \bb )
}
    \\[7pt]
    & \leq
    \limsup_{u \to \infty} 
    \frac{
    \pk{\Gamma_{[0,T]} ( \uu (\bm{X} - u \bm{b}) ) > L_u}
    }{
      \mathcal{H} \, u^{2 / \alpha - 2 / \beta - |I|} \,
      \varphi_{\Sigma} ( u \bb )
}
    \leq
    \int_0^\Lambda e^{-\tau_{\bm{w}} x^\beta} \, dx
    + C e^{-c \Lambda^\beta}.
  \end{align*}
  Letting \( \Lambda \to \infty \), we further obtain
  \begin{equation*}
\lim_{u \to \infty}
    \frac{
      \pk{\Gamma_{[0,T]} ( \uu ( \bm{X} - u \bm{b} ) ) > L_u }
    }{
      \mathcal{H} \, 
      u^{2 / \alpha - 2 / \beta - |I|} \, 
      \varphi_{\Sigma} ( u \bb )
}
    = 
    \Gamma \left( \frac{1}{\beta} + 1 \right) \, 
    \tau_{\bm{w}}^{-1 / \beta}.
  \end{equation*}

  \textbf{Case \( \alpha = \beta \).}
  For fixed \(\Lambda>0\), union-monotonicity and the log-layer bound give the
  deterministic truncation estimate
  \begin{equation*}
    0\le
    \pk{\Gamma_{[0,T]}(\uu(\bm X-u\bm b))>L_u}
    -
    \pk{\Gamma_{[0,\Lambda u^{-2/\beta}]}(\uu(\bm X-u\bm b))>L_u}
    \lesssim e^{-c\Lambda^\beta}u^{-|I|}\varphi_\Sigma(u\bb).
  \end{equation*}
  After the affine rescaling of~\ref{B2}, the local probability on
  \([0,\Lambda u^{-2/\beta}]\) is written on the fixed interval
  \([0,\Lambda]\).  Lemma~\ref{lemma: limiting process} and
  Lemma~\ref{vectorPickands_functionals}, applied with the total limiting drift
  \(S_{\alpha,V_{I,I}}(\cdot)\bm{w}_I+\bm{d}_I\), then give
  \begin{equation*}
    \pk{
      \Gamma_{[0, \Lambda u^{-2 / \beta}]}
      ( \uu ( \bm{X} - u \bm{b} ) )
      > L_u
    }
    \sim
    H_{\Gamma, L, \alpha, V_{I,I}, \bm{d}_I, \Sigma, \bm{b}}([0,\Lambda])
    u^{-|I|}\varphi_\Sigma(u\bb).
  \end{equation*}
  Therefore
  \begin{align*}
    H_{\Gamma, L, \alpha, V_{I,I}, \bm{d}_I, \Sigma, \bm{b}}([0,\Lambda])
    &\le
    \liminf_{u\to\infty}
    \frac{\pk{\Gamma_{[0,T]}(\uu(\bm X-u\bm b))>L_u}}
    {u^{-|I|}\varphi_\Sigma(u\bb)}
    \\
    &\le
    \limsup_{u\to\infty}
    \frac{\pk{\Gamma_{[0,T]}(\uu(\bm X-u\bm b))>L_u}}
    {u^{-|I|}\varphi_\Sigma(u\bb)}
    \\
    &\le
    H_{\Gamma, L, \alpha, V_{I,I}, \bm{d}_I, \Sigma, \bm{b}}([0,\Lambda])
    +Ce^{-c\Lambda^\beta}.
  \end{align*}
  Lemma~\ref{lemma: lim H(S) in (0, infty) if alpha > beta} shows that the
  truncated constants increase to the finite limit
  \(\mathcal{P}_{\Gamma, L, \alpha, V_{I,I}, \bm{d}_I, \Sigma, \bm{b}}\).
  Letting \(\Lambda\to\infty\) proves part~(b).

  \textbf{Case \( \alpha > \beta \).}
  The same truncation estimate holds:
  \begin{equation*}
    0\le
    \pk{\Gamma_{[0,T]}(\uu(\bm X-u\bm b))>L_u}
    -
    \pk{\Gamma_{[0,\Lambda u^{-2/\beta}]}(\uu(\bm X-u\bm b))>L_u}
    \lesssim e^{-c\Lambda^\beta}u^{-|I|}\varphi_\Sigma(u\bb).
  \end{equation*}
  After the affine rescaling of~\ref{B2} to the fixed interval
  \([0,\Lambda]\), Lemma~\ref{lemma: limiting process} gives, on the scale
  \(u^{-2/\beta}\), a degenerate Gaussian part and the deterministic drift
  \(\bm{d}(t)=t^\beta A\bm{w}\).  Hence Lemma~\ref{vectorPickands_functionals}
  and Assumptions~\ref{B3}--\ref{B4} for the deterministic limiting model yield
  \begin{equation*}
    \pk{
      \Gamma_{[0, \Lambda u^{-2 / \beta}]}
      ( \uu ( \bm{X} - u \bm{b} ) )
      > L_u
    }
    \sim
    C(\Lambda)\,u^{-|I|}\varphi_\Sigma(u\bb),
  \end{equation*}
  where
  \begin{equation*}
    C(\Lambda)=
      \int_{\R^d}
      \exp \left(
        \bm{x}_I^\top \bm{w}_I
        - \frac{1}{2} \, \bm{x}_J^\top ( \Sigma^{-1} )_{J J} \, \bm{x}_J
      \right)
      \Ind{
        \Gamma_{[0, \Lambda]} \left(
          -\bm{d}_I - \bm{x}_I,
          -\bm{x}_{\Jtan},
          ( \bb - \bm{b} )_{\Jsl}
        \right)
        > L
      }
      \, d \bm{x}.
  \end{equation*}
  Consequently,
  \begin{align*}
    C(\Lambda)
    &\le
    \liminf_{u\to\infty}
    \frac{\pk{\Gamma_{[0,T]}(\uu(\bm X-u\bm b))>L_u}}
    {u^{-|I|}\varphi_\Sigma(u\bb)}
    \\
    &\le
    \limsup_{u\to\infty}
    \frac{\pk{\Gamma_{[0,T]}(\uu(\bm X-u\bm b))>L_u}}
    {u^{-|I|}\varphi_\Sigma(u\bb)}
    \\
    &\le C(\Lambda)+Ce^{-c\Lambda^\beta}.
  \end{align*}
  Lemma~\ref{lemma: lim H(S) in (0, infty) if alpha > beta} shows that
  \(C(\Lambda)\) increases to the finite limit
  \(\mathcal{C}_{\Gamma,L,\bm d,\Sigma,\bm{b}}\). Letting \(\Lambda\to\infty\)
  proves part~(c).
\end{proof}

\subsection{Exceedance times}
\label{sec:proof_exceedance_times}
\begin{proof}[Proof of Theorem~\ref{thm:exceedance_times}]
  We first record a deterministic consequence of the scaled-level
  union-monotonicity supplied by Assumptions~\ref{B1}--\ref{B2} and
  Remark~\ref{rem:scaled-level-B1}. On \((0,T]\), the set
  \[
    A_u=\{t:\Gamma_{[0,t]}(\uu(\bm X-u\bm b))>L_u\}
  \]
  is upward closed. Hence, for every \(0<s_-<s<s_+\le T\),
  \begin{equation}
    \label{eq:hitting_sandwich}
    \{\Gamma_{[0,s_-]}(\uu(\bm X-u\bm b))>L_u\}
    \subset
    \{\mathfrak t(u,L)\le s\}
    \subset
    \{\Gamma_{[0,s_+]}(\uu(\bm X-u\bm b))>L_u\}.
  \end{equation}
  Indeed, if \(\mathfrak t(u,L)\le s<s_+\), then some point of \(A_u\) lies below
  \(s_+\), and upward closedness gives \(s_+\in A_u\). The same argument at
  the endpoint gives
  \(
    \{\mathfrak t(u,L)\le T\}
    =\{\Gamma_{[0,T]}(\uu(\bm X-u\bm b))>L_u\}
  \).

  In the stationary case, applying Theorem~\ref{thm:stationary_case} to the
  left and right sides of \eqref{eq:hitting_sandwich}, and then letting
  \(s_-\uparrow s\) and \(s_+\downarrow s\), gives
  \[
    \frac{
      \pk{\mathfrak t(u,L)\le s}
    }{
      \pk{\mathfrak t(u,L)\le T}
    }
    \to \frac{s}{T},
  \]
  for every \(s\in(0,T)\). The endpoints \(s=0\) and \(s=T\) follow from the
  same sandwich and from the denominator identity.

  In the non-stationary case \(\alpha<\beta\), the rescaled version of
  \eqref{eq:hitting_sandwich} gives, for every \(0<S_-<S<S_+\),
  lower and upper bounds with the intervals \([0,S_-u^{-2/\beta}]\) and
  \([0,S_+u^{-2/\beta}]\). Repeating the single-sum part of the proof of
  Theorem~\ref{thm:main_theorem}, with the outer horizon \(\Lambda\) replaced by
  \(S_\pm\) and with an independent block length \(R\to\infty\) used in the
  Bonferroni decomposition, gives
  \begin{equation*}
    \pk{\Gamma_{[0,S_\pm u^{-2/\beta}]}(\uu(\bm X-u\bm b))>L_u}
    \sim
    \mathcal{H}_{\Gamma,L,\alpha,V_{I,I},\Sigma,\bm b}
    \left(\int_0^{S_\pm} e^{-\tau_{\bm{w}}x^\beta}\,dx\right)
    u^{2/\alpha-2/\beta-|I|}\varphi_\Sigma(u\bb).
  \end{equation*}
  Theorem~\ref{thm:main_theorem}(a) gives the denominator with
  \(\int_0^\infty e^{-\tau_{\bm{w}}x^\beta}\,dx
  =\Gamma(1/\beta+1)\tau_{\bm{w}}^{-1/\beta}\). Letting
  \(S_-\uparrow S\) and \(S_+\downarrow S\) yields the stated incomplete-gamma
  expression. The case \(S=0\) follows by taking \(S_+\downarrow0\).

  In the case \(\alpha=\beta\), the same sandwich gives bounds with
  \([0,S_-u^{-2/\beta}]\) and \([0,S_+u^{-2/\beta}]
  \). After rescaling to fixed intervals, Lemma~\ref{lemma: limiting process}
  and Lemma~\ref{vectorPickands_functionals} yield
  \begin{equation*}
    \pk{\Gamma_{[0,S_\pm u^{-2/\beta}]}(\uu(\bm X-u\bm b))>L_u}
    \sim
    H_{\Gamma,L,\alpha,V_{I,I},\bm d_I,\Sigma,\bm b}([0,S_\pm])
    u^{-|I|}\varphi_\Sigma(u\bb).
  \end{equation*}
  Dividing by the asymptotic in Theorem~\ref{thm:main_theorem}(b) gives
  lower and upper limits \(G_{\mathfrak P}(S-)\) and
  \(G_{\mathfrak P}(S+)\).  Since \(F_{\mathfrak P}\) is the
  right-continuous modification of this non-decreasing function, both limits
  equal \(F_{\mathfrak P}(S)\) at every positive continuity point of
  \(F_{\mathfrak P}\).  If \(S=0\) is a continuity point of the distribution
  function, then \(F_{\mathfrak P}(0)=0\); the upper bound obtained from
  \([0,S_+u^{-2/\beta}]\), followed by \(S_+\downarrow0\), proves the same
  conclusion at zero.

  Finally, if \(\alpha>\beta\), the rescaled local Gaussian component
  degenerates and the same sandwich gives
  \begin{equation*}
    \pk{\Gamma_{[0,S_\pm u^{-2/\beta}]}(\uu(\bm X-u\bm b))>L_u}
    \sim
    \mathfrak C(S_\pm)u^{-|I|}\varphi_\Sigma(u\bb).
  \end{equation*}
  Dividing by Theorem~\ref{thm:main_theorem}(c) gives lower and upper limits
  \(G_{\mathfrak C}(S-)\) and \(G_{\mathfrak C}(S+)\), which both equal
  \(F_{\mathfrak C}(S)\) at every positive continuity point of the
  right-continuous modification.  At \(S=0\), continuity of the distribution
  function implies \(F_{\mathfrak C}(0)=0\), and the upper bound with
  \(S_+\downarrow0\) completes the argument.  The interval statements in
  parts~(c) and~(d) are differences of the corresponding distribution
  functions at continuity points.
\end{proof}

\newpage
\section{Auxiliary results}
\label{sec:auxiliary_results}

Let \( 0 \in E \in \mathcal{K} \) and let \( \bm{X}_{u, \tau}(t) \), \( t \in E
\), \(u>0\), \( \tau \in Q_u \) be a family of continuous centered \( \R^d
\)-valued Gaussian fields.
Denote its cmf by $R_{u, \tau}(t, s)=\E{\bm{X}_{u, \tau}(t) \bm{X}_{u,
\tau}(s)^{\top}}$. \smallskip  

We shall impose the following assumptions:
\begin{enumerate}[itemsep=0.3cm]
  \item[(\namedlabel{A1}{\( \mathbf{A}_1 \)})] There exists a positive
  definite matrix \(\Sigma\) such that, with
  \[
    \bb=
    \argmin\{\bm x^\top\Sigma^{-1}\bm x:\bm x\ge\bm b\},
  \]
  the matrix \(\Sigma_{u,\tau}=R_{u,\tau}(0,0)\) is positive definite for all
  sufficiently large \(u\) and all \(\tau\in Q_u\), and the following
  conditions hold:
  \begin{enumerate}[itemsep=0.3cm, topsep=0.3cm]
    \item [(\namedlabel{A1.1}{\( \mathbf{A}_{1.1} \)})] \( 
    \lim_{u \to \infty} 
    \sup_{\tau \in Q_u}
    \| \Sigma_{u, \tau} - \Sigma \|_{\mathrm{F}}
    = 0
    \)
    \item [(\namedlabel{A1.2}{\( \mathbf{A}_{1.2} \)})] \( 
    \lim_{u \to \infty} 
    \sup_{\tau \in Q_u}
    u \cdot
    \left|
    \Big(
      \Sigma^{-1}_{u, \tau} \bb 
\Big)_J
    \right|
    = 0.
    \)
  \end{enumerate}
  \item[(\namedlabel{A2}{\( \mathbf{A}_2 \)})] There exist a continuous $\mathbb{R}^d$-valued function $\bm{d}(t), t \in E$
  and a continuous matrix-valued function $K(t, s),(t, s) \in E \times E$ such
  that
  \begin{enumerate}[itemsep=0.3cm, topsep=0.3cm]
    \item[(\namedlabel{A2.1}{\( \mathbf{A}_{2.1} \)})] \(
      \displaystyle
      \lim _{u \rightarrow \infty}
      \sup _{\tau \in Q_u, t \in E}
      u \cdot
      \left\|
      \Sigma_{u, \tau}
      -R_{u, \tau}(t, 0)
      \right\|_{\mathrm{F}}
      = 0,
    \)
\item[(\namedlabel{A2.2}{\( \mathbf{A}_{2.2} \)})] \(
      \displaystyle
      \lim _{u \rightarrow \infty}
      \sup _{\tau \in Q_u, t \in E}
      \left| 
      u^2 \left[ 
      \Sigma_{u, \tau} - R_{u, \tau}(t, 0) 
      \right]
      \Sigma_{u,\tau}^{-1} \bb
-\bm{d}(t)
      \right|
      = 0,
    \)
\item[(\namedlabel{A2.3}{\( \mathbf{A}_{2.3} \)})] \(
      \displaystyle
      \lim_{u \rightarrow \infty}
      \sup_{\tau \in Q_u, t, s \in E}
      \left\|
      u^2 \left[
      R_{u, \tau}(t, s)
      -R_{u, \tau}(t, 0) 
      \Sigma_{u, \tau}^{-1} 
      R_{u, \tau}(0, s)
      \right]
      -K(t, s)
      \right\|_{\mathrm{F}}
      = 0.
    \)
\end{enumerate}
\item[(\namedlabel{A3}{\( \mathbf{A}_3 \)})] There exist positive constants $C$ and $\gamma \in(0,2]$ such that for any $s,
  t \in E$
  \begin{equation*}
    \sup_{\tau \in Q_u} u^2 \,
    \E{
      \left|
      \bm{X}_{u, \tau}(t)
      -\bm{X}_{u, \tau}(s)
      \right|^2
    }
    \leq C|t-s|^\gamma.
  \end{equation*}
\end{enumerate}

All quadratic-programming notation in this section, including
\(I,J,\Jtan,\Jsl,\bb\), and \(\bm w\), is associated with the pair
\((\Sigma,\bm b)\) appearing in Assumption~\ref{A1}.

\begin{remark}
  Assumption~\ref{A1} is implied by the following stronger assumption:
  \begin{enumerate}[]
    \item [(\namedlabel{A1'}{\( \mathbf{A}_1' \)})] For all large \( u \) and all \( \tau \in Q_u \) the matrix \( \Sigma_{u,
    \tau} = R_{u, \tau} ( 0, 0 ) \) is positive definite and
    \begin{equation*}
      \lim_{u \to \infty} 
      \sup_{\tau \in Q_u}
      u \cdot 
      \|
        \Sigma_{u, \tau} - \Sigma
      \|
      = 0
    \end{equation*}
    holds for some positive definite matrix \( \Sigma \).
  \end{enumerate}
  This version of Assumption~\ref{A1} is used in~\cite[Lemma 4.7]{VVGP}.
\end{remark} 

Equip \(C(E,\R^d)\) with the uniform norm, and let
\( \Gamma : C ( E, \R^d ) \to \R \) be Borel measurable, but not necessarily
linear or continuous. We shall use the following conditions.  Conditions~\ref{F2} and
\ref{F3} are always read for the particular shifted limiting random element
\(\mathcal W_{\bm x}\) that appears in the lemma or corollary where they are
invoked.
\begin{enumerate}[itemsep=0.3cm, topsep=0.3cm]
  \item [(\namedlabel{F1}{\( \mathbf{F}_1 \)})] \( \Gamma ( \bm{f} ) > 0 \implies \sup_{t \in E} \min_{i = 1, \dots, d} f_i (
          t ) > 0 \).
  \item [(\namedlabel{F2}{\( \mathbf{F}_2 \)})] \( \pk{ \Gamma ( \mathcal W_{\bm x} ) = 0 } = 0 \) for almost all \(
        \bm{x} \in \R^d \).
  \item [(\namedlabel{F3}{\( \mathbf{F}_3 \)})] \( \pk{ \mathcal W_{\bm x} \text{ is a continuity point of } \Gamma} = 1
            \) for almost all \( \bm{x} \in \R^d \).
\end{enumerate} 
In the basic integral examples one has \(\mathcal W_{\bm x}=\bm W-\bm d-\bm x\).

\begin{lemma}
    \label{vectorPickands_functionals}
    Suppose that \( \bm{X}_{u, \tau}(t) \), \( t \in E \), \( u > 0 \), \( \tau \in Q_u \) satisfy~\ref{A1}--\,\ref{A3} and that \( \Gamma \) satisfies~\ref{F1}.
    Let \(I\dot\cup\Jtan\dot\cup\Jsl=\{1,\ldots,d\}\) be the
    quadratic-programming decomposition associated with \((\Sigma,\bm b)\),
    and use the normalization in \eqref{def hat u}, namely
    \[
      (\uu)_I=u\bm1_I,
      \qquad
      (\uu)_{\Jtan}=\bm1_{\Jtan},
      \qquad
      (\uu)_{\Jsl}=u^{-1}\bm1_{\Jsl}.
    \]
    Let \( \bm{W}(t) \), \(t \in E\), be a centered Gaussian random field with covariance
    matrix function \( K (t, s) \). For \(\bm x\in\R^d\), put
    \[
      \mathcal W_{\bm x}
      =
      \begin{pmatrix}
        \bm W_I-\bm d_I-\bm x_I\\
        -\bm x_{\Jtan}\\
        (\bb-\bm b)_{\Jsl}
      \end{pmatrix}.
    \]
    Assume moreover that \(\Gamma\) satisfies conditions~\ref{F2}--\ref{F3} for
    this shifted limiting random element \(\mathcal W_{\bm x}\). Then, as
    $u \to \infty$, we have
    \begin{equation*}
      \pk{\Gamma ( \uu ( \bm{X}_{u, \tau} - u \bm{b} ) ) > 0}
      \sim
      H_{\Gamma, \bm{W}, \bm{d}, \Sigma, \bm{b}} \,
      u^{-|I|} \, 
      \varphi_{\Sigma_{u, \tau}} ( u \bb ),
    \end{equation*}
    where
    \begin{equation*}
      H_{\Gamma, \bm{W}, \bm{d}, \Sigma, \bm{b}}
      \coloneqq
      \int_{\R^d}
      \exp \left(
      \bm{x}_I^\top \bm{w}_I 
      - \frac{1}{2} \, \bm{x}_J^\top ( \Sigma^{-1} )_{J J} \, \bm{x}_J
      \right)
      \pk{
        \Gamma 
        \begin{pmatrix}
          \bm{W}_I - \bm{d}_I - \bm{x}_I \\
          -\bm{x}_{\Jtan}                                  \\
          ( \bb - \bm{b} )_{\Jsl}
        \end{pmatrix}
        > 0
      }
      \, d \bm{x}
    \end{equation*}
    uniformly in $\tau \in Q_u$.
  \end{lemma}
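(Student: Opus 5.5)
The plan is the classical conditioning-on-the-origin argument of Pickands type, carried out coordinatewise with the anisotropic normalization \(\uu\), as in \cite[Lemma~4.7]{VVGP} but with the event \(\{\sup\min>0\}\) replaced by \(\{\Gamma>0\}\).

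\emph{Step 1: change of variables.} Write the density of \(\bm X_{u,\tau}(0)\) at the point \(u\bm b-\bm x/\uu\), where division is coordinatewise. On \(I\) this point is \(u\bm b_I-\bm x_I/u\); on \(\Jtan\) it is \(u\bm b_{\Jtan}-\bm x_{\Jtan}\); on \(\Jsl\) it is \(u\bm b_{\Jsl}-u\bm x_{\Jsl}\). Re-centring the slack coordinates at \(u\bb_{\Jsl}\) (i.e.\ substituting \(\bm x_{\Jsl}\mapsto(\bm b-\bb)_{\Jsl}+\bm x_{\Jsl}/u\)) gives
\[
\pk{\Gamma(\uu(\bm X_{u,\tau}-u\bm b))>0}
=u^{-|I|}\int_{\R^d}\frac{\varphi_{\Sigma_{u,\tau}}(u\bb-\bm y_u(\bm x))}{\varphi_{\Sigma_{u,\tau}}(u\bb)}\,
\pk{\Gamma(\bm\chi_{u,\tau,\bm x})>0\mid\cdot}\,d\bm x\;\varphi_{\Sigma_{u,\tau}}(u\bb).
\]
The Jacobians on \(\Jtan\) and \(\Jsl\) cancel against the normalization \(\uu\), so only \(u^{-|I|}\) survives. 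Expanding the quadratic form with \(\bm w=\Sigma^{-1}\bb\) and using \ref{A1.1}--\ref{A1.2}, the density ratio converges, uniformly in \(\tau\), to \(\exp(\bm x_I^\top\bm w_I-\frac12\bm x_J^\top(\Sigma^{-1})_{JJ}\bm x_J)\). Here \(w_J=0\) kills the linear \(J\)-terms, the \(I\)-quadratic terms are \(O(u^{-2})\), and the cross terms are \(O(u^{-1})\).

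\emph{Step 2: conditional process.} Conditionally on \(\bm X_{u,\tau}(0)\), the field \(\uu(\bm X_{u,\tau}(t)-u\bm b)\) is Gaussian with conditional mean
\[
\uu\bigl(R_{u,\tau}(t,0)\Sigma_{u,\tau}^{-1}(u\bb-\bm y)-u\bm b\bigr)
\]
and conditional covariance \(\uu(R-R\Sigma^{-1}R)\uu\). By \ref{A2.1}--\ref{A2.3}:
\begin{itemize}
\item the \(I\)-block of the mean tends to \(-\bm d_I(t)-\bm x_I\) and its covariance to \(K_{II}\);
\item the \(\Jtan\)-block becomes \(-\bm x_{\Jtan}\) with vanishing variance, because \(\uu_{\Jtan}=1\) while the conditional variance is \(O(u^{-2})\);
\item the \(\Jsl\)-block tends to \((\bb-\bm b)_{\Jsl}\), again with vanishing fluctuation.
\end{itemize}
Condition \ref{A3} gives tightness in \(C(E,\R^d)\) uniformly in \(\tau\). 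Hence the conditional process converges weakly to \(\mathcal W_{\bm x}\), for each fixed \(\bm x\) and uniformly in \(\tau\), by a subsequence argument.

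\emph{Step 3: pass to the limit in the probability.} By \ref{F3} the limit is almost surely a continuity point of \(\Gamma\), and by \ref{F2} the boundary \(\{\Gamma=0\}\) carries no mass. The portmanteau theorem (continuous-mapping version for a.s.-continuity points) then gives, for a.e.\ \(\bm x\),
\[
\pk{\Gamma(\cdot)>0\mid\cdot}\to\pk{\Gamma(\mathcal W_{\bm x})>0}.
\]

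\emph{Step 4 (main obstacle): domination for interchanging limit and integral.} I would use \ref{F1} to bound the integrand by
\[
\pk{\sup_{t\in E}\min_i\bigl(\uu(\bm X_{u,\tau}(t)-u\bm b)\bigr)_i>0\,\big|\,\cdot}.
\]
This is exactly the integrand treated in \cite[Lemma~4.7]{VVGP}, where a uniform integrable majorant is built from Borell--TIS and Piterbarg-type bounds on the conditioned supremum. For \(\bm x_I\to+\infty\), the Gaussian tail \(\exp(-c|\bm x_I|^2)\) beats \(e^{\bm x_I^\top\bm w_I}\). For \(\bm x_J\), the quadratic factor \(\exp(-\frac12\bm x_J^\top(\Sigma^{-1})_{JJ}\bm x_J)\) supplies integrability, using that the exact density ratio is dominated by a fixed Gaussian for large \(u\) thanks to \ref{A1.1}. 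The \(\bm x_I\to-\infty\) side is harmless because of the factor \(e^{\bm x_I^\top\bm w_I}\) with \(\bm w_I>0\). I would import this majorant verbatim, since it involves only the supremum event and not \(\Gamma\).

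With the majorant in hand, dominated convergence combined with Steps 1--3 yields the claimed equivalence uniformly in \(\tau\in Q_u\). Uniformity is obtained by arguing along arbitrary sequences \((u_n,\tau_n)\).
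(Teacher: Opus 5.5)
Your proposal is correct and follows essentially the same route as the paper. The common steps are: conditioning on \(\bm X_{u,\tau}(0)\) with the substitution \(\bm y=u\bb-\bm x/\overline{\bm u}\), \(\overline{\bm u}=(u\bm 1_I,\bm 1_J)\); weak convergence of the conditioned field from \ref{A2}--\ref{A3}; the portmanteau step via \ref{F2}--\ref{F3}; and an \ref{F1}-based majorant imported from \cite[Lemma~4.7]{VVGP}. Your two-stage substitution with the slack recentring lands on exactly the paper's point and the same Jacobian \(u^{-|I|}\). One small imprecision is in Step~4: the Gaussian envelope in \(\bm x_J\) needs \ref{A1.2}, not only \ref{A1.1}, to control the linear term \(u(\Sigma_{u,\tau}^{-1}\bb)_J^\top\bm x_J\); this matters because \cite{VVGP} works under the stronger \ref{A1'}, so the majorant can be imported only after checking that it uses nothing beyond \ref{A1.1}--\ref{A1.2}, which is what the paper records.
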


\begin{proof}
    Put \( \overline{\bm u}=(u\bm 1_I,\bm 1_J) \). Conditioning on
    \(\bm X_{u,\tau}(0)\), and making the change of variables
    \(\bm y=u\bb-\bm x/\overline{\bm u}\), gives
    \begin{align*}
      &\pk{\Gamma(\uu(\bm X_{u,\tau}-u\bm b))>0} \\
      &\quad =u^{-|I|}\int_{\R^d}
      a_{u,\tau}(\bm x)
      \varphi_{\Sigma_{u,\tau}}(u\bb-\bm x/\overline{\bm u})\,d\bm x,
    \end{align*}
    where
    \begin{equation*}
      a_{u,\tau}(\bm x)
      =\pk{\Gamma(\bm\chi_{u,\tau,\bm x}+u\uu(\bb-\bm b))>0}
    \end{equation*}
    and
    \begin{equation*}
      \bm\chi_{u,\tau,\bm x}(t)
      =\uu\Big(\bm X_{u,\tau}(t)-u\bb\ \Big|\
      \bm X_{u,\tau}(0)=u\bb-\bm x/\overline{\bm u}\Big).
    \end{equation*}
    Its conditional mean is
    \begin{equation*}
      \bm m_{u,\tau,\bm x}(t)
      =\diag(\uu)\left[
        R_{u,\tau}(t,0)\Sigma_{u,\tau}^{-1}
        (u\bb-\bm x/\overline{\bm u})-u\bb
      \right].
    \end{equation*}
    By Assumptions~\ref{A2.1}--\ref{A2.2}, uniformly in
    \(t\in E\), \(\tau\in Q_u\), and for each fixed \(\bm x\),
    \begin{equation*}
      \bm m_{u,\tau,\bm x}(t)
      \longrightarrow
      \begin{pmatrix}
        -\bm d_I(t)-\bm x_I\\
        -\bm x_{\Jtan}\\
        \bm 0_{\Jsl}
      \end{pmatrix}.
    \end{equation*}
    Indeed, the first term in the active coordinates is exactly
    \(-u^2[\Sigma_{u,\tau}-R_{u,\tau}(t,0)]\Sigma_{u,\tau}^{-1}\bb\). Since
    \(R_{u,\tau}(t,0)\Sigma_{u,\tau}^{-1}=I+o(u^{-1})\) uniformly by
    \ref{A2.1}, the term containing \(\bm x/\overline{\bm u}\) contributes
    \(-\bm x_I+o(1)\) in the active coordinates and \(-\bm x_{\Jtan}+o(1)\) in the
    coordinates \(\Jtan\subset J\). In \(\Jsl\), both contributions vanish
    after multiplication by \(u^{-1}\).

    The conditional covariance is
    \begin{align*}
      &\E{\big[\bm\chi_{u,\tau,\bm x}(t)-\bm m_{u,\tau,\bm x}(t)\big]
        \big[\bm\chi_{u,\tau,\bm x}(s)-\bm m_{u,\tau,\bm x}(s)\big]^\top} \\
      &\quad =
      \diag(\uu)\Big[
        R_{u,\tau}(t,s)
        -R_{u,\tau}(t,0)\Sigma_{u,\tau}^{-1}R_{u,\tau}(0,s)
      \Big]\diag(\uu) \\
      &\quad \longrightarrow
      \begin{pmatrix}
        K_{II}(t,s)&0\\
        0&0
      \end{pmatrix},
    \end{align*}
    uniformly in \(t,s\in E\) and \(\tau\in Q_u\). Assumption~\ref{A3}
    gives the required tightness in \(C(E,\R^d)\) after the same scaling, and
    hence
    \begin{equation*}
      \bm\chi_{u,\tau,\bm x}+u\uu(\bb-\bm b)
      \Rightarrow
      \begin{pmatrix}
        \bm W_I-\bm d_I-\bm x_I\\
        -\bm x_{\Jtan}\\
        (\bb-\bm b)_{\Jsl}
      \end{pmatrix}
    \end{equation*}
    in \(C(E,\R^d)\), uniformly in \(\tau\in Q_u\) at the level of finite-dimensional
    distributions and tightness. Equivalently, every sequence \(\tau_u\in Q_u\)
    has the same weak limit displayed above. Therefore, by
    conditions~\ref{F2}--\ref{F3}, the portmanteau theorem gives, for
    Lebesgue-a.e. \(\bm x\),
    \begin{equation*}
      a_{u,\tau}(\bm x)\longrightarrow
      a(\bm x)
      \coloneqq
      \pk{
        \Gamma
        \begin{pmatrix}
          \bm W_I-\bm d_I-\bm x_I\\
          -\bm x_{\Jtan}\\
          (\bb-\bm b)_{\Jsl}
        \end{pmatrix}
        >0
      }
    \end{equation*}
    uniformly in \(\tau\in Q_u\).

    Next, with \(\bm z_u=\bm x/\overline{\bm u}\),
    \begin{equation*}
      \varphi_{\Sigma_{u,\tau}}(u\bb-\bm z_u)
      =\varphi_{\Sigma_{u,\tau}}(u\bb)
      \exp\left(
        \bm x_I^\top\bm w_I
        -\frac12\bm x_J^\top(\Sigma^{-1})_{JJ}\bm x_J
        +\theta_{u,\tau}(\bm x)
      \right),
    \end{equation*}
    where the identity is exact with
    \begin{align*}
      \theta_{u,\tau}(\bm x)
      &=u\bb^\top(\Sigma_{u,\tau}^{-1}-\Sigma^{-1})\bm z_u
        -\frac12 \bm z_u^\top\Sigma_{u,\tau}^{-1}\bm z_u
        +\frac12\bm x_J^\top(\Sigma^{-1})_{JJ}\bm x_J .
    \end{align*}
    For each fixed \(\bm x\), the active part of the first term tends to zero by
    \ref{A1.1}, the inactive part tends to zero by~\ref{A1.2}, and the quadratic
    term tends to zero because \(\bm z_u\to(0_I,\bm x_J)\) and
    \(\Sigma_{u,\tau}^{-1}\to\Sigma^{-1}\). Hence
    \(\theta_{u,\tau}(\bm x)\to0\), uniformly in \(\tau\).

    It remains to justify passage of the limit under the integral.  This is the
    uniform domination step in the proof of~\cite[Lemma~4.7]{VVGP}; we record
    why its hypotheses match the present formulation.  By~\ref{F1},
    \(a_{u,\tau}(\bm x)\) is bounded by a simultaneous-exceedance probability
    for the conditioned field.  Assumptions~\ref{A2.1} and~\ref{A3}, followed
    by Gaussian entropy and Borell--TIS bounds, give a Gaussian tail whenever
    an active coordinate of \(\bm x_I\) is large and positive.  On the negative
    active orthant the factor \(e^{\bm w_I^\top\bm x_I}\) is integrable because
    \(\bm w_I>\bm0_I\).  Uniform positive definiteness of
    \(\Sigma_{u,\tau}\) gives a Gaussian factor in \(\bm x_J\).  The only
    potentially linear inactive term is
    \(u(\Sigma_{u,\tau}^{-1}\bb)_J^\top\bm x_J\), and it is
    \(o(|\bm x_J|)\) uniformly by~\ref{A1.2}; completing the square therefore
    preserves an integrable Gaussian envelope.  Assumption~\ref{A1.1}
    controls the remaining density coefficients.  Consequently the normalized
    integrand is bounded, uniformly for all large \(u\) and \(\tau\in Q_u\),
    by an \(L^1(\R^d)\) function.

    Dominated convergence yields
    \begin{align*}
      \pk{\Gamma(\uu(\bm X_{u,\tau}-u\bm b))>0}
      &\sim
      u^{-|I|}\varphi_{\Sigma_{u,\tau}}(u\bb)
      \int_{\R^d}
      \exp\left(
        \bm x_I^\top\bm w_I
        -\frac12\bm x_J^\top(\Sigma^{-1})_{JJ}\bm x_J
      \right)a(\bm x)\,d\bm x,
    \end{align*}
    uniformly in \(\tau\in Q_u\), which is the asserted formula.
  \end{proof} 

The following corollary specifies Lemma~\ref{vectorPickands_functionals} for the
case when the functional \( \Gamma \) satisfies a constant reduction
property~\ref{B5}.

\begin{corollary}
    \label{corollary1}
    Under the conditions of Lemma~\ref{vectorPickands_functionals}, suppose
    additionally that for every proper \(K\) there is a Borel coordinate-deleted
    map \(\Gamma^{(-K)}:C(E,\R^{|K^c|})\to\R\), written as
    \(\Gamma(\bm f_{K^c},\bm\infty_K)\), and that
    \begin{enumerate}
      \item [(\namedlabel{F4}{\( \, \mathbf{F}_4 \)})] If \( \bm{f}_K \) is a
      constant subvector of \( \bm{f} \), then \( \Gamma( \bm{f} ) > 0 \iff
      \Gamma ( \bm{f}_{K^c}, \bm{\infty}_K ) > 0 \) and
      \( \bm{f}_K > \bm{0} \),
    \end{enumerate}
    then as \( u \to \infty \) we have
    \begin{equation*}
      \pk{\Gamma ( \uu ( \bm{X}_{u, \tau} - u \bm{b} ) ) > 0}
      \sim
      H_{\Gamma, \bm{W}, \bm{d}, \bm{w}} \, \pk{\bm{X}_{u, \tau} ( 0 ) > u \, \bm{b}},
    \end{equation*}
    where
    \begin{equation*}
      H_{\Gamma, \bm{W}, \bm{d}, \bm{w}}
      = \int_{\R^{|I|}} 
      e^{\bm{1}_I^\top \bm{x}_I} \,
      \pk{
        \Gamma ( \bm{W}_I - \bm{d}_I - \bm{x}_I / \bm{w}_I, \bm{\infty}_{J} ) > 0
      }
      \, d \bm{x}_I.
    \end{equation*}
  \end{corollary}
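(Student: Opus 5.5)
The plan is to avoid any separate Gaussian tail computation. I will apply Lemma~\ref{vectorPickands_functionals} twice: once to \(\Gamma\), and once to the pointwise functional \(\Gamma_0(\bm f)=\min_{1\le i\le d}f_i(0)\). Taking the ratio then turns \(u^{-|I|}\varphi_{\Sigma_{u,\tau}}(u\bb)\) into \(\pk{\bm X_{u,\tau}(0)>u\bm b}\). It remains to show that the two constants differ exactly by the factor \(H_{\Gamma,\bm W,\bm d,\bm w}\).

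\textbf{Step 1: simplify the constant for \(\Gamma\).} Start from the constant \(H_{\Gamma,\bm W,\bm d,\Sigma,\bm b}\) of Lemma~\ref{vectorPickands_functionals}. In the limiting element \(\mathcal W_{\bm x}\), the \(J\)-coordinates are constant paths: \(-\bm x_{\Jtan}\) and \((\bb-\bm b)_{\Jsl}\). Since \(I\neq\emptyset\), the set \(K=J\) is proper, so \ref{F4} applies and gives
\[
  \Gamma(\mathcal W_{\bm x})>0
  \iff
  \Gamma(\bm W_I-\bm d_I-\bm x_I,\bm\infty_J)>0,\quad \bm x_{\Jtan}<\bm0 .
\]
The slack condition \((\bb-\bm b)_{\Jsl}>\bm0\) holds automatically by \eqref{eq qp inactive coordinates}. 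The integrand therefore factorizes, and
\[
  H_{\Gamma,\bm W,\bm d,\Sigma,\bm b}
  =\int_{\R^{|I|}}e^{\bm x_I^\top\bm w_I}\,
  \pk{\Gamma(\bm W_I-\bm d_I-\bm x_I,\bm\infty_J)>0}\,d\bm x_I
  \cdot C_J,
\]
where
\[
  C_J=\int_{\{\bm x_{\Jtan}<\bm0\}}\exp\Bigl(-\tfrac12\bm x_J^\top(\Sigma^{-1})_{JJ}\bm x_J\Bigr)\,d\bm x_J ,
\]
with \(C_J=1\) if \(J=\emptyset\). I then substitute \(x_i\mapsto x_i/w_i\) for \(i\in I\), which is legitimate because \(\bm w_I>\bm0\). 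This turns the first factor into \(\bigl(\prod_{i\in I}w_i\bigr)^{-1}H_{\Gamma,\bm W,\bm d,\bm w}\).

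\textbf{Step 2: the pointwise functional.} I verify the hypotheses of Lemma~\ref{vectorPickands_functionals} for \(\Gamma_0\). Condition~\ref{F1} is trivial, and \(\Gamma_0\) is continuous, so \ref{F3} holds. For \ref{F2}, first note \(\bm d(0)=\bm0\) and \(K(0,0)=0\). Both follow from \ref{A2.2}--\ref{A2.3}, since \(\Sigma_{u,\tau}-R_{u,\tau}(0,0)=0\) and the conditional covariance vanishes at \(t=s=0\). Hence \(\bm W_I(0)=\bm0\) a.s., and \(\Gamma_0(\mathcal W_{\bm x})=\min(-\bm x_I,-\bm x_{\Jtan},(\bb-\bm b)_{\Jsl})\) is deterministic. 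It equals \(0\) only on a union of coordinate hyperplanes, so \ref{F2} holds for a.e.\ \(\bm x\).

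The lemma then gives
\[
  \pk{\bm X_{u,\tau}(0)>u\bm b}=\pk{\Gamma_0(\uu(\bm X_{u,\tau}-u\bm b))>0}\sim H_0\,u^{-|I|}\varphi_{\Sigma_{u,\tau}}(u\bb)
\]
uniformly in \(\tau\in Q_u\). The same factorization as in Step 1 yields
\[
  H_0=\int_{\{\bm x_I<\bm0\}}e^{\bm x_I^\top\bm w_I}\,d\bm x_I\cdot C_J=\Bigl(\prod_{i\in I}w_i\Bigr)^{-1}C_J,
\]
which is finite and positive.

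\textbf{Step 3: take the ratio.} Dividing the asymptotic for \(\Gamma\) by the one for \(\Gamma_0\), the factors \(u^{-|I|}\varphi_{\Sigma_{u,\tau}}(u\bb)\), \(C_J\) and \(\prod_{i\in I}w_i\) all cancel. What remains is \(H_{\Gamma,\bm W,\bm d,\bm w}\), uniformly in \(\tau\), which is the claim.

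The main point needing care is the identification of \(\pk{\bm X_{u,\tau}(0)>u\bm b}\) through the lemma. This needs the degeneracy of the limit at \(t=0\), namely \(\bm d(0)=\bm0\) and \(K(0,0)=0\), which I read off from \ref{A2.2}--\ref{A2.3}. The Step 1 factorization is routine once \ref{F4} is applied with \(K=J\). One should also keep in mind that the tangential constraint \(\bm x_{\Jtan}<\bm0\) appears identically in both constants and so cancels.
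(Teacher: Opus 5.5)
Your proof is correct, and Step~1 coincides with the paper's proof: apply \ref{F4} with \(K=J\), use \((\bb-\bm b)_{\Jsl}>\bm0\), factor out the \(J\)-integral \(C_J\), and substitute \(y_i=w_ix_i\). The two routes differ in how they obtain the asymptotic for \(\pk{\bm X_{u,\tau}(0)>u\bm b}\).

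The paper simply records this as the standard quadratic-programming Gaussian tail asymptotic \(u^{-|I|}\varphi_{\Sigma_{u,\tau}}(u\bb)\prod_{i\in I}w_i^{-1}C_J\) and compares constants. You instead derive it by applying Lemma~\ref{vectorPickands_functionals} a second time, to the pointwise functional \(\Gamma_0(\bm f)=\min_i f_i(0)\). That step rests on your observation that \ref{A2.2}--\ref{A2.3} at \(t=s=0\) force \(\bm d(0)=\bm0\) and \(K(0,0)=0\). Consequently \(\Gamma_0(\mathcal W_{\bm x})\) is deterministic, and \ref{F2} holds off a union of coordinate hyperplanes.

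Your route is slightly longer but self-contained within the paper's framework. It also gives, automatically, uniformity in \(\tau\in Q_u\) and validity with a varying \(\Sigma_{u,\tau}\) under the weaker Assumption~\ref{A1}; the paper's one-line appeal to the tail asymptotic leaves both points implicit. The paper's route is shorter and never needs the degeneracy of the limiting model at \(t=0\).
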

  
  \begin{proof}
    By~\ref{F4} and the fact that \(\tilde b_i>b_i\) for \(i\in\Jsl\), we have
    \begin{equation}
      \label{DG_condition}
      \pk{
      \Gamma 
      \begin{pmatrix}
        \bm{W}_I - \bm{d}_I - \bm{x}_I \\
        -\bm{x}_{\Jtan}                                  \\
        ( \bb - \bm{b} )_{\Jsl}
      \end{pmatrix}
      > 0
      }
      =
      \Ind{\bm{x}_{\Jtan} < \bm{0}_{\Jtan}} \,
      \pk{
      \Gamma 
      \begin{pmatrix}
        \bm{W}_I - \bm{d}_I - \bm{x}_I \\
        \bm{\infty}_J
      \end{pmatrix}
      > 0
      }
    \end{equation}
    It remains to note that
    \begin{equation*}
      \pk{\bm{X}_{u, \tau} ( 0 ) > u \bm{b}}
      \sim
      u^{-|I|} \,
      \varphi_{\Sigma_{u, \tau}} ( u \bb )
      \prod_{i \in I} w_i^{-1}
      \int_{\R^{|J|}}
      \exp \left( 
        -\frac{1}{2} \, \bm{x}_J^\top ( \Sigma^{-1} )_{J J} \, \bm{x}_J
      \right) 
      \, \Ind{\bm{x}_{\Jtan} < \bm{0}_{\Jtan}}
      \, d \bm{x}_J
    \end{equation*}
    and compare it with the expression for \( \pk{\Gamma ( \uu ( \bm{X}_{u,
    \tau} - u \bm{b} ) ) > 0} \) obtained in
    Lemma~\ref{vectorPickands_functionals}. The remaining active-coordinate
    integral is transformed by \(y_i=w_i x_i\), which gives precisely the
    displayed constant \(H_{\Gamma,\bm W,\bm d,\bm w}\).
  \end{proof}

Let \( D(g) \) denote the set of all continuity points of a function \( g : F_1
\to F_2 \), where \( F_1 \) and \( F_2 \) are two topological spaces. Next, we introduce the following partial order on \( \R^d \): \( \bm{x}' \succ \bm{x} \) if \( x_i' \geq x_i \) for every \(i\) and there exists \( i \in \{
1, \dots , d \} \) such that \( x_i' > x_i \). In the following, we shall say that \( G : \R^d \to \R \) is monotone increasing
if \( \bm{x}' \succ \bm{x} \) implies \( G ( \bm{x}' ) \geq G ( \bm{x} ) \). We shall also say that \( L \) is a plateau level of a monotone function \( G \)
if \( \lambda ( G^{-1} ( L ) ) > 0 \), where \( \lambda \) is the standard
Lebesgue measure on \( \R^d \).

\begin{lemma}\label{sojorn_F3} Let \( \bm{W}(t), \ t \in E \) be a continuous stochastic process, \( G : \R^d
  \to \R \) a Borel function bounded on compacta and \( \mu \) a finite measure on
  \( E \). If
  \begin{equation}
    \label{F4_G_condition}
    \pk{ \mu ( F^c ) = 0 } = 1
    \quad \text{for almost all } \bm{x} \in \R^d,
  \end{equation}
  where
  \begin{equation}
    \label{sojourn F3 def F}
    F \coloneqq \{ t \in E : \bm{W}(t) - \bm{x} \in D(G) \},
  \end{equation}
  then \( \Gamma : C ( E, \R^d ) \to \R \) defined by
  \begin{equation}
    \label{def:sojourn}
    \Gamma ( \bm{f} )
    = \int_E G ( \bm{f} ( t ) ) \, d \mu ( t ) - L
  \end{equation}
  satisfies condition~\ref{F3}. If \( G \) is monotone, then
  condition~\eqref{F4_G_condition} is satisfied.
\end{lemma}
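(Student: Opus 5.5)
We must show two things: first, that under \eqref{F4_G_condition} the functional \(\Gamma\) in \eqref{def:sojourn} is continuous at \(\bm W-\bm x\) almost surely for a.e. \(\bm x\); second, that monotonicity of \(G\) implies \eqref{F4_G_condition}.

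\emph{Continuity.} Fix an \(\bm x\) for which \eqref{F4_G_condition} holds, and an outcome \(\omega\) with \(\mu(F^c)=0\). Put \(\bm g=\bm W(\omega)-\bm x\in C(E,\R^d)\), and take any sequence \(\bm f_n\to\bm g\) uniformly on \(E\). For every \(t\in F\) the point \(\bm g(t)\) is a continuity point of \(G\), so \(G(\bm f_n(t))\to G(\bm g(t))\). Hence the convergence holds \(\mu\)-a.e. Since \(E\) is compact, \(\bm g\) is bounded, and so \(\sup_n\|\bm f_n\|_\infty<\infty\). All the values \(\bm f_n(t)\) therefore lie in a fixed compact set, on which \(G\) is bounded. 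As \(\mu\) is finite, dominated convergence gives \(\Gamma(\bm f_n)\to\Gamma(\bm g)\). The space \(C(E,\R^d)\) is metric, so sequential continuity is enough, and \(\bm g\) is a continuity point of \(\Gamma\). This holds on the full-probability event \(\{\mu(F^c)=0\}\), which is exactly \ref{F3}. A measurability remark is needed for \(\{\mu(F^c)=0\}\). Since \(D(G)\) is a Borel set (a \(G_\delta\) for the oscillation) and \(\bm W\) is jointly measurable, being continuous, Fubini shows that \(\mu(F^c)\) is a random variable.

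\emph{Monotone case.} This is the main step. For monotone \(G\), we first claim that the discontinuity set \(D(G)^c\) is Lebesgue-null in \(\R^d\). To see this, on each line parallel to the diagonal direction \(\bm 1\), the map \(s\mapsto G(\bm y+s\bm 1)\) is monotone. Moreover, \(G(\bm y+s\bm 1-\epsilon\bm 1)\le G(\bm z)\le G(\bm y+s\bm 1+\epsilon\bm 1)\) for every \(\bm z\) in a small box around \(\bm y+s\bm 1\). Hence \(G\) is continuous at \(\bm y+s\bm 1\) whenever this one-dimensional monotone function is continuous at \(s\). The latter fails only on a countable set of \(s\). Fubini over the lines \(\{\bm y+s\bm 1\}\), with \(\bm y\) ranging over the hyperplane \(\bm 1^\perp\), then gives \(\lambda(D(G)^c)=0\). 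Now
\[
\int_{\R^d}\E{\mu(F^c)}\,\phi(\bm x)\,d\bm x
=\E{\int_E\int_{\R^d}\Ind{\bm W(t)-\bm x\notin D(G)}\phi(\bm x)\,d\bm x\,d\mu(t)}=0
\]
for any positive integrable weight \(\phi\), for example a Gaussian density. The step uses Tonelli and the fact that for each fixed \(t\) and \(\omega\) the inner integral is over the translate \(\bm W(t)-D(G)^c\) of a null set. It follows that \(\E{\mu(F^c)}=0\), and hence \(\mu(F^c)=0\) almost surely, for a.e. \(\bm x\).

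The expected obstacle is the null-discontinuity claim for multivariate monotone functions. This claim relies on the order \(\succ\) letting us sandwich a neighbourhood between two points on the diagonal line. The rest of the argument is routine dominated convergence and Fubini.
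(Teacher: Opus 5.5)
Your proof is correct and follows the same route as the paper. Continuity comes from dominated convergence on the full-measure set \(F\), and the monotone case from Tonelli over the shift variable together with the Lebesgue-nullity of the discontinuity set of a monotone \(G\). The only differences are that you prove that nullity fact via the diagonal sandwich (the paper just quotes it), and that your integrable weight \(\phi\) is harmless but unnecessary, since the paper integrates the non-negative quantity \(\E{\mu(F^c)}\) directly against Lebesgue measure.
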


\begin{proof}
  Fix \(\bm{x}\) for which \eqref{F4_G_condition} holds and put
  \(\bm f=\bm W-\bm x\). On the event \(\mu(F^c)=0\), let
  \(\bm f_n\to\bm f\) in \(C(E,\R^d)\). For \(t\in F\), \(\bm f(t)\in D(G)\),
  so \(G(\bm f_n(t))\to G(\bm f(t))\). Moreover, for all large \(n\),
  \(\bm f_n(E)\) is contained in a fixed compact neighbourhood of the compact
  set \(\bm f(E)\). Since \(G\) is bounded on compacta, there is a finite
  bound \(M<\infty\) such that
  \[
    |G(\bm f_n(t))|+|G(\bm f(t))|\le M,
    \qquad t\in E,
  \]
  for all large \(n\). The dominated convergence theorem on \(F\), together
  with \(\mu(F^c)=0\), gives
  \[
    \int_E G(\bm f_n(t))\,d\mu(t)
    =\int_F G(\bm f_n(t))\,d\mu(t)
    \longrightarrow
    \int_F G(\bm f(t))\,d\mu(t)
    =\int_E G(\bm f(t))\,d\mu(t).
  \]
  Hence \(\bm W-\bm x\) is a continuity point of \(\Gamma\) almost surely for
  Lebesgue-a.e. \(\bm x\), which is condition~\ref{F3}.

  It remains to verify \eqref{F4_G_condition} under monotonicity. A
  finite-valued coordinatewise monotone function on \(\R^d\) has a
  Lebesgue-null discontinuity set; put \(N=D(G)^c\), so \(\lambda(N)=0\).
  Tonelli's theorem gives
  \begin{align*}
    \int_{\R^d}
    \E{\mu\{t\in E:\bm W(t)-\bm x\in N\}}\,d\bm x
    &=
    \int_E
    \E{\int_{\R^d}\Ind{\bm W(t)-\bm x\in N}\,d\bm x}\,d\mu(t) \\
    &=\int_E\lambda(N)\,d\mu(t)=0.
  \end{align*}
  Hence \(\E{\mu(F^c)}=0\), and therefore \(\mu(F^c)=0\) almost surely, for
  Lebesgue-a.e. \(\bm x\).
\end{proof}

Recall that a \emph{capacity} on \( E \) is a set function
\( \mu : \mathcal B(E) \to [0, \infty] \) such that \( \mu(\emptyset) = 0 \) and
\( \mu(A) \le \mu(B) \) whenever \( A \subset B \); see
Definition~\ref{def:capacity-terminology}.  A capacity \( \mu \) on \( E \) is
said to be
\begin{itemize}
  \item \emph{finite} if \( \mu(E) < \infty \);
  \item \emph{null-additive} if \( \mu (A) = 0 \implies \mu(A \cup B) = \mu(B)\)
  for every Borel set \(B\subset E\);
  \item \emph{upper regular} if \( \mu(A) = \inf \{ \mu(G) : A \subset G, \ G
  \text{ is open} \} \);
  \item \emph{lower regular} if \( \mu(A) = \sup \{ \mu(K) : K \subset A, \ K
  \text{ is compact} \} \);
  \item \emph{regular} if it is both upper and lower regular;
  \item \emph{convex} (or \emph{supermodular}) if \( \mu ( A \cup B ) + \mu ( A
  \cap B ) \geq \mu ( A ) + \mu ( B ) \);
  \item \emph{dominated by a measure \( \mu_0 \)} if \( \mu_0(A) = 0 \implies
  \mu(A) = 0 \).
\end{itemize}
For this terminology and the corresponding Choquet-integral theory,
see~\cite{Denneberg1994}; the foundational theory of capacities is due
to~\cite{choquet-ref}.

\begin{lemma}\label{sojorn_F3_cap} Let \( \bm{W} ( t ), \ t \in E \) be a continuous stochastic process, \( G :
  \R^d \to \R_+ \) a Borel function bounded on compacta and \( \mu \) a finite
  capacity on \( E \). For a Borel set \(A\subset E\), denote
  \begin{equation}
    \label{sojourn F3 def S}
      S(A)
      \coloneqq
      \left\{ 
        s \geq 0 :
        \bm{g} \mapsto \mu \{ t \in A : G ( \bm{g}(t) ) > s \}
        \text{ is discontinuous at } 
        \bm{g} = \bm{W} - \bm{x}
      \right\}.
\end{equation}
  Let \( F \) be defined by~\eqref{sojourn F3 def F}. If either of the following
  conditions holds:
  \begin{enumerate}[a)]
    \item \label{sojourn F3 condition a}
    \( 
    \pk{ \lambda ( S(E) ) = 0 } = 1
    \)
    for almost all \( \bm{x} \in \R^d \),
    \item \label{sojourn F3 condition b}
    \( \mu \) is null-additive, \( 
    \pk{ \lambda ( S(F) ) = 0 } = 1
    \)
    for almost all \( \bm{x} \in \R^d \) and condition~\eqref{F4_G_condition} is
    satisfied,
  \end{enumerate}
  then \( \Gamma : C ( E, \R^d ) \to \R \) defined by the Choquet integral
  \begin{equation*}
    \Gamma(\bm f)=\int_0^\infty \mu\{t\in E:G(\bm f(t))>s\}\,ds-L
  \end{equation*}
  satisfies condition~\ref{F3}.
  \smallskip

  A simple sufficient condition for the requirement involving \(S(E)\) is the
  following: if \(G\) is continuous, then \(\lambda(S(E))=0\) for every
  realization of \(\bm W-\bm x\), and the resulting Choquet functional is
  continuous at every path in \(C(E,\R^d)\). More generally, the same conclusion
  about \(S(A)\) holds on any Borel set \(A\subset E\) whenever
  \(G(\bm g_n(\cdot))\to G(\bm g(\cdot))\) uniformly on \(A\) for all
  \(\bm g_n\to\bm g\) in \(C(E,\R^d)\).
\end{lemma}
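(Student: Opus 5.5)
The plan is to fix a shift \(\bm x\) for which the relevant hypothesis holds and put \(\bm f=\bm W-\bm x\). Take any sequence \(\bm f_n\to\bm f\) in \(C(E,\R^d)\); we want \(\Gamma(\bm f_n)\to\Gamma(\bm f)\) almost surely. Write \(\phi_{\bm g}(s)=\mu\{t\in E:G(\bm g(t))>s\}\), so that \(\Gamma(\bm g)+L=\int_0^\infty\phi_{\bm g}(s)\,ds\). The proof then has two steps: pointwise convergence of the distribution functions \(\phi_{\bm f_n}(s)\to\phi_{\bm f}(s)\) for Lebesgue-a.e.\ \(s\), followed by dominated convergence in \(s\).

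\textbf{Domination.} As in Lemma~\ref{sojorn_F3}, for large \(n\) all \(\bm f_n(E)\) lie in a fixed compact neighbourhood of the compact set \(\bm f(E)\). Since \(G\) is bounded on compacta, there is \(M<\infty\) with \(0\le G(\bm f_n(t)),G(\bm f(t))\le M\) for all \(t\in E\). Hence \(\phi_{\bm f_n}(s)=\phi_{\bm f}(s)=0\) for \(s\ge M\), and \(0\le\phi\le\mu(E)<\infty\). The \(s\)-integrals are therefore over \([0,M]\) with the bounded integrand \(\mu(E)\), and dominated convergence applies as soon as a.e.\ pointwise convergence is known.

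\textbf{Condition a).} Off the event of probability zero, \(\lambda(S(E))=0\). For \(s\notin S(E)\), the map \(\bm g\mapsto\phi_{\bm g}(s)\) is continuous at \(\bm f\) by the definition of \(S(E)\), so \(\phi_{\bm f_n}(s)\to\phi_{\bm f}(s)\) for a.e.\ \(s\). Dominated convergence then gives continuity of \(\Gamma\) at \(\bm f\), which is~\ref{F3}.

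\textbf{Condition b).} The step I expect to be the main obstacle is replacing \(E\) by \(F\). On the almost sure event \(\mu(F^c)=0\), null-additivity gives \(\mu(A\cup B)=\mu(B)\) whenever \(A\subset F^c\), and monotonicity handles intersections. Together these give
\[
\phi_{\bm g}(s)=\mu\bigl(\{t\in F:G(\bm g(t))>s\}\cup\{t\in F^c:\cdots\}\bigr)=\mu\{t\in F:G(\bm g(t))>s\}
\]
for every path \(\bm g\), and in particular for every \(\bm f_n\). The key point is that \(F\) depends only on the limit path \(\bm f\), so the same null set \(F^c\) is discarded for all \(n\). After this replacement the argument of case a) runs with \(S(F)\) in place of \(S(E)\), using \(\lambda(S(F))=0\) almost surely. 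Measurability of the random set \(F\) comes from \(G\) being Borel and \(D(G)^c\) being Borel, which is the standard fact that the discontinuity set of any function is \(F_\sigma\).

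\textbf{Sufficient condition.} For the final statement, assume \(G(\bm g_n(\cdot))\to G(\bm g(\cdot))\) uniformly on \(A\); this holds for continuous \(G\), by uniform continuity on the compact neighbourhood. Then for each \(\varepsilon>0\) and large \(n\),
\[
\{G(\bm g)>s+\varepsilon\}\cap A\subset\{G(\bm g_n)>s\}\cap A\subset\{G(\bm g)>s-\varepsilon\}\cap A .
\]
Monotonicity of \(\mu\) sandwiches the level-set capacity between \(\phi^A_{\bm g}(s\pm\varepsilon)\), where \(\phi^A_{\bm g}(s)=\mu\{t\in A:G(\bm g(t))>s\}\). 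This function is monotone in \(s\), so it has at most countably many jumps. Hence continuity in \(\bm g\) holds at every continuity point \(s\), and \(S(A)\) is countable, so \(\lambda(S(A))=0\). The same sandwich integrated over \(s\) yields \(|\Gamma(\bm g_n)-\Gamma(\bm g)|\le 2\varepsilon\,\mu(E)\), which gives continuity of the Choquet functional at every path.
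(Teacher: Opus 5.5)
Your proposal is correct and follows essentially the same route as the paper: a uniform bound \(M\) from boundedness of \(G\) on compacta, convergence of the level-set capacities for a.e.\ \(s\) off \(S(E)\) (or off \(S(F)\), after using null-additivity and \(\mu(F^c)=0\) to replace \(E\) by \(F\) simultaneously for every path), dominated convergence in \(s\), and, for the sufficient condition, the level-set sandwich together with the countability of jumps of a monotone function. The only cosmetic difference is that the paper gets continuity at every path from the bound \(|\int_A u\,d\mu-\int_A v\,d\mu|\le\mu(A)\|u-v\|_\infty\), derived from monotonicity and translation by constants. Your integrated sandwich reproduces this bound, and in fact gives constant \(1\) rather than \(2\).
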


\begin{remark}
  The auxiliary set \(S(F)\) can sometimes be checked after scalarization,
  but a continuity hypothesis is essential.  Fix a path \(\bm g\) and assume
  that every sequence \(\bm g_n\to\bm g\) in \(C(E,\R^d)\) satisfies
  \[
    \sup_{t\in F}
    |G(\bm g_n(t))-G(\bm g(t))|\longrightarrow0.
  \]
  For the bounded function \(u=G(\bm g)|_F\), define
  \begin{equation*}
    \widetilde S(F,u)
    =
    \left\{
      s\ge0:
      v\mapsto\mu\{t\in F:v(t)>s\}
      \text{ is discontinuous at }u
      \text{ in the uniform norm}
    \right\}.
  \end{equation*}
  Then \(S(F)\subset\widetilde S(F,G(\bm g)|_F)\).  Without uniform
  continuity of the scalarized compositions along perturbations of \(\bm g\),
  this inclusion need not hold.

  For example, let
  \(\mu(A)=\pk{A\cap X\ne\varnothing}\), where \(X\) is a random closed
  subset of \(F\) and the relevant suprema are measurable.  Then
  \begin{equation*}
    \widetilde S(F,u)
    =
    \left\{s\ge0:
      \pk{\sup_{x\in X}u(x)=s}>0
    \right\},
  \end{equation*}
  the set of atoms of a real random variable, and is therefore at most
  countable.  If \(X=F\) almost surely, this set is the singleton
  \(\{\sup_{x\in F}u(x)\}\).
\end{remark}

\begin{proof}[Proof of Lemma~\ref{sojorn_F3_cap}]
  Fix \(\bm x\) outside the exceptional null sets in the assumptions and put
  \(\bm f=\bm W-\bm x\).

  \textbf{Case~\ref{sojourn F3 condition a}.} On the event
  \(\lambda(S(E))=0\), let \(\bm f_n\to\bm f\) in \(C(E,\R^d)\). Since \(G\) is
  bounded on compacta and non-negative, there is \(M<\infty\) such that
  \(0\leq G(\bm f_n(t)),G(\bm f(t))\leq M\) for all large \(n\) and all
  \(t\in E\). Thus
  \[
    \mu\{t\in E:G(\bm f_n(t))>s\}\leq \mu(E)\Ind{s<M},
  \]
  which is an \(L^1(0,\infty)\) majorant. For every
  \(s\notin S(E)\), the level-set capacity is continuous at \(\bm f\), so
  \[
    \mu\{t\in E:G(\bm f_n(t))>s\}
    \to
    \mu\{t\in E:G(\bm f(t))>s\}.
  \]
  Dominated convergence in \(s\) proves continuity of the Choquet integral at
  \(\bm f\). Hence \(\pk{\bm W-\bm x\in D(\Gamma)}=1\).

  \textbf{Case~\ref{sojourn F3 condition b}.} On the event
  \(\lambda(S(F))=0\) and \(\mu(F^c)=0\), null-additivity gives, for every
  \(\bm g\in C(E,\R^d)\) and every \(s\ge0\),
  \[
    \mu\{t\in E:G(\bm g(t))>s\}
    =\mu\{t\in F:G(\bm g(t))>s\}.
  \]
  Repeating the preceding dominated-convergence argument on \(F\) gives
  \(\pk{\bm W-\bm x\in D(\Gamma)}=1\).

  \textbf{Sufficient condition.} Assume that, for the relevant Borel set
  \(A\subset E\), one has
  \(G(\bm g_n(\cdot))\to G(\bm g(\cdot))\) uniformly on \(A\) whenever
  \(\bm g_n\to\bm g\) in \(C(E,\R^d)\). For fixed \(\bm g\), put
  \(u(t)=G(\bm g(t))\). If \(u_n\to u\) uniformly on \(A\), then, for every
  \(s>0\), every \(\varepsilon\in(0,s)\), and all large \(n\),
  \begin{equation*}
    \{t\in A:u(t)>s+\varepsilon\}
    \subset
    \{t\in A:u_n(t)>s\}
    \subset
    \{t\in A:u(t)>s-\varepsilon\}.
  \end{equation*}
  Therefore the level-set map is continuous at every positive continuity point
  of the non-increasing function
  \(s\mapsto\mu\{t\in A:u(t)>s\}\).  A monotone real function has at most
  countably many discontinuities, and the single endpoint \(s=0\) is immaterial;
  hence \(\lambda(S(A))=0\).

  There is also a direct continuity estimate.  For bounded non-negative
  functions \(u,v\) on \(A\), put \(\delta=\|u-v\|_\infty\).  Monotonicity of
  the Choquet integral and the layer-cake identity for addition of a constant
  give
  \[
    \int_Au\,d\mu
    \le \int_A(v+\delta)\,d\mu
    =\int_Av\,d\mu+\delta\mu(A),
  \]
  and the same estimate with \(u\) and \(v\) interchanged yields
  \begin{equation}
    \label{eq:choquet-supnorm-lipschitz}
    \left|\int_A u\,d\mu-\int_A v\,d\mu\right|
    \le \mu(A)\,\|u-v\|_\infty .
  \end{equation}
  Consequently, if \(G\) is continuous, then the Choquet functional is
  continuous at every \(\bm g\in C(E,\R^d)\): the compactness of \(\bm g(E)\)
  gives uniform convergence of the compositions, and
  \eqref{eq:choquet-supnorm-lipschitz} applies.
\end{proof}

\begin{lemma}\label{sojorn_F2}Let \( G : \R^d \to \R \) be monotone and \( \mu \) be a finite measure on \(
  E \) such that \( \mu(E)>0 \) and \( \supp \mu \) is connected. If \( L / \mu(E) \) is not a
  plateau level of \( G \), then the functional \( \Gamma \) defined
  by~\eqref{def:sojourn} satisfies condition~\ref{F2}. If \( G \) is strictly
  monotone, then the same result holds without the condition that \( \supp \mu
  \) is connected.
\end{lemma}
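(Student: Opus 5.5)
The plan is to prove a deterministic statement for each fixed continuous path and then integrate over the randomness. Fix a realization \(\bm w\) of \(\bm W\), so \(\bm w\in C(E,\R^d)\), and put
\[
  \phi(\bm x)=\int_E G(\bm w(t)-\bm x)\,d\mu(t),
  \qquad
  P_{\bm w}=\{\bm x\in\R^d:\phi(\bm x)=L\}.
\]
I will show that \(\lambda(P_{\bm w})=0\) for every such path. Granting this, Tonelli's theorem applied to the jointly measurable indicator \(\Ind{\Gamma(\bm W-\bm x)=0}\) gives
\[
  \int_{\R^d}\pk{\Gamma(\bm W-\bm x)=0}\,d\bm x=\E{\lambda(P_{\bm W})}=0 .
\]
Hence \(\pk{\Gamma(\bm W-\bm x)=0}=0\) for Lebesgue-a.e. \(\bm x\), which is condition~\ref{F2}. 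Boundedness of \(G\) on the compact set \(\bm w(E)-\bm x\) makes \(\phi\) finite.

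\textbf{Step 1: order-convexity and a box.} Since \(G\) is monotone, \(\phi\) is coordinatewise non-increasing. Therefore, if \(\bm x\le\bm x'\) coordinatewise and both lie in \(P_{\bm w}\), the whole order interval \([\bm x,\bm x']\) lies in \(P_{\bm w}\). Suppose \(\lambda(P_{\bm w})>0\) and take a Lebesgue density point \(\bm x_0\). The small cubes \(\bm x_0-(0,\varepsilon)^d\) and \(\bm x_0+(0,\varepsilon)^d\) each meet \(P_{\bm w}\) for small \(\varepsilon\). This yields \(\bm x<\bm x'\) in \(P_{\bm w}\) with every coordinate strictly ordered, and hence a non-degenerate box \(B=[\bm x,\bm x']\subset P_{\bm w}\).

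\textbf{Step 2: pointwise constancy.} For each \(t\), monotonicity gives \(G(\bm w(t)-\bm x)\ge G(\bm w(t)-\bm y)\ge G(\bm w(t)-\bm x')\) for every \(\bm y\in B\). Since \(\phi(\bm x)=\phi(\bm x')=L\) and \(\mu\) is finite, the two outer terms agree for \(\mu\)-a.e. \(t\). Thus there is a \(\mu\)-full set \(E_0\) such that, for every \(t\in E_0\), \(G\) equals a constant \(c(t)\) on the box \(\bm w(t)-B\).

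In the strictly monotone case this is already a contradiction. Strict monotonicity makes \(G(\bm w(t)-\bm x)>G(\bm w(t)-\bm x')\) for every \(t\), so \(\phi(\bm x)-\phi(\bm x')>0\) because \(\mu(E)>0\). Connectedness of the support is not needed here.

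\textbf{Step 3: globalize using connectedness (main obstacle).} In the non-strict case I must show that \(c\) is constant on \(E_0\cap\supp\mu\). This is the step I expect to be hardest. The idea is as follows. By uniform continuity of \(\bm w\), there is \(\delta>0\) such that for \(|t-s|<\delta\) the boxes \(\bm w(t)-B\) and \(\bm w(s)-B\) overlap, since their displacement is smaller than half the side lengths of \(B\). Hence \(c(t)=c(s)\) whenever \(t,s\in E_0\) and \(|t-s|<\delta\).

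Now \(E_0\cap\supp\mu\) is dense in \(\supp\mu\): every relatively open piece of the support has positive measure, so it meets the full-measure set \(E_0\). Because \(\supp\mu\) is a connected subset of \(\R\), it is an interval. Chaining through points of \(E_0\) at mutual distances less than \(\delta\) therefore shows that \(c\equiv c_0\) on \(E_0\cap\supp\mu\).

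Integrating, \(L=\phi(\bm x)=c_0\,\mu(E)\), so \(c_0=L/\mu(E)\). Then \(G\equiv L/\mu(E)\) on a box \(\bm w(t)-B\) of positive Lebesgue measure, so \(L/\mu(E)\) is a plateau level of \(G\). This contradicts the hypothesis, so \(\lambda(P_{\bm w})=0\), and the Tonelli argument above completes the proof.
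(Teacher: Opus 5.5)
Your proof is correct and follows the paper's argument. You fix a continuous path, use monotonicity to make $G$ constant on the translated boxes $\bm w(t)-[\bm x,\bm x']$ for $\mu$-a.e.\ $t$, spread that constant over the connected support using uniform continuity and a dense chain, contradict the non-plateau hypothesis (the strictly monotone case is immediate), and finish with Tonelli. The differences are minor: the paper rules out only diagonal pairs $\bm x,\bm x+a\bm 1$ and gets $\lambda(D_{\bm f})=0$ by Fubini along lines parallel to $\bm 1$, where you extract a strictly ordered pair from a Lebesgue density point, and you use that a connected compact subset of $\R$ is an interval where the paper uses $\eta$-chain connectedness.
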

\begin{proof}
  Assume without loss of generality that \( G \) is monotone increasing. Fix a
  continuous function \( \bm{f}:E\to\R^d \) and put
  \begin{equation*}
    I_{\bm f}(\bm{x})
    =
    \int_E G(\bm f(t)-\bm{x})\,d\mu(t),
    \qquad
    D_{\bm f}
    =
    \{\bm{x}\in\R^d:I_{\bm f}(\bm{x})=L\}.
  \end{equation*}
  We first show that, under either set of assumptions in the statement,
  \(D_{\bm f}\) cannot contain two points of the form
  \(\bm{x}\) and \(\bm{x}+a\bm{1}\), where \(a>0\).

  Suppose, to the contrary, that \(\bm{x},\bm{x}'\in D_{\bm f}\) with
  \(\bm{x}'=\bm{x}+a\bm{1}\), \(a>0\). Then
  \begin{equation*}
    \int_E G(\bm f(t)-\bm{x})\,d\mu(t)
    =
    \int_E G(\bm f(t)-\bm{x}')\,d\mu(t)
    =L.
  \end{equation*}
  Since \(G(\bm f(t)-\bm{x})\geq G(\bm f(t)-\bm{x}')\), this gives
  \begin{equation}\label{eq:measure_diag_zero}
    \int_E
    \left(G(\bm f(t)-\bm{x})-G(\bm f(t)-\bm{x}')\right)
    \,d\mu(t)=0.
  \end{equation}
  If \(G\) is strictly monotone, then the integrand in
  \eqref{eq:measure_diag_zero} is strictly positive for every \(t\in E\). Since
  \(\mu(E)>0\), its integral is strictly positive, a contradiction.

  It remains to consider the non-strictly monotone case with connected support.
  From \eqref{eq:measure_diag_zero} there is a set \(A\subset E\) such that
  \(\mu(A^c)=0\) and
  \begin{equation}\label{eq:measure_diag_equal_a}
    G(\bm f(t)-\bm{x})=G(\bm f(t)-\bm{x}')
    \qquad \text{for all } t\in A.
  \end{equation}
  By monotonicity, \eqref{eq:measure_diag_equal_a} implies
  \begin{equation}\label{eq:measure_diag_equal_box}
    G(\bm f(t)-\bm{y})=G(\bm f(t)-\bm{x})
    \qquad
    \text{for all }t\in A \text{ and all }\bm{y}\in[\bm{x},\bm{x}'].
  \end{equation}
  Let \(S=\supp\mu\). Since \(\mu(E\setminus S)=0\), the set \(A\cap S\) is
  dense in \(S\). Set \(\bm m=(\bm{x}+\bm{x}')/2\) and
  \(\rho=\dist_\infty(\bm m,[\bm{x},\bm{x}']^c)=a/2\). By uniform continuity of
  \(\bm f\), choose \(\delta>0\) such that
  \(\|\bm f(t)-\bm f(s)\|_\infty<\rho\) whenever \(|t-s|<\delta\). If
  \(t,s\in A\cap S\) and \(|t-s|<\delta\), then
  \(\bm m-(\bm f(t)-\bm f(s))\in[\bm{x},\bm{x}']\), and hence
  \begin{align*}
    G(\bm f(t)-\bm{x})
    &=G(\bm f(t)-\bm m)  \\
    &=G\left(\bm f(s)-\big(\bm m-(\bm f(t)-\bm f(s))\big)\right) \\
    &=G(\bm f(s)-\bm{x}),
  \end{align*}
  where the first and third equalities use~\eqref{eq:measure_diag_equal_box}.
  A connected compact metric space is \(\eta\)-chain connected for every
  \(\eta>0\). Taking \(\eta<\delta/3\) and then perturbing the intermediate
  points of an \(\eta\)-chain in \(S\) into the dense set \(A\cap S\), we obtain
  a chain in \(A\cap S\) whose jumps are still smaller than \(\delta\). Thus
  \(t\mapsto G(\bm f(t)-\bm{x})\) is constant on \(A\cap S\); denote the constant
  by \(c\). Since \(A\cap S\) has full \(\mu\)-measure,
  \begin{equation*}
    L=I_{\bm f}(\bm{x})=c\,\mu(E),
    \qquad \text{so } c=L/\mu(E).
  \end{equation*}
  For any \(t_0\in A\cap S\), \eqref{eq:measure_diag_equal_box} gives
  \begin{equation*}
    \bm f(t_0)-[\bm{x},\bm{x}']
    \subset
    G^{-1}\big(L/\mu(E)\big).
  \end{equation*}
  The box \(\bm f(t_0)-[\bm{x},\bm{x}']\) has positive \(d\)-dimensional Lebesgue
  measure, because \(\bm{x}'=\bm{x}+a\bm{1}\) with \(a>0\). This says that
  \(L/\mu(E)\) is a plateau level of \(G\), contradicting the assumption.

  Consequently, for every continuous path \(\bm f\), the set \(D_{\bm f}\) has no
  two points lying on the same line parallel to \(\bm 1\). Indeed, if
  \(\mathcal H=\{\bm h\in\R^d:\langle \bm h,\bm 1\rangle=0\}\), then each section
  \begin{equation*}
    \{r\in\R:\bm h+r\bm 1\in D_{\bm f}\},
    \qquad \bm h\in\mathcal H,
  \end{equation*}
  contains at most one point.
  The map \(I_{\bm f}\) is Borel: monotone finite-valued functions are Borel and
  are bounded on the compact range of \(\bm f-\bm x\), and integration preserves
  Borel measurability in \(\bm x\). Hence \(D_{\bm f}\) is measurable.
  Fubini's theorem in the orthogonal decomposition
  \(\R^d=\mathcal H\oplus\operatorname{span}\{\bm 1\}\) therefore yields
  \(\lambda(D_{\bm f})=0\).

  Applying this pathwise to \(\bm f=\bm W-\bm d\) gives
  \begin{equation*}
    \int_{\R^d}\pk{\Gamma(\bm W-\bm d-\bm{x})=0}\,d\bm{x}
    =
    \E{\lambda(D_{\bm W-\bm d})}=0,
  \end{equation*}
  and hence \(\pk{\Gamma(\bm W-\bm d-\bm{x})=0}=0\) for Lebesgue-a.e.
  \(\bm{x}\in\R^d\).
\end{proof}

We use the capacity terminology of
Definition~\ref{def:capacity-terminology}.  In particular, the
support-intersection property implies \(\mu(E\setminus\supp\mu)=0\), whereas the
separate equality \(\mu(\supp\mu)=\mu(E)\) means that the support carries full
capacity.  We shall also use the standard fact that the Choquet integral of
bounded non-negative functions is superadditive when \(\mu\) is convex
(supermodular); see \cite{Denneberg1994}.

\begin{lemma}\label{sojorn_F2_cap}
Let \(L>0\), let \(G:\R^d\to\R_+\) be continuous and coordinatewise
non-decreasing, and let
\(\mu\) be a regular finite capacity satisfying the support-intersection
property on \(E\), with \(\mu(E)>0\). Define
\[
  \Gamma(\bm f)=\int_0^\infty \mu\{t\in E:G(\bm f(t))>s\}\,ds-L .
\]
\begin{enumerate}[a)]
  \item \label{sojorn F2 condition a}
  If
  \begin{equation}
    \label{eq:strict-above-zero}
    G(\bm y)>0,\quad \bm z\succ\bm y
    \quad\Longrightarrow\quad G(\bm z)>G(\bm y),
  \end{equation}
  then \(\Gamma\) satisfies condition~\ref{F2}.
  \item \label{sojorn F2 condition b}
  If \(\mu\) is convex, \(\supp\mu\) is connected,
  \(\mu(\supp\mu)=\mu(E)\), and \(L/\mu(E)\) is not a plateau level of \(G\),
  then \(\Gamma\) satisfies condition~\ref{F2}.
\end{enumerate}
\end{lemma}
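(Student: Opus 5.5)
The plan mirrors the proof of Lemma~\ref{sojorn_F2}. Fix a continuous path \(\bm f\) and set
\[
  I_{\bm f}(\bm x)=\int_0^\infty\mu\{t\in E:G(\bm f(t)-\bm x)>s\}\,ds,
  \qquad
  D_{\bm f}=\{\bm x:I_{\bm f}(\bm x)=L\}.
\]
The main step is to show that \(D_{\bm f}\) contains no two points \(\bm x\) and \(\bm x'=\bm x+a\bm1\) with \(a>0\). Once this holds, the remainder is the same as in Lemma~\ref{sojorn_F2}. First, \(I_{\bm f}\) is continuous in \(\bm x\), by continuity of \(G\) and the Lipschitz bound \eqref{eq:choquet-supnorm-lipschitz}, so \(D_{\bm f}\) is Borel. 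Then Fubini in the decomposition \(\mathcal H\oplus\operatorname{span}\{\bm1\}\) gives \(\lambda(D_{\bm f})=0\). Applying this pathwise to the shifted limiting element and taking expectations yields~\ref{F2}.

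\textbf{Case a).} Suppose \(\bm x,\bm x'\in D_{\bm f}\) with \(\bm x'=\bm x+a\bm 1\). Put \(u=G(\bm f-\bm x)\) and \(v=G(\bm f-\bm x')\), so \(u\ge v\ge0\) pointwise and the two Choquet integrals coincide. By layer-cake, the functions \(s\mapsto\mu\{u>s\}\) and \(s\mapsto\mu\{v>s\}\) are ordered pointwise and have equal integrals, hence they agree for a.e.\ \(s\). Since \(L>0\), the set \(U_s=\{u>s\}\) is non-empty for some \(s>0\). On \(\{v>0\}\), hypothesis \eqref{eq:strict-above-zero} gives \(u>v\) strictly. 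We also have \(u>0\) wherever \(v\ge0\) and \(u\ge v\), whenever \(u(t)>0\). Choose \(s>0\) with \(\mu\{u>s\}=\mu\{v>s\}>0\); such an \(s\) exists, otherwise the integral would be \(0\neq L\). Take \(t_0\) with \(v(t_0)=\max v>s\). Then \(u(t_0)>v(t_0)\), so \(\{u>\max v\}\) is a non-empty relatively open set.

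The aim is a contradiction with equality of the level capacities at levels \(s\in(\max v,u(t_0))\), where \(\mu\{v>s\}=0\). The support-intersection property then forces \(\{u>s\}\cap\supp\mu=\emptyset\) for a.e.\ such \(s\), so one must pick \(t_0\) in \(\supp\mu\). To arrange this, replace \(\max v\) by \(\sup_{\supp\mu}v\) and use \(\mu(E\setminus\supp\mu)=0\) together with regularity to see that \(\{v>s\}\) has zero capacity once \(s\ge\sup_{\supp\mu}v\). Strict increase at a maximizer in \(\supp\mu\), which is compact, then gives an open set meeting the support with \(u>s\), hence positive capacity. This contradicts a.e.\ equality on an interval of levels.

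\textbf{Case b).} Here the strict gap is unavailable, and one instead adapts the chain argument of Lemma~\ref{sojorn_F2}. From a.e.\ equality of the level capacities and regularity, I expect to show that for a.e.\ \(s\) the open set \(\{u>s\}\setminus\overline{\{v>s\}}\) misses \(\supp\mu\). Convexity is used here: the superadditivity \(\int(u-v)\,d\mu\le\int u\,d\mu-\int v\,d\mu=0\) gives \(\int(u-v)\,d\mu=0\). By the support-intersection property and continuity, \(u=v\) on \(\supp\mu\), because \(\{u-v>\varepsilon\}\) is open and of zero capacity. Monotonicity then extends the equality to the whole box \([\bm x,\bm x']\), as in \eqref{eq:measure_diag_equal_box}. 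The \(\delta\)-chain argument on the connected set \(\supp\mu\) shows that \(G(\bm f-\bm x)\) equals a constant \(c\) on \(\supp\mu\). Since the support carries full capacity, \(L=c\,\mu(\supp\mu)=c\,\mu(E)\), and the box \(\bm f(t_0)-[\bm x,\bm x']\) lies in \(G^{-1}(L/\mu(E))\), contradicting the assumption that \(L/\mu(E)\) is not a plateau level.

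The main obstacle is making the non-additive steps rigorous, namely passing from equal integrals to pointwise information on \(\supp\mu\) via regularity and the support-intersection property. In case~a) this concerns the choice of the maximizer in the support. In case~b) it concerns justifying \(\int(u-v)\,d\mu\le\int u-\int v\) for convex \(\mu\), which is superadditivity applied to \(u=(u-v)+v\).
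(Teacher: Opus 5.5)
Your proposal is correct and is essentially the paper's argument. It uses the same reduction to excluding diagonal pairs \(\bm x,\bm x+a\bm 1\) followed by Fubini. In case a) it takes the same maximizer of \(v\) on the compact support and combines strict increase with the support-intersection property. In case b) it follows the same chain: superadditivity gives equality on \(\supp\mu\), then local constancy on the connected support, then the plateau contradiction.

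Your variations in case b) are harmless streamlinings. You apply superadditivity once to the endpoint pair and then use a monotone sandwich. You also obtain \(L=c\,\mu(E)\) from monotonicity alone: level sets below \(c\) contain \(\supp\mu\), and those above \(c\) lie in the zero-capacity set \(E\setminus\supp\mu\). So the paper's supermodularity identity \(\mu(A)=\mu(A\cap\supp\mu)\) is not needed. In the write-up, delete the garbled sentence in case a) and state explicitly that \(\max_{\supp\mu}v>s>0\), because \(\{v>s\}\) is open with positive capacity.
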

\begin{proof}
  Assume without loss of generality that \(G\) is monotone increasing. For a
  continuous path \(\bm f:E\to\R^d\), write
  \begin{equation*}
    I_{\bm f}(\bm{x})
    =
    \int_E G(\bm f(t)-\bm{x})\,d\mu(t)
    =
    \int_0^\infty
      \mu\{t\in E:G(\bm f(t)-\bm{x})>s\}\,ds,
  \end{equation*}
  and set \(D_{\bm f}=\{\bm{x}:I_{\bm f}(\bm{x})=L\}\).  By continuity of
  \(G\), uniform convergence on compact ranges, and
  \eqref{eq:choquet-supnorm-lipschitz}, the map \(I_{\bm f}\) is continuous;
  in particular, \(D_{\bm f}\) is Borel.  As in the proof of
  Lemma~\ref{sojorn_F2}, it is enough to show that \(D_{\bm f}\) contains no two
  points \(\bm{x}\) and \(\bm{x}+a\bm 1\), \(a>0\).  The same
  diagonal-section Fubini argument then gives \(\lambda(D_{\bm f})=0\) for
  every continuous path \(\bm f\).  Once this pathwise statement has been
  proved, continuity of the Choquet functional makes
  \((\omega,\bm x)\mapsto
  \Ind\{I_{\bm W(\omega)-\bm d}(\bm x)=L\}\) measurable, and Tonelli gives
  \[
    \int_{\R^d}
      \pk{I_{\bm W-\bm d}(\bm x)=L}\,d\bm x
    =
    \E{\lambda(D_{\bm W-\bm d})}=0.
  \]
  This is exactly condition~\ref{F2}.

  Fix \(\bm{x}'=\bm{x}+a\bm 1\), \(a>0\), and suppose for contradiction that
  \(\bm{x},\bm{x}'\in D_{\bm f}\). Denote \(S=\supp\mu\). The
  support-intersection property gives \(\mu(E\setminus S)=0\).

  \smallskip
  \textbf{Case~\ref{sojorn F2 condition a}.}
  Put \(u_{\bm y}(t)=G(\bm f(t)-\bm y)\).  The continuous function
  \(u_{\bm{x}'}\) attains its maximum on the compact set \(S\); choose
  \(t_0\in S\) such that
  \[
    m=\max_{t\in S}u_{\bm{x}'}(t)=u_{\bm{x}'}(t_0).
  \]
  Since \(I_{\bm f}(\bm x')=L>0\) and sets outside \(S\) have zero capacity,
  one has \(m>0\). Pick \(\eta\in(0,a)\). Then
  \(\bm f(t_0)-\bm{x}-\eta\bm 1\succ\bm f(t_0)-\bm{x}'\), and
  \eqref{eq:strict-above-zero} gives
  \[
    r_0
    \coloneqq
    G(\bm f(t_0)-\bm{x}-\eta\bm 1)
    >m.
  \]
  By continuity of \(\bm f\), the set
  \[
    U=\{t\in E:\|\bm f(t)-\bm f(t_0)\|_\infty<\eta\}
  \]
  is an open neighbourhood of \(t_0\), and hence \(\mu(U)>0\). For
  \(t\in U\), monotonicity gives \(u_{\bm x}(t)\ge r_0\). On the other hand,
  \(u_{\bm x'}\le m\) on \(S\), and all subsets of \(E\setminus S\) have zero
  capacity. Therefore, for \(r\in(m,r_0)\),
  \[
    \mu\{u_{\bm x}>r\}\ge\mu(U)>0,
    \qquad
    \mu\{u_{\bm x'}>r\}=0.
  \]
  Integrating in \(r\) gives
  \(I_{\bm f}(\bm x)>I_{\bm f}(\bm x')\), a contradiction.

  \smallskip
  \textbf{Case~\ref{sojorn F2 condition b}.}
  Since \(I_{\bm f}\) is monotone decreasing in \(\bm{x}\), the equality
  \(I_{\bm f}(\bm{x})=I_{\bm f}(\bm{x}')=L\) implies
  \begin{equation}
    \label{eq:cap_box_integral_constant}
    I_{\bm f}(\bm y)=L
    \qquad\text{for every }\bm y\in[\bm{x},\bm{x}'].
  \end{equation}
  Let \(\bm y,\bm z\in(\bm{x},\bm{x}')\) with \(\bm z\succ\bm y\), and set
  \[
    h_{\bm y,\bm z}(t)
    =G(\bm f(t)-\bm y)-G(\bm f(t)-\bm z)\ge0.
  \]
  By superadditivity of the Choquet integral for a convex capacity,
  \begin{align*}
    L
    &=I_{\bm f}(\bm y)
      =\int_E\big(G(\bm f(t)-\bm z)+h_{\bm y,\bm z}(t)\big)\,d\mu(t)\\
    &\ge I_{\bm f}(\bm z)+\int_Eh_{\bm y,\bm z}(t)\,d\mu(t)
      =L+\int_Eh_{\bm y,\bm z}(t)\,d\mu(t).
  \end{align*}
  Thus \(\int_Eh_{\bm y,\bm z}\,d\mu=0\). If
  \(h_{\bm y,\bm z}(t_0)>0\) at some \(t_0\in S\), continuity gives a
  relatively open neighbourhood \(U\) of \(t_0\) on which
  \(h_{\bm y,\bm z}\) is bounded below by a positive constant. Then
  \(\mu(U)>0\), so its Choquet integral is positive, a contradiction. Hence
  \begin{equation}
    \label{eq:cap_equal_on_support}
    G(\bm f(t)-\bm y)=G(\bm f(t)-\bm z),
    \qquad t\in S,
  \end{equation}
  for comparable \(\bm y,\bm z\) in the open box. If they are not comparable,
  choose \(\bm q\) in the box below both and apply the preceding equality twice;
  thus \eqref{eq:cap_equal_on_support} holds for all pairs in the open box.

  Let \(\bm m=(\bm{x}+\bm{x}')/2\) and
  \(\rho=\dist_\infty(\bm m,(\bm{x},\bm{x}')^c)>0\). If \(t,s\in S\) and
  \(\|\bm f(t)-\bm f(s)\|_\infty<\rho\), then
  \(\bm m-(\bm f(t)-\bm f(s))\in(\bm{x},\bm{x}')\), so
  \eqref{eq:cap_equal_on_support} gives
  \[
    G(\bm f(t)-\bm m)=G(\bm f(s)-\bm m).
  \]
  Therefore \(t\mapsto G(\bm f(t)-\bm m)\) is locally constant on the connected
  set \(S\), hence constant there; call the constant \(c\).  We next justify
  that points outside \(S\) do not affect the integral in this convex-capacity
  case.  For every Borel \(A\subset E\), supermodularity applied to \(A\) and
  \(S\), together with \(\mu(S)=\mu(E)\), gives
  \[
    \mu(A)+\mu(S)
    \le \mu(A\cup S)+\mu(A\cap S)
    \le \mu(E)+\mu(A\cap S)
    =\mu(S)+\mu(A\cap S).
  \]
  Hence \(\mu(A)=\mu(A\cap S)\).  The Choquet integral of a function equal
  to \(c\) on \(S\) is therefore \(c\mu(S)=c\mu(E)\).  From
  \eqref{eq:cap_box_integral_constant},
  \[
    L=I_{\bm f}(\bm m)=c\mu(E),
    \qquad c=L/\mu(E).
  \]
  For any \(t_0\in S\),
  \[
    \bm f(t_0)-(\bm{x},\bm{x}')
    \subset G^{-1}\bigl(L/\mu(E)\bigr).
  \]
  The left-hand side is an open box of positive Lebesgue measure, contradicting
  the non-plateau assumption.

  In both cases diagonal pairs have been ruled out, and the diagonal-section
  argument completes the proof.
\end{proof}

\begin{corollary}[No atoms under partial coordinate shifts]
\label{cor:choquet-partial-shifts}
Let \(L>0\), let \(G:\R^d\to\R_+\) be continuous and coordinatewise
non-decreasing, and let
\(\mu\) be a regular finite capacity satisfying the support-intersection
property on \(E\), with \(\mu(E)>0\).  Let
\(D\subseteq\{1,\ldots,d\}\) be non-empty, let
\(\bm c\in\R^{D^c}\), and put
\[
  G_{D,\bm c}(\bm y_D)=G(\bm y_D,\bm c_{D^c}).
\]
Let \(\bm U\) be a continuous \(\R^{|D|}\)-valued stochastic process on
\(E\), and suppose that \(G_{D,\bm c}\) and \(\mu\) satisfy either
condition~\ref{sojorn F2 condition a} or
condition~\ref{sojorn F2 condition b} of
Lemma~\ref{sojorn_F2_cap}.  Then
\[
  \pk{
    \int_E G_{D,\bm c}(\bm U(t)-\bm x_D)\,d\mu(t)=L
  }=0
\]
for Lebesgue-a.e. \(\bm x_D\in\R^{|D|}\).  If this expression is embedded in a
full shift vector \(\bm x\in\R^d\) but is independent of
\(\bm x_{D^c}\), the same no-atom statement holds for Lebesgue-a.e.
\(\bm x\in\R^d\).
\end{corollary}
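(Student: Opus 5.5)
The plan is to apply Lemma~\ref{sojorn_F2_cap} directly in dimension \(|D|\), with \(G_{D,\bm c}\) in place of \(G\). First check that \(G_{D,\bm c}:\R^{|D|}\to\R_+\) inherits the standing hypotheses of that lemma. It is continuous as the restriction of a continuous function to an affine coordinate section. It is coordinatewise non-decreasing, because increasing a coordinate in \(D\) with \(\bm c\) fixed increases the argument of \(G\) coordinatewise. The capacity \(\mu\), the set \(E\) and the level \(L>0\) are unchanged. Condition~\ref{sojorn F2 condition a} or~\ref{sojorn F2 condition b} is assumed for the pair \((G_{D,\bm c},\mu)\) itself. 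In case~b), the plateau notion refers to \(|D|\)-dimensional Lebesgue measure, which is exactly the dimension in which the lemma will be applied. So nothing beyond this bookkeeping is needed for the hypotheses.

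Next, note that the proof of Lemma~\ref{sojorn_F2_cap} is pathwise. For every continuous \(\bm f:E\to\R^{|D|}\), it shows that the level set
\[
D_{\bm f}=\Bigl\{\bm x_D:\int_E G_{D,\bm c}(\bm f(t)-\bm x_D)\,d\mu(t)=L\Bigr\}
\]
contains no two points differing by \(a\bm 1_D\) with \(a>0\). The diagonal-section Fubini argument then gives \(\lambda_{|D|}(D_{\bm f})=0\). The only role of the process in that lemma is through the final Tonelli step. Replacing \(\bm W-\bm d\) by the arbitrary continuous process \(\bm U\), the map \(\bm x_D\mapsto I_{\bm U}(\bm x_D)\) is continuous by~\eqref{eq:choquet-supnorm-lipschitz}. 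Hence \((\omega,\bm x_D)\mapsto\Ind{I_{\bm U(\omega)}(\bm x_D)=L}\) is jointly measurable, because the functional is continuous on \(C(E,\R^{|D|})\) and \(\bm U\) is a random element there. Tonelli then gives
\[
\int_{\R^{|D|}}\pk{I_{\bm U}(\bm x_D)=L}\,d\bm x_D=\E{\lambda_{|D|}(D_{\bm U})}=0,
\]
which is the first assertion. The degenerate case \(|D|=1\) needs no change, since the diagonal is the real line itself.

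For the embedding statement, let \(N\subset\R^{|D|}\) be the null exceptional set. The set of full shift vectors where the probability is positive is contained in \(N\times\R^{D^c}\), because the expression does not depend on \(\bm x_{D^c}\). That set has \(d\)-dimensional measure zero by Fubini, or by Tonelli applied to the product measure.

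The main obstacle is case~b). The contradiction in Lemma~\ref{sojorn_F2_cap} uses the non-plateau property on an open box in the domain of \(G\). One must make sure that the box \(\bm f(t_0)-(\bm x,\bm x')\) lives in \(\R^{|D|}\) and that positivity of its measure is measured there rather than in \(\R^d\). This is exactly why the hypotheses are imposed on the section \(G_{D,\bm c}\) and not on \(G\); the counterexample with \(\psi_{\min,p}\) in Remark~\ref{rem:admissible-choquet-data-examples} shows this distinction is genuine. Once this is respected, the argument transfers verbatim.
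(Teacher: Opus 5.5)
Your proposal is correct and follows the paper's proof. The paper likewise applies Lemma~\ref{sojorn_F2_cap} in dimension $|D|$, with scalarization $G_{D,\bm c}$ and path $\bm U$, and gets the embedded statement from Tonelli's theorem on $\R^d=\R^{|D|}\times\R^{|D^c|}$; your added checks (the section inheriting continuity and monotonicity, the pathwise nature of the lemma's proof, and joint measurability for the Tonelli step) spell out what the paper's two-line proof leaves implicit.
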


\begin{proof}
Apply Lemma~\ref{sojorn_F2_cap} in dimension \(|D|\), with scalarization
\(G_{D,\bm c}\) and path \(\bm U\).  The last assertion follows from Tonelli's
theorem for the product decomposition
\(\R^d=\R^{|D|}\times\R^{|D^c|}\).
\end{proof}

\begin{remark}
  The connected-support hypothesis is a convenient path-independent sufficient
  condition, but it is not necessary.  For an ordinary measure and a fixed
  path, the condition
  \begin{equation*}
    \mu \Big\{ t : G ( \bm{f} ( t ) - \bm{x} ) > G ( \bm{f} ( t ) - \bm{x}' ) \Big\} > 0
  \end{equation*}
  is exactly what makes the two integrals strictly different for the given pair
  of shifts.  For a genuinely non-additive capacity, positivity of this set
  alone need not force strict inequality of the two Choquet integrals, which is
  why Lemma~\ref{sojorn_F2_cap} uses the stronger support and convexity
  hypotheses in part~\ref{sojorn F2 condition b}.  The following additive
  counterexample shows why some strictness condition is needed when the support
  is disconnected.  Let \(E=[-1,1]\), take the distinct points
  \(t_0=-1\) and \(t_1=1\), and put
  \(\mu=\delta_{t_0}+\delta_{t_1}\).  Define the continuous non-decreasing
  function
  \[
    G(y)=
    \begin{cases}
      1, & y\le-1,\\
      (y+3)/2, & -1<y<1,\\
      2, & y\ge1.
    \end{cases}
  \]
  Choose a continuous function \(f\) with \(f(t_0)=2\) and \(f(t_1)=-2\),
  and take \(x=0\) and \(x'=1/2\).  Then
  \begin{equation*}
    \int_E G(f(t)-x)\,d\mu(t)
    =G(2-x)+G(-2-x)
    =3
    =G(2-x')+G(-2-x')
    =\int_E G(f(t)-x')\,d\mu(t).
  \end{equation*}
  Here \(L=3\) and \(L/\mu(E)=3/2\) is not a plateau level of \(G\); the
  failure is caused by the disconnected support.
\end{remark} 

\begin{lemma}
  Let \( N \in \N \) and \( \Gamma_i : C ( E, \R^d ) \to \R \), \( i = 1, \dots,
  N \) be functionals satisfying~\ref{F2}. Then \( \Gamma_{\min} ( \bm{f} )
  \coloneqq \min_{i = 1, \dots, N} \Gamma_i ( \bm{f} ) \) and \( \Gamma_{\max} (
  \bm{f} ) \coloneqq \max_{i = 1, \dots, N} \Gamma_i ( \bm{f} ) \) also
  satisfy~\ref{F2}. If \( \Gamma_i \) satisfy~\ref{F3}, then \( \Gamma_{\min} \)
  and \( \Gamma_{\max} \) also satisfy~\ref{F3}.
\end{lemma}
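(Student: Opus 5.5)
We treat the two conditions separately, each by a pathwise inclusion followed by a finite union of null sets. Throughout, fix the shifted limiting element \(\mathcal W_{\bm x}\) for which \ref{F2} and \ref{F3} are read. Write \(\Gamma_{\min}=\min_i\Gamma_i\); the case of \(\Gamma_{\max}\) is symmetric.

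For \ref{F2}, the starting observation is deterministic. If \(\min_i\Gamma_i(\bm f)=0\), then the minimum is attained at some index \(i\), so \(\Gamma_i(\bm f)=0\) for that \(i\). The same holds for the maximum. Hence
\[
  \{\Gamma_{\min}(\bm f)=0\}\cup\{\Gamma_{\max}(\bm f)=0\}
  \subset\bigcup_{i=1}^N\{\Gamma_i(\bm f)=0\}.
\]
Each \(\Gamma_i\) satisfies \ref{F2}, so there is a Lebesgue-null set \(N_i\) such that \(\pk{\Gamma_i(\mathcal W_{\bm x})=0}=0\) for \(\bm x\notin N_i\). Take \(\bm x\notin\bigcup_iN_i\), which is still null. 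Subadditivity then gives
\[
  \pk{\Gamma_{\min}(\mathcal W_{\bm x})=0}\le\sum_i\pk{\Gamma_i(\mathcal W_{\bm x})=0}=0.
\]
Measurability of \(\Gamma_{\min}\) and \(\Gamma_{\max}\) follows because finite minima and maxima of Borel maps are Borel.

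For \ref{F3}, we use the elementary fact that \(\min\) and \(\max\) are continuous maps \(\R^N\to\R\). Consequently, if \(\bm f\) is a continuity point of every \(\Gamma_i\), then \(\bm f\) is a continuity point of \(\Gamma_{\min}\) and of \(\Gamma_{\max}\). In set notation,
\[
  \bigcap_i D(\Gamma_i)\subset D(\Gamma_{\min})\cap D(\Gamma_{\max}).
\]
For \(\bm x\) outside the union of the \(N\) exceptional null sets, each event \(\{\mathcal W_{\bm x}\in D(\Gamma_i)\}\) has probability one. Their intersection therefore also has probability one, and the inclusion gives \ref{F3} for \(\Gamma_{\min}\) and \(\Gamma_{\max}\).

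The argument is elementary; the only point needing care is measurability of the events \(\{\mathcal W_{\bm x}\in D(\cdot)\}\). We avoid it by reading ``probability one'' as containing a full-measure event, or equivalently by working with outer probability. Under this reading the inclusion of events transfers the conclusion directly, so the lemma holds without extra hypotheses.
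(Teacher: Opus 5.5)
Your proof is correct and is essentially the paper's argument. For \ref{F2} you use the inclusion \(\{\Gamma_*=0\}\subset\bigcup_i\{\Gamma_i=0\}\), and for \ref{F3} the inclusion \(D(\Gamma_*)^c\subset\bigcup_i D(\Gamma_i)^c\); each is followed by finite subadditivity outside a finite union of exceptional null sets of shifts. Your measurability caveat is harmless but unnecessary: the continuity set of any map from a metric space into \(\R\) is a \(G_\delta\) set, so the events \(\{\mathcal W_{\bm x}\in D(\Gamma_i)\}\) are genuinely measurable.
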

\begin{proof}
  For both choices \(\Gamma_*=\Gamma_{\min}\) and
  \(\Gamma_*=\Gamma_{\max}\), the event \(\{\Gamma_*=0\}\) is contained in
  \(\bigcup_{i=1}^N\{\Gamma_i=0\}\). Hence
  \begin{equation*}
    \pk{ \Gamma_* ( \bm{W} - \bm{d} - \bm{x} ) = 0 }
    \leq
    \sum_{i = 1}^{N} \pk{ \Gamma_i ( \bm{W} - \bm{d} - \bm{x} ) = 0}
    = 0
  \end{equation*}
  for almost all \( \bm{x} \in \R^d \). Moreover, with
  \( \Gamma_* = \Gamma_{\min} \) or \( \Gamma_* = \Gamma_{\max} \), we have
  \begin{equation*}
    \pk{\bm{W} - \bm{d} - \bm{x} \in D ( \Gamma_* )^c}
    \leq \pk{\bm{W} - \bm{d} - \bm{x} \in \bigcup_{i = 1}^N D ( \Gamma_i )^c }
    \leq \sum_{i = 1}^N \pk{ \bm{W} - \bm{d} - \bm{x} \in D ( \Gamma_i )^c }
    = 0
  \end{equation*}
  for almost all \( \bm{x} \in \R^d \).
\end{proof}

\begin{corollary}
    For the process \(\bm W-\bm d\) appearing in conditions~\ref{F2}--\ref{F3},
    if \( G_i : \R \to \R \), \( i = 1, \dots, d \), are
    monotone and \( \mu_i \), \( i = 1, \dots, d \), are finite measures on
    \(E\) with \(\mu_i(E)>0\) and connected supports, then
    \begin{equation*}
      \Gamma ( \bm{f} ) = \min_{i = 1, \dots, d} 
      \left\{ \int_E G_i ( f_i ( t ) ) \, d \mu_i ( t ) - S_i \right\} 
    \end{equation*}
    satisfies~\ref{F3} for all \( \bm{S} \in \R^d \) and~\ref{F2} for all \(
    \bm{S} \) such that \( S_i / \mu_i ( E ) \) is not a plateau level of \( G_i
    \) for every \(i\).
\end{corollary}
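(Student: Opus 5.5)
We combine the preceding min/max lemma with Lemma~\ref{sojorn_F2} and Lemma~\ref{sojorn_F3}, applied coordinate by coordinate. For each \(i\), put
\[
  \Gamma_i(\bm f)=\int_E G_i(f_i(t))\,d\mu_i(t)-S_i,
\]
so that \(\Gamma=\min_i\Gamma_i\). By the lemma on minima and maxima, it suffices to show that each \(\Gamma_i\) satisfies~\ref{F3} for every \(S_i\), and~\ref{F2} whenever \(S_i/\mu_i(E)\) is not a plateau level of \(G_i\). Each \(\Gamma_i\) depends only on the \(i\)-th coordinate of the path. We therefore regard it as a functional of the scalar path \(W_i-d_i\), with the scalar shift \(x_i\) in place of the full vector.

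For~\ref{F3}, take the one-dimensional process \(W_i-d_i\), the monotone function \(G_i\) and the finite measure \(\mu_i\). Lemma~\ref{sojorn_F3} states that monotonicity of \(G_i\) already yields~\eqref{F4_G_condition}, because a monotone function of one variable has only countably many discontinuities. It follows that, for Lebesgue-a.e.\ \(x_i\in\R\), the path \(W_i-d_i-x_i\) is almost surely a continuity point of \(f\mapsto\int_E G_i(f)\,d\mu_i\). We also need continuity of \(\Gamma_i\) as a map on \(C(E,\R^d)\). This follows because the projection \(\bm f\mapsto f_i\) is continuous, so a continuity point of the scalar functional gives a continuity point of its composition with the projection.

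For~\ref{F2}, apply Lemma~\ref{sojorn_F2} in dimension one with level \(S_i\). Its hypotheses are that \(\mu_i(E)>0\), that \(\supp\mu_i\) is connected, and that \(S_i/\mu_i(E)\) is not a plateau level of \(G_i\); all three are assumed. The lemma then gives \(\pk{\Gamma_i(\bm W-\bm d-\bm x)=0}=0\) for a.e.\ \(x_i\in\R\).

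The only delicate step is passing from "a.e.\ \(x_i\in\R\)" to "a.e.\ \(\bm x\in\R^d\)". Since the event for \(\Gamma_i\) does not depend on \(\bm x_{\{i\}^c}\), the exceptional set in \(\R^d\) is a product \(N_i\times\R^{d-1}\) with \(\lambda(N_i)=0\). Tonelli's theorem gives that this product is Lebesgue-null, exactly as in the last step of Corollary~\ref{cor:choquet-partial-shifts}. A finite union over \(i\) of such null sets is null, which completes the reduction. Note that~\ref{F3} requires no condition on \(S_i\), since shifting a functional by a constant does not change its continuity points.
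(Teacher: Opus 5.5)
Your proposal is correct and follows essentially the same route as the paper. You write \(\Gamma=\min_i\Gamma_i\), obtain~\ref{F3} and~\ref{F2} for each \(\Gamma_i\) from Lemmas~\ref{sojorn_F3} and~\ref{sojorn_F2} applied to the scalar process \(W_i-d_i\), pass from a.e.\ \(x_i\) to a.e.\ \(\bm x\in\R^d\) by Fubini in the unused coordinates, and conclude with the lemma on finite minima. Your explicit remark that continuity points lift through the coordinate projection \(\bm f\mapsto f_i\) is a detail the paper leaves implicit.
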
 

\begin{proof}
  For each \(i\), apply Lemmas~\ref{sojorn_F3} and~\ref{sojorn_F2} to the
  one-dimensional process \(W_i-d_i\) and the functional
  \[
    \Gamma_i(\bm f)=\int_E G_i(f_i(t))\,d\mu_i(t)-S_i.
  \]
  Lemma~\ref{sojorn_F3} gives~\ref{F3}, while the connected support and
  non-plateau assumptions give~\ref{F2} by Lemma~\ref{sojorn_F2}. Fubini in the
  unused coordinates turns the one-dimensional ``almost every shift'' statements
  into statements for Lebesgue-a.e. \(\bm x\in\R^d\). The displayed functional is
  \(\min_i\Gamma_i\), so the preceding lemma for finite minima preserves
  conditions~\ref{F2} and~\ref{F3}.
\end{proof}

The next corollary allows us to apply our results to the non-Gaussian processes
that are obtained from Gaussian processes by applying continuous operators.

\begin{corollary}\label{composition}
    Let \( E_1 \) and \( E_2 \) be compact sets, \(\mu\) a finite measure on
    \(E_2\) with \(\mu(E_2)>0\), and
    \( \mathscr{ A }: C( E_1, \R^d ) \to C( E_2, \R^d ) \) a continuous operator
    satisfying
    \begin{enumerate}
      \item [(1)] For all \( \bm{f} \in C( E_1, \R^d ) \),
      \[
      \sup_{ t \in E_2 } \min_{ i = 1, \dots, d } \mathscr{ A } ( \bm{f} )_i ( t ) > 0 
      \implies \sup_{ s \in E_1 } \min_{ j = 1, \dots, d } f_j( s ) > 0 .
      \]
      \item [(2)] For all \( \bm{x} \in \R^d \),
      \[
        \mathscr{ A } ( \bm{f} + \bm{x} )(t) = \mathscr{ A } ( \bm{f} )(t) + \bm{x} 
        \quad \forall t \in E_2 .
      \]
    \end{enumerate}
    Let \(G:\R^d\to\R\) be monotone and bounded on compacta, and assume that
    \(G,\mu,L\) satisfy the hypotheses of Lemma~\ref{sojorn_F2}. Assume
    additionally that \(L>0\) and that
    \( G( \bm{x} ) \leq L/\mu( E_2 ) \) whenever
    \( \min_{ i = 1, \dots, d } x_i \leq 0\). Then the functional
    \[
      \Gamma( \bm{f} ) = \int_{ E_2 } G( \mathscr{ A } ( \bm{f} )(t) ) \, d \mu( t ) - L
      \]
    satisfies conditions~\ref{F1}--\,\ref{F3}.
  \end{corollary}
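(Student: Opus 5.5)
We verify \ref{F1}, \ref{F3} and \ref{F2} in that order, each reduced to a property of \(G\) and \(\mu\) transported through \(\mathscr A\).

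\textbf{Condition~\ref{F1}.} Suppose \(\Gamma(\bm f)>0\), so \(\int_{E_2}G(\mathscr A(\bm f)(t))\,d\mu(t)>L\). If \(\min_i\mathscr A(\bm f)_i(t)\le0\) for every \(t\in E_2\), the hypothesis \(G\le L/\mu(E_2)\) on that set would bound the integral by \(L\), a contradiction. Hence some \(t\in E_2\) has \(\min_i\mathscr A(\bm f)_i(t)>0\), and property~(1) of \(\mathscr A\) gives \(\sup_{s\in E_1}\min_jf_j(s)>0\).

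\textbf{Condition~\ref{F3}.} Write \(\Gamma=\Gamma_0\circ\mathscr A\), where \(\Gamma_0(\bm g)=\int_{E_2}G(\bm g)\,d\mu-L\) on \(C(E_2,\R^d)\). Since \(\mathscr A\) is continuous, \(\bm f\in D(\Gamma)\) whenever \(\mathscr A(\bm f)\in D(\Gamma_0)\). By translation equivariance~(2), \(\mathscr A(\bm W-\bm d-\bm x)=\bm V-\bm x\), where \(\bm V=\mathscr A(\bm W-\bm d)\) is a continuous process on \(E_2\); it is Borel as a continuous image. Lemma~\ref{sojorn_F3}, applied to \(\bm V\), \(G\) (monotone and bounded on compacta, so \eqref{F4_G_condition} holds) and \(\mu\), gives \(\pk{\bm V-\bm x\in D(\Gamma_0)}=1\) for a.e.\ \(\bm x\). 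Hence \(\bm W-\bm d-\bm x\in D(\Gamma)\) almost surely for a.e.\ \(\bm x\).

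\textbf{Condition~\ref{F2}.} The same identity gives \(\Gamma(\bm W-\bm d-\bm x)=\Gamma_0(\bm V-\bm x)\). The proof of Lemma~\ref{sojorn_F2} is pathwise: for any continuous path \(\bm f\) on \(E_2\), the set \(D_{\bm f}=\{\bm x:\int G(\bm f-\bm x)\,d\mu=L\}\) is Lebesgue-null. It uses only monotonicity of \(G\), \(\mu(E_2)>0\), connected support or strict monotonicity, and the non-plateau assumption, none of which involve Gaussianity. Applying this to \(\bm f=\bm V(\omega)\) and then Tonelli gives \(\int\pk{\Gamma_0(\bm V-\bm x)=0}\,d\bm x=\E{\lambda(D_{\bm V})}=0\).

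\textbf{Main obstacle.} The delicate point is joint measurability of \((\omega,\bm x)\mapsto\Gamma_0(\bm V(\omega)-\bm x)\) for Tonelli. I would handle it by noting that the integrand is Borel in \((\bm g,\bm x)\): \(G\) is Borel, and integration against a finite measure preserves Borel measurability. The same measurability remark must also be checked in Lemma~\ref{sojorn_F3}, since \(\bm V\) is merely continuous rather than Gaussian.
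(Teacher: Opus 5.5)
Your proposal is correct and follows essentially the same route as the paper. You prove \ref{F1} by the same bound \(\int_{E_2}G(\mathscr A(\bm f))\,d\mu\le L\) when \(\min_i\mathscr A(\bm f)_i\le0\) everywhere, and you prove \ref{F2}--\ref{F3} by applying Lemmas~\ref{sojorn_F2} and~\ref{sojorn_F3} pathwise to the continuous process \(\mathscr A(\bm W-\bm d)\) via translation equivariance~(2). Your remarks that continuity of \(\mathscr A\) transfers continuity points from \(\Gamma_0\) to \(\Gamma=\Gamma_0\circ\mathscr A\), and on joint measurability for Tonelli, only spell out what the paper leaves implicit; like the paper, you take Borel measurability of \(G\) for granted.
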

  \begin{proof}
    To check~\ref{F1}, suppose \(\Gamma(\bm f)>0\). Then
    \(\int_{E_2}G(\mathscr A(\bm f)(t))\,d\mu(t)>L\). Hence
    \(G(\mathscr A(\bm f)(t))>L/\mu(E_2)\) for some \(t\in E_2\); otherwise the
    integral would be at most \(L\). By the displayed assumption on \(G\), this
    implies
    \(\sup_{t\in E_2}\min_i\mathscr A(\bm f)_i(t)>0\), and then property~(1)
    gives \(\sup_{s\in E_1}\min_i f_i(s)>0\).

    Put \(\bm Z=\bm W-\bm d\). For~\ref{F2}, property~(2) gives
    \[
      \Gamma(\bm Z-\bm x)
      =\int_{E_2}G(\mathscr A(\bm Z)(t)-\bm x)\,d\mu(t)-L.
    \]
    Lemma~\ref{sojorn_F2}, applied pathwise to the continuous process
    \(\bm Y=\mathscr A(\bm Z)\), yields the no-atom condition. For~\ref{F3},
    Lemma~\ref{sojorn_F3} applies to the same process \(\bm Y\), since \(G\) is
    monotone.
  \end{proof}

\begin{lemma}\label{lem:moving_window_functional}
    Let \( E_0 \) and \( K \) be non-empty compact sets in \( \R^n \), and put
    \(E=E_0+K\). Then the functional \(\Gamma:C(E,\R^d)\to\R\) defined by
    \[
      \Gamma ( \bm{f} ) 
      = \sup_{ t \in E_0 } 
      \inf_{ s \in K } 
      \min_{ i = 1, \dots, d } f_i ( t + s )
    \]
    satisfies conditions~\ref{F1}--\,\ref{F3}.  If \(0\in K\), then it also
    has the stronger start-set witness
    \[
      \Gamma(\bm f)>0
      \Longrightarrow
      \sup_{t\in E_0}\min_i f_i(t)>0.
    \]
  \end{lemma}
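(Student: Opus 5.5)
The plan is to verify the three conditions and the start-set witness directly, from three elementary facts about
\[
  \Gamma(\bm f)=\sup_{t\in E_0}\inf_{s\in K}\min_{1\le i\le d}f_i(t+s):
\]
it is monotone in the path, it is $1$-Lipschitz in the uniform norm, and it commutes exactly with diagonal shifts. Throughout, $\Gamma$ is read at level $0$, and the shifted limiting element is $\mathcal W_{\bm x}=\bm W-\bm d-\bm x$, as in the basic examples.

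\textbf{Condition~\ref{F1} and the start-set witness.} Suppose $\Gamma(\bm f)>0$. Then there is $t\in E_0$ with $\inf_{s\in K}\min_i f_i(t+s)>0$. For any $s_0\in K$ this gives $\min_i f_i(t+s_0)>0$ with $t+s_0\in E$, which is~\ref{F1}. If $0\in K$, choose $s_0=0$; the witness point is then $t\in E_0$ itself, which is the stronger statement.

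\textbf{Condition~\ref{F3}.} Each of $\min_i$, $\inf_{s\in K}$ and $\sup_{t\in E_0}$ is $1$-Lipschitz for the sup norm. Hence $|\Gamma(\bm f)-\Gamma(\bm g)|\le\|\bm f-\bm g\|_\infty$ on $C(E,\R^d)$. So $\Gamma$ is continuous at every path, and~\ref{F3} holds for every shift $\bm x$ and every realization.

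\textbf{Condition~\ref{F2}.} I reuse the diagonal-section argument from the proof of Lemma~\ref{sojorn_F2}.
\begin{enumerate}[(i)]
  \item Fix a continuous path $\bm f$ and put $\Phi_{\bm f}(\bm x)=\Gamma(\bm f-\bm x)$. The key identity is
  \[
    \Phi_{\bm f}(\bm x+a\bm 1)=\Phi_{\bm f}(\bm x)-a,\qquad a\in\R .
  \]
  It holds because subtracting the constant $a$ from every coordinate commutes with $\min_i$, $\inf_s$ and $\sup_t$.
  \item Consequently the zero set $D_{\bm f}=\{\bm x:\Phi_{\bm f}(\bm x)=0\}$ meets each line $\bm h+\R\bm 1$, $\bm h\perp\bm 1$, in exactly one point.
  \item $\Phi_{\bm f}$ is $1$-Lipschitz in $\bm x$, so $D_{\bm f}$ is closed.
  \item Fubini in the decomposition $\R^d=\bm 1^\perp\oplus\R\bm 1$ then gives $\lambda(D_{\bm f})=0$.
  \item Apply this pathwise to $\bm f=\bm W-\bm d$. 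The map $(\omega,\bm x)\mapsto\Gamma(\bm W(\omega)-\bm d-\bm x)$ is jointly measurable, since it is continuous in the path and in $\bm x$. Tonelli therefore gives
  \[
    \int_{\R^d}\pk{\Gamma(\bm W-\bm d-\bm x)=0}\,d\bm x=\E{\lambda(D_{\bm W-\bm d})}=0 ,
  \]
  and hence~\ref{F2} for Lebesgue-a.e.\ $\bm x$.
\end{enumerate}

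\textbf{Expected obstacles.} The only delicate points are measurability and the structure of the limiting element. For measurability, note that $t\mapsto\inf_{s\in K}\min_if_i(t+s)$ is continuous on the compact set $E_0$, because $\bm f$ is uniformly continuous on $E$. The supremum is therefore attained and may be computed over a countable dense subset, so $\Gamma$ is Borel; in any case this already follows from continuity. The structural point is the exact shift-equivariance in step~(i). If some coordinates of the limiting element were frozen rather than shifted, as with tangential and slack coordinates, that equivariance would fail. I would then invoke the coordinate reduction~\eqref{eq:parisian-constant-coordinate-reduction} and Lemma~\ref{lem:parisian-partially-frozen}, which lie outside the present statement.
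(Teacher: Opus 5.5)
Your proof is correct. Condition~\ref{F1}, condition~\ref{F3} and the start-set witness are handled exactly as in the paper: you choose $s_0=0$ for the witness and use that $\Gamma$ is $1$-Lipschitz in the sup norm.

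For~\ref{F2} you take a different route. The paper fixes the shift rather than the path. It sets $H(\bm x)=\pk{\Gamma(\bm W-\bm d-\bm x)>0}$ and notes that $H$ is coordinatewise non-increasing, hence continuous at Lebesgue-a.e.\ $\bm x$. It then uses the diagonal identity $\Gamma(\bm f-a\bm 1)=\Gamma(\bm f)-a$ only to obtain the sandwich
\[
  \{\Gamma(\bm W-\bm d-\bm x)=0\}\subset\{\Gamma(\bm W-\bm d-(\bm x-a\bm 1))>0\}\setminus\{\Gamma(\bm W-\bm d-(\bm x+a\bm 1))>0\}.
\]
The probability of the right-hand side is $H(\bm x-a\bm 1)-H(\bm x+a\bm 1)$, which vanishes as $a\downarrow0$ at every continuity point of $H$.

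You instead argue pathwise. For each fixed continuous path, the zero set of $\bm x\mapsto\Gamma(\bm f-\bm x)$ is the graph of a Lipschitz function over $\bm 1^\perp$, hence Lebesgue-null, and Tonelli transfers this to almost every shift. This is the same diagonal-section device as in Lemma~\ref{sojorn_F2}.

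Each route has its advantages. Yours gives a deterministic statement valid for every continuous path. It also avoids the theorem that coordinatewise monotone functions on $\R^d$ are a.e.\ continuous, which is nontrivial for $d\ge2$. Its price is the joint measurability of $(\omega,\bm x)\mapsto\Gamma(\bm W(\omega)-\bm d-\bm x)$. You justify this adequately: $\bm x\mapsto\bm W-\bm d-\bm x$ is a Carath\'eodory map into the separable space $C(E,\R^d)$, composed with the continuous $\Gamma$. The paper's version needs no product measurability, since it works only with the scalar function $H$ of the shift.

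Your closing remark on frozen tangential and slack coordinates matches how the paper handles them, via the coordinate reduction and Lemma~\ref{lem:parisian-partially-frozen}.
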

  \begin{proof}
    If \(\Gamma(\bm f)>0\), then for some \(t\in E_0\) and every \(s\in K\) we
    have \(\min_i f_i(t+s)>0\). Since \(K\neq\emptyset\) and \(t+s\in E\), this
    gives \(\sup_{r\in E}\min_i f_i(r)>0\), proving~\ref{F1}.  If \(0\in K\),
    taking \(s=0\) gives the stronger witness on \(E_0\).

    The map \(\Gamma\) is 1-Lipschitz with respect to the sup-norm: replacing
    \(\bm f\) by \(\bm g\) changes each \(\min_i f_i(t+s)\), hence also the
    infimum and supremum, by at most \(\|\bm f-\bm g\|_\infty\). Therefore every
    path is a continuity point of \(\Gamma\), and~\ref{F3} holds.

    For~\ref{F2}, set
    \[
    H( \bm{x} )
    = \pk{ \Gamma(\bm W-\bm d-\bm x)>0 } .
    \]
    The function \(H\) is coordinatewise non-increasing, hence it is continuous
    at Lebesgue-a.e. \(\bm x\in\R^d\). Also
    \(\Gamma(\bm f-a\bm 1)=\Gamma(\bm f)-a\). Thus, for \(a>0\),
    \[
      \{\Gamma(\bm W-\bm d-\bm x)=0\}
      \subset
      \{\Gamma(\bm W-\bm d-(\bm x-a\bm1))>0\}
      \setminus
      \{\Gamma(\bm W-\bm d-(\bm x+a\bm1))>0\}.
    \]
    At every continuity point of \(H\), the probability of the right-hand side
    tends to zero as \(a\downarrow0\). Hence
    \[
      \pk{\Gamma(\bm W-\bm d-\bm x)=0}=0
    \]
    for Lebesgue-a.e. \(\bm x\), which is condition~\ref{F2}.
  \end{proof}

\begin{lemma}[Parisian no-atom property under partially frozen coordinates]
\label{lem:parisian-partially-frozen}
Let \(E_0,F\subset\R\) be non-empty compact sets with \(0\in F\), and let
\(\bm Z\) be a continuous \(\R^{|I|}\)-valued random process on \(E_0+F\).
Let \(\bm q\) be continuous on the same set and let
\(\bm c\in(0,\infty)^{\Jsl}\).  For
\((\bm x_I,\bm x_{\Jtan})\in\R^{|I|+|\Jtan|}\), put
\[
  \Phi(\bm x_I,\bm x_{\Jtan})
  =
  \Pi_{E_0;F}
  \begin{pmatrix}
    \bm Z-\bm q-\bm x_I\\
    -\bm x_{\Jtan}\\
    \bm c
  \end{pmatrix},
\]
with empty subvectors omitted.  Then
\[
  \pk{\Phi(\bm x_I,\bm x_{\Jtan})=0}=0
\]
for Lebesgue-a.e. \((\bm x_I,\bm x_{\Jtan})\).  Consequently the same statement
holds for Lebesgue-a.e. full \(\bm x\in\R^d\) when the unused coordinates
\(\bm x_{\Jsl}\) are integrated out.
\end{lemma}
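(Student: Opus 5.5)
The plan is to use the coordinate-reduction identity \eqref{eq:parisian-constant-coordinate-reduction} to split off the frozen coordinates, and then combine the diagonal-shift argument of Lemma~\ref{lem:moving_window_functional} with a hyperplane argument for the tangential part. Put $D=I\cup\Jtan$, the path coordinates, with the slack coordinates frozen at $\bm c$. By \eqref{eq:parisian-constant-coordinate-reduction},
\[
  \Phi(\bm x_I,\bm x_{\Jtan})
  =\min\Bigl\{\Psi(\bm x_I,\bm x_{\Jtan}),\ \min_{j\in\Jsl}c_j\Bigr\},
\]
where $\Psi$ is the Parisian functional $\Pi^D_{E_0;F}$ evaluated at the $D$-dimensional path $(\bm Z-\bm q-\bm x_I,-\bm x_{\Jtan})$. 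Since every $c_j>0$, the second term is strictly positive, so $\{\Phi=0\}=\{\Psi=0\}$ (or $\Phi=\Psi$ when $\Jsl=\emptyset$). It therefore suffices to treat $\Psi$.

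Next, pull the tangential coordinates out of the infimum. They are constant in time, so
\[
  \Psi=\min\Bigl\{\Pi^I_{E_0;F}(\bm Z-\bm q-\bm x_I),\ \min_{j\in\Jtan}(-x_j)\Bigr\}
  =:\min\{\Psi_I(\bm x_I),\,m(\bm x_{\Jtan})\}.
\]
If $\Psi=0$, then either $m(\bm x_{\Jtan})=0$ or $\Psi_I(\bm x_I)=0$. The first alternative forces $x_j=0$ for some $j\in\Jtan$. This is a finite union of coordinate hyperplanes, which is Lebesgue-null and deterministic, so it contributes nothing. For the second alternative, $\Psi_I$ depends only on $\bm x_I$. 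We apply Lemma~\ref{lem:moving_window_functional} in dimension $|I|$ to the continuous process $\bm Z$ with drift $\bm q$; its \ref{F2} argument gives $\pk{\Psi_I(\bm x_I)=0}=0$ for Lebesgue-a.e.\ $\bm x_I$. That argument uses only the translation identity $\Psi_I(\bm x_I+a\bm1_I)=\Psi_I(\bm x_I)-a$, the coordinatewise monotonicity of $\bm x_I\mapsto\pk{\Psi_I(\bm x_I)>0}$, and a.e.\ continuity of monotone functions; Gaussianity is never needed. If $I$ were empty the second alternative would not arise, but $I\neq\emptyset$ always holds.

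Combining the two alternatives, for $(\bm x_I,\bm x_{\Jtan})$ outside the union of the coordinate hyperplanes and the set $N_I\times\R^{|\Jtan|}$, where $N_I$ is the exceptional null set, we get
\[
  \pk{\Phi=0}\le \pk{\Psi_I(\bm x_I)=0}=0 .
\]
This union is null by Tonelli. The extension to the full $\bm x\in\R^d$ again follows from Tonelli, since $\Phi$ does not depend on $\bm x_{\Jsl}$.

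The main point needing care is measurability of $\{\Phi=0\}$ and of $\Psi_I$ as a random variable. It follows because $\Pi$ is 1-Lipschitz on the uniform path space and the paths are continuous, so $\Psi_I$ is a continuous function of the random element and jointly measurable in $(\omega,\bm x)$.
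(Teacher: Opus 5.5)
Your proposal is correct and follows essentially the same route as the paper. You write $\Phi$ as a minimum of the active-coordinate Parisian functional and a constant, and discard the Lebesgue-null union of coordinate hyperplanes in $\bm x_{\Jtan}$ on which that constant can vanish. You then apply Lemma~\ref{lem:moving_window_functional} to the active coordinates and finish with Fubini/Tonelli; the paper merges the slack and tangential constants into a single $a(\bm x_{\Jtan})$ and splits by its sign, where you remove the slack part first, but the argument is the same.
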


\begin{proof}
Let
\[
  c_*=
  \begin{cases}
    \min_{j\in \Jsl}c_j,&\Jsl\ne\emptyset,\\
    +\infty,&\Jsl=\emptyset,
  \end{cases}
\]
and define
\[
  a(\bm x_{\Jtan})
  =
  \begin{cases}
    \min\{c_*,\min_{j\in\Jtan}(-x_j)\},&\Jtan\ne\emptyset,\\
    c_*,&\Jtan=\emptyset.
  \end{cases}
\]
Since minimum with a constant commutes with both the infimum in \(s\) and the
supremum in \(t\),
\[
  \Phi(\bm x_I,\bm x_{\Jtan})
  =
  \min\left\{
    \Pi^{I}_{E_0;F}(\bm Z-\bm q-\bm x_I),
    a(\bm x_{\Jtan})
  \right\},
\]
where \(\Pi^{I}\) is the moving-window functional using only the active
coordinates.  If \(a(\bm x_{\Jtan})<0\), the right-hand side cannot be zero.
The set \(\{\bm x_{\Jtan}:a(\bm x_{\Jtan})=0\}\) is contained in a finite
union of coordinate hyperplanes and is therefore Lebesgue-null.  If
\(a(\bm x_{\Jtan})>0\), equality to
zero is equivalent to
\[
  \Pi^{I}_{E_0;F}(\bm Z-\bm q-\bm x_I)=0.
\]
Lemma~\ref{lem:moving_window_functional}, applied to the active coordinates,
shows that the probability of this event is zero for Lebesgue-a.e.
\(\bm x_I\).  Fubini's theorem proves the assertion in
\((\bm x_I,\bm x_{\Jtan})\), and a second application of Fubini handles the unused
coordinates.
\end{proof}

\newpage
\section{Technical estimates}
\label{sec:technical_estimates}

Throughout this appendix, \( g(t) = \min_{\bm x \ge \bm b} \bm x^\top
\Sigma^{-1}(t) \, \bm x \) denotes the inverse generalized variance introduced
in Section~\ref{sec:non_stationary_case}.

\begin{lemma}
  \label{lemma:gen_var_expansion}
  \( g ( t ) = g ( 0 ) + 2 \, t^\beta \, \bm{w}^\top A \, \bm{w} + o ( t^\beta )
  \) as \( t \to 0 \).
\end{lemma}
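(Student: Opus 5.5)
The expansion will follow from a first-order perturbation of the quadratic program \(\Pi_{\Sigma(t)}(\bm b)\) around \(t=0\), together with the envelope principle. By Assumption~\ref{D2} with \(s=t\), we have \(\Sigma-\Sigma(t)=(A+A^\top)t^\beta+o(t^\beta)\). Inverting a small perturbation of a positive definite matrix then gives
\[
  \Sigma(t)^{-1}=\Sigma^{-1}+t^\beta\,\Sigma^{-1}(A+A^\top)\Sigma^{-1}+o(t^\beta).
\]
In particular \(\Sigma(t)^{-1}\to\Sigma^{-1}\), and the feasible set \(\{\bm x\ge\bm b\}\) does not depend on \(t\).

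\emph{Upper bound.} We plug the fixed feasible point \(\bb\) into the program at time \(t\). This gives
\[
  g(t)\le \bb^\top\Sigma(t)^{-1}\bb
  = g(0)+t^\beta\,\bm w^\top(A+A^\top)\bm w+o(t^\beta)
  = g(0)+2t^\beta\,\bm w^\top A\bm w+o(t^\beta),
\]
using \(\Sigma^{-1}\bb=\bm w\) and \(\bm w^\top A^\top\bm w=\bm w^\top A\bm w\).

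\emph{Lower bound.} Let \(\bm x_t\) be the minimizer at time \(t\).
\begin{itemize}
  \item \emph{Boundedness.} For small \(t\) the matrices \(\Sigma(t)^{-1}\) are uniformly positive definite and \(g(t)\le g(0)+1\), so the \(\bm x_t\) are bounded.
  \item \emph{Consistency.} Any limit point is feasible and attains \(g(0)\), so by uniqueness \(\bm x_t\to\bb\).
  \item \emph{Rate.} Write \(Q_t(\bm x)=\bm x^\top\Sigma(t)^{-1}\bm x\). Using \(Q_0(\bm x_t)\ge g(0)\) together with strong convexity of \(Q_0\) and the KKT conditions \eqref{eq qp kkt conditions} at \(\bb\) (the first-order term \(2\bm w^\top(\bm x_t-\bb)\ge0\) on the feasible set), we get \(Q_0(\bm x_t)\ge g(0)+c|\bm x_t-\bb|^2\). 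Combined with \(Q_t(\bm x_t)=g(t)\le g(0)+O(t^\beta)\) and \(|Q_t-Q_0|=O(t^\beta)\) on bounded sets, this yields \(|\bm x_t-\bb|=O(t^{\beta/2})\).
  \item \emph{Expansion.} Now
  \[
    g(t)=Q_0(\bm x_t)+t^\beta\,\bm x_t^\top\Sigma^{-1}(A+A^\top)\Sigma^{-1}\bm x_t+o(t^\beta).
  \]
  The first term is at least \(g(0)\). In the second term, replacing \(\bm x_t\) by \(\bb\) costs \(O(t^\beta|\bm x_t-\bb|)=o(t^\beta)\). Hence \(g(t)\ge g(0)+2t^\beta\,\bm w^\top A\bm w+o(t^\beta)\).
\end{itemize}

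The main obstacle is the lower bound. The boundedness of \(\bm x_t\) and the consistency \(\bm x_t\to\bb\) must be established \emph{before} the perturbation bounds can be applied. The key step there is the quadratic growth of \(Q_0\) away from \(\bb\) on the feasible set, which comes from the KKT inequality \(\bm w^\top(\bm x-\bb)\ge0\) for \(\bm x\ge\bm b\): this holds because \(w_i(x_i-\tilde b_i)=w_i(x_i-b_i)\ge0\). All remaining steps are routine matrix perturbation.
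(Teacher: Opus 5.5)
Your proof is correct and is essentially the paper's argument: the upper bound comes from evaluating the time-\(t\) objective at \(\bb\), and the lower bound from evaluating the time-\(0\) objective at the time-\(t\) minimizer. In both, \(\bm x_t\to\bb\) follows from coercivity and uniqueness, and the inverse-matrix expansion from Assumption~\ref{D2} at \(s=t\) finishes both bounds. The only extra ingredient is your KKT quadratic-growth rate \(|\bm x_t-\bb|=O(t^{\beta/2})\), which is valid but unnecessary: plain convergence \(\bm x_t\to\bb\) already makes the replacement error \(O(t^\beta|\bm x_t-\bb|)\) equal to \(o(t^\beta)\).
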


\begin{proof}
Put \(Q_t=\Sigma(t)^{-1}\), \(Q_0=\Sigma^{-1}\), and let \(\bb(t)\) be the
unique minimizer of \(\bm x^\top Q_t\bm x\) over \(\bm x\ge\bm b\).  Since
\(Q_t\to Q_0\) and the objectives are uniformly coercive near \(t=0\), every
convergent subsequence of \(\bb(t)\) minimizes the limiting problem.  The
minimizer of that problem is unique, hence
\[
  \bb(t)\longrightarrow\bb.
\]
Both \(\bb(t)\) and \(\bb\) are feasible for every one of the quadratic
programmes.  Consequently,
\begin{align*}
  g(t)-g(0)&\le \bb^\top(Q_t-Q_0)\bb,\\
  g(t)-g(0)&\ge \bb(t)^\top(Q_t-Q_0)\bb(t).
\end{align*}
Because \(\bb(t)\to\bb\) and \(Q_t-Q_0=O(t^\beta)\), the two bounds differ by
\(o(t^\beta)\).  Thus
\begin{equation}
  \label{eq:generalized-variance-envelope}
  g(t)-g(0)=\bb^\top(Q_t-Q_0)\bb+o(t^\beta).
\end{equation}
Assumption~\ref{D2}, used with \(s=t\), gives
\[
  \Sigma-\Sigma(t)=(A+A^\top)t^\beta+o(t^\beta).
\]
The inverse-matrix expansion therefore yields
\[
  Q_t-Q_0
  =\Sigma^{-1}(\Sigma-\Sigma(t))\Sigma^{-1}+o(t^\beta)
  =t^\beta\Sigma^{-1}(A+A^\top)\Sigma^{-1}+o(t^\beta).
\]
Substitution in \eqref{eq:generalized-variance-envelope}, followed by
\(\bm w=\Sigma^{-1}\bb\), gives
\[
  g(t)-g(0)
  =t^\beta\bm w^\top(A+A^\top)\bm w+o(t^\beta)
  =2t^\beta\bm w^\top A\bm w+o(t^\beta),
\]
which proves the claim.
\end{proof} 

\begin{lemma}
  \label{lemma:cond_mean}
  The conditional mean vector
  \begin{equation*}
    \bm{d}_\tau ( t ) 
    = ( I - R ( \tau + t, \tau ) \, \Sigma^{-1} (\tau) ) \, \bb
  \end{equation*}
  admits the following asymptotic formula
  \begin{equation*}
    \bm{d}_\tau (t)
    =
    \Big[ (\tau+t)^\beta - \tau^\beta \Big] \, A \, \bm{w}
    + t^\alpha \, V \bm{w}
    + o ( ( \tau + t )^\beta + \tau^\beta + t^\alpha )
    \quad \text{as} \quad
    \tau, t \to 0^+.
  \end{equation*}
\end{lemma}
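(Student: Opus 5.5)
The plan is to expand $\bm d_\tau(t)$ using the joint expansion in Assumption~\ref{D2}. I first rewrite
\[
  \bm d_\tau(t)=\bigl(\Sigma(\tau)-R(\tau+t,\tau)\bigr)\Sigma(\tau)^{-1}\bb ,
\]
which holds because $I-R\Sigma(\tau)^{-1}=(\Sigma(\tau)-R)\Sigma(\tau)^{-1}$. I then treat the two factors separately.

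For the first factor, apply \eqref{eq:D2-joint-remainder} twice, both times in the wedge with the larger argument first. At the pair $(\tau+t,\tau)$ it gives
\[
  \Sigma-R(\tau+t,\tau)=A(\tau+t)^\beta+A^\top\tau^\beta+Vt^\alpha+o\bigl((\tau+t)^\beta+\tau^\beta+t^\alpha\bigr).
\]
At the diagonal pair $(\tau,\tau)$ it gives
\[
  \Sigma-\Sigma(\tau)=(A+A^\top)\tau^\beta+o(\tau^\beta).
\]
Subtracting the first from the second yields
\[
  \Sigma(\tau)-R(\tau+t,\tau)=\bigl[(\tau+t)^\beta-\tau^\beta\bigr]A+t^\alpha V+o\bigl((\tau+t)^\beta+\tau^\beta+t^\alpha\bigr).
\]
The $A^\top\tau^\beta$ terms cancel exactly. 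Since the little-oh in \ref{D2} is a joint limit as $(\tau+t,\tau)\to(0,0)$, this remainder is uniform in $\tau,t\to0^+$. In particular, the whole bracketed matrix is $O\bigl((\tau+t)^\beta+\tau^\beta+t^\alpha\bigr)$.

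For the second factor, $\Sigma(\tau)\to\Sigma$ by continuity, so
\[
  \Sigma(\tau)^{-1}\bb=\Sigma^{-1}\bb+O(\|\Sigma(\tau)-\Sigma\|_{\mathrm F})=\bm w+o(1).
\]
Multiplying, the product of the $O(\cdot)$ matrix with the $o(1)$ vector error is $o\bigl((\tau+t)^\beta+\tau^\beta+t^\alpha\bigr)$. The main term is $\bigl[(\tau+t)^\beta-\tau^\beta\bigr]A\bm w+t^\alpha V\bm w$, which is the claimed formula.

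The only delicate point is bookkeeping. The remainder from the diagonal expansion is $o(\tau^\beta)$, and this is dominated by the target error scale, so it causes no difficulty. The one thing to check is that no remainder has to be compared against the smaller difference $(\tau+t)^\beta-\tau^\beta$: the error is stated relative to the full sum $(\tau+t)^\beta+\tau^\beta+t^\alpha$, so none is. I do not expect a genuine obstacle beyond making sure the wedge orientation $t\ge s$ is respected. This holds because $\tau+t\ge\tau$, so the $V$-term appears untransposed.
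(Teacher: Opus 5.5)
Your proposal is correct and follows essentially the same route as the paper. You write $\bm d_\tau(t)=\Delta\,\Sigma(\tau)^{-1}\bb$ with $\Delta=\Sigma(\tau)-R(\tau+t,\tau)$, obtain $\Delta$ from the off-diagonal and diagonal instances of Assumption~\ref{D2} so that the $A^\top\tau^\beta$ terms cancel, and then replace $\Sigma(\tau)^{-1}\bb$ by $\bm w$ at a cost of $o(\|\Delta\|)$. The only slip is in the wording: you need the diagonal expansion subtracted from the off-diagonal one, not the reverse, although your displayed formula already has the correct sign.
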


\begin{proof}
  We have
  \begin{align*}
    \Sigma - R ( \tau + t, \tau )
    & =
    A ( \tau + t )^\beta + A^\top \tau^\beta + V t^\alpha
    + o ( (\tau + t)^\beta + \tau^\beta + t^\alpha )
    \\[7pt]
    \Sigma - \Sigma ( \tau )
    & =
    A \tau^\beta + A^\top \tau^\beta
    + o ( \tau^\beta ).
  \end{align*}
  Therefore,
  \begin{equation*}
    \Sigma(\tau) - R ( \tau + t, \tau ) 
    =
    A \Big[ ( \tau + t )^\beta - \tau^\beta \Big]
    + V t^\alpha
    + o ( (\tau + t)^\beta + \tau^\beta + t^\alpha ).
  \end{equation*}
  Therefore, with \( \Delta = \Sigma ( \tau ) - R ( \tau + t, \tau ) \) we have
  \begin{align*}
    I - R ( \tau + t, \tau ) \, \Sigma^{-1} ( \tau )
    & = I - ( \Sigma ( \tau ) - \Delta ) \, \Sigma^{-1} ( \tau )
    = \Delta \, \Sigma^{-1} ( \tau )
    \\[7pt]
    & =
    \Delta \Sigma^{-1} + o ( \| \Delta \| )
    = 
    \Big[ ( \tau + t )^\beta - \tau^\beta \Big] \, A \Sigma^{-1}
    + t^\alpha V \Sigma^{-1}
    + o ( ( \tau + t )^\beta + \tau^\beta + t^\alpha ).
  \end{align*}
  Hence,
  \begin{equation*}
    \bm{d}_\tau (t)
    =
    \Big[ (\tau+t)^\beta - \tau^\beta \Big] \, A \, \bm{w}
    + t^\alpha \, V \bm{w}
    + o ( ( \tau + t )^\beta + \tau^\beta + t^\alpha ).
    \qedhere
  \end{equation*}
\end{proof} 

\begin{lemma}
  \label{lemma:cond_cov}
  The conditional covariance matrix
  \begin{equation*}
    K_\tau ( t, s )
    =
    R ( \tau + t, \tau + s ) 
    - R ( \tau + t, \tau ) \, \Sigma^{-1} (\tau ) \, R ( \tau, \tau + s )
  \end{equation*}
  admits the following asymptotic formula:
  \begin{equation*}
    K_\tau ( t, s )
    =
    S_{\alpha,V}(t)+S_{\alpha,V}(-s)-S_{\alpha,V}(t-s)
    + o ( \tau^\beta + (\tau + t)^\beta + (\tau +s)^\beta + t^\alpha + s^\alpha + |t-s|^\alpha )
  \end{equation*}
  as \( \tau, t, s \to 0^+ \). In particular, for \(t\ge s\ge0\) the leading
  term is
  \[
    Vt^\alpha+V^\top s^\alpha-V(t-s)^\alpha .
  \]
\end{lemma}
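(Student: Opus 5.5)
The expansion will follow from inserting the joint expansion of Assumption~\ref{D2} into each of the three covariance terms of \(K_\tau(t,s)\). The aim is to show that every \(A\)-term cancels, exactly as in the proof of Lemma~\ref{lemma:cond_mean}, leaving only the \(V\)-terms. Write
\(\varepsilon=\tau^\beta+(\tau+t)^\beta+(\tau+s)^\beta+t^\alpha+s^\alpha+|t-s|^\alpha\).
By symmetry of the roles of \(t,s\) (using \(R(t,s)=R(s,t)^\top\)), it suffices to treat the wedge \(t\ge s\ge0\). In that wedge Assumption~\ref{D2}, together with transposition for reversed arguments, gives
\begin{align*}
  R(\tau+t,\tau+s)&=\Sigma-A(\tau+t)^\beta-A^\top(\tau+s)^\beta-V(t-s)^\alpha+o(\varepsilon),\\
  R(\tau+t,\tau)&=\Sigma-A(\tau+t)^\beta-A^\top\tau^\beta-Vt^\alpha+o(\varepsilon),\\
  R(\tau,\tau+s)&=\Sigma-A\tau^\beta-A^\top(\tau+s)^\beta-V^\top s^\alpha+o(\varepsilon),\\
  \Sigma(\tau)&=\Sigma-(A+A^\top)\tau^\beta+o(\tau^\beta).
\end{align*}
The third line comes from \(R(\tau,\tau+s)=R(\tau+s,\tau)^\top\). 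Joint uniformity of the little-oh on shrinking rectangles lets all remainders be absorbed into \(o(\varepsilon)\).

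Next I would linearize the product. Write \(R(\tau+t,\tau)=\Sigma(\tau)-\Delta_1\) and \(R(\tau,\tau+s)=\Sigma(\tau)-\Delta_2\), where
\[
  \Delta_1=A[(\tau+t)^\beta-\tau^\beta]+Vt^\alpha+o(\varepsilon),
  \qquad
  \Delta_2=A^\top[(\tau+s)^\beta-\tau^\beta]+V^\top s^\alpha+o(\varepsilon).
\]
The first of these is exactly the computation in Lemma~\ref{lemma:cond_mean}. Then
\[
  R(\tau+t,\tau)\Sigma(\tau)^{-1}R(\tau,\tau+s)=\Sigma(\tau)-\Delta_1-\Delta_2+\Delta_1\Sigma(\tau)^{-1}\Delta_2 .
\]
The quadratic term is \(O(\|\Delta_1\|\|\Delta_2\|)=o(\varepsilon)\), because both factors tend to zero. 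Subtracting this from the first expansion and replacing \(\Sigma(\tau)\) by its expansion, the \(A\)- and \(A^\top\)-terms cancel:
\[
  -A(\tau+t)^\beta-A^\top(\tau+s)^\beta+(A+A^\top)\tau^\beta+A[(\tau+t)^\beta-\tau^\beta]+A^\top[(\tau+s)^\beta-\tau^\beta]=0 .
\]
What remains is \(Vt^\alpha+V^\top s^\alpha-V(t-s)^\alpha+o(\varepsilon)\), which equals \(S_{\alpha,V}(t)+S_{\alpha,V}(-s)-S_{\alpha,V}(t-s)\) for \(t\ge s\ge0\). The case \(s>t\) follows by transposing, since \(K_\tau(t,s)=K_\tau(s,t)^\top\) and \(S_{\alpha,V}(-r)=S_{\alpha,V}(r)^\top\).

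The main obstacle is the careful bookkeeping of the remainders, specifically for the second and third lines. Assumption~\ref{D2} is stated relative to the base point \(0\), with arguments \((\tau+t,\tau)\). Its remainder is therefore \(o((\tau+t)^\beta+\tau^\beta+t^\alpha)\), which is indeed \(o(\varepsilon)\). I also need \(\Sigma(\tau)^{-1}\) to stay uniformly bounded near \(0\), which follows from \(\Sigma(\tau)\to\Sigma\) positive definite, so that the linearization step is legitimate.
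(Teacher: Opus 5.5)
Your proposal is correct and is essentially the paper's proof. You expand the three covariance entries and \(\Sigma(\tau)\) via Assumption~\ref{D2}, transposing for \(R(\tau,\tau+s)\). You then write \(K_\tau(t,s)\) as a signed sum of increments plus the quadratic term \(\Delta_1\Sigma(\tau)^{-1}\Delta_2=o(\varepsilon)\), observe that the \(A\)- and \(A^\top\)-terms cancel exactly, and obtain the case \(s>t\) by transposition. The only cosmetic difference is that the paper also names \(\Sigma(\tau)-R(\tau+t,\tau+s)\) as a third increment, rather than expanding that entry directly.
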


\begin{proof}
It is enough to consider \(t\ge s\ge0\); the case \(s>t\) follows from
\(K_\tau(t,s)=K_\tau(s,t)^\top\).  Set
\[
  \Delta_1=\Sigma(\tau)-R(\tau+t,\tau+s),\qquad
  \Delta_2=\Sigma(\tau)-R(\tau+t,\tau),\qquad
  \Delta_3=\Sigma(\tau)-R(\tau,\tau+s).
\]
Subtracting the diagonal expansion
\(\Sigma-\Sigma(\tau)=(A+A^\top)\tau^\beta+o(\tau^\beta)\) from the three
instances of Assumption~\ref{D2} gives
\begin{align*}
  \Delta_1
  &=A\big((\tau+t)^\beta-\tau^\beta\big)
    +A^\top\big((\tau+s)^\beta-\tau^\beta\big)
    +V(t-s)^\alpha+o(r_{\tau,t,s}),\\
  \Delta_2
  &=A\big((\tau+t)^\beta-\tau^\beta\big)
    +Vt^\alpha+o(r_{\tau,t,s}),\\
  \Delta_3
  &=A^\top\big((\tau+s)^\beta-\tau^\beta\big)
    +V^\top s^\alpha+o(r_{\tau,t,s}),
\end{align*}
where
\[
  r_{\tau,t,s}=\tau^\beta+(\tau+t)^\beta+(\tau+s)^\beta
  +t^\alpha+s^\alpha+|t-s|^\alpha.
\]
Here the third line follows by applying \ref{D2} to
\(R(\tau+s,\tau)\) and then transposing.  Algebraically,
\begin{align*}
  K_\tau(t,s)
  &=-\Delta_1+\Delta_2+\Delta_3
    -\Delta_2\Sigma(\tau)^{-1}\Delta_3.
\end{align*}
Since \(\Delta_2,\Delta_3=O(r_{\tau,t,s})\) and \(\Sigma(\tau)^{-1}\) stays
bounded, the last product is \(o(r_{\tau,t,s})\).  The terms containing \(A\)
and \(A^\top\) cancel exactly, leaving
\[
  K_\tau(t,s)=Vt^\alpha+V^\top s^\alpha-V(t-s)^\alpha
  +o(r_{\tau,t,s}).
\]
This is
\(S_{\alpha,V}(t)+S_{\alpha,V}(-s)-S_{\alpha,V}(t-s)\) for \(t\ge s\ge0\),
and the transpose relation gives the formula for all \(s,t\ge0\).
\end{proof} 

\begin{lemma}[Limiting process]
  \label{lemma: limiting process}
  The conditions~\ref{A1'}, \ref{A2} and \ref{A3} of Lemma~\ref{vectorPickands_functionals}
  are satisfied with
  \begin{enumerate}[topsep=5pt, itemsep=5pt]
    \item \( R_{u, \tau} ( t, s ) = R ( u^{-2 / \nu} ( \tau + t ), u^{-2 / \nu}
    ( \tau + s ) ), \) with \( \nu = \min \{ \alpha, \beta \} \)
    \item \( Q_u = [ 0, \Lambda u^{2 / \alpha - 2 / \beta} ] \) if \( \alpha <
    \beta \) and \( Q_u = \{ 0 \} \) if \( \alpha \geq \beta \),
    \item \( E = [0, S] \),
    \item \( \bm{d} ( t ) = t^\alpha \, V \bm{w} \, \1_{\alpha \leq \beta} +
    t^\beta \, A \, \bm{w} \, \1_{\alpha \geq \beta}, \)
    \item \( K ( t, s ) =
    \bigl(S_{\alpha,V}(t)+S_{\alpha,V}(-s)-S_{\alpha,V}(t-s)\bigr)
    \, \1_{\alpha \leq \beta} \),
  \end{enumerate}
  for \(s,t\in[0,S]\); in particular, for \( t > s > 0 \) this is
  \( ( V t^\alpha + V^\top s^\alpha - V (t-s)^\alpha )
    \, \1_{\alpha \leq \beta} \). 
\end{lemma}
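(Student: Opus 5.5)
The plan is to check Assumptions~\ref{A1'}, \ref{A2} and \ref{A3} directly for the rescaled covariance
\[
  R_{u,\tau}(t,s)=R\bigl(u^{-2/\nu}(\tau+t),u^{-2/\nu}(\tau+s)\bigr),
\]
writing $\tau_u=u^{-2/\nu}\tau$, $t_u=u^{-2/\nu}t$ and $s_u=u^{-2/\nu}s$.

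\textbf{Variance at the base point.} Here $\Sigma_{u,\tau}=\Sigma(\tau_u)$, and Assumption~\ref{D2} on the diagonal gives $\|\Sigma-\Sigma(\tau_u)\|_{\mathrm F}=O(\tau_u^\beta)$. If $\alpha\ge\beta$, then $Q_u=\{0\}$ and the difference vanishes. If $\alpha<\beta$, then $\tau\le\Lambda u^{2/\alpha-2/\beta}$, so $\tau_u\le\Lambda u^{-2/\beta}$. This gives $u\,\tau_u^\beta\lesssim u^{-1}\to0$ uniformly in $\tau\in Q_u$, which proves \ref{A1'}. Note that \ref{A1'} implies \ref{A1}, and in particular \ref{A1.2}, since $(\Sigma^{-1}\bb)_J=\bm w_J=\bm0$.

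\textbf{Conditional mean, \ref{A2.1}--\ref{A2.2}.} By Lemma~\ref{lemma:cond_mean} with $(\tau,t)$ replaced by $(\tau_u,t_u)$,
\[
  u^2\bigl[\Sigma_{u,\tau}-R_{u,\tau}(t,0)\bigr]\Sigma_{u,\tau}^{-1}\bb
  =u^2\bigl[(\tau_u+t_u)^\beta-\tau_u^\beta\bigr]A\bm w+u^2t_u^\alpha V\bm w+o(\cdot).
\]
Since $u^2t_u^\alpha=t^\alpha u^{2-2\alpha/\nu}$, this term equals $t^\alpha$ when $\alpha\le\beta$ and tends to $0$ when $\alpha>\beta$.

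For the $\beta$-term I split into cases. If $\alpha\ge\beta$, then $\tau_u=0$ and the term is exactly $t^\beta A\bm w$. If $\alpha<\beta$, I use the mean value bound
\[
  (\tau_u+t_u)^\beta-\tau_u^\beta\lesssim t_u(\tau_u+t_u)^{\beta-1}
\]
when $\beta\ge1$, and the bound $t_u^\beta$ when $\beta<1$. Because $\tau_u+t_u=O(u^{-2/\beta})$ and $t_u=O(u^{-2/\alpha})$, the product with $u^2$ is $O\bigl(u^{2-2/\alpha-2(\beta-1)/\beta}\bigr)=O\bigl(u^{2/\beta-2/\alpha}\bigr)\to0$; the case $\beta<1$ is similar and easier.

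The remainder needs care, and this is where I expect the main obstacle. The $o(\cdot)$ carries $u^2\tau_u^\beta$, which is bounded but not small when $\alpha<\beta$, since $u^2\tau_u^\beta\le\Lambda^\beta$. The bound $o(\tau_u^\beta)u^2=o(1)$ still holds uniformly, because the joint little-oh in \ref{D2} is uniform on shrinking rectangles. So every error is $o(1)\cdot O(1+\Lambda^\beta+S^\beta+S^\alpha)$, uniformly in $\tau\in Q_u$ and $t\in[0,S]$.

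The same bounds without the factor $\bm w$, multiplied by $u$ instead of $u^2$, give \ref{A2.1}.

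\textbf{Conditional covariance, \ref{A2.3}.} Lemma~\ref{lemma:cond_cov} gives $u^2K_{\tau_u}(t_u,s_u)$. Its leading term is $u^2(\cdot)_u^\alpha$, which converges to the kernel $R_{\alpha,V}$ when $\alpha\le\beta$ and to $0$ when $\alpha>\beta$. The remainder is controlled exactly as for the mean: it is $u^2$ times $o(r_{\tau_u,t_u,s_u})$, with $u^2r=O(1)$, so it tends to $0$.

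\textbf{Increments, \ref{A3}.} I apply Assumption~\ref{D3} to get
\[
  u^2\,\E{\bigl|\bm X_{u,\tau}(t)-\bm X_{u,\tau}(s)\bigr|^2}\lesssim u^2\,u^{-2\gamma/\nu}|t-s|^\gamma\le|t-s|^\gamma,
\]
since $\gamma\ge\nu$. This holds for large $u$, uniformly in $\tau$, because all the points lie in a shrinking neighbourhood of $0$.
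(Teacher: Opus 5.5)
Your proposal is correct and follows essentially the same route as the paper: \ref{A1'} comes from the diagonal of \ref{D2} together with $\tau_u\le\Lambda u^{-2/\beta}$. Conditions \ref{A2.1}--\ref{A2.3} come from Lemmas~\ref{lemma:cond_mean} and~\ref{lemma:cond_cov}, a power-increment bound (your split into $\beta\ge1$ and $\beta<1$), and the observation that $u^2$ times the remainder arguments stays bounded while those arguments shrink uniformly; \ref{A3} comes from \ref{D3} via $\gamma\ge\nu$. The only cosmetic difference is that you keep the exponent $\gamma$ in \ref{A3}, whereas the paper converts it to $|t-s|^\nu$ with an $S$-dependent constant; both are valid.
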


\begin{proof}
Write
\[
  q_{u,\tau}=u^{-2/\nu}\tau,
  \qquad h_u(t)=u^{-2/\nu}t,
  \qquad \nu=\min\{\alpha,\beta\}.
\]
Then \(\Sigma_{u,\tau}=\Sigma(q_{u,\tau})\).  We repeatedly use the elementary
bound
\begin{equation}
  \label{eq:power-increment-bound}
  0\le (x+y)^\beta-x^\beta
  \le C_\beta\bigl(y^\beta+\1_{\{\beta>1\}}x^{\beta-1}y\bigr),
  \qquad x,y\ge0.
\end{equation}

Suppose first that \(\alpha<\beta\).  Then \(\nu=\alpha\) and, uniformly for
\(\tau\in Q_u\),
\[
  0\le q_{u,\tau}\le \Lambda u^{-2/\beta},
  \qquad 0\le h_u(t)\le Su^{-2/\alpha}.
\]
The diagonal part of Assumption~\ref{D2} therefore gives
\[
  \sup_{\tau\in Q_u}u\|\Sigma(q_{u,\tau})-\Sigma\|
  =O(u^{-1})\longrightarrow0,
\]
which is Assumption~\ref{A1'}.  By
\eqref{eq:power-increment-bound}, uniformly for \(t\in[0,S]\),
\begin{align*}
  u\bigl((q_{u,\tau}+h_u(t))^\beta-q_{u,\tau}^\beta+h_u(t)^\alpha\bigr)
  &\longrightarrow0,\\
  u^2\bigl((q_{u,\tau}+h_u(t))^\beta-q_{u,\tau}^\beta\bigr)
  &\longrightarrow0,\\
  u^2h_u(t)^\alpha&=t^\alpha.
\end{align*}
Moreover, the quantities multiplying the little-oh remainders in
Lemmas~\ref{lemma:cond_mean} and~\ref{lemma:cond_cov}, after multiplication by
\(u^2\), remain bounded, while their arguments tend to zero uniformly.  Those
lemmas now give Assumptions~\ref{A2.1}--\ref{A2.3} with
\[
  \bm d(t)=t^\alpha V\bm w,
  \qquad
  K(t,s)=S_{\alpha,V}(t)+S_{\alpha,V}(-s)-S_{\alpha,V}(t-s).
\]

If \(\alpha\ge\beta\), then \(Q_u=\{0\}\), \(\nu=\beta\), and
\(q_{u,0}=0\).  Thus Assumption~\ref{A1'} is immediate.  With
\(h_u(t)=u^{-2/\beta}t\), Lemmas~\ref{lemma:cond_mean} and
\ref{lemma:cond_cov} yield, uniformly on \([0,S]\),
\begin{align*}
  u^2\bm d_0(h_u(t))
  &\longrightarrow t^\beta A\bm w
    +t^\alpha V\bm w\,\1_{\{\alpha=\beta\}},\\
  u^2K_0(h_u(t),h_u(s))
  &\longrightarrow
    \bigl(S_{\alpha,V}(t)+S_{\alpha,V}(-s)-S_{\alpha,V}(t-s)\bigr)
    \1_{\{\alpha=\beta\}}.
\end{align*}
Also
\(u\|\Sigma-R(h_u(t),0)\|\to0\), so
Assumption~\ref{A2.1} holds.  These limits are exactly the stated choices of
\(\bm d\) and \(K\).

Finally, Assumption~\ref{D3} gives in every regime, since \(\gamma\ge\nu\),
\[
  u^2\E{\left|\bm X(q_{u,\tau}+h_u(t))
                  -\bm X(q_{u,\tau}+h_u(s))\right|^2}
  \lesssim u^{2-2\gamma/\nu}|t-s|^\gamma
  \lesssim_S |t-s|^\nu.
\]
This is Assumption~\ref{A3} and completes the verification.
\end{proof} 

The following lemma extending the Bonferroni inequality is a consequence of
assumption~\ref{B1}.

\begin{lemma}
  \label{lemma Bonferroni adapted for sojourns}
  Let \( \Gamma \) be a functional satisfying Assumption~\ref{B1}, and let
  \(E_1,\dots,E_k\in\mathcal K\), \(k\geq2\), be compact sets. For
  \(\bm f\in C(\cup_{i=1}^kE_i,\R^d)\) and \(L\in\mathcal L\), put
  \[
    A_i=\{\Gamma_{E_i}(\bm f)>L\},\qquad
    C_i=\left\{\sup_{t\in E_i}\min_{j=1,\dots,d}f_j(t)>0\right\}.
  \]
  Then
  \[
    \bigcup_{i=1}^k A_i
    \subset
    \left\{\Gamma_{\cup_{i=1}^kE_i}(\bm f)>L\right\}
    \subset
    \bigcup_{i=1}^k A_i \cup
    \bigcup_{1\leq i<j\leq k}(C_i\cap C_j).
  \]
  Consequently, if \(\bm f\) is random, then
  \begin{align*}
    \pk{\Gamma_{\cup_{i=1}^k E_i}(\bm f)>L}
    &\leq
    \sum_{i=1}^k\pk{\Gamma_{E_i}(\bm f)>L}
    +\sum_{1\leq i<j\leq k}\pk{C_i\cap C_j},
    \\
    \pk{\Gamma_{\cup_{i=1}^k E_i}(\bm f)>L}
    &\geq
    \sum_{i=1}^k\pk{\Gamma_{E_i}(\bm f)>L}
    -\sum_{1\leq i<j\leq k}\pk{C_i\cap C_j}.
  \end{align*}
\end{lemma}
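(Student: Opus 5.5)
The plan is to derive both displayed inclusions pathwise, by induction on \(k\), from the two implications in Assumption~\ref{B1}. The probabilistic bounds then follow from the union bound and a truncated inclusion--exclusion. The lower inclusion \(\bigcup_i A_i\subset\{\Gamma_{\cup_iE_i}(\bm f)>L\}\) needs only the first implication of~\ref{B1}, applied with \(A=E_i\) and \(B=\bigcup_{j\ne i}E_j\), so that \(A\cup B=\bigcup_jE_j\). No induction is required for this direction.

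For the upper inclusion I would induct on \(k\).

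\textbf{Base case \(k=2\).} Suppose \(\Gamma_{E_1\cup E_2}(\bm f)>L\) and \(\bm f\notin A_1\cup A_2\). Apply the middle implication of~\ref{B1} with \(A=E_1\), \(B=E_2\). Since \(A_1\) fails, this gives \(C_2\). Applying it again with the roles of \(E_1\) and \(E_2\) swapped gives \(C_1\). Hence \(\bm f\in C_1\cap C_2\).

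\textbf{Inductive step.} Put \(E'=\bigcup_{i<k}E_i\) and suppose \(\Gamma_{E'\cup E_k}(\bm f)>L\). Two cases arise.
\begin{itemize}
  \item If \(\Gamma_{E'}(\bm f)>L\), the induction hypothesis applies and yields the claim.
  \item Otherwise \(\Gamma_{E'}(\bm f)\le L\), and the middle implication gives \(C_k\). Applying the same implication with \(A=E_k\) and \(B=E'\): either \(A_k\) holds, and we are done, or \(\sup_{E'}\min_jf_j>0\). The latter means \(C_i\) holds for some \(i<k\), so \(\bm f\in C_i\cap C_k\).
\end{itemize}
This closes the induction. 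The only care needed is that restrictions of \(\bm f\) to the subsets are well defined, which they are since \(\bm f\) is continuous on the union.

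The probabilistic statements then follow. The upper bound is the union bound applied to the right-hand set. For the lower bound, the lower inclusion gives \(\pk{\Gamma_{\cup E_i}>L}\ge\pk{\bigcup A_i}\ge\sum_i\pk{A_i}-\sum_{i<j}\pk{A_i\cap A_j}\) by the second Bonferroni inequality. The third implication in~\ref{B1}, \(\Gamma_{E_i}(\bm f)>L\Rightarrow C_i\), gives \(A_i\subset C_i\), hence \(\pk{A_i\cap A_j}\le\pk{C_i\cap C_j}\).

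I expect no real obstacle. The only subtle point is the inductive step: one must apply the middle implication in both directions to get witnesses on two distinct blocks, rather than only on \(E_k\).
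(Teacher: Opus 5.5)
Your proposal is correct and follows the paper's proof essentially step for step: the same induction on \(k\) for the upper inclusion, with \(\bigcup_{i<k}E_i\) as the accumulated block and the middle implication of Assumption~\ref{B1} applied in both orders. The passage to probabilities is also the same (union bound, second Bonferroni inequality, and \(A_i\subset C_i\)); the only cosmetic difference is that you get the lower inclusion from a single application of the first implication with \(B=\bigcup_{j\ne i}E_j\), where the paper speaks of repeated application.
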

\begin{proof}
  The first inclusion follows by repeated application of the first implication
  in Assumption~\ref{B1}. For the second inclusion we prove the contrapositive
  by induction on \(k\). The case \(k=2\) follows from the second implication
  in Assumption~\ref{B1}, applied once with \((A,B)=(E_1,E_2)\) and once with
  \((A,B)=(E_2,E_1)\). For the induction step, set
  \(F_{k-1}=\cup_{i=1}^{k-1}E_i\). If
  \(\Gamma_{F_{k-1}\cup E_k}(\bm f)>L\) and none of the events \(A_i\) occurs,
  Assumption~\ref{B1} applied to \((F_{k-1},E_k)\) gives either
  \(\Gamma_{F_{k-1}}(\bm f)>L\) or \(C_k\). In the first case the induction
  hypothesis yields \(C_i\cap C_j\) for some \(1\leq i<j\leq k-1\). In the
  second case, applying Assumption~\ref{B1} to \((E_k,F_{k-1})\) and using that
  \(A_k\) does not occur gives
  \(\sup_{t\in F_{k-1}}\min_j f_j(t)>0\), hence \(C_i\) for some
  \(i<k\); therefore \(C_i\cap C_k\) occurs. This proves the deterministic
  inclusions. The two probability inequalities are then the ordinary
  Bonferroni inequalities, together with
  \(\Gamma_{E_i}(\bm f)>L\Rightarrow C_i\), which is the last implication in
  Assumption~\ref{B1}.
\end{proof}

In the following three lemmas, write
\( 
H(S) \coloneqq H_{\Gamma,L,\alpha,V,\bm 0,\Sigma,\bm b}([0,S])
\) 
for a Pickands-type preconstant with \( I \neq \varnothing \). Such constants
are represented by the integral in Lemma~\ref{vectorPickands_functionals}, with
\(\bm{X}_{u,\tau}(t)=\bm X(u^{-2/\alpha}t)\), \(Q_u=\{0\}\), and a stationary
Gaussian representer \(\bm X\) satisfying the local covariance expansion.  This
representation does not use the constant-reduction property~\ref{B5}; when
\ref{B5} is available it can also be rewritten in the shorter form of
Corollary~\ref{corollary1}. In the proofs below \(\bm X\) denotes any such
representer.

\begin{lemma}
  \label{lemma:finiteness-of-H}
  The family \(S^{-1}H(S)\), \(S\ge1\), is bounded from above.
\end{lemma}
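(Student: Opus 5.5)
We want $\sup_{S\ge1}S^{-1}H(S)<\infty$. The plan is to bound $H(S)$ by the high-excursion probability of the stationary representer on a long interval, and then compare that probability with the single-block probability through the double-sum estimate.

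\emph{Step 1: representation.} Fix $S\ge1$. By Lemma~\ref{vectorPickands_functionals}, applied with $E=[0,S]$, $\bm X_{u,0}(t)=\bm X(u^{-2/\alpha}t)$ and $Q_u=\{0\}$, together with~\ref{B2},
\[
  H(S)=\lim_{u\to\infty}\frac{\pk{\Gamma_{[0,Su^{-2/\alpha}]}(\uu(\bm X-u\bm b))>L_u}}{u^{-|I|}\varphi_\Sigma(u\bb)} .
\]
Here we may take the representer on a sufficiently long horizon.

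\emph{Step 2: reduction to pointwise excursions.} By the last implication in~\ref{B1} (at scaled levels, Remark~\ref{rem:scaled-level-B1}), the numerator is at most
\[
  \pk{\exists t\in[0,Su^{-2/\alpha}]:\bm X(t)>u\bm b}.
\]
Partition $[0,S]$ into $\lceil S\rceil$ unit blocks $[k,k+1]$ and apply the union bound together with stationarity. This gives
\[
  \pk{\exists t\in[0,Su^{-2/\alpha}]:\bm X(t)>u\bm b}
  \le \lceil S\rceil\,\pk{\exists t\in[0,u^{-2/\alpha}]:\bm X(t)>u\bm b}.
\]
Note that no double sum is even needed for an upper bound.

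\emph{Step 3: the unit block.} It remains to show
\[
  \limsup_{u\to\infty}\frac{\pk{\exists t\in[0,u^{-2/\alpha}]:\bm X(t)>u\bm b}}{u^{-|I|}\varphi_\Sigma(u\bb)}=:C_1<\infty .
\]
Apply Lemma~\ref{vectorPickands_functionals} to the supremum functional $\tilde\Gamma(\bm f)=\sup_{t\in[0,1]}\min_i f_i(t)$, i.e.\ the Parisian functional with $F=\{0\}$. This functional satisfies~\ref{F1}--\ref{F3} by Lemma~\ref{lem:moving_window_functional} and Lemma~\ref{lem:parisian-partially-frozen}. The lemma yields a limit equal to a finite constant: finiteness is part of the domination step in the proof of Lemma~\ref{vectorPickands_functionals}, where the integrand has an $L^1$ envelope. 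One may equivalently cite the Pickands-type bound of \cite{VVGP}.

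Combining the three steps gives $H(S)\le\lceil S\rceil C_1\le 2SC_1$ for $S\ge1$, so $S^{-1}H(S)\le 2C_1$ uniformly.

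\emph{Main obstacle.} The delicate point is making sure the uniform $L^1$ domination invoked in Lemma~\ref{vectorPickands_functionals} actually applies to the pointwise-excursion event. That event also has to cover the tangential and slack coordinates: the slack coordinates are frozen at positive constants, and the factor $-\tfrac12\bm x_J^\top(\Sigma^{-1})_{JJ}\bm x_J$ is what makes the $J$-integral finite. If citing that step is judged insufficient, I would verify finiteness of $C_1$ directly. On $\bm x_I\le\bm0$ the exponential weight $e^{\bm w_I^\top\bm x_I}$ is integrable. On large positive $\bm x_I$ the probability that $\sup_{[0,1]}(\bm W_I-\bm q)$ exceeds $\bm x_I$ decays like a Gaussian tail, by Borell--TIS and continuity of $\bm Y_{\alpha,V_{I,I}}$ on $[0,1]$.
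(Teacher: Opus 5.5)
Your proof is correct, and it takes a leaner route than the paper's.

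\textbf{The paper's route.} The paper bounds $p_u(S)$ by the upper inequality of Lemma~\ref{lemma Bonferroni adapted for sojourns} over unit blocks. That leaves two kinds of terms to control. The one-block functional probabilities are handled by stationarity and $H(1)$. The pairwise simultaneous-exceedance terms are handled by Lemma~\ref{lemma:double_sum_bound_stationary}, which gives $O(a_u)$ for adjacent pairs and $O(e^{-cm^\alpha}a_u)$ for pairs at separation $m$, followed by summing the resulting series.

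\textbf{Your route.} You apply the last implication of~\ref{B1} once to the whole interval $[0,Su^{-2/\alpha}]$. This replaces the functional event by the pointwise exceedance event, after which a plain union bound over $\lceil S\rceil$ unit blocks suffices. Overlap terms are irrelevant for an upper bound.

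\textbf{The single input you need.} Your argument rests only on the unit-block bound
\[
  \pk{\exists t\in[0,u^{-2/\alpha}]:\bm X(t)>u\bm b}=O\bigl(u^{-|I|}\varphi_\Sigma(u\bb)\bigr).
\]
The paper needs this bound too: its double-sum lemma invokes the fixed-block Pickands upper bound to cover bounded gaps. Your justification of it is sound. You apply Lemma~\ref{vectorPickands_functionals} to the supremum functional, and you verify \ref{F1}--\ref{F3} in the partially frozen form via Lemmas~\ref{lem:moving_window_functional} and~\ref{lem:parisian-partially-frozen}, with finiteness of the resulting constant coming from the $L^1$ domination in that lemma.

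\textbf{What each approach buys.} The paper's decomposition parallels the one it needs with both signs in Lemma~\ref{lemmma:positivity-of-H} and in the subsequent existence lemma, so it gains uniformity of presentation. Your version shows that finiteness alone requires no double-sum estimate. It also gives the explicit bound $S^{-1}H(S)\le 2C_1$ in terms of the supremum-type preconstant.
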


\begin{proof}
Set
\[
  a_u=u^{-|I|}\varphi_\Sigma(u\bb),
  \qquad
  p_u(S)=\pk{\Gamma_{[0,Su^{-2/\alpha}]}
                  (\uu(\bm X-u\bm b))>L_u}.
\]
For every fixed \(S>0\), affine scaling and
Lemma~\ref{vectorPickands_functionals} give
\begin{equation}
  \label{eq:local-H-asymptotic-finiteness}
  p_u(S)\sim H(S)a_u.
\end{equation}
Partition the scaled interval \([0,S]\) into at most \(\lceil S\rceil\) unit
subintervals.  The upper inequality in
Lemma~\ref{lemma Bonferroni adapted for sojourns} bounds \(p_u(S)\) by the sum
of the corresponding one-block functional probabilities plus the sum of
pairwise simultaneous high-excursion probabilities.  By stationarity and the
fixed-block version of Lemma~\ref{vectorPickands_functionals}, each one-block
term is \(O(a_u)\).  Lemma~\ref{lemma:double_sum_bound_stationary}, applied
with unit blocks, gives \(O(a_u)\) for adjacent pairs and
\(O(e^{-c m^\alpha}a_u)\) for pairs separated by \(m\) unit blocks.  Therefore
\[
  p_u(S)\le C\lceil S\rceil a_u
  +Ca_u\sum_{k=1}^{\lceil S\rceil}
       \sum_{m\ge0}e^{-cm^\alpha}
  \le C' S a_u,
  \qquad S\ge1,
\]
where a possible final truncated subinterval is enlarged to a unit interval.
Divide by \(a_u\), let \(u\to\infty\) in
\eqref{eq:local-H-asymptotic-finiteness}, and obtain \(H(S)\le C'S\), uniformly
for \(S\ge1\).
\end{proof}

\begin{lemma}
  \label{lemmma:positivity-of-H}
  If \(H(S_0)>0\) for some \(S_0>0\), then
  \(
    \liminf_{S \to \infty} S^{-1} H ( S ) > 0.
  \)
\end{lemma}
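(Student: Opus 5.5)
We prove the claim by a lower Bonferroni bound over many disjoint copies of the good block, with the same representer as in Lemma~\ref{lemma:finiteness-of-H}. Put \(a_u=u^{-|I|}\varphi_\Sigma(u\bb)\) and
\(p_u(S)=\pk{\Gamma_{[0,Su^{-2/\alpha}]}(\uu(\bm X-u\bm b))>L_u}\), so that \(p_u(S)\sim H(S)a_u\) for each fixed \(S\) by affine scaling and Lemma~\ref{vectorPickands_functionals}.

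Fix a gap \(\Delta>0\), to be chosen large. For \(S\ge S_0+\Delta\) let \(n=\floor{S/(S_0+\Delta)}\), and consider the scaled start blocks \(E_k=(S_0+\Delta)k+[0,S_0]\), \(0\le k\le n-1\). These are contained in \([0,S]\) and are pairwise separated by at least \(\Delta\). By the first implication of Assumption~\ref{B1}, in the scaled-level form of Remark~\ref{rem:scaled-level-B1}, we have \(\{\Gamma_{\cup E_k}>L\}\subset\{\Gamma_{[0,S]}>L\}\) after rescaling. The lower inequality of Lemma~\ref{lemma Bonferroni adapted for sojourns} then gives
\[
  p_u(S)\ge \sum_{k=0}^{n-1}\pk{\Gamma_{u^{-2/\alpha}E_k}(\uu(\bm X-u\bm b))>L_u}
  -\sum_{0\le k<l\le n-1}\pk{C_k\cap C_l},
\]
where \(C_k\) is the event of a pointwise vector exceedance \(\bm X(t)>u\bm b\) for some \(t\in u^{-2/\alpha}E_k\). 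By stationarity each single term equals \(p_u(S_0)\sim H(S_0)a_u\).

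For the double sum, apply Lemma~\ref{lemma:double_sum_bound_stationary} with blocks of length \(S_0\). Pairs whose distance exceeds a fixed \(\delta u^{2/\alpha}\) cannot occur here, since the whole window has length \(Su^{-2/\alpha}\) with \(S\) fixed. A pair separated by \(m\) block-periods has separation at least \(\Delta+(m-1)(S_0+\Delta)\ge m\Delta\), so its probability is \(\lesssim S_0\,e^{-c(m\Delta)^\alpha}a_u\), up to the polynomial factors appearing in that lemma. Summing over \(k\) and \(m\ge1\) gives a total of at most \(C n\,S_0\sum_{m\ge1}e^{-c(m\Delta)^\alpha}a_u=n\,\varepsilon(\Delta)a_u\), where \(\varepsilon(\Delta)\to0\) as \(\Delta\to\infty\).

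Dividing by \(a_u\) and letting \(u\to\infty\) yields \(H(S)\ge n\,(H(S_0)-\varepsilon(\Delta))\). Choose \(\Delta\) so large that \(\varepsilon(\Delta)\le H(S_0)/2\). Since \(n\ge S/(S_0+\Delta)-1\), we obtain
\[
  \liminf_{S\to\infty}S^{-1}H(S)\ \ge\ \frac{H(S_0)}{2(S_0+\Delta)}>0 .
\]

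The main obstacle is the separated-pair estimate: it must hold with a constant uniform in the separation index \(m\) and decay in the gap, and this is exactly what Lemma~\ref{lemma:double_sum_bound_stationary} is assumed to supply. If that lemma is stated only for adjacent-index blocks of a common length \(S\), one must instead embed the blocks \(E_k\) into a partition of \([0,S]\) into blocks of length \(S_0\) and use only every \((1+\lceil\Delta/S_0\rceil)\)-th block. The same estimate then applies with \(h\ge 1+\lceil\Delta/S_0\rceil\).
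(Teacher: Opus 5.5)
Your argument is correct, but it takes a different route from the paper's proof.

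\textbf{The paper's route.} The paper works on a common physical horizon. It sets \(p_u\) to be the exceedance probability on \([0,1]\) and \(r_u=u^{2/\alpha}u^{-|I|}\varphi_\Sigma(u\bb)\). Using every other block of length \(R\), it bounds \(p_u/r_u\) from below by \(H(R)/(2R)-Ce^{-c'R^\alpha}=:\eta>0\). Using a full partition into blocks of length \(S\), it bounds \(p_u/r_u\) from above by \(H(S)/S+\epsilon(S)\). Combining the two gives \(H(S)/S\ge\eta-\epsilon(S)\).

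\textbf{Your route.} You stay inside a single scaled window \([0,S]\). There you prove the superadditivity-type bound \(H(S)\ge n\,(H(S_0)-\varepsilon(\Delta))\) directly at the level of preconstants.

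\textbf{What your route buys.} It needs only the lower Bonferroni inequality and the separated case \(\lambda-\tau>S\) of Lemma~\ref{lemma:double_sum_bound_stationary}. All blocks lie in a window of physical length \(Su^{-2/\alpha}\to0\), so you never need:
\begin{itemize}
  \item the adjacent-pair \(S^{1/2}\) estimate;
  \item the Assumption~\ref{R1}-based bound for pairs at a fixed positive physical distance;
  \item a terminal-remainder estimate.
\end{itemize}
You also get an explicit linear lower bound \(H(S)\ge cS\) for all large \(S\), not merely a statement about the \(\liminf\).

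\textbf{What the paper's route buys.} Its two-sided common-horizon estimate is exactly the one reused in Lemma~\ref{lemma: lim H(S) / S in (0, infty) if alpha < beta} to prove that \(\lim_{S\to\infty}S^{-1}H(S)\) exists. Positivity therefore comes almost for free from machinery that is needed anyway.

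\textbf{A small fix.} Lemma~\ref{lemma:double_sum_bound_stationary} is stated only for block length at least \(1\). If \(S_0<1\), first replace \(S_0\) by \(\max\{S_0,1\}\). This is legitimate because \(S\mapsto H(S)\) is non-decreasing by the first implication in Assumption~\ref{B1}.

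Your closing contingency about adjacent-index blocks is unnecessary. The lemma is already formulated for arbitrary start points \(\tau,\lambda\) with a constant independent of the separation.
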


\begin{proof}
Put
\[
  a_u=u^{-|I|}\varphi_\Sigma(u\bb),
  \qquad r_u=u^{2/\alpha}a_u,
  \qquad
  p_u=\pk{\Gamma_{[0,1]}(\uu(\bm X-u\bm b))>L_u}.
\]
Fix \(R\ge S_0\), and retain, inside \([0,1]\), every other block of length
\(Ru^{-2/\alpha}\):
\[
  B_{j,u}=u^{-2/\alpha}[2jR,(2j+1)R].
\]
There are \(N_u\sim u^{2/\alpha}/(2R)\) such blocks.  The union of the events
\(\{\Gamma_{B_{j,u}}>L_u\}\) is contained in the event defining \(p_u\), by
Assumption~\ref{B1}.  Ordinary Bonferroni, the local asymptotic on a block, and
Lemma~\ref{lemma:double_sum_bound_stationary} therefore give
\begin{align*}
  \liminf_{u\to\infty}\frac{p_u}{r_u}
  &\ge \frac{H(R)}{2R}
    -C\sum_{m\ge1}e^{-c((2m-1)R)^\alpha}\\
  &\ge \frac{H(R)}{2R}-Ce^{-c'R^\alpha}.
\end{align*}
Here the sum covers pairs whose physical separation tends to zero; pairs at a
fixed positive physical separation are exponentially negligible by
Assumption~\ref{R1} and the standard two-copy Gaussian tail bound.
Since \(H(R)\ge H(S_0)>0\), one may choose \(R\) so large that the last
quantity, denoted by \(\eta\), is strictly positive.

Now partition \([0,1]\) into consecutive blocks of length
\(Su^{-2/\alpha}\), for an arbitrary fixed \(S\ge1\).  The upper Bonferroni
bound, the one-block asymptotic, and the adjacent/separated estimates in
Lemma~\ref{lemma:double_sum_bound_stationary} imply
\begin{equation}
  \label{eq:common-horizon-upper-positivity}
  \limsup_{u\to\infty}\frac{p_u}{r_u}
  \le \frac{H(S)}{S}+\epsilon(S),
  \qquad
  \epsilon(S)\le C\bigl(S^{-1/2}+e^{-cS^{\alpha/2}}\bigr).
\end{equation}
The terminal remainder contributes only \(O(a_u)=o(r_u)\).  Combining the
positive lower bound with
\eqref{eq:common-horizon-upper-positivity} yields
\[
  \frac{H(S)}{S}\ge \eta-\epsilon(S).
\]
Letting \(S\to\infty\) proves the asserted positive lower limit.
\end{proof} 

\begin{lemma}
    \label{lemma: lim H(S) / S in (0, infty) if alpha < beta}
    The limit \( \lim_{S \to \infty} S^{-1} H(S) \) exists and is finite; it is
    positive whenever the non-triviality condition \(H(S_0)>0\) holds for some \(S_0>0\).
\end{lemma}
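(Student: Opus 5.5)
We prove that \(\lim_{S\to\infty}S^{-1}H(S)\) exists by a sandwich of the \(\limsup\) and \(\liminf\) of \(S^{-1}H(S)\). Both bounds compare \(H(S)\) with a common long-horizon quantity. As in the proof of Lemma~\ref{lemmma:positivity-of-H}, put \(a_u=u^{-|I|}\varphi_\Sigma(u\bb)\), \(r_u=u^{2/\alpha}a_u\), and
\(p_u=\pk{\Gamma_{[0,1]}(\uu(\bm X-u\bm b))>L_u}\). Define
\[
  \overline{\mathcal H}=\limsup_{u\to\infty}p_u/r_u,
  \qquad
  \underline{\mathcal H}=\liminf_{u\to\infty}p_u/r_u .
\]
By Lemma~\ref{lemma:finiteness-of-H} and the upper bound \eqref{eq:common-horizon-upper-positivity}, both are finite.

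\emph{Step 1 (upper bound for \(\overline{\mathcal H}\)).} Fix \(S\ge1\) and partition \([0,1]\) into consecutive blocks of length \(Su^{-2/\alpha}\). The upper inequality of Lemma~\ref{lemma Bonferroni adapted for sojourns}, stationarity together with \ref{B2}, and the one-block asymptotic from Lemma~\ref{vectorPickands_functionals} give \eqref{eq:common-horizon-upper-positivity}:
\[
  \overline{\mathcal H}\le S^{-1}H(S)+\epsilon(S),
  \qquad
  \epsilon(S)\le C\bigl(S^{-1/2}+e^{-cS^{\alpha/2}}\bigr).
\]
The double sum is handled by Lemma~\ref{lemma:double_sum_bound_stationary} for near pairs and by \ref{R1} for pairs at fixed physical separation, and the terminal remainder is \(O(a_u)=o(r_u)\).

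\emph{Step 2 (lower bound for \(\underline{\mathcal H}\)).} Apply the lower Bonferroni inequality to the same full partition into \(S\)-blocks. No gaps are needed, because the double-sum bound for adjacent pairs is \(O(S^{1/2})\) per pair, which is \(o(S)\) after dividing by the block length. This yields
\[
  \underline{\mathcal H}\ge S^{-1}H(S)-\epsilon(S).
\]
The first inclusion of Lemma~\ref{lemma Bonferroni adapted for sojourns} ensures that the union of the block events is contained in the event defining \(p_u\).

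\emph{Step 3 (conclusion).} Combining the two bounds, for every \(S,S'\ge1\),
\[
  S^{-1}H(S)-\epsilon(S)\le\underline{\mathcal H}\le\overline{\mathcal H}\le S'^{-1}H(S')+\epsilon(S').
\]
Taking \(\limsup\) in \(S\) and \(\liminf\) in \(S'\) gives
\[
  \limsup_{S\to\infty} S^{-1}H(S)\le\liminf_{S\to\infty} S^{-1}H(S),
\]
so the limit exists and equals \(\underline{\mathcal H}=\overline{\mathcal H}\). Finiteness follows from Lemma~\ref{lemma:finiteness-of-H}. Positivity under \(H(S_0)>0\) is exactly Lemma~\ref{lemmma:positivity-of-H}.

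The main obstacle is Step 2. We must verify that the adjacent-pair double-sum contribution from Lemma~\ref{lemma:double_sum_bound_stationary} really is \(o(S)\) per unit of normalized length, uniformly after \(u\to\infty\). We must also verify that pairs of blocks at distance between \(Su^{-2/\alpha}\) and a fixed \(\delta\) are summable with a bound vanishing relative to \(S\), namely \(\sum_m e^{-c(Sm)^\alpha}\to0\). If the adjacent bound is too weak, the fallback is to use every other block as in Lemma~\ref{lemmma:positivity-of-H}, with short gaps of length \(\sqrt S\,u^{-2/\alpha}\) instead of \(S\,u^{-2/\alpha}\). This loses only a factor \(S/(S+\sqrt S)\to1\), which vanishes in the limit.
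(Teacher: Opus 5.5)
Your proposal is correct and follows essentially the paper's argument: both Bonferroni inequalities on a full partition of \([0,1]\) into blocks of length \(Su^{-2/\alpha}\) give the two-sided bound \(S^{-1}H(S)-\epsilon(S)\le\liminf_u p_u/r_u\le\limsup_u p_u/r_u\le S^{-1}H(S)+\epsilon(S)\). Comparing two values of \(S\) then gives existence (you conclude via \(\limsup_S\le\liminf_S\), the paper via a Cauchy criterion), and finiteness and positivity come from the two preceding lemmas. Your gapped-block fallback is not needed, since the adjacent-pair bound already gives \(\epsilon(S)\to0\).
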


\begin{proof}
Use the common-horizon notation
\[
  a_u=u^{-|I|}\varphi_\Sigma(u\bb),
  \qquad r_u=u^{2/\alpha}a_u,
  \qquad
  p_u=\pk{\Gamma_{[0,1]}(\uu(\bm X-u\bm b))>L_u}.
\]
For any fixed \(S\ge1\), partition \([0,1]\) into blocks of length
\(Su^{-2/\alpha}\).  The two Bonferroni inequalities, the fixed-block
asymptotic, and Lemma~\ref{lemma:double_sum_bound_stationary} give
\begin{equation}
  \label{eq:common-horizon-two-sided}
  \frac{H(S)}{S}-\epsilon(S)
  \le \liminf_{u\to\infty}\frac{p_u}{r_u}
  \le \limsup_{u\to\infty}\frac{p_u}{r_u}
  \le \frac{H(S)}{S}+\epsilon(S),
\end{equation}
where
\[
  \epsilon(S)\le C\bigl(S^{-1/2}+e^{-cS^{\alpha/2}}\bigr)
  \longrightarrow0.
\]
Indeed, adjacent block pairs give the \(S^{-1/2}\) term, all remaining local
pairs give an exponentially small term, pairs at a fixed positive physical
separation are negligible by Assumption~\ref{R1}, and the terminal remainder
is \(o(r_u)\).

Apply \eqref{eq:common-horizon-two-sided} with \(S=S_1\) and \(S=S_2\).  It
follows that
\[
  \frac{H(S_1)}{S_1}
  \le \frac{H(S_2)}{S_2}+\epsilon(S_1)+\epsilon(S_2),
\]
and the reverse inequality follows after interchanging \(S_1\) and \(S_2\).
Thus \(H(S)/S\) is Cauchy as \(S\to\infty\), so the limit exists.  Its
finiteness follows from Lemma~\ref{lemma:finiteness-of-H}; under the stated
non-triviality assumption, its positivity follows from
Lemma~\ref{lemmma:positivity-of-H}.
\end{proof} 

In the following lemma, \(H(\Lambda)\) denotes either the Piterbarg
constant truncated at \([0,\Lambda]\),
\begin{equation*}
  H(\Lambda)
  =H_{\Gamma,L,\alpha,V_{I,I},\bm d_I,\Sigma,\bm b}([0,\Lambda]),
  \qquad \alpha=\beta,
\end{equation*}
or the deterministic truncated constant
\begin{equation*}
  H(\Lambda)
  =
  \int_{\R^d}
      \exp \left(
        \bm{x}_I^\top \bm{w}_I
        - \frac{1}{2} \bm{x}_J^\top ( \Sigma^{-1} )_{J J} \bm{x}_J
      \right)
      \Ind{
        \Gamma_{[0, \Lambda]} \left(
          -\bm{d}_I - \bm{x}_I,
          -\bm{x}_{\Jtan},
          ( \bb - \bm{b} )_{\Jsl}
        \right)
        > L
      }\,d\bm{x},
  \qquad \alpha>\beta,
\end{equation*}
where \(\bm d(t)=t^\beta A\bm w\). In both cases the relevant
Assumptions~\ref{B3}--\ref{B4} and non-triviality assumptions are those of
Theorem~\ref{thm:main_theorem}.

\begin{lemma}[Convergence of Piterbarg and deterministic constants]
    \label{lemma: lim H(S) in (0, infty) if alpha > beta}
    The limit \( \lim_{\Lambda \to \infty} H(\Lambda) \) exists and is finite;
    it is positive under the stated non-triviality condition.
\end{lemma}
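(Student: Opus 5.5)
Monotonicity gives existence of the limit, and a common-horizon sandwich gives finiteness. Positivity comes last.

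\emph{Monotonicity.} For \(\Lambda_1\le\Lambda_2\), the first implication of Assumption~\ref{B1} with \(A=[0,\Lambda_1]\) and \(B=[0,\Lambda_2]\) shows that the event inside the integrand is pointwise non-decreasing in \(\Lambda\). This holds for each fixed \(\bm x\) and each sample path of the Gaussian part, so in the deterministic case the indicator is non-decreasing, and in the Piterbarg case the probability is non-decreasing. The exponential weight does not depend on \(\Lambda\), and the integrands are non-negative. By monotone convergence, \(\lim_{\Lambda\to\infty}H(\Lambda)\) exists in \([0,\infty]\).

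\emph{Finiteness.} Fix \(\Lambda\). In the proof of Theorem~\ref{thm:main_theorem}(b),(c) we already have the local asymptotic
\[
\pk{\Gamma_{[0,\Lambda u^{-2/\beta}]}(\uu(\bm X-u\bm b))>L_u}\sim H(\Lambda)\,u^{-|I|}\varphi_\Sigma(u\bb),
\]
obtained from Lemma~\ref{lemma: limiting process} and Lemma~\ref{vectorPickands_functionals}. This uses only fixed-interval arguments, not the existence of the limit. By the first implication in Assumption~\ref{B1}, the left side is at most \(\pk{\Gamma_{[0,T]}(\cdots)>L_u}\), and that in turn is at most \(\pk{\exists t\in[0,T]:\bm X(t)>u\bm b}\) by the last implication in Assumption~\ref{B1}.

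I then bound this supremum probability by \(C\,u^{-|I|}\varphi_\Sigma(u\bb)\), with \(C\) independent of \(\Lambda\). The set \([0,T]\) is split into three parts.
\begin{itemize}
\item On \([\Lambda_0u^{-2/\beta},T]\) I use the log-layer bound \Cref{log layer bound lemma}.
\item On \([0,\Lambda_0u^{-2/\beta}]\) with \(\alpha>\beta\), I cover the interval by a bounded number of blocks of length \(u^{-2/\beta}\) and apply Lemma~\ref{vectorPickands_functionals} to the functional \(\sup\min\). This functional satisfies \ref{F1}--\ref{F3} by Lemma~\ref{lem:moving_window_functional} with \(K=\{0\}\).
\item On \([0,\Lambda_0u^{-2/\beta}]\) with \(\alpha=\beta\), a single block of length \(\Lambda_0u^{-2/\beta}\) is handled the same way.
\end{itemize}
Dividing by \(u^{-|I|}\varphi_\Sigma(u\bb)\) and letting \(u\to\infty\) gives \(H(\Lambda)\le C\) for all \(\Lambda\), so the limit is finite.

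\emph{Positivity.} By the non-triviality assumption, \(H(S_0)>0\) for some \(S_0\). Monotonicity then gives \(\lim_{\Lambda\to\infty} H(\Lambda)\ge H(S_0)>0\).

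\emph{Main obstacle.} The uniform-in-\(\Lambda\) upper bound is where the care lies. The log-layer estimate must be applied on a tail whose starting point \(\Lambda_0\) is fixed and not tied to \(\Lambda\), and the short initial segment must be controlled by a constant that does not grow with \(\Lambda\). In the deterministic regime, the local supremum constant must also be finite, which needs the exponential weight \(e^{\bm x_I^\top\bm w_I}\) to be integrable against the region where some path value \(-\bm d_I(t)-\bm x_I\) is positive. Since \(\bm d_I\) is bounded on \([0,\Lambda_0]\) and \(\bm w_I>\bm0\), this region lies in a half-space bounded in the positive direction, so the integral is finite.
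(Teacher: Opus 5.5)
Your proposal is correct and follows essentially the same route as the paper: Assumption~\ref{B1} gives monotonicity in $\Lambda$, and the uniform bound $H(\Lambda)\lesssim 1$ comes from dominating the local functional probability by $\pk{\exists t\in[0,T]:\bm X(t)>u\bm b}$, which is controlled by a local Pickands bound on $[0,\Lambda_0u^{-2/\beta}]$ plus Lemma~\ref{log layer bound lemma}; positivity then follows from non-triviality. The remaining issues are minor precision points. A single block $[0,\Lambda_0]$ suffices in both regimes, since Lemma~\ref{lemma: limiting process} only covers $Q_u=\{0\}$ when $\alpha\ge\beta$. The no-atom property of $\sup\min$ with frozen tangential and slack coordinates is really Lemma~\ref{lem:parisian-partially-frozen} with $F=\{0\}$, not Lemma~\ref{lem:moving_window_functional}. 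Finally, the relevant $\bm x_I$-region is a translated negative orthant rather than a half-space; this is what makes $e^{\bm x_I^\top\bm w_I}$ integrable when $|I|\ge2$.
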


\begin{proof}
  By Assumption~\ref{B1}, \(H(\Lambda)\) is non-decreasing in \(\Lambda\). The
  non-triviality assumption gives \(H(\Lambda_0)>0\) for some \(\Lambda_0>0\),
  so the limit, if finite, is positive.

  We next prove boundedness. For fixed \(\Lambda\), Lemmas~\ref{lemma: limiting process}
  and~\ref{vectorPickands_functionals}, applied on \([0,\Lambda]\), give
  \begin{equation*}
    \pk{\Gamma_{[0, \Lambda u^{-2/\beta}]}(\uu(\bm X-u\bm b))>L_u}
    \sim H(\Lambda)u^{-|I|}\varphi_\Sigma(u\bb).
  \end{equation*}
  By Assumption~\ref{B1}, the event on the left implies
  \(\{\exists t\in[0,\Lambda u^{-2/\beta}]:\bm X(t)>u\bm b\}\). Choose a fixed
  \(\Lambda_0\) large enough for Lemma~\ref{log layer bound lemma}. The local
  vector Pickands bound on \([0,\Lambda_0u^{-2/\beta}]\), together with the
  log-layer estimate on \([\Lambda_0u^{-2/\beta},T]\), yields
  \begin{equation*}
    \pk{\exists t\in[0,T]:\bm X(t)>u\bm b}
    \lesssim u^{-|I|}\varphi_\Sigma(u\bb).
  \end{equation*}
  Hence \(H(\Lambda)\) is bounded uniformly in \(\Lambda\). Since it is
  non-decreasing, it has a finite limit, and the preceding positivity argument
  completes the proof.
\end{proof}

Define for \( \tau, \lambda, S \in \R_+ \) the double events' probabilities by
\begin{equation*}
  P_{\bm{b}} ( \tau, \lambda, S )
  =
  \pk{
  \begin{aligned}
    & \exists\, t \in u^{-2 / \alpha} [ \tau, \tau + S ] : & \bm{X} ( t ) > u \bm{b}
    \\
    & \exists\, s \in u^{-2 / \alpha} [ \lambda, \lambda + S ] : & \bm{X} ( s ) > u \bm{b}
  \end{aligned} 
  }.
\end{equation*}

\begin{lemma}[Double sum bound for stationary case]
  \label{lemma:double_sum_bound_stationary}
  There exist \( \delta > 0 \) and \(c>0\) such that, for all
  \(0 < \lambda - \tau + S < \delta u^{2 / \alpha}\),
  \( \lambda - \tau \ge S \), and \( S \geq 1 \),
  \begin{equation*}
    \frac{P_{\bm{b}} ( \tau, \lambda, S )}{u^{-|I|} \varphi_{\Sigma} ( u \bb )}
    \lesssim
    \begin{cases}
      S^{1/2}+S e^{-cS^{\alpha/2}}, & \lambda-\tau=S,\\[3pt]
      S \exp\left(-c(\lambda-\tau-S)^\alpha\right),
        & \lambda-\tau>S,
    \end{cases}
  \end{equation*}
\end{lemma}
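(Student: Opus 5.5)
The plan is the standard double-event bound from the vector-valued Pickands theory of \cite{VVGP}, adapted to the present block geometry. By stationarity we may shift so that \(\tau=0\) and put \(h=\lambda-\tau\ge S\). We treat the two regimes separately: adjacent blocks (\(h=S\)) and blocks separated by a gap \(h-S>0\).

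\textbf{Separated blocks, \(h>S\).} We bound
\[
  \pk{\exists t\in A,\ \exists s\in B:\ \bm X(t)>u\bm b,\ \bm X(s)>u\bm b}
\]
by the corresponding probability for the sum field
\[
  \bm X(t)+\bm X(s),\qquad (t,s)\in A\times B ,
\]
at level \(2u\bm b\). Here \(A=u^{-2/\alpha}[0,S]\) and \(B=u^{-2/\alpha}[h,h+S]\).

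\emph{Step 1 (variance of the sum).} The covariance of \(\bm X(t)+\bm X(s)\) is
\[
  2\Sigma+\mathcal R(s-t)+\mathcal R(s-t)^\top
  =4\Sigma-\bigl(V+V^\top\bigr)|s-t|^\alpha+o(|s-t|^\alpha),
\]
by \ref{R2}. Since \(\bm w^\top V\bm w>0\), the relevant quadratic-program value for the pair satisfies, for \(|s-t|\le\delta\) small,
\[
  \min_{\bm x\ge 2\bm b}\ \bm x^\top\Bigl(\cov\bigl(\bm X(t)+\bm X(s)\bigr)\Bigr)^{-1}\bm x
  \ \ge\ \bb^\top\Sigma^{-1}\bb+c|s-t|^\alpha .
\]
This uses the envelope argument of Lemma~\ref{lemma:gen_var_expansion}, perturbing at the active set, where \(\bm w_I>0\). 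This gives the factor
\[
  \exp\bigl(-c\,u^2 (u^{-2/\alpha}(h-S))^\alpha\bigr)=\exp\bigl(-c(h-S)^\alpha\bigr)
\]
relative to \(\varphi_\Sigma(u\bb)\). The hypothesis \(h+S<\delta u^{2/\alpha}\) keeps all lags inside the range where \ref{R2} applies.

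\emph{Step 2 (prefactor).} The two-parameter sup over \(A\times B\) is handled by the uniform local Pickands bound (Lemma~\ref{vectorPickands_functionals} with the supremum functional, or Piterbarg's inequality via \ref{A3}). Covering \(A\times B\) by \(O(S^2)\) unit squares would give a prefactor \(S^2\). To get \(S\) we instead cover by \(O(S)\) strips and use the exponential decay along the diagonal to sum. This yields the bound
\[
  S\,e^{-c(h-S)^\alpha}\,u^{-|I|}\varphi_\Sigma(u\bb).
\]

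\textbf{Adjacent blocks, \(h=S\).} We split the second block \(u^{-2/\alpha}[S,2S]\) into two pieces:
\[
  u^{-2/\alpha}[S,S+\sqrt S]\qquad\text{and}\qquad u^{-2/\alpha}[S+\sqrt S,2S].
\]
The event involving the first piece is bounded by the single-block exceedance probability on an interval of length \(\sqrt S\). By Lemma~\ref{vectorPickands_functionals} applied to the supremum functional, together with subadditivity (Pickands constant growing linearly), this is \(\lesssim S^{1/2}\,u^{-|I|}\varphi_\Sigma(u\bb)\).

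The remaining pair \(u^{-2/\alpha}[0,S]\) versus \(u^{-2/\alpha}[S+\sqrt S,2S]\) is separated by a gap \(\sqrt S\). The separated-block bound from the previous regime gives \(S e^{-cS^{\alpha/2}}\,u^{-|I|}\varphi_\Sigma(u\bb)\).

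\textbf{Main obstacle.} The hard part is Step 1 in the vector setting with inactive coordinates \(J\). One must show that the joint two-point quadratic program gains \(c|s-t|^\alpha\) uniformly, and that the slack and tangential coordinates contribute only bounded factors. I would follow \cite[Lemma~4.9]{VVGP}: reduce to the active coordinates by bounding the event with \(\{\bm X_I(t)>u\bm b_I,\ \bm X_I(s)>u\bm b_I\}\) and handling \(J\) by conditioning. The uniformity of constants in \(S\ge1\) and in the admissible range of \(\tau\) and \(\lambda\) must be tracked throughout.
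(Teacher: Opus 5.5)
Your proposal is correct and has the same structure as the paper's proof. Both use a two-point estimate with decay \(e^{-cg^\alpha}\) for pairs of unit scaled blocks at gap \(g\), summed over unit pairs (your diagonal strips are the paper's convolution estimate giving \(S e^{-c'(\lambda-\tau-S)^\alpha}\)), and both split the second block at \(\lambda+S^{1/2}\) in the adjacent case. The only difference is that the paper does not rederive the unit-block estimate through the sum field \(\bm X(t)+\bm X(s)\); it imports it from \cite[Lemma~5.1]{VVGP} after passing to the active subvector \(\bm X_I\), using \(\pk{\bm X_I(0)>u\bm b_I}\asymp u^{-|I|}\varphi_\Sigma(u\bb)\). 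As you suggest, this reduction removes the inactive coordinates entirely, so your ``main obstacle'' disappears and no conditioning on \(J\) is needed.
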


\begin{proof}
Write \(a_u=u^{-|I|}\varphi_\Sigma(u\bb)\) and
\(d=\lambda-\tau\).  The event for the full vector is contained in the
corresponding event for the active subvector \(\bm X_I\), and the standard
quadratic-programming tail asymptotic gives
\[
  \pk{\bm X_I(0)>u\bm b_I}\asymp a_u.
\]
Assumption~\ref{R1}, now in its positive-definite form, and the local expansion
in~\ref{R2} are exactly the one-parameter assumptions used in
\cite[Lemma~5.1]{VVGP}.  Applied to two unit intervals, that lemma gives,
after changing constants,
\begin{equation}
  \label{eq:stationary-unit-double-bound}
  \pk{\text{an exceedance on each of two unit scaled blocks at gap }g}
  \le C a_u e^{-cg^\alpha},
\end{equation}
uniformly while the total physical lag is below a sufficiently small fixed
number.  Bounded gaps are covered by the same estimate after enlarging \(C\),
using the fixed-block Pickands upper bound.

Assume first that \(d>S\) and put \(g=d-S\).  Split both scaled intervals into
unit subintervals.  Summing
\eqref{eq:stationary-unit-double-bound} over the resulting pairs and grouping
pairs according to their additional separation gives the elementary
convolution estimate
\[
  \sum_{k,l}e^{-c(d+l-k-1)_+^\alpha}
  \le C S e^{-c'g^\alpha}.
\]
Consequently
\[
  P_{\bm b}(\tau,\lambda,S)
  \le C S e^{-c'(\lambda-\tau-S)^\alpha}a_u.
\]
The restriction \(d+S<\delta u^{2/\alpha}\) ensures that all physical lags
involved lie in the local range in which the cited estimate is uniform.

If \(d=S\), split the second interval at \(\lambda+S^{1/2}\).  The probability
of an exceedance in its first part is
\(O(S^{1/2}a_u)\), by a union over unit blocks.  The remaining part is
separated from the first interval by a gap \(S^{1/2}\), so the already proved
separated estimate is
\(O(S e^{-cS^{\alpha/2}}a_u)\).  Adding the two terms proves the adjacent
bound.
\end{proof} 

\begin{lemma}[Double sum bound for non-stationary case]
  \label{lemma:double_sum_bound}
  Let \( \bm{X} \) be a centered continuous Gaussian process that satisfies
  assumptions~\ref{D1}--\,\ref{D3} with \( \alpha < \beta \) and \( \Lambda > 0
  \). Then for \( 0 \leq \tau, \lambda \leq \Lambda u^{2 / \alpha - 2 / \beta}\)
  satisfying \( \lambda - \tau \ge S \) with \(u\) and \(S\) large enough we
  have
  \begin{equation*}
    \pk{
      \begin{aligned}
        & \exists\, t \in u^{-2 / \alpha}  [\tau, \tau+S] : 
        & \bm{X} ( t ) > u \bm{b}
        \\
        & \exists\, s \in u^{-2 / \alpha} [\lambda, \lambda+S] : 
        & \bm{X} ( s ) > u \bm{b}
      \end{aligned} 
    }
    \lesssim_\Lambda
    u^{-|I|} \,
    \varphi_{\Sigma} ( u \bb )
    \times
    \begin{cases}
      S^{1 / 2} + S e^{-c S^{\alpha / 2}}, & \text{ if } \lambda - \tau = S, \\
      S \, e^{-c (\lambda - \tau - S)^\alpha}, & \text{ if } \lambda - \tau > S.
    \end{cases}
  \end{equation*}
  with some \( c > 0 \) independent of \( u \) and \( S \).
\end{lemma}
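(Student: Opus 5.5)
The plan is to reduce the non-stationary double-sum bound to the stationary one, Lemma~\ref{lemma:double_sum_bound_stationary}, through a locally uniform comparison. The key observation is that for $\alpha<\beta$ the whole range $t\in[0,\Lambda u^{-2/\beta}]$ sits at physical distance $O(u^{-2/\beta})$ from the boundary. On the correlation scale $u^{-2/\alpha}$, the variance change across two blocks is of order
\[
u^2\bigl[(\tau u^{-2/\alpha}+Su^{-2/\alpha})^\beta-(\tau u^{-2/\alpha})^\beta\bigr]\to0,
\]
uniformly for $\tau\le\Lambda u^{2/\alpha-2/\beta}$ and fixed $S$. This is exactly the computation in the proof of Lemma~\ref{lemma: limiting process}. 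So the key steps are as follows.

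(i) Bound the double event for the full vector by the same event for the active subvector $\bm X_I$. Then use monotonicity of $\bm X$ to drop nothing and keep the threshold $u\bm b_I$, so that the normalizing factor $u^{-|I|}\varphi_\Sigma(u\bb)$ is available up to a constant depending on $\Lambda$. This is possible because the variance factor $\varphi_{\Sigma(t)}(u\bb)/\varphi_\Sigma(u\bb)$ is bounded by $1$ up to $e^{o(1)}$ on this range, by Lemma~\ref{lemma:gen_var_expansion} and \ref{D1}.

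(ii) Treat separated blocks, $\lambda-\tau>S$. Split both scaled intervals into unit blocks, as in the stationary proof, and estimate a pair of unit blocks at scaled gap $g$ by $Ca_ue^{-cg^\alpha}$. This two-block estimate should come from the standard argument of \cite[Lemma~5.1]{VVGP}: apply Borell--TIS/Piterbarg's inequality to the sum field $\bm w_I^\top(\bm X_I(t)+\bm X_I(s))$ restricted to the pair. Its variance deficit relative to $4\bm w^\top\Sigma\bm w$ is bounded below by a multiple of $|t-s|^\alpha$ via \ref{D2}: the correlation term $\bm w^\top V\bm w>0$ dominates. The $A$-terms only help, since they decrease the variance ($\bm w^\top A\bm w>0$). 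The $o(\cdot)$ remainder is uniform because all arguments tend to $0$ jointly. \ref{D3} with $\gamma\ge\alpha$ supplies the entropy/increment bound needed for Piterbarg's inequality. Summing the convolution $\sum_{k,l}e^{-c(g+l-k)^\alpha}$ then gives $S e^{-c(\lambda-\tau-S)^\alpha}$.

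(iii) Treat adjacent blocks, $\lambda-\tau=S$, exactly as in the stationary lemma. Cut the second interval at $\lambda+S^{1/2}$. The initial piece contributes $O(S^{1/2}a_u)$ by a union of unit-block single probabilities, each $O(a_u)$ uniformly by Lemma~\ref{vectorPickands_functionals} with Lemma~\ref{lemma: limiting process}. The rest is separated by gap $S^{1/2}$, so step (ii) gives $Se^{-cS^{\alpha/2}}$.

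The main obstacle is step (ii): obtaining the lower bound on the variance deficit of the pair sum uniformly in $\tau\in Q_u$ in the non-stationary setting, with the coordinate reduction to $I$. I would first try writing
\[
\mathrm{Var}\bigl(\bm w_I^\top(\bm X_I(t)+\bm X_I(s))\bigr)=4\bm w^\top\Sigma\bm w-\bm w^\top\bigl[(A+A^\top)(\ldots)\bigr]\bm w-2\bm w^\top V\bm w\,|t-s|^\alpha(1+o(1)),
\]
dropping the nonpositive $A$-part, and checking that the Borell-type exponent yields the factor $\exp(-c\,u^2|t-s|^\alpha)=\exp(-cg^\alpha)$ after the scaling $t-s=u^{-2/\alpha}g$.
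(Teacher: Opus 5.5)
Your skeleton --- reduction to the active coordinates, a unit-block two-exceedance estimate $C_\Lambda a_u e^{-cg^\alpha}$ with $a_u=u^{-|I|}\varphi_\Sigma(u\bb)$, the convolution sum over unit sub-blocks, and the cut of the second interval at $\lambda+S^{1/2}$ in the adjacent case --- is exactly the paper's. The gap is in step (ii), which is the estimate everything else rests on.

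\textbf{Why step (ii) fails.} Projecting onto the scalar field $Z(t,s)=\bm w_I^\top(\bm X_I(t)+\bm X_I(s))$ destroys the polynomial prefactor. Already at a single point $t=s$ near $0$, the event $\{Z>2u\bm w_I^\top\bm b_I\}$ is the scalar event $\{\bm w_I^\top\bm X_I(t)>u\,\bm w^\top\bb\}$. Its probability is of order $u^{-1}\exp(-\tfrac{u^2}{2}\bb^\top\Sigma^{-1}\bb)$, whereas $a_u\asymp u^{-|I|}\exp(-\tfrac{u^2}{2}\bb^\top\Sigma^{-1}\bb)$. So Piterbarg's inequality for $Z$ gives at best $Cu^{|I|-1}a_u e^{-cg^\alpha}$, and Borell--TIS alone gives only $Cu^{|I|}a_u e^{-cg^\alpha}$. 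No sharpening can help, because the projected event genuinely has this probability. Your variance-deficit computation does correctly produce the factor $e^{-cg^\alpha}$. But the prefactor is off by the unbounded factor $u^{|I|-1}$, so the claimed bound, whose implied constant must not depend on $u$, follows only when $|I|=1$. For $|I|\ge2$, summing over the $M_u\asymp\Lambda u^{2/\alpha-2/\beta}/S$ blocks would give a double sum of order $u^{2/\alpha-2/\beta-1}\varphi_\Sigma(u\bb)$. That is not negligible against the main term $u^{2/\alpha-2/\beta-|I|}\varphi_\Sigma(u\bb)$.

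\textbf{How to repair it.} You have to keep the vector structure of the pair event. One way:
\begin{itemize}
\item Bound the pair event by $\{\exists(t,s):\bm X_I(t)+\bm X_I(s)>2u\bm b_I\}$.
\item Apply an $|I|$-dimensional Pickands/Piterbarg-type upper bound to this two-parameter field, i.e.\ condition at a point and integrate against the $|I|$-dimensional density, in the spirit of Lemma~\ref{PavelsIntegral}.
\item Set $D(t,s)=4\Sigma-\E{(\bm X(t)+\bm X(s))(\bm X(t)+\bm X(s))^\top}$. The relevant quadratic programme has covariance $4\Sigma_{II}-D_{II}(t,s)$ and threshold $2\bm b_I$.
\item Since $\bm w_I>\bm 0_I$, every index of $I$ stays active for small $D$. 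The value is $\bb^\top\Sigma^{-1}\bb+\tfrac14\bm w^\top D(t,s)\bm w\,(1+o(1))$.
\item Your lower bound on $\bm w^\top D\bm w$ then gives $e^{-cg^\alpha}$ together with the correct prefactor $u^{-|I|}$.
\end{itemize}

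\textbf{What the paper does.} It does not redo this argument. It imports the vector-valued unit-block estimate, with prefactor $a_u$, from \cite[Lemma~12]{VVGF}; the stationary analogue is \cite[Lemma~5.1]{VVGP}. Assumption~\ref{D3} supplies the increment bound. The generalized-variance expansion and the limiting-process lemma make the density comparison and conditional local limit uniform over $0\le\tau,\lambda\le\Lambda u^{2/\alpha-2/\beta}$.
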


\begin{proof}
Put \(a_u=u^{-|I|}\varphi_\Sigma(u\bb)\).  We first record the uniform
unit-block estimate.  Restricting to the active coordinates, Assumption~\ref{D2}
is the one-dimensional locally additive covariance expansion used in
\cite[Lemma~12]{VVGF}; Assumption~\ref{D3} supplies its uniform increment
bound.  Lemmas~\ref{lemma:gen_var_expansion} and
\ref{lemma: limiting process} give, uniformly for
\(0\le\tau,\lambda\le\Lambda u^{2/\alpha-2/\beta}\), the required boundary
density comparison and conditional local limit.  Hence the cited lemma yields
\begin{equation}
  \label{eq:nonstationary-unit-double-bound}
  \pk{\text{an exceedance on each of two unit scaled blocks at gap }g}
  \le C_\Lambda a_u e^{-cg^\alpha}.
\end{equation}
The possible non-stationary variance penalty only decreases the right-hand
side and has therefore been discarded.

For \(\lambda-\tau>S\), split both length-\(S\) intervals into unit
subintervals and sum
\eqref{eq:nonstationary-unit-double-bound}.  The same convolution estimate as
in the stationary proof gives
\[
  \pk{\text{both length-\(S\) blocks contain an exceedance}}
  \le C_\Lambda S e^{-c(\lambda-\tau-S)^\alpha}a_u.
\]
For \(\lambda-\tau=S\), split the second interval into
\([\lambda,\lambda+S^{1/2}]\) and
\([\lambda+S^{1/2},\lambda+S]\).  The one-block upper estimate bounds the
first contribution by \(C_\Lambda S^{1/2}a_u\); the second pair of intervals
has gap \(S^{1/2}\), so the separated estimate bounds it by
\(C_\Lambda S e^{-cS^{\alpha/2}}a_u\).  This proves both cases.
\end{proof}

\begin{lemma}[Log-layer]
  \label{log layer bound lemma}
  Let \( \bm{X} \) be a Gaussian process that satisfies
  assumptions~\ref{D1}--\,\ref{D3}. Then there exist positive constants \(
  \Lambda_0 > 0 \) and \( u_0 > 0 \) such that for all \( u \geq u_0 \) and \(
  \Lambda \geq \Lambda_0 \)
  \[
  \pk{ \exists\, t \in [\Lambda u^{-\frac{2}{\beta}}, T] : \bm{X}(t) > u \bm{b}} 
  \lesssim 
  \exp\left( -c \Lambda^\beta \right) 
  u^{(2/\alpha-2/\beta)_+-|I|} \,
  \varphi_{\Sigma} ( u \bb )
  \]
  holds with some \( c > 0 \) independent of \( u \) and \( \Lambda \).
\end{lemma}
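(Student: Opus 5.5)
We will prove the log-layer bound by splitting \([\Lambda u^{-2/\beta},T]\) into a fixed-distance part \([\delta_0,T]\) and a near part \([\Lambda u^{-2/\beta},\delta_0]\), with \(\delta_0>0\) small and fixed. On \([\delta_0,T]\), Assumption~\ref{D1} and continuity of \(g\) give \(\min_{t\in[\delta_0,T]}g(t)\ge g(0)+\eta\) for some \(\eta>0\). A Borell--TIS bound for the vector exceedance, applied coordinatewise through the quadratic program (the standard bound \(\pk{\exists t:\bm X(t)>u\bm b}\le\exp(-u^2\min g/2+o(u^2))\) as in \cite{VVGP}), then yields \(\exp(-u^2(g(0)+\eta)/2+o(u^2))\). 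Since \(\varphi_\Sigma(u\bb)\asymp\exp(-u^2g(0)/2)\), this part is exponentially negligible, uniformly in \(\Lambda\).

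For the near part we will use a Laplace-type layering on the variance scale. By Lemma~\ref{lemma:gen_var_expansion}, after shrinking \(\delta_0\), we have \(g(t)-g(0)\ge\tau_{\bm w}t^\beta\) for \(t\le\delta_0\), since \(2\tau_{\bm w}t^\beta\) dominates the remainder. Cover \([\Lambda u^{-2/\beta},\delta_0]\) by the intervals \(\Delta_k=u^{-2/\beta}[k,k+1]\) with \(\Lambda\le k\le\delta_0u^{2/\beta}\). On each \(\Delta_k\), apply the uniform single-block Pickands upper bound. Concretely, subdivide \(\Delta_k\) into \(O(u^{2/\alpha-2/\beta}+1)\) blocks of length \(u^{-2/\alpha}\) if \(\alpha<\beta\); if \(\alpha\ge\beta\), take \(\Delta_k\) as a single block. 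Then use Lemma~\ref{vectorPickands_functionals} with \(\Gamma=\sup\min\), which satisfies \ref{F1}--\ref{F3} by Lemma~\ref{lem:moving_window_functional} with \(K=\{0\}\), together with Lemma~\ref{lemma: limiting process}. Each block costs \(\lesssim u^{-|I|}\varphi_{\Sigma(t_k)}(u\bb(t_k))\), and \(\varphi_{\Sigma(t_k)}(u\bb(t_k))\lesssim\varphi_\Sigma(u\bb)\exp(-\tfrac{u^2}{2}(g(t_k)-g(0)))\le\varphi_\Sigma(u\bb)e^{-ck^\beta}\). The determinant prefactor is harmless because it is continuous. Summing gives
\[
u^{(2/\alpha-2/\beta)_+-|I|}\varphi_\Sigma(u\bb)\sum_{k\ge\Lambda}e^{-ck^\beta}\lesssim e^{-c'\Lambda^\beta}u^{(2/\alpha-2/\beta)_+-|I|}\varphi_\Sigma(u\bb).
\]

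The main obstacle is uniformity of the single-block bound in \(k\) up to \(\delta_0u^{2/\beta}\). Lemma~\ref{lemma: limiting process} only treats \(\tau\) in a bounded scaled range, so we cannot pass to a limit. Instead we need a non-asymptotic upper bound: condition on \(\bm X(t_k)\), centered at the local \(\bb(t_k)\) and \(\Sigma(t_k)\), and bound the conditional exceedance probability by an integrable envelope. This envelope comes from Assumption~\ref{D3}, which gives the increment bound needed for Borell--TIS/Piterbarg entropy estimates uniformly over \(t_k\le\delta_0\), exactly as in the domination step of Lemma~\ref{vectorPickands_functionals}. Two further points need care. First, the active set at \(t_k\) may differ from \(I\); we avoid this by bounding the event by the active-coordinate event \(\{\bm X_I(t)>u\bm b_I\}\), whose exponent at \(t\) is at least \(\bm b_I^\top\Sigma_{II}(t)^{-1}\bm b_I\ge g(0)+\tau_{\bm w}t^\beta\) up to small error. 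Second, the conditional mean drift, controlled by Lemma~\ref{lemma:cond_mean}, must be only helpful or bounded; we will absorb it into the constant by choosing \(\delta_0\) small. Finally, the choice of \(\Lambda_0\) and \(u_0\) is made so that \(ck^\beta\) dominates all the \(o(\cdot)\) corrections for \(k\ge\Lambda_0\).
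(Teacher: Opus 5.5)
Your route is correct in outline, but it organizes the near region differently from the paper. The paper uses three pieces:
\begin{itemize}
\item the log-layer \([\Lambda u^{-2/\beta},r_u]\), with \(r_u=u^{-2/\beta}(\log u)^{2/\beta}\), treated by blockwise conditioning and Lemma~\ref{PavelsIntegral} while keeping the full vector and the fixed quadratic-programming structure at \(t=0\) (justified there by \(u\|\Sigma(t)-\Sigma\|=o(1)\));
\item the middle piece \([r_u,\delta]\), handled by a crude vector Piterbarg inequality whose polynomial loss \(u^M\) is absorbed by \(e^{-c(\log u)^2}\);
\item the far piece \([\delta,T]\).
\end{itemize}
You instead discard the inactive coordinates at the outset. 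Since \(\Sigma_{II}(t)^{-1}\bm b_I\) stays in a compact subset of \((0,\infty)^{|I|}\) for \(t\le\delta_0\), the event for \(\bm X_I\) has a fully active programme with uniformly controlled weights. One uniform envelope bound then covers all of \([\Lambda u^{-2/\beta},\delta_0]\), and neither the cutoff \(r_u\) nor an external Piterbarg inequality is needed. This is more self-contained; the price is making the envelope uniform up to a fixed \(\delta_0\).

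Two points of your sketch should be made precise.
\begin{enumerate}
\item The exponent \(\bm b_I^\top\Sigma_{II}(t)^{-1}\bm b_I=g(0)+2\tau_{\bm w}t^\beta+o(t^\beta)\) must be expanded directly. This quantity is bounded above by \(g(t)\), so Lemma~\ref{lemma:gen_var_expansion} gives no lower bound for it.
\item The \(\bm x\)-free conditional drift on a block starting near \(ku^{-2/\beta}\) is not bounded. It is of order \(1+k^{(\beta-1)_+}+\epsilon(\delta_0)k^\beta\), the last term coming from \(u^2\,o(\tau^\beta)\) in \ref{D2}. Lemma~\ref{PavelsIntegral} returns \(Ce^{c(G+\sigma^2)}\) with a fixed \(c\), so you need \(\delta_0\) small enough that \(e^{c\,\epsilon(\delta_0)k^\beta}\) is beaten by the gain \(e^{-\tau_{\bm w}k^\beta/2}\), and \(\Lambda_0\) large for the \(k^{(\beta-1)_+}\) term. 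The ``helpful or bounded'' claim and the choice of \(\delta_0\) alone do not suffice.
\end{enumerate}

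The remaining envelope hypotheses are harmless. Conditional variances and increments are bounded via \ref{D3}, because conditional variances are dominated by unconditional increment variances and \(\gamma\ge\min\{\alpha,\beta\}\). The \(\bm x\)-dependent part of the mean has coefficient \(\lesssim\|\Sigma(\tau)-R(\tau+h,\tau)\|\lesssim h^{\gamma/2}\), with no extra factor \(u\), since every retained coordinate is scaled by \(u\).
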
 

\begin{proof}
Let \(\nu=\min\{\alpha,\beta\}\) and
\(a_u=u^{-|I|}\varphi_\Sigma(u\bb)\).  By
Lemma~\ref{lemma:gen_var_expansion} and the uniqueness in Assumption~\ref{D1},
there are \(c_0,\eta,\delta>0\) such that
\[
  g(t)\ge g(0)+c_0t^\beta\quad(0\le t\le\delta),
  \qquad
  g(t)\ge g(0)+\eta\quad(\delta\le t\le T).
\]
A standard compact-set Gaussian quadratic-programming/Piterbarg bound makes
the contribution of \([\delta,T]\) exponentially smaller than the claimed
right-hand side.

Set
\[
  r_u=u^{-2/\beta}(\log u)^{2/\beta}.
\]
We first consider the near layer
\([\Lambda u^{-2/\beta},r_u]\), when it is non-empty, and cover it by
\(I_{k,u}=u^{-2/\nu}[k,k+1]\).  Conditioning at the left endpoint of a block,
using Assumptions~\ref{D2}--\ref{D3}, and applying
Lemma~\ref{PavelsIntegral} to the conditional field gives the uniform local
upper estimate
\begin{equation}
  \label{eq:log-layer-local-block}
  \pk{\exists t\in I_{k,u}:\bm X(t)>u\bm b}
  \le C a_u
    \exp\{-c k^\beta u^{2-2\beta/\nu}\}.
\end{equation}
For completeness, on this layer
\(u\|\Sigma(t)-\Sigma\|=O(u^{-1}(\log u)^2)=o(1)\), so the active Gaussian
density prefactor is uniformly comparable with \(a_u\).  The generalized
variance expansion supplies the exponential factor in
\eqref{eq:log-layer-local-block}.  The conditional integral is uniformly
bounded when \(\nu<\beta\); when \(\nu=\beta\), its possible growth is at most
\(\exp\{C(1+k^{(\beta-1)_+})\}\), by the power-increment bound and
Lemma~\ref{PavelsIntegral}, and is absorbed by \(e^{-ck^\beta}\) after reducing
\(c\).

With
\(K_u=\lceil\Lambda u^{2/\nu-2/\beta}\rceil\), summing
\eqref{eq:log-layer-local-block} and extending the sum to infinity gives
\[
  \sum_{k\ge K_u}e^{-c k^\beta u^{2-2\beta/\nu}}
  \le C e^{-c'\Lambda^\beta}
  \begin{cases}
    1,&\nu=\beta,\\
    u^{2/\nu-2/\beta},&\nu<\beta.
  \end{cases}
\]
Thus the near layer has the required order.

It remains to treat
\([\max\{\Lambda u^{-2/\beta},r_u\},\delta]\).  The vector Piterbarg
inequality on this fixed neighbourhood, together with the lower bound for
\(g\), gives for some fixed \(M\)
\[
  \pk{\exists t\text{ in this interval}:\bm X(t)>u\bm b}
  \le C u^M a_u
  \exp\{-c u^2\max(\Lambda u^{-2/\beta},r_u)^\beta\}.
\]
Since the exponent is at least
\(c\max\{\Lambda^\beta,(\log u)^2\}\), the polynomial factor is absorbed after
reducing \(c\).  Combining this estimate, the near-layer sum, and the
exponentially negligible interval \([\delta,T]\), and observing that
\(2/\nu-2/\beta=(2/\alpha-2/\beta)_+\), proves the lemma.
\end{proof}

\begin{lemma}
  \label{PavelsIntegral}
  For every \(\varepsilon>0\), \(\gamma\in(0,2]\), and
  \(\bm\Lambda\in\R_{\ge0}^k\), there exist constants
  \(\delta,c,C>0\) such that the following holds.  Let
  \[
    \left\{
      (\bm\chi_{\bm x}(\bm t))_{\bm t\in[\bm0,\bm\Lambda]}:
      \bm x\in\R^d
    \right\}
  \]
  be a family of continuous \(\R^d\)-valued Gaussian random fields for which
  the map
  \[
    \bm x\longmapsto
    \pk{\exists\,\bm t\in[\bm0,\bm\Lambda]:
      \bm\chi_{\bm x}(\bm t)>\bm x}
  \]
  is measurable.  Suppose that there exist \(G,\sigma\ge0\) and
  \(\bm w=(w_1,\ldots,w_d)^\top\), with
  \(\varepsilon\le w_i\le\varepsilon^{-1}\) for every \(i\), such that for
  every \(\bm x\in\R^d\) and every
  \(\bm t,\bm s\in[\bm0,\bm\Lambda]\),
  \begin{enumerate}[itemsep=0.3cm, topsep=0.3cm, label={\alph*)}]
    \item
    \[
      \sup_{F\subset\{1,\ldots,d\}}
      \sup_{\bm t\in[\bm0,\bm\Lambda]}
      \bm w_F^\top\E{\bm\chi_{\bm x,F}(\bm t)}
      \le G+\delta\sum_{j=1}^d|x_j|;
    \]
    \item
    \[
      \sup_{F\subset\{1,\ldots,d\}}
      \sup_{\bm t\in[\bm0,\bm\Lambda]}
      \Var{\bm w_F^\top\bm\chi_{\bm x,F}(\bm t)}
      \le\sigma^2;
    \]
    \item
    \[
      \sup_{F\subset\{1,\ldots,d\}}
      \Var{
        \bm w_F^\top\bm\chi_{\bm x,F}(\bm t)
        -\bm w_F^\top\bm\chi_{\bm x,F}(\bm s)
      }
      \le\varepsilon^{-1}|\bm t-\bm s|^\gamma.
    \]
  \end{enumerate}
  Then
  \[
    \int_{\R^d}e^{\bm w^\top\bm x}
    \pk{\exists\,\bm t\in[\bm0,\bm\Lambda]:
      \bm\chi_{\bm x}(\bm t)>\bm x}
    \,d\bm x
    \le C e^{c(G+\sigma^2)}.
  \]
\end{lemma}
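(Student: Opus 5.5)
The plan is to split \(\R^d\) according to the sign pattern of \(\bm x\), reduce the vector exceedance to a scalar one for a weighted sum, and then balance a Borell--TIS Gaussian tail against the exponential weight \(e^{\bm w^\top\bm x}\).

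\textbf{Reduction to a scalar exceedance.} For \(\bm x\in\R^d\) put \(F=F(\bm x)=\{j:x_j>0\}\) and write \(\bm y=\bm x_{F^c}\le\bm0\). On the event \(\{\exists\,\bm t:\bm\chi_{\bm x}(\bm t)>\bm x\}\) we have \(\chi_{\bm x,j}(\bm t)>x_j\) for every \(j\). Multiplying by \(w_j>0\) and summing over \(j\in F\) gives
\[
  \sup_{\bm t}\bm w_F^\top\bm\chi_{\bm x,F}(\bm t)>s,
  \qquad s:=\bm w_F^\top\bm x_F\ge0 .
\]
The weight splits as \(e^{\bm w^\top\bm x}=e^{s}e^{-\bm w_{F^c}^\top|\bm y|}\le e^{s}e^{-\varepsilon|\bm y|_1}\).

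\textbf{The case \(F=\emptyset\).} The probability is at most \(1\), and \(\int e^{-\varepsilon|\bm y|_1}\,d\bm y\) is a constant depending only on \(\varepsilon,d\).

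\textbf{Controlling the mean.} Since \(w_i\ge\varepsilon\), we have \(|\bm x_F|_1\le s/\varepsilon\). Hypothesis a) then gives
\[
  \sup_{\bm t}\E{\bm w_F^\top\bm\chi_{\bm x,F}(\bm t)}
  \le G+\frac{\delta}{\varepsilon}s+\delta|\bm y|_1 .
\]
I will choose \(\delta\le\varepsilon^2/4\), so the right-hand side is at most \(G+s/4+\delta|\bm y|_1\).

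\textbf{Centred part and Borell--TIS.} Let \(Z_F(\bm t)\) denote the centred part of \(\bm w_F^\top\bm\chi_{\bm x,F}(\bm t)\). Hypothesis c) together with Dudley's entropy bound on \([\bm0,\bm\Lambda]\) gives \(\E{\sup_{\bm t} Z_F}\le K\), where \(K=K(\varepsilon,\gamma,\bm\Lambda,k)\) is uniform in \(\bm x\) and \(F\). Hypothesis b) bounds the variance by \(\sigma^2\). Borell--TIS then yields
\[
  \pk{\exists\,\bm t:\bm\chi_{\bm x}(\bm t)>\bm x}
  \le \exp\Bigl(-\frac{(3s/4-a)_+^2}{2\sigma^2}\Bigr),
  \qquad a=G+K+\delta|\bm y|_1 .
\]
If \(\sigma=0\), this reads as the indicator of \(\{3s/4\le a\}\).

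\textbf{Integration.} Split the \(s\)-range at \(s_0=2a+8\sigma^2\). For \(s\le s_0\), bound the probability by \(1\); this contributes at most \(e^{s_0}\) times a polynomial factor. For \(s>s_0\), the exponent \(s-(3s/4-a)^2/(2\sigma^2)\) is concave in \(s\) and eventually decreasing at a linear rate. Hence this region contributes at most \(Ce^{c(a+\sigma^2)}\). The Lebesgue measure of \(\{\bm x_F>\bm0:\bm w_F^\top\bm x_F\in[s,s+1]\}\) is \(O((1+s)^{|F|-1})\), and this polynomial factor is absorbed by slightly enlarging \(c\).

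Collecting terms, the \(\bm x_F\)-integral is bounded by \(Ce^{c(G+K+\sigma^2)}e^{c\delta|\bm y|_1}\). Decreasing \(\delta\) further so that \(c\delta<\varepsilon/2\) makes the remaining \(\bm y\)-integral \(\int e^{-(\varepsilon/2)|\bm y|_1}\,d\bm y\) finite. Summing over the finitely many sets \(F\) gives \(Ce^{c(G+\sigma^2)}\), where \(K\) has been absorbed into \(C\).

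\textbf{Main obstacle.} The delicate point is the order of choices for \(\delta\). The constant \(c\) arising from the Gaussian optimisation must not depend on \(\delta\); only then can \(\delta\) be fixed afterwards so that the mean drift \(\delta|\bm y|_1\) coming from the negative coordinates is beaten by the factor \(e^{-\varepsilon|\bm y|_1}\). I would keep \(c\) an absolute multiple (e.g. \(c=4\)) coming from the explicit maximisation above, and only then fix \(\delta=\min\{\varepsilon^2/4,\varepsilon/(4c)\}\).
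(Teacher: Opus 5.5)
Your proposal is correct and follows essentially the paper's route: decomposition into orthants by the sign pattern of \(\bm x\), reduction to the scalar field \(\bm w_F^\top\bm\chi_{\bm x,F}\), Dudley plus Borell--TIS, and a choice of \(\delta\) small relative to \(\varepsilon\). The only difference is how the negative coordinates are handled. You carry the drift \(\delta|\bm y|_1\) through the Gaussian optimisation and beat the resulting factor \(e^{c\delta|\bm y|_1}\) with \(e^{-\varepsilon|\bm y|_1}\); this works precisely because your \(c\) does not depend on \(\delta\), as you note. The paper instead splits each orthant into \(\{\delta n\le p/4+A_0\}\), where the drift is absorbed into \(p\), and its complement, where it uses the trivial bound \(\pk{A(\bm x)}\le1\) together with \(e^{p-\varepsilon n}\) and \(p<4\delta n\).
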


\begin{proof}
Put
\[
  A(\bm x)=\{\exists\bm t\in[\bm0,\bm\Lambda]:
  \bm\chi_{\bm x}(\bm t)>\bm x\},
\]
and decompose \(\R^d\) into the orthants indexed by
\(F=\{i:x_i>0\}\).  The orthant \(F=\varnothing\) contributes at most
\(\prod_iw_i^{-1}\), so fix \(F\ne\varnothing\) and write
\[
  p=\bm w_F^\top\bm x_F,
  \qquad n=|\bm x_{F^c}|_1.
\]
On this orthant, \(A(\bm x)\) implies
\[
  \sup_{\bm t}\bm w_F^\top\bm\chi_{\bm x,F}(\bm t)>p.
\]
The centered scalar field on the left has variance at most \(\sigma^2\) and
increment metric bounded by
\(\varepsilon^{-1/2}|\bm t-\bm s|^{\gamma/2}\).  Dudley's entropy bound on the
fixed rectangle and Borell--TIS therefore give constants \(C_0,c_0,C_1\),
independent of \(\bm x\), such that
\begin{equation}
  \label{eq:pavel-integral-borell-bound-revised}
  \pk{A(\bm x)}
  \le C_0\exp\left\{-c_0
  \frac{\bigl(p-G-\delta(|\bm x_F|_1+n)-C_1(1+\sigma)\bigr)_+^2}
       {1+\sigma^2}\right\}.
\end{equation}
This form is also valid when \(\sigma=0\).

Because \(w_i\in[\varepsilon,\varepsilon^{-1}]\),
\(|\bm x_F|_1\le\varepsilon^{-1}p\) and
\(\bm w_{F^c}^\top\bm x_{F^c}\le-\varepsilon n\).  Choose \(\delta>0\),
depending only on the fixed parameters, so small that
\(\delta\varepsilon^{-1}\le1/4\) and \(\varepsilon/(4\delta)>2\), and put
\(A_0=G+C_1(1+\sigma)+1\).  Split the orthant into
\[
  D_1=\{\delta n\le p/4+A_0\},
  \qquad D_2=D_1^c.
\]
On \(D_1\), the positive part in
\eqref{eq:pavel-integral-borell-bound-revised} is bounded below by
\(p/2-2A_0\).  Under the change of variables \(y_i=w_ix_i\), \(i\in F\),
the section \(\sum_{i\in F}y_i=p\) has volume bounded by a constant times
\(p^{|F|-1}\).  The negative coordinates have the integrable weight
\(e^{-\varepsilon n}\).  Completing the square in \(p\) therefore gives
\[
  \int_{D_1}e^{\bm w^\top\bm x}\pk{A(\bm x)}\,d\bm x
  \le C_F e^{c_F(G+\sigma^2)}.
\]

On \(D_2\) use only \(\pk{A(\bm x)}\le1\).  Radial simplex coordinates for
the negative variables give a polynomial factor times \(e^{-\varepsilon n}\).
Since \(n>(p/4+A_0)/\delta\), the negative-coordinate integral is bounded by
\(C\exp\{-\varepsilon(p/4+A_0)/\delta\}\).  After multiplication by the
positive-coordinate weight \(e^p\), this becomes
\(C\exp\{-[\varepsilon/(4\delta)-1]p-cA_0\}\), which is integrable against
\(p^{|F|-1}\,dp\) by the choice of \(\delta\).  Hence
\[
  \int_{D_2}e^{\bm w^\top\bm x}\pk{A(\bm x)}\,d\bm x
  \le C_F\le C_F e^{c_F(G+\sigma^2)}.
\]
Summing over the finitely many orthants proves the lemma.
\end{proof}

\section*{Declaration of generative AI and AI-assisted technologies in the manuscript preparation process}

During the preparation of this work the authors used ChatGPT~5.5 and
Claude~Opus~4.8 in order to edit the text, improve its style, and revise it
for potential mistakes or sloppy writing.  After using these tools, the
authors reviewed and edited the content as needed and take full
responsibility for the content of the published article.

\end{document}